\newcommand\blfootnote[1]{%
  \begingroup
  \renewcommand\thefootnote{}\footnote{#1}%
  \addtocounter{footnote}{-1}%
  \endgroup
}
\newlength\knuthian@fdfive
\def\mathpal@save#1{\let\was@math@style=#1\relax}
\def\utilde#1{\mathpalette\mathpal@save
              {\setbox124=\hbox{$\was@math@style#1$}%
\setbox125=\hbox{$\fam=3\global\knuthian@fdfive=\fontdimen5\font$}
\setbox125=\hbox{$\widetilde{\vrule height 0pt depth 0pt width \wd124}$}%
               \baselineskip=1pt\relax
               \lineskiplimit=\z@\relax
               \lineskip=1pt\relax
               \vtop{\copy124\copy125\vskip -\knuthian@fdfive}}}
\declaretheorem[numberwithin=section]{theorem}
\newtheorem{lemma}[theorem]{Lemma}
\newtheorem{corollary}[theorem]{Corollary}
\newtheorem{proposition}[theorem]{Proposition}
\theoremstyle{definition}
\newtheorem{definition}[theorem]{Definition}
\theoremstyle{remark}
\newtheorem{remark}[theorem]{Remark}
\newbox\gnBoxA
\newdimen\gnCornerHgt
\newdimen\gnArgHgt
\def\gnmb #1{%
\setbox\gnBoxA=\hbox{$#1$}%
\gnArgHgt=\ht\gnBoxA%
\ifnum     \gnArgHgt<\gnCornerHgt \gnArgHgt=0pt%
\else \advance \gnArgHgt by -\gnCornerHgt%
\fi \raise\gnArgHgt\hbox{$\ulcorner$} \box\gnBoxA %
\raise\gnArgHgt\hbox{$\urcorner$}}
\newcommand{\defiff}{\stackrel{\mbox{\scriptsize $\textrm{def}$}}{\iff}}
\begin{document}

\title{Reducing $\omega$-model reflection to iterated syntactic reflection}
\author{Fedor Pakhomov}
\thanks{Research of Fedor Pakhomov is supported by FWO Senior Postdoctoral Fellowship, project 1283021N}
\author{James Walsh}
\address{Ghent University and Steklov Mathematical Institute of Russian Academy of Sciences}
\email{fedor.pakhomov@ugent.be}
\address{Sage School of Philosophy, Cornell University}
\email{jameswalsh@cornell.edu}

\begin{abstract}
    In mathematical logic there are two seemingly distinct kinds of principles called ``reflection principles.'' Semantic reflection principles assert that if a formula holds in the whole universe, then it holds in a set-sized model. Syntactic reflection principles assert that every provable sentence from some complexity class is true. In this paper we study connections between these two kinds of reflection principles in the setting of second-order arithmetic. We prove that, for a large swathe of theories, $\omega$-model reflection is equivalent to the claim that arbitrary iterations of uniform $\Pi^1_1$ reflection along countable well-orderings are $\Pi^1_1$-sound. This result yields uniform ordinal analyses of theories with strength between $\mathsf{ACA}_0$ and $\mathsf{ATR}$. The main technical novelty of our analysis is the introduction of the notion of the proof-theoretic dilator of a theory $T$, which is the operator on countable ordinals that maps the order-type of $\prec$ to the proof-theoretic ordinal of $T+\mathsf{WO}(\prec)$. We obtain precise results about the growth of proof-theoretic dilators as a function of provable $\omega$-model reflection. This approach enables us to simultaneously obtain not only $\Pi^0_1$, $\Pi^0_2$, and $\Pi^1_1$ ordinals but also reverse-mathematical theorems for well-ordering principles.
\end{abstract}

\maketitle

\blfootnote{Thanks to Antonio Montalb\'{a}n for helpful discussions of this material and for comments on drafts. Thanks to the referee for helpful comments.}

\section{Introduction}

Two types of principles are commonly called \emph{reflection principles} in mathematical logic. First, according to \emph{semantic} reflection principles, whatever is true in the universe holds in some set-sized model. The thought here is that structures within the universe reflect what is happening in the universe. Second, according to \emph{syntactic} reflection principles, whatever is provable is true. The thought here is that we should endorse these principles by reflecting on the soundness of our axioms and inference rules. Kreisel and L\'{e}vy \cite{kreisel1968reflection} wrote that they could not agree whether this terminological coincidence was ``merely a pun.''

In this paper we establish results relating both types of reflection principles in second-order arithmetic. In particular, we establish conservation theorems reducing $\omega$-model reflection principles to iterated syntactic reflection principles. There is a thorough proof-theoretic understanding of the latter, e.g., in terms of ordinal analysis. Accordingly, these reductions yield proof-theoretic analyses of $\omega$-model reflection principles. In particular, we use these reductions to  uniformly analyze theories between $\mathsf{ACA}_0$ and $\mathsf{ATR}$ in terms of both proof-theoretic ordinals and well-ordering principles.

We will be concerned in this paper with iterations of reflection along arbitrary (potentially non-recursive) well-orderings. We formally define these iterations using the language $\mathbf{L}_2$ that extends the standard language of second-order arithmetic with set-constants $C_X$ for all sets $X$. $\mathbf{L}_2$ formulas can be encoded as sets, and definitions of $\mathbf{L}_2$ theories and $\mathbf{L}_2$'s standard syntactic complexity classes can be given in $\mathsf{ACA}_0$. Accordingly, throughout this paper we formalize our results in $\mathsf{ACA}_0$.


The main syntactic reflection principle we consider, $\mathbf{\Pi}^1_n$-$\mathsf{RFN}(T)$, informally says ``all $\mathbf{\Pi}^1_n$ theorems of $T$ are true.'' We will also be interested in the theories $\mathbf{\Pi}^1_n$-$\mathbf{R}^\alpha(T)$ that result from iterating this principle along well-orderings $\alpha$. We give precise definitions of these theories via G\"{o}del's fixed point lemma in \textsection \ref{linear_orders}. Informally, one can think of them as defined inductively, according to the following equation:
$$\mathbf{\Pi}^1_n\text{-}\mathbf{R}^{\alpha}(T):= T + \big\{ \mathbf{\Pi}^1_n\text{-}\mathsf{RFN}\big(T+ \mathbf{\Pi}^1_n\text{-}\mathbf{R}^\beta (T)\big) : \beta<\alpha \big\}$$

We use the notation $\mathbf{\Pi}^1_n\text{-}\mathbf{R}^{\textsc{on}}(T)$ in place of 
$$\forall \alpha \big(\mathsf{WO}(\alpha) \rightarrow \mathbf{\Pi}^1_n\textrm{-}\mathsf{RFN}(\mathbf{\Pi}^1_n\textrm{-}\mathbf{R}^\alpha(T))\big).$$

We will also consider $\omega$-model reflection principles, according to which all sets are encoded in $\omega$-models of $T$. An $\omega$-model is an $L_2$ structure whose first-order part is $\mathbb{N}$ and whose second-order part is some subset of $\mathcal{P}(\mathbb{N})$.


Our main theorem is the following:

\begin{theorem}[$\mathsf{ACA}_0$]\label{reduction_for_omega_models}
For any $\mathbf{\Pi}^1_2$ axiomatized theory $T$, the following are equivalent:
\begin{enumerate}
\item Every set is contained in an $\omega$-model of $T$.
\item $\mathbf{\Pi}^1_1\text{-}\mathbf{R}^{\textsc{on}}(T)$
\end{enumerate}
\end{theorem}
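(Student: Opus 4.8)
The plan is to establish the two implications separately, using the standard correspondence between $\omega$-models and derivations in $\omega$-logic for $(2)\Rightarrow(1)$, and the downward absoluteness of $\mathbf{\Pi}^1_1$ formulas to $\omega$-models for $(1)\Rightarrow(2)$.

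For $(2)\Rightarrow(1)$, fix a set $X$. In $\mathbf{L}_2$ form the theory $T_X:=T+\mathrm{Diag}(X)$, where $\mathrm{Diag}(X)$ consists of the atomic sentences $\bar n\in C_X$ for $n\in X$ and $\bar n\notin C_X$ for $n\notin X$; this is again $\mathbf{\Pi}^1_2$-axiomatized, and since the first-order part of an $\omega$-model is standard, an $\omega$-model of $T_X$ is precisely an $\omega$-model of $T$ containing $X$ (with $C_X$ naming it). Build the Sch\"{u}tte-style search tree for such a model: nodes are finite approximations to a Henkin-complete $\omega$-model of $T_X$, branching over truth values of arithmetical sentences, witnesses for the $\mathbf{\Pi}^1_2$-axioms, and membership facts, a branch dying once an explicit inconsistency appears. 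If the tree is ill-founded, an infinite branch yields the desired $\omega$-model and we are done. Otherwise it is well-founded; by Kleene--Brouwer $\mathsf{ACA}_0$ proves $\mathsf{WO}(\alpha)$ for $\alpha$ its rank, and the tree is essentially a cut-free $\omega$-refutation of $T_X$ of height $\alpha$. The crux is then a Feferman--Sch\"{u}tte-style lemma: one application of the $\omega$-rule can be absorbed by one step of uniform $\mathbf{\Pi}^1_1$ reflection, and after eliminating the finitely many cuts coming from the $\mathbf{\Pi}^1_2$-axioms one shows, inside $\mathsf{ACA}_0$, that $\mathbf{\Pi}^1_1\text{-}\mathbf{R}^{f(\alpha)}(T)+\mathrm{Diag}(X)$ is inconsistent for a fixed $f$ with $\mathsf{ACA}_0\vdash\mathsf{WO}(\alpha)\to\mathsf{WO}(f(\alpha))$. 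Hence $\mathbf{\Pi}^1_1\text{-}\mathbf{R}^{f(\alpha)}(T)$ proves $\neg\bigwedge D_0$ for some finite $D_0\subseteq\mathrm{Diag}(X)$; this is a $\mathbf{\Pi}^1_1$ sentence of $\mathbf{L}_2$ which is false (every member of $D_0$ is true when $C_X$ denotes $X$), contradicting the instance $\mathbf{\Pi}^1_1\text{-}\mathsf{RFN}(\mathbf{\Pi}^1_1\text{-}\mathbf{R}^{f(\alpha)}(T))$ supplied by (2). The slack in the ordinal bound is harmless precisely because (2) gives reflection iterated along \emph{every} well-order.

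For $(1)\Rightarrow(2)$, fix a well-order $\alpha$ and argue by transfinite induction that $\mathbf{\Pi}^1_1\text{-}\mathbf{R}^\beta(T)$ is $\mathbf{\Pi}^1_1$-sound for every $\beta$ appearing in the iteration; the same argument with $\beta=\alpha$ then yields $\mathbf{\Pi}^1_1\text{-}\mathsf{RFN}(\mathbf{\Pi}^1_1\text{-}\mathbf{R}^\alpha(T))$. Suppose soundness holds below $\beta$ and that $\mathbf{\Pi}^1_1\text{-}\mathbf{R}^\beta(T)\vdash\varphi$ for a $\mathbf{\Pi}^1_1$ sentence $\varphi=\forall Y\,\theta(Y)$; fix a set $Y_0$. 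The derivation uses only axioms of $T$ together with finitely many reflection instances $\mathbf{\Pi}^1_1\text{-}\mathsf{RFN}(\mathbf{\Pi}^1_1\text{-}\mathbf{R}^{\gamma_i}(T))$ with $\gamma_i\prec\beta$, and by the induction hypothesis each of these is a \emph{true} $\mathbf{\Pi}^1_1$ sentence. By (1) pick an $\omega$-model $M\models T$ with $Y_0\in M$; since $\mathbf{\Pi}^1_1$ sentences are downward absolute to $\omega$-models, $M$ satisfies all these reflection instances, hence all axioms used in the derivation, so $M\models\varphi$ and thus $M\models\theta(Y_0)$; as $\theta$ is arithmetical this gives $\theta(Y_0)$. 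This completes the induction, and since $\alpha$ was arbitrary we obtain $\mathbf{\Pi}^1_1\text{-}\mathbf{R}^{\textsc{on}}(T)$.

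I expect the main obstacle to be the infinitary step in $(2)\Rightarrow(1)$: verifying rigorously, and within $\mathsf{ACA}_0$, that a well-founded $\omega$-refutation of $T_X$ of height $\alpha$ can be converted into a finitary derivation from $\mathbf{\Pi}^1_1\text{-}\mathbf{R}^{f(\alpha)}(T)+\mathrm{Diag}(X)$, in particular handling the $\mathbf{\Pi}^1_2$-axioms (which force cuts of bounded but positive complexity, so that partial cut-elimination tracked through the ordinal is needed) and the bookkeeping with the set-constants $C_X$. A secondary delicate point is the formalization of the transfinite induction in $(1)\Rightarrow(2)$: the predicate ``$\mathbf{\Pi}^1_1\text{-}\mathbf{R}^\beta(T)$ is $\mathbf{\Pi}^1_1$-sound'' is $\mathbf{\Pi}^1_1$, not arithmetical, so this instance of transfinite induction is not immediately available in $\mathsf{ACA}_0$; one must reduce it to an available instance, for example to transfinite induction for the arithmetical consistency statement, recovering soundness from consistency at each stage by means of (1) and the induction hypothesis.
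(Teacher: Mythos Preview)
Your overall strategy is the same as the paper's: for $(2)\Rightarrow(1)$ a Sch\"utte search tree plus a Feferman-style conversion of $\omega$-proofs into iterated $\mathbf{\Pi}^1_1$ reflection, and for $(1)\Rightarrow(2)$ an induction along $\alpha$ using an $\omega$-model supplied by (1). You have also correctly identified the two delicate points. The paper resolves both more cleanly than you suggest, and it is worth spelling out how.

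For $(2)\Rightarrow(1)$, there is no cut elimination to do. In $\mathbf{L}_2$ the constant $C_X$ already denotes $X$, so your $\mathrm{Diag}(X)$ is automatically true and adds nothing; the paper simply works with a true $\mathbf{\Sigma}^1_1$ sentence $\varphi$ (with $X$ as witness) and the sequent $\lnot T,\lnot\varphi$. The canonical deduction tree for this sequent is cut-free by construction, so no ``cuts coming from the $\mathbf{\Pi}^1_2$-axioms'' ever appear. The Feferman lemma is then: from a cut-free $\omega$-proof of $\lnot T,\Gamma$ ($\Gamma\subseteq\mathbf{\Pi}^1_1$) of Kleene--Brouwer rank $\delta$, one gets $\mathbf{\Pi}^1_1\text{-}\mathbf{R}^\delta(T)\vdash\bigvee\Gamma$. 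The paper proves this in one stroke by L\"ob's theorem: assuming the lemma is $\mathsf{ACA}_0$-provable, the subtrees give $\mathbf{R}^{\delta_i}(T)\vdash\bigvee\Delta_i$, whence $\mathbf{R}^\delta(T)$ proves each $\bigvee\Delta_i$ is true; the $\mathbf{\Sigma}^1_2$ disjuncts (negated axioms of $T$) are rejected outright since $\mathbf{R}^\delta(T)\supseteq T$, and the remaining $\mathbf{\Pi}^1_1$ disjuncts yield $\bigvee\Gamma$ by soundness of the last rule. No ordinal function $f$ or partial cut elimination is needed.

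For $(1)\Rightarrow(2)$, your worry about transfinite induction on the $\mathbf{\Pi}^1_1$ predicate ``$\mathbf{R}^\beta(T)$ is $\mathbf{\Pi}^1_1$-sound'' is well founded, but the fix is not to pass to consistency. The paper instead argues by contradiction: take $\alpha$ and a false $\mathbf{\Pi}^1_1$ sentence $\varphi$ with $\mathbf{R}^\alpha(T)\vdash\varphi$, and use (1) to get a single $\omega$-model $\mathfrak{M}\models T$ with $\mathfrak{M}\models\lnot\varphi$. Now induct along $\alpha$ on the predicate ``$\mathfrak{M}\models\mathbf{R}^\beta(T)$''. With $\mathfrak{M}$ and its satisfaction class fixed as parameters, this predicate is arithmetical, so the transfinite induction is available in $\mathsf{ACA}_0$. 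The step is exactly your downward-absoluteness argument, just run inside the one fixed $\mathfrak{M}$ rather than a fresh model at each stage.
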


Theorem \ref{reduction_for_omega_models} provides a reduction of $\omega$-model reflection to iterated syntactic reflection. This reduction is desirable because of the distinct roles the two types of principles play in second-order arithmetic. On the one hand, $\omega$-model reflection principles are well-known in reverse mathematics, since many theories of interest can be axiomatized in terms of $\omega$-model reflection principles. On the other hand, iterated syntactic reflection principles are widely studied in ordinal analysis because of the systematic connections between iterated reflection and proof-theoretic ordinals (see, e.g., \cite{pakhomov2018reflection}). Thus, Theorem \ref{reduction_for_omega_models} opens the path to a systematic connection between $\omega$-model reflection and ordinal analysis.

In fact, the proof of Theorem \ref{reduction_for_omega_models} delivers a more general theorem.  The semantic reflection principle we work with is $\mathbf{\Pi}^1_n\textrm{-}\omega\mathsf{RFN}(T)$, which informally says that ``any $\mathbf{\Pi}^1_n$ sentence that holds in all $\omega$-models of $T$ is true.'' Note that $\mathbf{\Pi}^1_1\textrm{-}\omega\mathsf{RFN}(T)$ and $\mathbf{\Pi}^1_2\textrm{-}\omega\mathsf{RFN}(T)$ are both equivalent to the claim that every set is contained in an $\omega$-model of $T$.  We are able to generalize Theorem \ref{reduction_for_omega_models} as follows:

\begin{theorem}\label{main_theorem}
Suppose $n>0$. Then $\mathsf{ACA}_0$ proves that for any $\mathbf{\Pi}^1_{n+1}$-axiomatizable $T$, the following are equivalent:
\begin{enumerate}
\item $\mathbf{\Pi}^1_n\textrm{-}\omega\mathsf{RFN}(T)$
\item $\mathbf{\Pi}^1_n\text{-}\mathbf{R}^{\textsc{on}}(T)$
\end{enumerate}
\end{theorem}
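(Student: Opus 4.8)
The plan is to prove the two implications separately, both inside $\mathsf{ACA}_0$, with the direction (2)$\Rightarrow$(1) being the essentially new content and (1)$\Rightarrow$(2) being a more routine (though still delicate) iteration argument.

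For (1)$\Rightarrow$(2): working under $\mathbf{\Pi}^1_n\text{-}\omega\mathsf{RFN}(T)$, I would prove by (external, or rather arithmetical) induction along an arbitrary given well-ordering $\alpha$ that every $\omega$-model of $T$ whose second-order part is closed under the relevant jump-like operations already satisfies $\mathbf{\Pi}^1_n\text{-}\mathbf{R}^\beta(T)$ for all $\beta<\alpha$ coded in the model, hence that $\mathbf{\Pi}^1_n\text{-}\mathsf{RFN}(\mathbf{\Pi}^1_n\text{-}\mathbf{R}^\alpha(T))$ holds. The key point is that $\omega$-model reflection is iterable: from ``every set lies in an $\omega$-model of $T$'' one extracts, for any set $X$, an $\omega$-model containing $X$; inside that model one can again reflect, and transfinitely often, the bookkeeping being carried by the well-ordering $\alpha$ via G\"odel fixed points exactly as in the definition of $\mathbf{\Pi}^1_n\text{-}\mathbf{R}^\alpha$. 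For $n\ge 2$ one must be careful that a $\mathbf{\Pi}^1_n$ sentence true in the real world and reflected to a model stays true there, which is where $\mathbf{\Pi}^1_n\text{-}\omega\mathsf{RFN}$ rather than mere $\omega$-model containment is needed; for $n=1$ absoluteness of $\mathbf{\Pi}^1_1$ to $\omega$-models makes this automatic.

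For (2)$\Rightarrow$(1): this is where I expect the real work. Assume $\mathbf{\Pi}^1_n\text{-}\mathbf{R}^{\textsc{on}}(T)$ and a $\mathbf{\Pi}^1_n$ sentence $\varphi$ that holds in all $\omega$-models of $T$; we must show $\varphi$. The strategy is to build, from the hypothetical failure of $\varphi$ together with a given set $X$, a canonical well-ordering $\prec$ — morally a search tree or an ordinal notation capturing an attempted proof-search/model-construction for $T+\neg\varphi$ relative to $X$ — such that either $\prec$ is a genuine well-ordering, in which case $\mathbf{\Pi}^1_n\text{-}\mathsf{RFN}(\mathbf{\Pi}^1_n\text{-}\mathbf{R}^{\prec}(T))$ (an instance of the hypothesis) yields the truth of $\varphi$ after all, or $\prec$ has an infinite descending sequence, from which one reads off an $\omega$-model of $T$ containing $X$ in which $\neg\varphi$ holds, contradicting the assumption on $\varphi$. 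The heart of the matter is the construction of this $\prec$ and the verification that ``$\prec$ ill-founded'' gives an $\omega$-model of $T+\neg\varphi$: this is a Barwise-compactness / completeness-theorem style argument carried out formally in $\mathsf{ACA}_0$, using that $T$ is $\mathbf{\Pi}^1_{n+1}$-axiomatized so that the relevant proof-search is arithmetically definable, and that along any branch the constructed set is an $\omega$-model because the search systematically realizes all existential ($\mathbf{\Sigma}^1_n$-type) demands of $T$.

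The step I expect to be the main obstacle is precisely this last construction in (2)$\Rightarrow$(1): getting a single linear order $\prec$ whose well-foundedness is provably (in $\mathsf{ACA}_0$) equivalent to ``$\varphi$ is provable in some iterate $\mathbf{\Pi}^1_n\text{-}\mathbf{R}^\alpha(T)$'' while its ill-foundedness yields an actual $\omega$-model of $T+\neg\varphi$ — this requires a careful, uniform ``proof-theoretic dilator'' type construction (the operator sending $\mathrm{otp}(\prec)$ to the proof-theoretic ordinal of $T+\mathsf{WO}(\prec)$ mentioned in the abstract), and reconciling the syntactic reflection side with the semantic $\omega$-model side at the correct level $n$ of the hierarchy. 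The remaining subtleties — handling set-constants $C_X$ so that $X$ is genuinely captured, checking the $\mathbf{\Pi}^1_n$/$\mathbf{\Pi}^1_{n+1}$ complexity bookkeeping, and verifying everything goes through over $\mathsf{ACA}_0$ rather than a stronger base theory — I view as laborious but not conceptually problematic.
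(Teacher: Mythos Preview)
Your outline matches the paper's approach in its overall shape, but you have the emphasis in both directions somewhat misplaced.

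For (1)$\Rightarrow$(2) the argument is simpler than you suggest: no jump-like closure and no ``reflecting inside the model'' is needed. One takes a single $\omega$-model $\mathfrak{M}\models T$ witnessing the true $\mathbf{\Sigma}^1_n$ sentence $\neg\varphi$, and shows by transfinite induction along $\alpha$ that $\mathfrak{M}\models\mathbf{\Pi}^1_n\text{-}\mathbf{R}^\beta(T)$ for every $\beta\le\alpha$. The only ingredient is that $\omega$-models are correct about first-order provability, so if $\mathfrak{M}$ thinks some $\mathbf{\Pi}^1_n\text{-}\mathbf{R}^\beta(T)$ proves a false $\mathbf{\Pi}^1_n$ sentence, it really does, and the inductive hypothesis gives a contradiction.

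For (2)$\Rightarrow$(1) your dichotomy is exactly right, and the order $\prec$ is concretely the Kleene--Brouwer order on the canonical Sch\"utte-style cut-free $\omega$-proof-search tree for the sequent $\neg T,\neg\varphi$. But you have inverted where the work lies. The ill-founded branch is the \emph{routine} half: an infinite path through the deduction tree yields an $\omega$-model of $T+\varphi$ by the standard completeness argument (Theorem~\ref{omega_completeness_2} in the paper). The genuinely new content is the well-founded branch, which requires Lemma~\ref{feferman}: if there is a cut-free $\omega$-proof of $\neg T,\Gamma$ (with $\Gamma\subseteq\mathbf{\Pi}^1_n$) of Kleene--Brouwer rank $\delta$, then $\mathbf{\Pi}^1_n\text{-}\mathbf{R}^\delta(T)\vdash\bigvee\Gamma$. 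This is an analogue of Feferman's completeness theorem for iterated reflection, and it is proved by a single invocation of L\"ob's theorem, not by any dilator machinery. Proof-theoretic dilators enter the paper only in \S\ref{pt_dilators} and play no role in the proof of Theorem~\ref{main_theorem}; you should drop that from your plan.
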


An interesting feature of Theorem \ref{main_theorem} is that on the one hand it appears similar to the result of J\"ager and Strahm \cite{jager1999bar} about the equivalence of $\mathbf{\Pi}^1_{n+2}\textrm{-}\omega\mathsf{RFN}(T)$ and $\Pi^1_{n}\textrm{-}\mathsf{BI}$, for $n\ge 1$.  And on the other hand Theorem \ref{main_theorem} appears similar to reduction properties for systems of first-order arithmetic, see \cite{schmerl1979fine, beklemishev2003proof}. In \cite{pakhomov2018reflection} the authors proved a Schmerl-style theorem for iterated $\Pi^1_1$ reflection and used it to establish a systematic connection between iterated $\Pi^1_1$ reflection and $\Pi^1_1$ proof-theoretic ordinals. Theorem \ref{main_theorem} extends these results in two important ways. First, it extends them to treat semantic reflection principles, namely $\omega$-model reflection principles. Second, it extends them to yield information sufficient not only for $\Pi^1_1$ proof-theoretic analysis (i.e., the calculation of $\Pi^1_1$ proof-theoretic ordinals), but also for the $\Pi^1_2$ proof-theoretic analysis of theories.

With respect to $\Pi^1_1$ ordinal analysis, we use Theorem \ref{main_theorem} to uniformly prove the following known results, where $|T|_{\mathbf{\Pi}^1_1}$ is the proof-theoretic ordinal of $T$:

\begin{theorem}
\begin{enumerate}
    \item $|\mathsf{ACA}_0^+|_{\mathbf{\Pi}^1_1}=\phi_2(0)$;
    \item $|\Sigma^1_1$-$\mathsf{AC}|_{\mathbf{\Pi}^1_1}=|\Pi^1_2$-$\mathsf{RFN}^{\varepsilon_0}(\Sigma^1_1$-$\mathsf{AC}_0)|_{\mathbf{\Pi}^1_1}=\phi_{\varepsilon_0}(0)$
    \item $|\mathsf{ATR}_0|_{\mathbf{\Pi}^1_1}=\Gamma_0$.
    \item $|\mathsf{ATR}|_{\mathbf{\Pi}^1_1}=\Gamma_{\varepsilon_0}$.
\end{enumerate}
\end{theorem}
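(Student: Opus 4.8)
The plan is to run all four computations through one mechanism. Each theory $S$ on the list is reduced, for its $\mathbf{\Pi}^1_1$-consequences, to a principle built by iterating uniform reflection over a weak base, and then $|S|_{\mathbf{\Pi}^1_1}$ is extracted from the relevant proof-theoretic dilator via the Schmerl-style calculus that accompanies Theorem \ref{main_theorem}. The reductions come from three sources. For the restricted-induction theories the operative reflection is $\omega$-model reflection: $\mathsf{ACA}_0^+$ is, over $\mathsf{ACA}_0$, exactly ``every set lies in an $\omega$-model of $\mathsf{ACA}_0$,'' i.e. $\mathbf{\Pi}^1_1\text{-}\omega\mathsf{RFN}(\mathsf{ACA}_0)$, so Theorem \ref{reduction_for_omega_models} converts it into the iterated syntactic reflection principle $\mathbf{\Pi}^1_1\text{-}\mathbf{R}^{\textsc{on}}(\mathsf{ACA}_0)$; and $\mathsf{ATR}_0$ is, by a standard reverse-mathematical argument, $\mathbf{\Pi}^1_1$-equivalent to a transfinitely iterated form of $\omega$-model reflection over $\mathsf{ACA}_0$, which repeated use of Theorem \ref{main_theorem} again turns into (deeply nested) iterated syntactic reflection. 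For the full-induction theories one prepends the second-order form of the Beklemishev--Schmerl phenomenon that full induction over a restricted-induction base is, for $\mathbf{\Pi}^1_1$-consequences, an $\varepsilon_0$-fold iterate of reflection over that base: this is exactly the reduction of $\Sigma^1_1\text{-}\mathsf{AC}$ to $\mathbf{\Pi}^1_2\text{-}\mathsf{RFN}^{\varepsilon_0}(\Sigma^1_1\text{-}\mathsf{AC}_0)$ recorded in the statement, and the analogue holds for $\mathsf{ATR}$ over $\mathsf{ATR}_0$.

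Granting the reductions, the ordinals follow from two further ingredients. First, classical Gentzen--Sch\"{u}tte-style ordinal analysis supplies the relevant dilators: the $\varepsilon$ function governs the $\mathsf{ACA}_0^+$-level computations, while (slices of) the binary Veblen function $\phi$ govern the remaining, $\mathsf{ATR}$-level computations. Second, the calculus extending \cite{pakhomov2018reflection} tracks how the proof-theoretic dilator of a theory grows under provable reflection and $\omega$-model reflection, and in particular identifies the $\mathbf{\Pi}^1_1$-ordinal of a full iterate of reflection over $T$ with the closure ordinal of $D_T$ above $|T|_{\mathbf{\Pi}^1_1}$, and of a length-$\alpha$ iterate with the $\alpha$-indexed value of $D_T$. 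Plugging in: the closure ordinal of $\alpha\mapsto\varepsilon_\alpha$ is $\phi_2(0)$ (item 1); the closure ordinal reached by transfinitely iterating over $\mathsf{ACA}_0$ is $\Gamma_0$ (item 3); $\varepsilon_0$ iterations over $\Sigma^1_1\text{-}\mathsf{AC}_0$, which has $\mathbf{\Pi}^1_1$-ordinal $\varepsilon_0$, reach $\sup_{\gamma<\varepsilon_0}\phi_\gamma(0)=\phi_{\varepsilon_0}(0)$ (item 2); and $\varepsilon_0$ further iterations over $\mathsf{ATR}_0$ reach $\Gamma_{\varepsilon_0}$ (item 4). In each case the lower bound is visible on the reflection side, since each iterate provably well-orders everything below the corresponding value of the dilator, and the upper bound from the fact that the reflection principles---which only assert $\mathbf{\Pi}^1_1$-soundness of the iterates---do not well-order the target ordinal itself.

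I expect the main obstacle to be the interface between the dilator and the reflection calculus. The dilator $D_T(\alpha)=|T+\mathsf{WO}(\prec_\alpha)|_{\mathbf{\Pi}^1_1}$ is insensitive to small $\alpha$---it is eventually constant below $|T|_{\mathbf{\Pi}^1_1}$---so one must show that it is nonetheless the correct object to iterate, i.e. that iterated reflection and $\omega$-model reflection evaluate it only at large arguments, where it coincides, as a dilator, with the $\varepsilon$ and Veblen functions; supplying exactly this is the role of the Schmerl-style theorem, and the fact that one tracks the dilator (not merely the ordinal) of each intermediate theory is what keeps the transfinitely nested cases for $\mathsf{ATR}_0$ and $\mathsf{ATR}$ manageable. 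A secondary difficulty is the two ``full induction $=\varepsilon_0$ iterations'' reductions: these require the second-order form of the Beklemishev--Schmerl conservation result, and one must then check that the $\mathbf{\Pi}^1_2$ reflection appearing there is, as measured by $\mathbf{\Pi}^1_1$-ordinals, governed by the same calculus---here the $n=2$ case of Theorem \ref{main_theorem}, together with the coincidence of $\mathbf{\Pi}^1_1$- and $\mathbf{\Pi}^1_2$-$\omega$-model reflection, does the work. Finally, all of this must be carried out inside $\mathsf{ACA}_0$, so the $\omega$-model reflection characterizations and the ordinal-notation bookkeeping behind the dilator computations have to be formalized at that level---routine but delicate.
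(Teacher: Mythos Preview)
Your outline for items (1), (3), and (4) is essentially the paper's approach: $\mathsf{ACA}_0^+$ and $\mathsf{ATR}_0$ are handled by identifying them with (iterated) $\omega$-model reflection over $\mathsf{ACA}_0$ and reading off the dilator from Theorems \ref{omega_Pi^1_2}, \ref{omega_iter_Pi^1_2}, and \ref{ATR_0_Pi^1_2}; and $\mathsf{ATR}$ is handled by the Schmerl-type reduction (Theorem \ref{Schmerl-variant}) to $\mathbf{\Pi}^1_1\text{-}\mathbf{R}^{\varepsilon_0}(\mathsf{ATR}_0)$ followed by a L\"ob argument using $|\mathsf{ATR}_0|_{\mathbf{\Pi}^1_2}=|\Gamma^+|$.

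There is, however, a genuine gap in your treatment of item (2). After the Schmerl reduction you must compute $|\mathbf{\Pi}^1_2\text{-}\mathbf{R}^{\varepsilon_0}(\Sigma^1_1\text{-}\mathsf{AC}_0)|_{\mathbf{\Pi}^1_1}$, and you assert this is $\sup_{\gamma<\varepsilon_0}\phi_\gamma(0)$, as though each syntactic $\mathbf{\Pi}^1_2$-reflection step increments the Veblen subscript. But that is the effect of \emph{$\omega$-model} reflection (Theorem \ref{omega_Pi^1_2}), not of syntactic $\mathbf{\Pi}^1_2$-reflection: Theorem \ref{iter_Pi^1_2} says that $\beta$ many iterations of $\mathbf{\Pi}^1_2\text{-}\mathsf{RFN}$ over a theory with dilator $\phi_\alpha^+$ push the dilator only to $\phi_\alpha^{+\omega^\beta}$, staying at the same Veblen level. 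Your proposed fix---``the $n=2$ case of Theorem \ref{main_theorem} together with the coincidence of $\mathbf{\Pi}^1_1$- and $\mathbf{\Pi}^1_2$-$\omega$-model reflection''---does not help: Theorem \ref{main_theorem} equates \emph{one} step of $\omega$-model reflection with \emph{all} well-founded iterates of syntactic reflection, so it cannot be used to match the two step-for-step.

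What is actually needed, and what the paper supplies, is a separate conservation result specific to $\Sigma^1_1\text{-}\mathsf{AC}_0$: Corollary \ref{AC_cons_iter} shows $\mathbf{\Pi}^1_2\text{-}\mathbf{R}^\alpha(\Sigma^1_1\text{-}\mathsf{AC}_0)\equiv^{\mathbf{\Sigma}^1_1}_{\mathbf{\Pi}^1_2}\mathbf{\Pi}^1_2\text{-}\omega\mathbf{R}^\alpha(\mathsf{ACA}_0)$, so that over a base containing $\Sigma^1_1\text{-}\mathsf{AC}$ each syntactic $\mathbf{\Pi}^1_2$-reflection step \emph{is} as strong as an $\omega$-model reflection step. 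This rests on the $\forall\exists_2!\,\mathbf{\Pi}^0_\infty$-axiomatizability machinery of Lemmas \ref{AC_cons_1}--\ref{AC_cons_2} (the point being that $\Sigma^1_1\text{-}\mathsf{AC}$ lets one assemble the canonical $\omega$-model directly from the reflection principle). With this in hand, Theorem \ref{omega_iter_Pi^1_2} gives the dilator $\phi_{1+\varepsilon_0}^+=\phi_{\varepsilon_0}^+$ and hence the ordinal $\phi_{\varepsilon_0}(0)$. Without it, your argument for (2) does not go through.
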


$\Pi^1_2$ proof theory, pioneered by Girard \cite{girard1981pi12}, is concerned with \emph{dilators}, certain well-behaved functions on the ordinals. In this paper we introduce the notion of the \emph{dilator of a theory}, which is roughly a function encapsulating the closure conditions that a theory imposes on the ordinals. More formally, we use the following definition:
\begin{definition}
The \emph{proof-theoretic dilator} of a theory $T$ is the function $\omega_1\cup\{\infty\}\to\omega_1\cup\{\infty\}$: $$|\alpha|\longmapsto |T+\mathsf{WO}(\dot\alpha)|_{\mathbf{\Pi}^1_1}$$ where $\alpha$ ranges over countable linear orders. We write $|T|_{\mathbf{\Pi}^1_2}$ to denote the proof-theoretic dilator of $T$.
\end{definition}

In \cite{pakhomov2018reflection} the authors developed a systematic connection between iterated $\Pi^1_1$ reflection and $\Pi^1_1$ ordinal analysis, including the following theorem (stated using the terminology of this paper):
\begin{theorem}\label{previous_paper}
$|\mathbf{\Pi}^1_1$-$\mathbf{R}^\alpha(\mathsf{ACA}_0)|_{\Pi^1_1}=\varepsilon_\alpha$.
\end{theorem}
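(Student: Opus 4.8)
The plan is to prove, by (external) transfinite induction on $\alpha$, the sharper statement that $\mathbf{\Pi}^1_1\text{-}\mathbf{R}^\alpha(\mathsf{ACA}_0)$ has exactly the same $\Pi^1_1$-consequences as $\mathsf{ACA}_0+\{\mathsf{WO}(\varepsilon_\beta):\beta<\alpha\}$. The theorem follows from this: by the classical ordinal analysis of arithmetical comprehension with transfinite induction one has $|\mathsf{ACA}_0+\mathsf{WO}(\varepsilon_\beta)|_{\Pi^1_1}=\varepsilon_{\beta+1}$, and this bound is not attained, so by compactness $\mathsf{ACA}_0+\{\mathsf{WO}(\varepsilon_\beta):\beta<\alpha\}$ proves $\mathsf{WO}(\gamma)$ for exactly the $\gamma<\varepsilon_\alpha$. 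The base case $\alpha=0$ is the classical fact $|\mathsf{ACA}_0|_{\Pi^1_1}=\varepsilon_0$. The limit stage is pure bookkeeping: a $\Pi^1_1$-theorem of $\mathbf{\Pi}^1_1\text{-}\mathbf{R}^\lambda(\mathsf{ACA}_0)$ uses only finitely many reflection instances, hence already follows from $\mathbf{\Pi}^1_1\text{-}\mathbf{R}^{\gamma+1}(\mathsf{ACA}_0)$ for some $\gamma<\lambda$ (using that $\mathbf{\Pi}^1_1\text{-}\mathsf{RFN}$ is monotone under provable inclusion of theories), to which the inductive hypothesis applies. So the entire content lies in the successor step, and amounts to a one-step Schmerl-type identity: over $\mathsf{ACA}_0$, uniform $\Pi^1_1$ reflection for a $\Pi^1_1$-sound theory of $\Pi^1_1$-proof-theoretic ordinal $\delta$ advances the ordinal to the least $\varepsilon$-number above $\delta$. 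Applied with $\delta=\varepsilon_\alpha$, and recalling $\mathbf{\Pi}^1_1\text{-}\mathbf{R}^{\alpha+1}(\mathsf{ACA}_0)=\mathbf{\Pi}^1_1\text{-}\mathbf{R}^{\alpha}(\mathsf{ACA}_0)+\mathbf{\Pi}^1_1\text{-}\mathsf{RFN}(\mathbf{\Pi}^1_1\text{-}\mathbf{R}^{\alpha}(\mathsf{ACA}_0))$, this yields $\varepsilon_{\alpha+1}$.

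For the lower half of the successor step I would use a \emph{self-referential well-ordering}. Recall that over $\mathsf{ACA}_0$ every $\Pi^1_1$-sentence is equivalent to some $\mathsf{WO}(\prec)$ with $\prec$ primitive recursive, so $\mathbf{\Pi}^1_1\text{-}\mathsf{RFN}(T)$ is exactly uniform reflection for statements of $\mathsf{WO}$-form. Using the inductive hypothesis to enumerate $\mathbf{\Pi}^1_1\text{-}\mathbf{R}^{\alpha}(\mathsf{ACA}_0)$-proofs of $\mathsf{WO}(\delta)$ for $\delta<\varepsilon_\alpha$, build via the fixed point lemma a primitive recursive order $\prec$ of order type $\varepsilon_\alpha$ all of whose initial segments are proved well-ordered by $\mathbf{\Pi}^1_1\text{-}\mathbf{R}^{\alpha}(\mathsf{ACA}_0)$, but for which $\mathsf{ACA}_0$ proves $\mathsf{WO}(\prec)\leftrightarrow\mathbf{\Pi}^1_1\text{-}\mathsf{RFN}(\mathbf{\Pi}^1_1\text{-}\mathbf{R}^{\alpha}(\mathsf{ACA}_0))$; since $\mathbf{\Pi}^1_1\text{-}\mathbf{R}^{\alpha}(\mathsf{ACA}_0)$ is $\Pi^1_1$-sound with ordinal $\varepsilon_\alpha$ it does not prove $\mathsf{WO}(\prec)$, whereas one layer of uniform $\Pi^1_1$ reflection does. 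Hence $\mathbf{\Pi}^1_1\text{-}\mathbf{R}^{\alpha+1}(\mathsf{ACA}_0)\vdash\mathsf{WO}(\varepsilon_\alpha)$, and the $\mathsf{ACA}_0$-provable closure of the class of well-orders under ordinal sum, product and base-$\omega$ exponentiation upgrades this to $\mathsf{WO}(\gamma)$ for every $\gamma<\varepsilon_{\alpha+1}$.

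For the upper half of the successor step I would first replace the reflected theory by the concrete theory $S:=\mathsf{ACA}_0+\{\mathsf{WO}(\varepsilon_\gamma):\gamma<\alpha\}$ furnished by the inductive hypothesis — this uses a lemma to the effect that, provably in $\mathsf{ACA}_0$, uniform $\Pi^1_1$ reflection is insensitive to $\Pi^1_1$-conservative reaxiomatization of the reflected theory — and then bound the $\Pi^1_1$-ordinal of $S+\mathbf{\Pi}^1_1\text{-}\mathsf{RFN}(S)$ by $\varepsilon_{\alpha+1}$ by a direct ordinal analysis: embed the theory into an infinitary sequent calculus with an $\omega$-rule and a rule discharging a $\Pi^1_1$-truth predicate, perform cut elimination while absorbing the reflection layer ordinally rather than eliminating it, and observe that cut-free derivations of statements $\mathsf{WO}(\cdot)$ have heights below $\varepsilon_{\alpha+1}$, so that none of them derives $\mathsf{WO}(\varepsilon_{\alpha+1})$. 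Combined with compactness and the inductive hypothesis, this gives the conservation claim for $\mathbf{\Pi}^1_1\text{-}\mathbf{R}^{\alpha+1}(\mathsf{ACA}_0)$.

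The main obstacle is this upper-bound half: it is the site of the genuine proof-theoretic labor — in essence an ordinal analysis of $\mathsf{ACA}_0$ enriched by a single layer of uniform $\Pi^1_1$ reflection — and it has to be carried out (i) with $\mathsf{ACA}_0$ as the ambient metatheory, hence uniformly in a possibly non-recursive $\alpha$, working throughout in the language $\mathbf{L}_2$ with set constants and being careful never to invoke the unprovable $\Pi^1_1$-soundness of the iterates, and (ii) together with the reaxiomatization-invariance lemma for $\mathbf{\Pi}^1_1\text{-}\mathsf{RFN}$, which is itself not wholly routine. By contrast, the self-referential well-ordering in the lower bound is a comparatively soft fixed-point construction, and the remaining ingredients — soundness preservation along the iteration (by external transfinite induction on $\alpha$), the limit stages, and the ordinal arithmetic — are straightforward once these pieces and the classical analysis of $\mathsf{ACA}_0+\mathsf{WO}(\delta)$ are in hand.
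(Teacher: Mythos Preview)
Your proposal is correct in outline, but it takes a genuinely different route from the paper's methodology. This particular theorem is actually cited from the authors' earlier paper \cite{pakhomov2018reflection} rather than proved here; however, the present paper does prove the exact analogue $|\mathbf{\Pi}^1_1\text{-}\mathbf{R}^\alpha(\mathsf{ATR}_0)|_{\mathbf{\Pi}^1_1}=\Gamma_\alpha$ (in the proof of Corollary~7.2(4)), and the same template applies verbatim with $\mathsf{ACA}_0$ and $\varepsilon$ in place of $\mathsf{ATR}_0$ and $\Gamma$. That template is: argue by L\"ob's theorem rather than transfinite induction. Under the reflexive hypothesis that $\mathsf{ACA}_0$ proves the theorem, one gets $\mathsf{ACA}_0\vdash \mathbf{\Pi}^1_1\text{-}\mathsf{RFN}(\mathbf{\Pi}^1_1\text{-}\mathbf{R}^\gamma(\mathsf{ACA}_0))\leftrightarrow\mathsf{WO}(\varepsilon_\gamma)$ for every $\gamma$, so each axiom of the iterate collapses to a well-ordering assertion; then the single black box $|\mathsf{ACA}_0|_{\mathbf{\Pi}^1_2}=|\varepsilon^+|$ (Theorem~\ref{ACA_0_Pi^1_2}) converts $\mathsf{ACA}_0+\mathsf{WO}(\varepsilon_\gamma)\vdash\mathsf{WO}(\beta)$ into $|\beta|<\varepsilon_{\gamma+1}$, and one is done.

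The contrast with your approach is instructive. You do an external transfinite induction and, at the successor step, separately manufacture a self-referential well-ordering for the lower bound and run an infinitary cut-elimination for the upper bound. The paper absorbs both halves of that successor step into the dilator identity $|\mathsf{ACA}_0|_{\mathbf{\Pi}^1_2}=|\varepsilon^+|$: that single statement already says that adding $\mathsf{WO}(\delta)$ to $\mathsf{ACA}_0$ yields proof-theoretic ordinal exactly $\varepsilon^+(\delta)$, which is precisely your ``one layer of reflection advances to the next $\varepsilon$-number'' lemma. And L\"ob's theorem replaces your transfinite induction wholesale, which is not merely cosmetic: it makes the argument go through uniformly for an arbitrary linear order $\alpha$ given as a set parameter (possibly non-recursive, possibly ill-founded), inside $\mathsf{ACA}_0$, without ever needing the $\Pi^1_1$-soundness of the iterates or your reaxiomatization-invariance lemma. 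Your approach is the classical Schmerl-style analysis and is perfectly sound for recursive well-orderings $\alpha$; the paper's buys uniformity and brevity at the cost of front-loading the work into the dilator computation.
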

Theorem \ref{previous_paper} specifies how proof-theoretic ordinals of a theory grows as a function of the amount of $\mathbf{\Pi}^1_1$ reflection it proves. In this paper we develop a similar systematic connection between iterated $\omega$-model reflection and dilators of theories. In particular, we pin down how theories' dilators climb the Veblen hierarchy as a function of the amount of $\omega$-model reflection postulated. Before stating this connection, we introduce some notation. For linear orders $\alpha,\beta,\gamma$ we write $\phi_\alpha^+(\beta)$ to denote the standard notation system for the least ordinal strictly above $\beta$ that is a value of $\phi_\alpha$ function. And we write $\phi_\alpha^{+\gamma}(\beta)$ to denote the standard notation system for the $\gamma^{th}$ ordinal above $\beta$ that is a value of $\phi_\alpha$-function. We characterize iterated $\mathbf{\Pi}^1_2$ reflection via dilators as follows:

\begin{theorem}[$\mathsf{ACA}_0$]\label{iter-intro}
Let $T$ be a $\mathbf{\Pi}^1_2$-axiomatizable theory such that $|T|_{\mathbf{\Pi}^1_2}=|\phi_\alpha^+|$, for some linear order $\alpha$. Then for any $\beta$ we have $|\mathbf{\Pi}^1_2\mbox{-}\mathbf{R}^\beta(T)|_{\mathbf{\Pi}^1_2}=|\phi_{\alpha}^{+\omega^\beta}|$.
\end{theorem}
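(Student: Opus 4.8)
The plan is to reduce the statement to a pointwise computation of $\mathbf{\Pi}^1_1$ proof-theoretic ordinals and then argue by transfinite induction on $\beta$. Since the proof-theoretic dilator of a theory $U$ is by definition the map $|\gamma|\mapsto|U+\mathsf{WO}(\dot\gamma)|_{\mathbf{\Pi}^1_1}$, it suffices to prove, in $\mathsf{ACA}_0$ and uniformly in the countable linear order $\gamma$, that $|\mathbf{\Pi}^1_2\text{-}\mathbf{R}^\beta(T)+\mathsf{WO}(\dot\gamma)|_{\mathbf{\Pi}^1_1}=\phi_\alpha^{+\omega^\beta}(\gamma)$. The case $\beta=0$ is exactly the hypothesis, since $\mathbf{\Pi}^1_2\text{-}\mathbf{R}^0(T)=T$ and $\phi_\alpha^{+\omega^0}=\phi_\alpha^{+1}=\phi_\alpha^+$. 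For limit $\beta$ one has $\mathbf{\Pi}^1_2\text{-}\mathbf{R}^\beta(T)=\bigcup_{\delta<\beta}\mathbf{\Pi}^1_2\text{-}\mathbf{R}^{\delta+1}(T)$ and, by continuity of $\eta\mapsto\phi_\alpha^{+\eta}(\gamma)$ together with $\omega^\beta=\sup_{\delta<\beta}\omega^{\delta+1}$, also $\phi_\alpha^{+\omega^\beta}(\gamma)=\sup_{\delta<\beta}\phi_\alpha^{+\omega^{\delta+1}}(\gamma)$; the limit case then follows from the successor cases and the routine fact that the $\mathbf{\Pi}^1_1$ ordinal of a definable increasing union of theories is the supremum of those of its members.

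The entire content is therefore in the successor step: assuming $|\mathbf{\Pi}^1_2\text{-}\mathbf{R}^{\beta_0}(T)+\mathsf{WO}(\dot\gamma)|_{\mathbf{\Pi}^1_1}=\phi_\alpha^{+\omega^{\beta_0}}(\gamma)$ uniformly in $\gamma$, prove the same for $\beta_0+1$ with $\phi_\alpha^{+\omega^{\beta_0+1}}$. Writing $U=\mathbf{\Pi}^1_2\text{-}\mathbf{R}^{\beta_0}(T)$ and $\xi=\omega^{\beta_0}$, one uses that $\mathbf{\Pi}^1_2\text{-}\mathbf{R}^{\beta_0+1}(T)\equiv U+\mathbf{\Pi}^1_2\text{-}\mathsf{RFN}(U)$ provably in $\mathsf{ACA}_0$ (here $U$ is $\mathbf{\Pi}^1_2$-axiomatizable because $T$ is and $\mathbf{\Pi}^1_2$-reflection instances are $\mathbf{\Pi}^1_2$), and that $\omega^{\beta_0}\cdot\omega=\omega^{\beta_0+1}$, so that the goal becomes: \emph{one application of $\mathbf{\Pi}^1_2$ syntactic reflection moves the dilator from $\phi_\alpha^{+\xi}$ to its $\omega$-th iterate, which for Veblen functions is $\phi_\alpha^{+\xi\cdot\omega}$.} The identity $(\phi_\alpha^{+\xi})^{(n)}(\gamma)=\phi_\alpha^{+\xi\cdot n}(\gamma)$ — hence $\sup_n(\phi_\alpha^{+\xi})^{(n)}(\gamma)=\phi_\alpha^{+\xi\cdot\omega}(\gamma)$ — is the combinatorial input that makes the iterates compose, and the failure of this for a general (in particular: continuous) dilator is precisely why the hypothesis restricts the dilator of $T$ to the form $\phi_\alpha^+$. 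This step is the $\mathbf{\Pi}^1_2$, well-ordering-parametrised analogue of the Schmerl-style analysis of iterated $\mathbf{\Pi}^1_1$ reflection in \cite{pakhomov2018reflection}.

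For the lower bound of the successor step, I would use that the dilator hypothesis for $U$ (suitably formalised) yields a single $\mathbf{\Pi}^1_2$ sentence $\Psi_\xi$ provable in $U$ — roughly ``every well-order $X$ is well-founded up to $\phi_\alpha^{+\xi}(|X|)$'', obtained from the uniform well-foundedness statement for proper initial segments plus the $\mathsf{ACA}_0$-lemma that a linear order all of whose proper initial segments are well-founded is itself well-founded; then $\mathbf{\Pi}^1_2\text{-}\mathsf{RFN}(U)$ makes $\mathsf{ACA}_0$ prove $\Psi_\xi$, whence $\mathsf{ACA}_0+\Psi_\xi$ proves, by building and bounding the $\omega$-fold iterate of the jump, that every well-order is well-founded up to $\phi_\alpha^{+\xi\cdot\omega}$ of it; specialising $X=\dot\gamma$ gives $\mathsf{WO}(\dot\delta)$ for every $\delta<\phi_\alpha^{+\xi\cdot\omega}(\gamma)$. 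For the upper bound I would show, by a proof-theoretic reduction formalisable in $\mathsf{ACA}_0$ in the spirit of the soundness arguments of \cite{pakhomov2018reflection}, that $U+\mathbf{\Pi}^1_2\text{-}\mathsf{RFN}(U)+\mathsf{WO}(\dot\gamma)$ has $\mathbf{\Pi}^1_1$ ordinal at most $\phi_\alpha^{+\xi\cdot\omega}(\gamma)$: using a partial truth predicate for $\mathbf{\Pi}^1_2$ sentences, one analyses the new axioms $\mathbf{\Pi}^1_2\text{-}\mathsf{RFN}(U)$ and shows that, as far as provable well-foundedness goes, they contribute no more than iterating the dilator $\omega$ times, so that the theory is (for $\mathbf{\Pi}^1_1$-ordinal purposes) no stronger than $U+\{\mathsf{WO}(\dot\delta):\delta<\phi_\alpha^{+\xi\cdot\omega}(\gamma)\}$, whose $\mathbf{\Pi}^1_1$ ordinal equals $\sup\{\phi_\alpha^{+\xi}(\delta):\delta<\phi_\alpha^{+\xi\cdot\omega}(\gamma)\}=\phi_\alpha^{+\xi\cdot\omega}(\gamma)$ by closure of $\phi_\alpha^{+\xi\cdot\omega}(\gamma)$ under $\delta\mapsto\phi_\alpha^{+\xi}(\delta)$.

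The step I expect to be the main obstacle is this upper bound: establishing, $\mathsf{ACA}_0$-provably and uniformly in $\gamma$, that a single round of $\mathbf{\Pi}^1_2$ reflection adds \emph{exactly} $\omega$-fold iteration of the dilator and no more. This is the delicate ``soundness'' half of the ordinal analysis; the difficulty is that the only handle on the $\mathbf{\Pi}^1_2$-theorems of $U$ is the dilator hypothesis, which a priori speaks only about provable $\mathbf{\Pi}^1_1$ sentences, so one must extract from it effective, uniform control over the $\mathbf{\Pi}^1_2$ consequences — and it is exactly here that the explicit Veblen presentation of $\phi_\alpha^{+\xi}$, and the composability of its iterates, does the essential work.
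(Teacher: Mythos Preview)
Your outline has two genuine gaps, and the paper's proof is organised quite differently.

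First, transfinite induction on $\beta$ does not formalise in $\mathsf{ACA}_0$: the theorem is asserted for \emph{all} countable linear orders $\beta$ (as a single $\mathsf{ACA}_0$-theorem), and inside $\mathsf{ACA}_0$ there is no transfinite-induction principle along an arbitrary parameter well-order. Moreover, even at your successor step you need the inductive hypothesis to hold $\mathsf{ACA}_0$-\emph{provably} (so that $\mathbf{\Pi}^1_1\text{-}\mathsf{RFN}(U+\mathsf{WO}(\theta))$ can be rewritten as $\mathsf{WO}(\phi_\alpha^{+\xi}(\theta))$ inside the object theory), whereas an inductive hypothesis only gives truth. The paper solves both problems at once by arguing via L\"ob's theorem: assume $\mathsf{ACA}_0$ proves the full statement and derive it. Under this reflexive hypothesis one obtains, for every $\delta<\beta$ simultaneously, the $\mathsf{ACA}_0$-provable equivalence $\mathbf{\Pi}^1_1\text{-}\mathsf{RFN}\big(\mathbf{\Pi}^1_2\text{-}\mathbf{R}^\delta(T)+\mathsf{WO}(\theta)\big)\leftrightarrow\mathsf{WO}(\phi_\alpha^{+\omega^\delta}(\theta))$; there is no base/successor/limit split at all.

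Second --- and this is exactly the point you flag as the obstacle --- your upper-bound sketch does not identify the mechanism that drops $\mathbf{\Pi}^1_2$-reflection to $\mathbf{\Pi}^1_1$-data. The missing ingredient is the \emph{reduction property} (Lemma~\ref{reduction_property}): over a $\mathbf{\Pi}^1_1$-axiomatised base, the axiom $\mathbf{\Pi}^1_2\text{-}\mathsf{RFN}(U)$ is $\mathbf{\Pi}^1_1$-equivalent to closure under the rule $\varphi\in\mathbf{\Pi}^1_1\,/\,\mathbf{\Pi}^1_1\text{-}\mathsf{RFN}(U+\varphi)$. This is a genuine conservation theorem, proved by a Henkin-style model construction, not something one reads off a partial truth predicate. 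With it, the computation is one move: the reduction property converts $\mathbf{\Pi}^1_2\text{-}\mathbf{R}^\beta(T)+\mathsf{WO}(\gamma)$ into $T+\mathsf{WO}(\gamma)$ closed under the rules $\mathsf{WO}(\theta)\,/\,\mathbf{\Pi}^1_1\text{-}\mathsf{RFN}(\mathbf{\Pi}^1_2\text{-}\mathbf{R}^\delta(T)+\mathsf{WO}(\theta))$ for $\delta<\beta$ (using $\mathbf{\Pi}^1_1$-completeness of $\mathsf{WO}$), the L\"ob hypothesis rewrites each conclusion as $\mathsf{WO}(\phi_\alpha^{+\omega^\delta}(\theta))$, and the ordinal of $T+\mathsf{WO}(\gamma)$ closed under these well-ordering principles is read off directly as $\phi_\alpha^{+\omega^\beta}(\gamma)$.
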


Combining Theorem \ref{iter-intro} with Theorem \ref{main_theorem} yields the following, where we write $\mathbf{\Pi}^1_1\mbox{-}\omega\mathbf{R}^\alpha(T)$ for the result of iterating $\mathbf{\Pi}^1_1\mbox{-}\omega\mathsf{RFN}$ along $\alpha$ starting with $T$:

\begin{theorem}[$\mathsf{ACA}_0$]\label{iter_in_intro}
For any linear order $\alpha$
 $$|\mathbf{\Pi}^1_1\mbox{-}\omega\mathbf{R}^\alpha(\mathsf{ACA}_0)|_{\mathbf{\Pi}^1_2}=|\phi_{1+\alpha}^+|.$$
\end{theorem}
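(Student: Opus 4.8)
The plan is to combine Theorem \ref{main_theorem}, Theorem \ref{iter-intro}, and a base-case computation, and then set up an induction on $\alpha$ inside $\mathsf{ACA}_0$. First I would establish the base case: the claim that $|\mathsf{ACA}_0|_{\mathbf{\Pi}^1_2} = |\phi_1^+|$, i.e., that the proof-theoretic dilator of $\mathsf{ACA}_0$ sends the order-type $|\alpha|$ to the least $\varepsilon$-number strictly above $|\alpha|$. The forward estimate is essentially Theorem \ref{previous_paper} for the case of $\mathsf{ACA}_0 + \mathsf{WO}(\dot\alpha)$ — since $\mathsf{ACA}_0$ already proves $\mathsf{WO}$ for any standard ordinal below $\varepsilon_0$, adjoining $\mathsf{WO}(\dot\alpha)$ pushes the $\Pi^1_1$ ordinal up to the closure of $\alpha$ under $\varepsilon$-exponentiation, which is $\phi_1^+(\alpha)$; the lower bound uses the standard fact that $\mathsf{ACA}_0+\mathsf{WO}(\dot\alpha)$ proves $\mathsf{WO}$ for each ordinal of its proof-theoretic ordinal that is genuinely named below it (Gentzen-style well-ordering proofs relativized to the parameter $\alpha$). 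This identification must be carried out uniformly in $\alpha$ and provably in $\mathsf{ACA}_0$, which is where some care is needed with the formalization of dilators-as-theories.

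Next I would run the induction. Note that $\mathbf{\Pi}^1_1\mbox{-}\omega\mathbf{R}^\alpha(\mathsf{ACA}_0)$ is, by definition, the result of iterating $\mathbf{\Pi}^1_1\mbox{-}\omega\mathsf{RFN}$ along $\alpha$, and by Theorem \ref{main_theorem} (with $n=1$) each such iterate is equivalent — provably in $\mathsf{ACA}_0$ — to the corresponding iterate of $\mathbf{\Pi}^1_1\mbox{-}\mathbf{R}^{\textsc{on}}$, which in turn I would argue matches $\mathbf{\Pi}^1_2\mbox{-}\mathbf{R}^\alpha$ applied to $\mathsf{ACA}_0$: the key observation is that $\mathbf{\Pi}^1_2\mbox{-}\mathsf{RFN}(S)$ for a $\mathbf{\Pi}^1_2$-axiomatized $S$ is, over $\mathsf{ACA}_0$, equivalent to $\mathbf{\Pi}^1_1\mbox{-}\omega\mathsf{RFN}(S)$ (both say every set sits in an $\omega$-model of $S$, as noted in the paragraph following Theorem \ref{reduction_for_omega_models}), so iterating one is iterating the other. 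Hence $\mathbf{\Pi}^1_1\mbox{-}\omega\mathbf{R}^\alpha(\mathsf{ACA}_0) \equiv \mathbf{\Pi}^1_2\mbox{-}\mathbf{R}^\alpha(\mathsf{ACA}_0)$ over $\mathsf{ACA}_0$, and equivalent theories have the same proof-theoretic dilator.

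Now Theorem \ref{iter-intro} applies directly: taking $T = \mathsf{ACA}_0$, which by the base case has $|T|_{\mathbf{\Pi}^1_2} = |\phi_1^+|$ (so $\alpha$ in the statement of Theorem \ref{iter-intro} is the constant $1$), we get
$$|\mathbf{\Pi}^1_2\mbox{-}\mathbf{R}^\beta(\mathsf{ACA}_0)|_{\mathbf{\Pi}^1_2} = |\phi_1^{+\omega^\beta}|.$$
Finally I would reconcile the two forms of the answer: $\phi_1^{+\omega^\beta}(\gamma)$, the $\omega^\beta$-th $\varepsilon$-number strictly above $\gamma$, equals $\phi_{1+\beta}^+(\gamma)$, the least value of $\phi_{1+\beta}$ strictly above $\gamma$, by the standard identities of the Veblen hierarchy ($\phi_{1+\beta}$ enumerates the common fixed points of all $\phi_\delta$ with $\delta \le \beta$, and stepping up by one Veblen index corresponds to iterating the previous derivative $\omega$-many... more precisely, the fixed-point counting argument showing that the $\eta$-th value of $\phi_{1+\beta}$ above a point is reached after $\omega^\beta$-many applications of the $\phi_1$-enumeration when $\eta$ is suitably absorbed). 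Substituting $\beta = \alpha$ yields $|\phi_{1+\alpha}^+|$ as claimed.

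The main obstacle I expect is not the ordinal arithmetic but the careful bookkeeping needed to make the identification $\mathbf{\Pi}^1_1\mbox{-}\omega\mathbf{R}^\alpha(\mathsf{ACA}_0) \equiv \mathbf{\Pi}^1_2\mbox{-}\mathbf{R}^\alpha(\mathsf{ACA}_0)$ genuinely uniform and provable in $\mathsf{ACA}_0$ — one needs that the iteration operators commute with the passage through Theorem \ref{main_theorem} at every stage, including limit stages, which requires checking that the fixed-point definitions of the iterates (via Gödel's lemma, as set up in \textsection\ref{linear_orders}) are respected by the translation, and that $\mathbf{\Pi}^1_1\mbox{-}\mathbf{R}^\beta(\mathsf{ACA}_0)$ remains $\mathbf{\Pi}^1_2$-axiomatizable at each stage so that Theorem \ref{iter-intro}'s hypothesis is maintained along the iteration. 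The base-case computation of $|\mathsf{ACA}_0|_{\mathbf{\Pi}^1_2}$ is the other place where genuine proof-theoretic work (a uniform Gentzen-style well-ordering proof with a set parameter) is hidden.
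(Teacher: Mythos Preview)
There is a genuine gap at the step where you identify $\mathbf{\Pi}^1_1\mbox{-}\omega\mathbf{R}^\alpha(\mathsf{ACA}_0)$ with $\mathbf{\Pi}^1_2\mbox{-}\mathbf{R}^\alpha(\mathsf{ACA}_0)$. Your ``key observation'' that $\mathbf{\Pi}^1_2\mbox{-}\mathsf{RFN}(S)$ is equivalent over $\mathsf{ACA}_0$ to $\mathbf{\Pi}^1_1\mbox{-}\omega\mathsf{RFN}(S)$ is false: the paragraph after Theorem~\ref{reduction_for_omega_models} says that the two \emph{semantic} principles $\mathbf{\Pi}^1_1\mbox{-}\omega\mathsf{RFN}(S)$ and $\mathbf{\Pi}^1_2\mbox{-}\omega\mathsf{RFN}(S)$ coincide, not that syntactic $\mathbf{\Pi}^1_2\mbox{-}\mathsf{RFN}(S)$ coincides with either of them. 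In fact, by Theorem~\ref{main_theorem}, a \emph{single} application of $\omega$-model reflection is equivalent to iterating syntactic $\mathbf{\Pi}^1_1$ (or $\mathbf{\Pi}^1_2$) reflection along \emph{all} well-orderings, not along one step. Concretely, $\mathbf{\Pi}^1_2\mbox{-}\omega\mathbf{R}^1(\mathsf{ACA}_0)$ is $\mathsf{ACA}_0^+$ with dilator $\phi_2^+$, whereas $\mathbf{\Pi}^1_2\mbox{-}\mathbf{R}^1(\mathsf{ACA}_0)$ has dilator $\phi_1^{+\omega}$ by Theorem~\ref{iter-intro}; these disagree already at $0$, since $\phi_2(0)\neq\varepsilon_\omega$. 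This also shows that your proposed Veblen identity $\phi_1^{+\omega^\beta}=\phi_{1+\beta}^+$ is simply false for $\beta\geq 1$.

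What the paper does instead is insert an intermediate step (Theorem~\ref{omega_Pi^1_2}): from $|T|_{\mathbf{\Pi}^1_2}=|\phi_\alpha^+|$ one gets $|T+\mathbf{\Pi}^1_2\mbox{-}\omega\mathsf{RFN}(T)|_{\mathbf{\Pi}^1_2}=|\phi_{\alpha+1}^+|$. The proof of that step genuinely uses Theorem~\ref{main_theorem} together with Theorem~\ref{iter-intro} and Lemma~\ref{ON_iter_to_rule}: one replaces $\omega$-model reflection by closure under the rule $\mathsf{WO}(\beta)\,/\,\mathbf{\Pi}^1_2\mbox{-}\mathsf{RFN}(\mathbf{\Pi}^1_2\mbox{-}\mathbf{R}^\beta(T))$, and then iterates the bound from Theorem~\ref{iter-intro} to climb from $\phi_\alpha^+(\gamma)$ up to its $\phi_\alpha$-fixed-point $\phi_{\alpha+1}^+(\gamma)$. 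The full Theorem~\ref{iter_in_intro} is then obtained by a L\"ob-style induction on $\alpha$ using this successor step plus a supremum argument. Your plan skips exactly this fixed-point climb, which is where the jump from $\phi_1^{+\omega^\beta}$ to $\phi_{1+\beta}^+$ actually has to be earned.
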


This latter result is useful in the reverse mathematics of well-ordering principles. In this corner of reverse mathematics, a $\Pi^1_2$-axiomatized theory $T$ is shown to be equivalent to a comprehension principle related to the term system of $T$'s proof-theoretic ordinal. The classic result in this area, due to Girard \cite{girard1987proof}, is the following:

\begin{theorem}[Girard] Over $\mathsf{RCA}_0$, $\mathsf{ACA}_0$ is equivalent to the well-ordering principle $\forall \alpha \big( \mathsf{WO}(\alpha) \rightarrow \mathsf{WO}(\omega^\alpha) \big)$.
\end{theorem}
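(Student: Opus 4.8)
Both implications are proved over $\mathsf{RCA}_0$.

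\emph{$\mathsf{ACA}_0$ proves the well-ordering principle.} Write $\omega^{\prec}$ for the set of (codes of) Cantor normal forms $\omega^{a_1}+\dots+\omega^{a_k}$ with $a_1\succeq\dots\succeq a_k$ from the field of $\prec$, lexicographically ordered. Assume $\mathsf{WO}(\prec)$ and suppose $g$ were an infinite $\omega^{\prec}$-descending sequence. For $a$ in the field of $\prec$ let $g_a(i)$ be the tail of $g(i)$ consisting of its exponents $\prec a$; each sequence $i\mapsto g_a(i)$ is non-increasing. By arithmetical comprehension form the set $S$ of those $a$ for which $i\mapsto g_a(i)$ is not eventually constant. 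If $S=\emptyset$: the leading exponents of the $g(i)$ are non-increasing, hence (else a $\prec$-descending sequence is extracted, against $\mathsf{WO}(\prec)$) eventually equal to some $a$; the number of leading copies of $a$ is eventually constant; and the tail below $a$ is eventually constant since $a\notin S$ --- so the $g(i)$ are eventually constant, a contradiction. If $S\neq\emptyset$, let $a^{\ast}$ be its $\prec$-least element (present by $\mathsf{WO}(\prec)$); the leading exponent of $g_{a^{\ast}}(i)$ (nonzero for every $i$, as $a^{\ast}\in S$) is non-increasing and eventually equal to some $d^{\ast}\prec a^{\ast}$; since $d^{\ast}\notin S$, the same peeling argument makes $i\mapsto g_{a^{\ast}}(i)$ eventually constant, contradicting $a^{\ast}\in S$. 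Crucially only boundedly many instances of arithmetical comprehension occur (for $S$ and for the various sets of strict-decrease points), so the argument lives in $\mathsf{ACA}_0$, not merely in $\mathsf{ACA}_0^{+}$.

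\emph{The well-ordering principle proves $\mathsf{ACA}_0$.} By the standard characterization of $\mathsf{ACA}_0$ over $\mathsf{RCA}_0$ it suffices to produce, from an arbitrary injection $f\colon\mathbb{N}\to\mathbb{N}$, the set $\operatorname{ran}(f)$. I would attach to $f$ a linear order $\prec_f$ --- primitive recursive in $f$, hence available in $\mathsf{RCA}_0$ --- so that: (i) $\omega^{\prec_f}$ is \emph{visibly} ill-founded, i.e.\ there is an explicit $f$-recursive infinite $\omega^{\prec_f}$-descending sequence, built by using the additive structure of Cantor normal forms to record, stage by stage, which numbers have so far been enumerated into $\operatorname{ran}(f)$; and (ii) the exponents occurring in that sequence form a non-increasing sequence in $\prec_f$ whose limiting behaviour determines $\operatorname{ran}(f)$, so that \emph{any} infinite $\prec_f$-descending sequence lets one define $\operatorname{ran}(f)$ by $\Delta^0_1$-comprehension. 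Granting such a $\prec_f$, the proof closes at once: assume the well-ordering principle and suppose $\operatorname{ran}(f)$ did not exist; then $\mathsf{WO}(\prec_f)$ holds, since an infinite $\prec_f$-descending sequence would yield $\operatorname{ran}(f)$ by (ii); so the principle gives $\mathsf{WO}(\omega^{\prec_f})$, contradicting (i). Hence $\operatorname{ran}(f)$ exists and $\mathsf{ACA}_0$ follows.

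The reversal construction --- arranging (i) and (ii) simultaneously --- is the step I expect to be hardest. There is genuine tension: in the standard model $\omega^{\prec}$ is ill-founded exactly when $\prec$ is, so the whole construction hinges on the fact that extracting a $\prec_f$-descending sequence from the $\omega^{\prec_f}$-one requires iterating the ``stabilize the leading exponent, then the leading coefficient, then recurse'' step without bound, which is precisely what $\mathsf{RCA}_0$ lacks --- and precisely what the first paragraph used $\mathsf{ACA}_0$-comprehension to circumvent a \emph{bounded} number of times. The coding in (i) must be tuned so that this unavoidable non-uniformity coincides with the failure of $\operatorname{ran}(f)$ to exist and nothing more. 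Alternatively, one can sidestep the hand construction and read the equivalence off from the proof-theoretic dilator of $\mathsf{ACA}_0$ computed here (Theorem~\ref{iter_in_intro} at $\alpha=0$) together with the general correspondence, developed in the body of the paper, between a $\mathbf{\Pi}^1_2$-axiomatized theory and the well-ordering principle attached to its dilator.
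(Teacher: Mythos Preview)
The paper does not prove this theorem; it appears in the introduction as a classical result of Girard, with a citation and no argument anywhere in the body. So there is no paper-proof to compare your proposal against.

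On your proposal itself: both directions have gaps. In the forward direction, the claim that each truncation $i\mapsto g_a(i)$ is non-increasing is false --- take $g(0)=\omega^2$, $g(1)=\omega\cdot 5$, $a=2$: then $g_2(0)=0<\omega\cdot 5=g_2(1)$. The fix is to peel from the top rather than the bottom: the \emph{leading} exponent of $g(i)$ is genuinely non-increasing, hence eventually constant by $\mathsf{WO}(\prec)$; then the leading coefficient stabilizes; then one recurses on the remainder. This recursion is arithmetic in $g$ and either terminates (so $g$ is eventually constant, a contradiction) or produces an infinite $\prec$-descending sequence of successive stabilized leading exponents (also a contradiction). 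For the reversal you correctly isolate the hard construction but do not carry it out, so that direction is simply incomplete. Your fallback --- reading the equivalence off the proof-theoretic dilator computed here --- does not deliver what is asked: the paper works over base theory $\mathsf{ACA}_0$ throughout (the well-ordering corollaries in \S\ref{well-ordering-principles} are explicitly stated over $\mathsf{ACA}_0$, and Theorem~\ref{ACA_0_Pi^1_2} is itself quoted rather than proved), so at best one would recover the equivalence over $\mathsf{ACA}_0$, which is not Girard's $\mathsf{RCA}_0$ result.
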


In recent years there has been a renewed interest in such well-ordering principles, and they have been pursued by a variety of recursion-theoretic and proof-theoretic methods \cite{marcone2011veblen, afshari2009reverse}. Two of the results produced by this line of research are the following:

\begin{theorem}
[Marcone--Montalb\'{a}n]\label{marcone-mont}
Over $\mathsf{RCA}_0$, $\mathsf{ACA}_0^+$ is equivalent to the well-ordering principle $\forall \alpha\Big(\mathsf{WO}(\alpha)\to  \mathsf{WO}\big(\phi_1(\alpha)\big)\Big).$
\end{theorem}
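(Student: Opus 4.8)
The plan is to recover this result of Marcone and Montalb\'an from Theorems~\ref{main_theorem} and~\ref{previous_paper}. First note that each side of the claimed equivalence already proves $\mathsf{ACA}_0$ over $\mathsf{RCA}_0$: for $\mathsf{ACA}_0^+$ this is immediate, while $\forall\alpha\bigl(\mathsf{WO}(\alpha)\to\mathsf{WO}(\phi_1(\alpha))\bigr)$ implies $\forall\alpha\bigl(\mathsf{WO}(\alpha)\to\mathsf{WO}(\omega^\alpha)\bigr)$, since with the usual notation systems $\mathsf{RCA}_0$ proves $\omega^\alpha$ is an initial segment of $\phi_1(\alpha)=\varepsilon_\alpha$, and then $\mathsf{ACA}_0$ follows by Girard's theorem. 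So it is enough to prove the equivalence over $\mathsf{ACA}_0$.

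Over $\mathsf{ACA}_0$ I would chain together three equivalences. (i) By a standard fact of reverse mathematics, $\mathsf{ACA}_0^+$ is equivalent to the statement that every set lies in a countable coded $\omega$-model of $\mathsf{ACA}_0$ --- one direction using that $X^{(\omega)}$ codes the $\omega$-model whose sets are those arithmetical in $X$. By the remark following Theorem~\ref{reduction_for_omega_models} this statement is exactly $\mathbf{\Pi}^1_1\text{-}\omega\mathsf{RFN}(\mathsf{ACA}_0)$. (ii) Since $\mathsf{ACA}_0$ is $\mathbf{\Pi}^1_2$-axiomatizable, Theorem~\ref{main_theorem} with $n=1$ and $T=\mathsf{ACA}_0$ gives $\mathbf{\Pi}^1_1\text{-}\omega\mathsf{RFN}(\mathsf{ACA}_0)\leftrightarrow\mathbf{\Pi}^1_1\text{-}\mathbf{R}^{\textsc{on}}(\mathsf{ACA}_0)$. (iii) It remains to prove $\mathbf{\Pi}^1_1\text{-}\mathbf{R}^{\textsc{on}}(\mathsf{ACA}_0)\leftrightarrow\forall\alpha\bigl(\mathsf{WO}(\alpha)\to\mathsf{WO}(\phi_1(\alpha))\bigr)$.

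For (iii), unwind the left side to $\forall\alpha\bigl(\mathsf{WO}(\alpha)\to\mathbf{\Pi}^1_1\text{-}\mathsf{RFN}(\mathbf{\Pi}^1_1\text{-}\mathbf{R}^\alpha(\mathsf{ACA}_0))\bigr)$, so it suffices to show, uniformly in a well-order $\alpha$, that $\mathbf{\Pi}^1_1\text{-}\mathsf{RFN}(\mathbf{\Pi}^1_1\text{-}\mathbf{R}^\alpha(\mathsf{ACA}_0))\leftrightarrow\mathsf{WO}(\phi_1(\alpha))$. By Theorem~\ref{previous_paper}, $|\mathbf{\Pi}^1_1\text{-}\mathbf{R}^\alpha(\mathsf{ACA}_0)|_{\mathbf{\Pi}^1_1}=\varepsilon_\alpha=\phi_1(\alpha)$, and this equivalence is then the two-sided correspondence between $\mathbf{\Pi}^1_1$-reflection of a theory and well-orderedness of its proof-theoretic ordinal. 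The forward direction uses the uniform lower bound: $\mathbf{\Pi}^1_1\text{-}\mathbf{R}^\alpha(\mathsf{ACA}_0)$ provably proves $\mathsf{WO}$ of each proper initial segment of the notation system for $\phi_1(\alpha)$, so $\mathbf{\Pi}^1_1$-reflection yields $\mathsf{WO}$ of each such segment, hence $\mathsf{WO}(\phi_1(\alpha))$, because a linear order all of whose proper initial segments are well-ordered is itself well-ordered. The backward direction is the ordinal-analytic upper bound: over $\mathsf{ACA}_0$, $\mathsf{WO}(\phi_1(\alpha))$ provides enough arithmetical transfinite induction along $\phi_1(\alpha)$ to run the cut elimination and collapsing that verify $\mathbf{\Pi}^1_1$-soundness of $\mathbf{\Pi}^1_1\text{-}\mathbf{R}^\alpha(\mathsf{ACA}_0)$, all carried out relative to the set parameter $\alpha$. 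Concatenating (i)--(iii) and descending from $\mathsf{ACA}_0$ back to $\mathsf{RCA}_0$ as in the first paragraph completes the proof.

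The step I expect to be the main obstacle is the backward half of (iii): a relativized ordinal analysis of $\mathbf{\Pi}^1_1\text{-}\mathbf{R}^\alpha(\mathsf{ACA}_0)$ formalized inside $\mathsf{ACA}_0$, uniformly in the parameter $\alpha$ --- building the infinitary calculus and the notation system for $\phi_1(\alpha)$ over $\alpha$, checking that $\mathsf{WO}(\phi_1(\alpha))$ supplies the transfinite induction needed for cut elimination and the collapsing lemma, and absorbing whatever slack arises if the syntactic ``proof ordinal'' exceeds $\phi_1(\alpha)$ while having the same order type relative to $\alpha$. This is exactly the well-ordering-principle component of the machinery behind Theorems~\ref{iter-intro} and~\ref{iter_in_intro}. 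Alternatively, one can sidestep the hand computation: Theorem~\ref{iter_in_intro} at $\alpha=1$ says $|\mathbf{\Pi}^1_1\text{-}\omega\mathbf{R}^1(\mathsf{ACA}_0)|_{\mathbf{\Pi}^1_2}=|\phi_2^+|$, which (combined with step~(i)) computes the proof-theoretic dilator of $\mathsf{ACA}_0^+$ to be $|\phi_2^+|$, and one then invokes a transfer principle of the kind the paper develops --- a $\mathbf{\Pi}^1_2$-axiomatized theory with proof-theoretic dilator $|\phi_{\gamma+1}^+|$ is $\mathsf{RCA}_0$-equivalent to $\forall\alpha\bigl(\mathsf{WO}(\alpha)\to\mathsf{WO}(\phi_\gamma(\alpha))\bigr)$ --- with $\gamma=1$; the case $\gamma=0$ of this transfer principle is Girard's theorem. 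Finally, one should keep track of base theories, but this is harmless: Theorem~\ref{main_theorem} and the correspondence in (iii) are used only after $\mathsf{ACA}_0$ is already in hand, which, as observed at the outset, both sides of the target equivalence provide over $\mathsf{RCA}_0$.
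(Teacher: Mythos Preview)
Your proposal is correct and follows essentially the same route as the paper. The paper's own proof (the final Corollary of \S\ref{well-ordering-principles}) compresses your steps (i)--(ii) into a single invocation of Theorem~\ref{omega_to_iter}, and handles your step (iii) by citing Theorem~\ref{iter_Pi^1_2}, which packages exactly the formalized, parameter-uniform ordinal analysis you correctly flag as the main obstacle. One difference worth noting: the paper explicitly works only over $\mathsf{ACA}_0$ (as announced in the introduction), so your opening-paragraph reduction from $\mathsf{RCA}_0$ to $\mathsf{ACA}_0$ via Girard's theorem is an extra step the paper does not carry out.
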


\begin{theorem}[H. Friedman--Montalb\'{a}n--Weiermann]\label{fmw_intro}
Over $\mathsf{RCA}_0$, $\mathsf{ATR}_0$ is equivalent to the well-ordering principle $\forall \alpha\Big(\mathsf{WO}(\alpha)\to  \mathsf{WO}\big(\phi_\alpha(0)\big)\Big).$
\end{theorem}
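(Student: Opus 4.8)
The plan is to obtain Theorem~\ref{fmw_intro} from the dilator computation of Theorem~\ref{iter_in_intro}, using the general bridge between a theory's proof-theoretic dilator and a well-ordering principle that also yields Theorem~\ref{marcone-mont}. Write $g(\alpha)=\phi_\alpha(0)$ and $\mathsf{WP}_g$ for the scheme $\forall\alpha(\mathsf{WO}(\alpha)\to\mathsf{WO}(g(\alpha)))$. The two ingredients are: (i) the identification, over $\mathsf{ACA}_0$, of $\mathsf{ATR}_0$ with the transfinite iteration of $\omega$-model reflection of $\mathsf{ACA}_0$, i.e.\ with $\mathbf{\Pi}^1_1\text{-}\omega\mathbf{R}^{\textsc{on}}(\mathsf{ACA}_0):=\forall\alpha\big(\mathsf{WO}(\alpha)\to\mathbf{\Pi}^1_1\text{-}\omega\mathsf{RFN}(\mathbf{\Pi}^1_1\text{-}\omega\mathbf{R}^\alpha(\mathsf{ACA}_0))\big)$; and (ii) Theorem~\ref{iter_in_intro}, giving $|\mathbf{\Pi}^1_1\text{-}\omega\mathbf{R}^\alpha(\mathsf{ACA}_0)|_{\mathbf{\Pi}^1_2}=|\phi^+_{1+\alpha}|$ uniformly in the linear order $\alpha$. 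For (i) one uses that, over $\mathsf{ACA}_0$, $\mathsf{ATR}_0$ is equivalent to ``$X^{(\delta)}$ exists for every $X$ and every well-order $\delta$'', while passing to an $\omega$-model of the current theory is, step by step, the ``ARITH-jump'', so that the $\alpha$-th iterate $\mathbf{\Pi}^1_1\text{-}\omega\mathbf{R}^\alpha(\mathsf{ACA}_0)$ corresponds to having $X^{(\delta)}$ for $\delta$ cofinal below $\phi_{1+\alpha}(0)$; verifying this carefully is part of the work but is essentially standard.

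Combining (i) with (ii): reading Theorem~\ref{iter_in_intro} ``in the limit'' — any $\Pi^1_1$ theorem, hence any $\mathsf{WO}$-instance, provable in $\mathbf{\Pi}^1_1\text{-}\omega\mathbf{R}^{\textsc{on}}(\mathsf{ACA}_0)$ together with $\mathsf{WO}(\dot\beta)$ is already provable in some $\mathbf{\Pi}^1_1\text{-}\omega\mathbf{R}^\alpha(\mathsf{ACA}_0)+\mathsf{WO}(\dot\beta)$, and $\sup_\alpha\phi^+_{1+\alpha}(\beta)$ is the least $\delta>\beta$ closed under binary Veblen — gives that $|\mathsf{ATR}_0+\mathsf{WO}(\dot\beta)|_{\mathbf{\Pi}^1_1}$ is the least $\delta>\beta$ with $\phi_\delta(0)=\delta$, for every $\beta$. (At $\beta=0$ and $\beta=\varepsilon_0$ this recovers $|\mathsf{ATR}_0|_{\mathbf{\Pi}^1_1}=\Gamma_0$ and $|\mathsf{ATR}|_{\mathbf{\Pi}^1_1}=\Gamma_{\varepsilon_0}$.) Thus the proof-theoretic dilator of $\mathsf{ATR}_0$ is exactly the closure of its argument under $g$, the input required by the bridge.

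Granting this, the forward direction $\mathsf{ATR}_0\vdash\mathsf{WP}_g$ is soft: $g(\beta)<|\mathsf{ATR}_0+\mathsf{WO}(\beta)|_{\mathbf{\Pi}^1_1}$, and for the standard notation systems $\mathsf{ATR}_0+\mathsf{WO}(\beta)$ provably and uniformly in $\beta$ proves $\mathsf{WO}(\gamma)$ for every $\gamma$ below its $\Pi^1_1$-ordinal; together with $\mathsf{ATR}_0\vdash\mathsf{ACA}_0$ this gives $\mathsf{ATR}_0\vdash\forall\alpha(\mathsf{WO}(\alpha)\to\mathsf{WO}(\phi_\alpha(0)))$. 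The reversal, $\mathsf{RCA}_0+\mathsf{WP}_g\vdash\mathsf{ATR}_0$, is the crux. First, $\mathsf{WP}_g$ proves $\forall\alpha(\mathsf{WO}(\alpha)\to\mathsf{WO}(\omega^\alpha))$ — reduce to Girard's principle via the $\mathsf{RCA}_0$-definable embedding of $\omega^\alpha$ into $\phi_\alpha(0)$ — hence $\mathsf{ACA}_0$. Working over $\mathsf{ACA}_0$, I would then argue: $\mathsf{WP}_g$ proves $\mathsf{WO}(\varepsilon_\delta)$ whenever $\mathsf{WO}(\delta)$ (again via an $\mathsf{RCA}_0$-definable embedding), and hence, by the uniform Schmerl-type bookkeeping underlying Theorems~\ref{previous_paper} and~\ref{iter_in_intro}, it proves the $\Pi^1_1$-soundness of $\mathbf{\Pi}^1_1\text{-}\mathbf{R}^\beta(T)$ for every well-order $\beta$ and every $T$ among $\mathsf{ACA}_0,\mathbf{\Pi}^1_1\text{-}\omega\mathbf{R}^1(\mathsf{ACA}_0),\mathbf{\Pi}^1_1\text{-}\omega\mathbf{R}^2(\mathsf{ACA}_0),\dots$; that is, $\mathsf{WP}_g\vdash\mathbf{\Pi}^1_1\text{-}\mathbf{R}^{\textsc{on}}(\mathbf{\Pi}^1_1\text{-}\omega\mathbf{R}^\alpha(\mathsf{ACA}_0))$ uniformly in $\alpha$. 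By Theorem~\ref{main_theorem} the latter is $\mathbf{\Pi}^1_1\text{-}\omega\mathsf{RFN}(\mathbf{\Pi}^1_1\text{-}\omega\mathbf{R}^\alpha(\mathsf{ACA}_0))$, so $\mathsf{WP}_g\vdash\mathbf{\Pi}^1_1\text{-}\omega\mathbf{R}^{\textsc{on}}(\mathsf{ACA}_0)$, which by (i) is $\mathsf{ATR}_0$. (Alternatively one can run the classical pseudohierarchy argument of Friedman--Montalb\'{a}n--Weiermann directly: from a putative failure of arithmetical transfinite recursion one $\mathsf{ACA}_0$-constructs a linear order that $\mathsf{WP}_g$ forces to be ill-founded yet which the construction makes genuinely well-founded.)

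I expect the reversal to be the main obstacle, precisely because it must extract the genuinely $\mathbf{\Pi}^1_2$ content of $\mathsf{ATR}_0$ — arithmetical transfinite recursion, and hence the $\omega$-models — from the $\mathsf{WO}$-flavored scheme $\mathsf{WP}_g$. The delicate points are whether the well-orders $\mathsf{WP}_g$ actually supplies suffice to verify the relevant $\Pi^1_1$-soundness statements, and whether the whole argument runs uniformly in $\alpha$ inside $\mathsf{ACA}_0$ (this is where the precise strength of the conservation statement behind Theorem~\ref{iter_in_intro} matters); in the pseudohierarchy route the analogous difficulty is engineering the term system so that the Veblen term built from the pseudo-well-order ``sees past'' its ill-foundedness. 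A secondary obligation is to justify the ``limit'' reading of Theorem~\ref{iter_in_intro}, i.e.\ that the dilator of the \textsc{on}-indexed iteration is the pointwise supremum of the dilators of the iterates, uniformly and verifiably in $\mathsf{ACA}_0$.
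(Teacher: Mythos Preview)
Your ingredients are the right ones, but the assembly is far more circuitous than the paper's, and the detours you take are exactly where your acknowledged gaps live. The paper does \emph{not} argue forward direction and reversal separately, does not compute the dilator of $\mathsf{ATR}_0$ itself, and does not need any ``limit'' reading of Theorem~\ref{iter_in_intro}. Instead it makes a single pointwise substitution. By Corollary~\ref{reflection_ATR_0} (the Turing-jump characterization), over $\mathsf{ACA}_0$,
\[
\mathsf{ATR}_0 \;\equiv\; \forall\alpha\Big(\mathsf{WO}(\alpha)\to \mathbf{\Pi}^1_2\text{-}\omega\mathsf{RFN}\big(\mathbf{\Pi}^1_2\text{-}\omega\mathbf{R}^\alpha(\mathsf{ACA}_0)\big)\Big),
\]
and separately Theorem~\ref{omega_iter_Pi^1_2} gives $|\mathbf{\Pi}^1_2\text{-}\omega\mathbf{R}^\alpha(\mathsf{ACA}_0)|_{\mathbf{\Pi}^1_1}=\phi_{1+\alpha}(0)$, so that (via the equivalence between $\mathbf{\Pi}^1_1$-reflection and well-foundedness of the proof-theoretic ordinal) the inner reflection statement is $\mathsf{ACA}_0$-provably equivalent to $\mathsf{WO}(\phi_{1+\alpha}(0))$, for each $\alpha$. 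Substituting the second equivalence into the first yields both directions at once. The ``uniformly in $\alpha$'' worry you flag never arises, because the argument converts each $\alpha$-indexed conjunct individually rather than trying to verify a soundness statement with $\alpha$ universally quantified inside it.

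Two further remarks. First, the paper proves the equivalence only over $\mathsf{ACA}_0$, as announced in the introduction; your bootstrapping step from $\mathsf{RCA}_0$ via Girard's principle is additional work the paper does not undertake. Second, the obstacles you identify in your reversal --- whether $\mathsf{WP}_g$ supplies enough well-orders to verify $\mathbf{\Pi}^1_1\text{-}\mathbf{R}^{\textsc{on}}(\mathbf{\Pi}^1_1\text{-}\omega\mathbf{R}^\alpha(\mathsf{ACA}_0))$ uniformly, and whether the pseudohierarchy construction can be engineered --- are genuine difficulties on the route you chose, and the paper's pointwise approach simply sidesteps them. Your plan could probably be pushed through, but it reconstructs from scratch something the paper gets in two lines.
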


We use Theorem \ref{iter_in_intro} to prove a number of reverse mathematical results of this sort. In particular, we provide new proofs of Theorems \ref{marcone-mont} and \ref{fmw_intro} (over the base theory $\mathsf{ACA}_0$).

In some ways this work is similar to the habilitation thesis of Probst \cite{probst2017modular}. Probst carried out ordinal analyses of metapredicative theories (roughly, those theories in the range that we consider) through the analysis of reflection operators. In particular, he considered the theories generated by $\Pi^1_n$ $\omega$-model reflection operators. In technical respects his analysis was quite different from ours, insofar as it was based on elimination of $\Pi^1_n$-cuts from infinitary derivations. By contrast, we isolate the consideration of cut-free infinitary derivations to the proof of Theorem \ref{main_theorem} and elsewhere use reflexive induction and reduction principles for reflection principles.

Here is our plan for the rest of the paper. In \textsection \ref{preliminaries} we cover a number of preliminaries. We present the class language $\mathbf{L}_2$ for second-order arithmetic. We then define its syntactic complexity classes, their attendant reflection principles, and the iterations thereof. We prove a number of lemmas about the basic properties of iterated reflection principles, including a reduction principle for iterated $\omega$-model reflection. We also prove a theorem relating iterations of $\omega$-model reflection and iterates of the Turing jump. In \textsection \ref{equivalent_forms} we define an infinitary proof system, an $\omega$-proof-system for $\mathbf{L}_2$. We prove that this proof system is sound and complete with respect to $\omega$-models, which is crucial for our main results. In \textsection \ref{reduction} we prove the main theorems of our paper. In particular, we prove Theorem \ref{main_theorem}, a reduction of $\omega$-model reflection to iterated syntactic reflection. In \textsection \ref{pt_dilators} we introduce the notion of a proof-theoretic dilator, and prove Theorem \ref{iter-intro} and Theorem \ref{iter_in_intro}, which characterize iterated reflection in terms of dilators. We then turn to applications. In \textsection \ref{choice} we characterize $\mathsf{ATR}_0$ in terms of reflection over both $\mathsf{ACA}_0$ and $\Sigma^1_1\text{-}\mathsf{AC}_0$. In \textsection \ref{well-ordering-principles} we provide uniform calculations of $\Pi^1_1$ proof-theoretic ordinals of theories between $\mathsf{ACA}_0$ and $\mathsf{ATR}$. We also characterize $\mathsf{ACA}_0^+$ and $\mathsf{ATR}_0$ as well-ordering principles. It is worth noting that \textsection \ref{choice} and \textsection \ref{well-ordering-principles} jointly contain new proofs of all of the major proof-theoretic meta-theorems about $\mathsf{ATR}_0$.


\section{Preliminaries}\label{preliminaries}

In this section we will outline our treatment of theories, languages, complexity classes, and reflection principles. Our base system is the system $\mathsf{ACA}_0$. Since this theory is finitely axiomatizable we identify it with a sentence giving its finite axiomatization. We will also be interested in the systems $\Sigma^1_1\mbox{-}\mathsf{AC}_0$, $\mathsf{ACA}_0^+$, and $\mathsf{ATR}_0$. Since these theories are finitely axiomatizable we identify them with sentences giving their finite axiomatization.

Throughout this paper we restrict our attention to theories extending $\mathsf{ACA}_0$. So whenever we make a claim about ``every theory $T$,'' we mean ``every theory extending $\mathsf{ACA}_0$.''

\subsection{Languages and Complexity Classes}

In this chapter we will study reflection principles for formulas with set parameters. In the study of reflection and provability in first-order arithmetic it is common to study provability for formulas with number parameters; note the parameter in the expression $$\forall x \big( \mathsf{Prv}_{\mathsf{PA}}(\gnmb{\varphi(\dot{x})}) \rightarrow \varphi(x)\big).$$ Here $\gnmb{\varphi(\dot{x})}$ denotes the G\"odel number of the formula, $\varphi(\underline{x})$, where $\underline{x}$ is the numeral $$\underbrace{S(\ldots S}\limits_{\mbox{\scriptsize $x$ times}}(0)\ldots)).$$ This strategy is not available if we want to formalize claims about the provability of formulas with set parameters. Since there are no numerals for sets of natural numbers, we need to use a different approach to pass second-order variables inside provability predicates/reflection principles.

We write $L_2$ to denote the standard language of second-order arithmetic. We write $\mathbf{L}_2$ to denote the extension of $L_2$ with set-constants $C_X$ for all sets $X$. From the external perspective $\mathbf{L}_2$ is a continuum-sized language. However, formulas of $\mathbf{L}_2$ can be encoded by sets and reasoned about within $\mathsf{ACA}_0$. We will use the rest of this subsection to explain how this is accomplished.  

$L_2$ formulas are finitary objects and are encoded by natural numbers. The code for an $\mathbf{L}_2$ formula $\varphi(C_{Y_1},...,C_{Y_n}, \vec{x})$ is a pair $\big(\varphi(X_1,...,X_n, \vec{x}), \langle Y_1,...,Y_n \rangle \big)$ where $\varphi(X_1,...,X_n, \vec{x})$ is (a code for) an $L_2$-formula  and $\langle Y_1,...,Y_n \rangle$ is a sequence of sets. Note that whereas $L_2$ formulas are encoded by numbers, $\mathbf{L}_2$ formulas are encoded by sets.

Standard manipulations of (codes of) $L_2$ formulas (e.g., forming conjunctions, performing substitutions, etc.) is totally finitary and thus can be carried out in $\mathsf{ACA}_0$ (indeed, in much weaker theories). Analogous manipulations of (codes of) $\mathbf{L}_2$ formulas is carried out on sets rather than on numbers. Nevertheless, $\mathsf{ACA}_0$ can carry out these sorts of manipulations. The code of a formula formed, e.g., by conjunction is \emph{arithmetic} in the codes of the conjuncts.

For any formula $\varphi(\vec{X},\vec{x},y)$, there is a function $\mathsf{cmp}_{\varphi,y}$ which maps $\vec{X},\vec{x}$ to $\big\{y : \varphi(\vec{X}, \vec{x},y)\big\}$. Arithmetical comprehension terms are terms built from functions $\mathsf{cmp}_{\varphi,y}$, where $\varphi\in \Pi^0_\infty$. Elementary comprehension terms are terms built from functions $\mathsf{cmp}_{\varphi,y}$, where $\varphi\in \Delta^0_0$ (note that . The functions that manipulate (codes of) $\mathbf{L}_2$ formulas are expressible as arithmetical comprehension terms (but not as elementary comprehension terms, since we need to check the equality of sets). For instance, the code of a conjunction is the output of the comprehension function corresponding to an arithmetic operation applied to the codes of the conjuncts. Given a formula $$\varphi(X_1,\ldots,X_n,y_1,\ldots,y_m)\in L_2$$ the expression $$\gnmb{\varphi(\dot X_1,\ldots,\dot X_n,\dot y_1,\ldots,\dot y_m)}$$ is the term (built using the definable comprehension functions) denoting the code of the formula $$\varphi(C_{X_1},\ldots,C_{X_n},\underline{y_1},\ldots,\underline{y_m}).$$ Since we usually will not consider codes for formulas with free variables, to simplify our notation, the expression $\varphi\Big(\psi(\vec{X},\vec{y})\Big)$ will serve as shorthand for  $\varphi\Big(\gnmb{\psi(\dot{\vec{X}},\dot{\vec{y}})}\Big)$.

As usual we write $\Pi^1_0=\Sigma^1_0$ ($\mathbf{\Pi}^1_0=\mathbf{\Sigma}^1_0$) to denote the class of $L_2$-formulas ($\mathbf{L}_2$-formulas) without second-order quantifiers. The class $\Pi^1_{n+1}\subseteq L_2$ ($\mathbf{\Pi}^1_{n+1}\subseteq \mathbf{L}_2$) consists of all formulas of the form $\forall \overrightarrow{Xx}\;\varphi$, where $\varphi\in \Sigma^1_n$ ($\varphi\in \mathbf{\Sigma}^1_n$) and $\overrightarrow{Xx}$ is a vector of variables that could contain both first and second order variables. The class $\Sigma^1_{n+1}\subseteq L_2$ ($\mathbf{\Sigma}^1_{n+1}\subseteq \mathbf{L}_2$) consists of all formulas of the form $\exists \overrightarrow{Xx}\;\varphi$, where $\varphi\in \Pi^1_n$ ($\varphi\in \mathbf{\Pi}^1_n$) and $\overrightarrow{Xx}$ is some vector of variables that could contain both first and second-order variables.

In second-order arithmetic it is useful to work with countable sets of sets of naturals. To do this we represent a countable set $\mathcal{S}\subseteq \mathcal{P}(\mathbb{N})$ by a code of a countable sequence $\langle S_i\subseteq \mathbb{N}\mid i\in A\rangle$, $A\subseteq\mathbb{N}$ such that $\mathcal{S}=\{S_i\mid i\in A\}$. Formally, we use the predicate $X\dot\in Y$ that says: $$\exists z\;\Big(\langle z,0\rangle\in Y \land \forall x\big(x\in X\mathrel{\leftrightarrow}  \langle z,x+1\rangle\in Y\big)\Big).$$

Inside $\mathsf{ACA}_0$ we work with countable $\mathbf{L}_2$-theories represented by sets $T$ treated as codes for their set of axioms. The provability predicate $\mathsf{Prv}(T,\varphi)$ expresses that $T$ is an $\mathbf{L}_2$-theory, $\varphi$ is an $\mathbf{L}_2$-formula, and there is a proof $P$ of $\varphi$ in first-order logic such that all non-logical axioms in $P$ are from $T$. Note that here the proof $P$ by necessity is encoded by a set. However, $\mathsf{Prv}$ is equivalent to a $\Pi^1_0$ formula.

\subsection{Reflection Principles}

A standard construction allows us to define in $\mathsf{ACA}_0$ partial truth definitions $\mathsf{Tr}_{\mathbf{\Pi}^1_n}(X)$, for the classes of formulas $\mathbf{\Pi}^1_n$. Here for any $\Pi^1_n$-formula $\varphi(\vec{X},\vec{y})$ we have that
$$\mathsf{ACA}_0\vdash \forall \vec{X},\vec{y}\Big (\varphi(\vec{X},\vec{y})\mathrel{\leftrightarrow} \mathsf{Tr}_{\mathbf{\Pi}^1_n}\big(\varphi(\vec{X},\vec{y})\big)\Big)$$
Note that the formulas $\mathsf{Tr}_{\mathbf{\Pi}^1_n}(X)$ are $\Pi^1_n$-formulas. We have truth definitions $\mathsf{Tr}_{\mathbf{\Sigma}^1_n}(X)$, for $n\ge 1$ with analogous properties as well.

For a theory $T$ we put $\mathbf{\Pi}^1_n\mbox{-}\mathsf{RFN}(T)$, $n\ge 1$, to be the $\mathbf{L}_2$-sentence $$\forall \varphi\in \mathbf{\Pi}^1_n\;(\mathsf{Prv}(T,\varphi)\to \mathsf{Tr}_{\mathbf{\Pi}^1_n}(\varphi)).$$
Or equivalently (over $\mathsf{ACA}_0$) we could reformulate $\mathbf{\Pi}^1_n\mbox{-}\mathsf{RFN}(T)$ as $\mathbf{\Sigma}^1_n\mbox{-}\mathsf{Con}(T)$, which expresses that $T$ is consistent with any true $\mathbf{\Sigma}^1_n$ sentence:
$$\forall \varphi\in \mathbf{\Sigma}^1_n\;(\mathsf{Tr}_{\mathbf{\Sigma}^1_n}(\varphi)\to\mathsf{Con}(T+\varphi)).$$

As the name suggests, $\mathbf{\Sigma}^1_n\mbox{-}\mathsf{Con}(T)$ is a consistency-like operator; thus, we can formulate its dual provability-like predicate. Namely, we define $$\mathbf{\Sigma}^1_n\mbox{-}\mathsf{Prv}(T,\varphi):=\lnot \mathbf{\Sigma}^1_n\mbox{-}\mathsf{Con}(T+\lnot\varphi).$$ The following argument demonstrates that $\mathbf{\Sigma}^1_n\mbox{-}\mathsf{Prv}(T,\varphi)$ expresses that $\varphi$ is provable from axioms of $T$ and one true $\mathbf{\Sigma}^1_n$-sentence:
\begin{flalign*}
\mathbf{\Sigma}^1_n\mbox{-}\mathsf{Prv}(T,\varphi) &\equiv \lnot \mathbf{\Sigma}^1_n\mbox{-}\mathsf{Con}(T+\lnot\varphi)\\
&\equiv \neg \forall \psi\in\mathbf{\Sigma}^1_n \big( \mathsf{Tr}_{\mathbf{\Sigma}^1_n}(\psi)\rightarrow\mathsf{Con}(T+\neg\varphi+\psi)\big)\\
&\equiv \exists \psi\in\mathbf{\Sigma}^1_n\big(\mathsf{Tr}_{\mathbf{\Sigma}^1_n}(\psi)\wedge\neg\mathsf{Con}(T+\neg\varphi+\psi)\big)\\
&\equiv\exists\psi\in\mathbf{\Sigma}^1_n\big(\mathsf{Tr}_{\mathbf{\Sigma}^1_n}(\psi)\wedge\mathsf{Prv}(T+\psi,\varphi)\big)
\end{flalign*}

In the context of our paper the most important equivalence notion on theories will be equivalence up to $\mathbf{\Sigma}^1_1$-provability. We first define the notion of one theory being included in another up to $\mathbf{\Sigma}^1_1$ provability.
$$T\sqsubseteq^{\mathbf{\Sigma}^1_1} U\defiff \forall \varphi \in \mathbf{L}_2( \mathbf{\Sigma}^1_1\mbox{-}\mathsf{Prv}(T,\varphi)\mathrel{\rightarrow} \mathbf{\Sigma}^1_1\mbox{-}\mathsf{Prv}(U,\varphi)).$$
Two theories are equivalent up to $\mathbf{\Sigma}^1_1$ provability if each is included in the other up to $\mathbf{\Sigma}^1_1$ provability.
$$T\equiv^{\mathbf{\Sigma}^1_1} U \defiff \big(T\sqsubseteq^{\mathbf{\Sigma}^1_1} U \textrm{ and } U\sqsubseteq^{\mathbf{\Sigma}^1_1} T \big).$$
We note that over $\mathsf{ACA}_0$ the formula $T\equiv^{\mathbf{\Sigma}^1_1} U$ could be equivalently transformed to a $\Sigma^1_1$-formula.  

Recall that an $\omega$-model $\mathfrak{M}$ of second-order arithmetic is a structure whose interpretation of the natural numbers is standard, and the sort of sets of naturals is interpreted by some subset $\mathcal{S}_{\mathfrak{M}}$ of $\mathcal{P}(\mathbb{N})$. We reserve Fraktur letters $\mathfrak{M}$, $\mathfrak{N}$, ... for $\omega$-models. If $\mathcal{S}_{\mathfrak{M}}$ is countable, then the $\omega$-model $\mathfrak{M}$ is called countable. Formally, an $\omega$-model $\mathfrak{M}$ is a code for a countable set $\mathcal{S}_{\mathfrak{M}}$ of sets. Since we will formalize many results in $\mathsf{ACA}_0$, we must be careful in our treatment of $\omega$-models.

We recall the notion of a \emph{weak model} \cite[Definition~II.8.9]{simpson2009subsystems}. A weak model $\mathfrak{N}$ of a theory $K$ is a pair $\langle D_{\mathfrak{N}},\models_{\mathfrak{N}}\rangle$, where $D_{\mathfrak{N}}$ is the domain of the model and $\models_{\mathfrak{N}}$ is a partial satisfaction relation that covers all propositional combinations of atomic formulas and subformulas of axioms $K$. The partial satisfaction relation $\models_{\mathfrak{N}}$ should satisfy the usual compositionality conditions. Even over $\mathsf{RCA}_0$ the existence of a weak model of a theory $K$ implies the consistency of $K$ \cite[Definition~II.8.10]{simpson2009subsystems}. By a \emph{weak $\omega$-model} we mean, of course, a weak model $\mathfrak{N}$ whose domain $D_{\mathfrak{N}}$ is $\mathbb{N}$.

We formulate $\omega$-model reflection principles instead in terms of full satisfaction classes. Full satisfaction predicates are available only in $\mathsf{ACA}_0^+$, but partial satisfaction predicates are available in $\mathsf{ACA}_0$. Let $L_{\mathfrak{M}}$ be the set of all $\mathbf{L}_2$-sentences that contain constants $C_X$ only for $X\in\mathcal{S}_{\mathfrak{M}}$. A \emph{satisfaction class} is an assignment of truth-values to all formulas that satisfies the Tarski clauses. Provably in $\mathsf{ACA}_0^+$ (but not in $\mathsf{ACA}_0^+$) every $\omega$-model can be enriched with a full satisfaction class. Here is how we make sense of the notation ``$\mathfrak{M}\models\varphi$'' in $\mathsf{ACA}_0$: for an $\omega$-model $\mathfrak{M}$ and formula $\varphi$, $\mathfrak{M}\models \varphi$ if and only if $\varphi$ is true according to some satisfaction class $\mathcal{C}$ for $\mathfrak{M}$. We will formulate reflection principles in terms of full satisfaction classes. However, we will sometimes derive reflection principles concerning full satisfaction from principles concerning weak $\omega$-models with partial satisfaction classes.


The expression $\mathfrak{M}\models T$ for an $\mathbf{L}_2$-theory $T$ means that $T\subseteq L_{\mathfrak{M}}$ and for any axiom $\varphi$ of $T$ we have $\mathfrak{M}\models\varphi$. For notational convenience, when discussing $\omega$-models, instead of writing $X\dot\in\mathcal{S}_{\mathfrak{M}}$ we use the usual membership relation $X\in\mathfrak{M}$. We will also be interested in the reflection principle ``every set is contained in an $\omega$-model of $T$.'' This is equivalent to the claim that every true $\mathbf{\Sigma}^1_1$ (alternatively, $\mathbf{\Sigma}^1_2$) sentence is satisfied by some $\omega$-model of $T$, a fact which we will use in the proof of Theorem \ref{reduction_for_omega_models}.

For $n\ge 1$ we formulate the reflection principle $\mathbf{\Pi}^1_n\mbox{-}\omega\mathsf{RFN}(T)$ that says ``any $\mathbf{\Pi}^1_n$ sentence that holds in all $\omega$-models of $T$ is true.'' Formally, this is the sentence:
$$\forall\varphi\in\mathbf{\Pi}^1_n\big(\forall\mathfrak{M}(\mathfrak{M}\vDash T \rightarrow \mathfrak{M}\vDash\varphi)\rightarrow\mathsf{Tr}_{\mathbf{\Pi}^1_n}(\varphi)\big).$$
Contraposing the statement of $\mathbf{\Pi}^1_n\mbox{-}\omega\mathsf{RFN}(T)$ shows that it is equivalent (in $\mathsf{ACA}_0$) to the principle $\mathbf{\Sigma}^1_n$-$\omega\mathsf{Con}(T)$, which says ``for any true $\mathbf{\Sigma}^1_n$-sentence $\varphi$, there is an $\omega$-model of $T$ in which $\varphi$ holds.'' Formally this is the  sentence $$\forall \varphi\in \mathbf{\Sigma}^1_n\;\big(\mathsf{Tr}_{\mathbf{\Sigma}^1_n}(\varphi)\to \exists \mathfrak{M}\; \mathfrak{M}\models T+\varphi\big).$$

\begin{remark}
Note that, provably in $\mathsf{ACA}_0$, each principle of the form $\mathbf{\Pi}^1_n\text{-}\omega\mathsf{RFN}(T)$ implies $\mathsf{ACA}_0^+$ since we only consider theories $T$ that extend $\mathsf{ACA}_0$. Indeed, any of the reflection principles could be applied to the sentence $0\in C_A\lor \lnot 0\in C_A$. Thus for any set $A$ there exists a collection $S$ of sets containing $A$ and satisfying $\mathsf{ACA}_0$. Hence $S$ contains $A^{(n)}$, for every $n$, and thus we can construct $A^{(\omega)}$ from $S$.  
\end{remark}

Once again, since this is a consistency-like predicate we can formulate a corresponding provability-like predicate $$\mathbf{\Sigma}^1_n\mbox{-}\omega\mathsf{Prv}(T,\varphi):=\neg\mathbf{\Sigma}^1_n\mbox{-}\omega\mathsf{Con}(T+\neg\varphi).$$ A short argument demonstrates that $\mathbf{\Sigma}^1_n\mbox{-}\omega\mathsf{Prv}(T,\varphi)$ expresses that there exists a true $\mathbf{\Sigma}^1_n$-sentence $\psi$ such that $\varphi$ holds in all $\omega$-models of $T+\psi$:
\begin{flalign*}
\mathbf{\Sigma}^1_n\mbox{-}\omega\mathsf{Prv}(T,\varphi) &\equiv \lnot \mathbf{\Sigma}^1_n\mbox{-}\omega\mathsf{Con}(T+\lnot\varphi)\\
&\equiv \neg \forall \psi\in \mathbf{\Sigma}^1_n\;\big(\mathsf{Tr}_{\mathbf{\Sigma}^1_n}(\psi)\to \exists \mathfrak{M}\; (\mathfrak{M}\models T+\neg\varphi+\psi)\big)\\
&\equiv \exists \psi\in\mathbf{\Sigma}^1_n\big(\mathsf{Tr}_{\mathbf{\Sigma}^1_n}(\psi)\wedge \neg \exists \mathfrak{M}\;( \mathfrak{M}\models T+\neg\varphi+\psi)\big) \\
&\equiv \exists \psi\in\mathbf{\Sigma}^1_n\big(\mathsf{Tr}_{\mathbf{\Sigma}^1_n}(\psi)\wedge \forall \mathfrak{M}(\mathfrak{M}\models T+\psi \rightarrow \mathfrak{M}\models \varphi)\big)
\end{flalign*}
In \textsection \ref{equivalent_forms} we will see that over $\mathsf{ACA}_0$ the formula $\mathbf{\Sigma}^1_n\mbox{-}\omega\mathsf{Prv}(T,\varphi)$ is equivalent to the (appropriately formalized) fact that for some true $\mathbf{\Sigma}^1_n$ sentence $\psi$, there exists a cut-free $\omega$-proof of the sequent $\lnot T,\lnot \psi,\varphi$.



\subsection{Linear Orders and Iterated Reflection}\label{linear_orders}

We consider linear orders $\alpha$ defined on subsets of natural numbers. Formally $\alpha$ is a set encoding a pair $\langle D_{\alpha},\prec_{\alpha}\rangle$, where $D_\alpha\subseteq\mathbb{N}$ and $\prec_{\alpha}\subseteq D_{\alpha}^2$ is a strict linear order. For $x\in D_\alpha$ we denote by $\mathsf{cone}(\alpha,x)$ the set $\{y\in D_\alpha\mid y\prec_\alpha x\}$. Clearly, $\mathsf{cone}$ is a $\Pi^1_0$ comprehension term.

We will now turn to defining iterations of reflection principles along linear orders $\alpha$. Let us consider some reflection principle $\mathsf{RFN}(T)$ that is one of  $\mathbf{\Pi}^1_n\mbox{-}\mathsf{RFN}(T)$ or $\mathbf{\Pi}^1_n\mbox{-}\omega\mathsf{RFN}(T)$. In order to define $\mathbf{R}^{\cdot}(\cdot)$ we will define formulas $\mathsf{RFN}^{\mathsf{it}}(T,\alpha)$, where $T$ and $\alpha$ are free set variables and $x$ is a free number variable. 
We define $\mathsf{RFN}^{\mathsf{it}}(T,\alpha)$ as a fixed point that satisfies:
$$\mathsf{ACA}_0\vdash \mathsf{RFN}^{\mathsf{it}}(T,\alpha)\mathrel{\leftrightarrow} \mathsf{RFN}\Big(T+\big\{\mathsf{RFN}^{\mathsf{it}}\big(\dot T,\mathsf{cone}(\dot{\alpha},\dot{x})\big)\mid x\in D_\alpha\big\}\Big).$$
We put $$\mathbf{R}^\alpha(T)=T+\big\{\mathsf{RFN}^{\mathsf{it}}(T,\mathsf{cone}\big(\alpha,\dot{x})\big)\mid x\in D_\alpha\big\}.$$
And thus
$$\mathsf{ACA}_0\vdash \mathsf{RFN}^{\mathsf{it}}(T,\alpha)\mathrel{\leftrightarrow} \mathsf{RFN}\big(\mathbf{R}^{\alpha}(T)\big).$$
Clearly $\mathbf{R}^\alpha(T)$ is a $\Pi^1_0$-comprehension term depending on $\alpha$ and $T$.

We adopt the following notational conventions: If $\mathsf{RFN}(T)$ is  $\mathbf{\Pi}^1_n\mbox{-}\mathsf{RFN}(T)$, then $\mathbf{R}^\alpha(T)$ will be denoted $\mathbf{\Pi}^1_n\mbox{-}\mathbf{R}^\alpha(T)$ and if $\mathsf{RFN}(T)$ is  $\mathbf{\Pi}^1_n\mbox{-}\omega\mathsf{RFN}(T)$, then $\mathbf{R}^\alpha(T)$ will be denoted $\mathbf{\Pi}^1_n\mbox{-}\omega\mathbf{R}^\alpha(T)$.

A homomorphism of linear orders $f\colon \alpha\to \beta$ is a map $f\colon D_\alpha\to D_\beta$ such that $x\prec_{\alpha} y\Rightarrow f(x)\prec_{\beta} f(y)$, for any $x,y\in D_\alpha$.

\begin{lemma}[$\mathsf{ACA}_0$]\label{hom_lemma}
Suppose there is a homomorphism $f\colon \alpha\to \beta$. Then for any $\mathbf{L}_2$-theory $T$ we have $\mathbf{R}^{\alpha}(T)\sqsubseteq^{\mathbf{\Sigma}^1_1}\mathbf{R}^{\beta}(T)$.
\end{lemma}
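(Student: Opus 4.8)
The plan is to prove the lemma by induction, but the crucial observation is that the naive statement — that a homomorphism $f\colon\alpha\to\beta$ lifts to an interpretation of $\mathbf{R}^\alpha(T)$ inside $\mathbf{R}^\beta(T)$ — should be established "pointwise along $\alpha$" via the fixed-point characterization $\mathsf{RFN}^{\mathsf{it}}(T,\alpha)\leftrightarrow\mathsf{RFN}(\mathbf{R}^\alpha(T))$. Concretely, I would first prove the auxiliary claim that for every $x\in D_\alpha$ we have $\mathbf{R}^{\mathsf{cone}(\alpha,x)}(T)\sqsubseteq^{\mathbf{\Sigma}^1_1}\mathbf{R}^{\mathsf{cone}(\beta,f(x))}(T)$, and indeed that $\mathsf{RFN}^{\mathsf{it}}(T,\mathsf{cone}(\alpha,x))$ follows (over $\mathsf{ACA}_0$, $\mathbf{\Sigma}^1_1$-provably) from $\mathsf{RFN}^{\mathsf{it}}(T,\mathsf{cone}(\beta,f(x)))$. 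The point is that $f$ restricts to a homomorphism $\mathsf{cone}(\alpha,x)\to\mathsf{cone}(\beta,f(x))$ (since $y\prec_\alpha x$ implies $f(y)\prec_\beta f(x)$), so this claim has the right shape for an induction.

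The essential subtlety is that the "induction" here is not over the ordinal $\alpha$ as an external well-order — we are working over $\mathsf{ACA}_0$ and $\alpha$ need not even be well-founded — so I would instead argue directly from the fixed-point equation. Unfolding, $\mathbf{R}^\alpha(T)=T+\{\mathsf{RFN}^{\mathsf{it}}(T,\mathsf{cone}(\alpha,x))\mid x\in D_\alpha\}$, so $T\sqsubseteq^{\mathbf{\Sigma}^1_1}\mathbf{R}^\alpha(T)$ means showing each axiom $\mathsf{RFN}^{\mathsf{it}}(T,\mathsf{cone}(\alpha,x))$ is $\mathbf{\Sigma}^1_1$-provable from $\mathbf{R}^\beta(T)$. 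By the fixed-point identity this is $\mathsf{RFN}(\mathbf{R}^{\mathsf{cone}(\alpha,x)}(T))$, and since $\mathbf{R}^\beta(T)$ contains $\mathsf{RFN}^{\mathsf{it}}(T,\mathsf{cone}(\beta,f(x)))=\mathsf{RFN}(\mathbf{R}^{\mathsf{cone}(\beta,f(x))}(T))$, it suffices to show that the reflection operator $\mathsf{RFN}$ is monotone under $\sqsubseteq^{\mathbf{\Sigma}^1_1}$ — i.e. $U\sqsubseteq^{\mathbf{\Sigma}^1_1}V$ implies $\mathsf{RFN}(U)$ is $\mathbf{\Sigma}^1_1$-provable from $\mathsf{RFN}(V)$ — combined with the auxiliary claim $\mathbf{R}^{\mathsf{cone}(\alpha,x)}(T)\sqsubseteq^{\mathbf{\Sigma}^1_1}\mathbf{R}^{\mathsf{cone}(\beta,f(x))}(T)$ applied one level down. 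So the structure is: (i) prove monotonicity of $\mathsf{RFN}$ under $\sqsubseteq^{\mathbf{\Sigma}^1_1}$, for both $\mathbf{\Pi}^1_n\text{-}\mathsf{RFN}$ and $\mathbf{\Pi}^1_n\text{-}\omega\mathsf{RFN}$; (ii) reduce the lemma, via the fixed-point equation, to the cone-wise inclusion statement; (iii) close the loop so that the cone-wise statement is provable from itself "one step down."

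For step (iii), the right formalization is to argue inside $\mathsf{ACA}_0$ by an internal induction. I would let $\varphi(x)$ be (a $\mathbf{\Sigma}^1_1$ formulation of) the statement $\mathbf{R}^{\mathsf{cone}(\alpha,x)}(T)\sqsubseteq^{\mathbf{\Sigma}^1_1}\mathbf{R}^{\mathsf{cone}(\beta,f(x))}(T)$; the recursion-theoretic content is that $\mathsf{RFN}^{\mathsf{it}}$ is defined by a single fixed point, so $\mathbf{R}^{\mathsf{cone}(\alpha,x)}(T)$ "depends only on" the values at $y\prec_\alpha x$, and the implication $(\forall y\prec_\alpha x\,\varphi(y))\to\varphi(x)$ is provable in $\mathsf{ACA}_0$ using monotonicity of $\mathsf{RFN}$. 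One must be careful that this does not require $\prec_\alpha$ to be well-founded: the claim we want, $\mathbf{R}^\alpha(T)\sqsubseteq^{\mathbf{\Sigma}^1_1}\mathbf{R}^\beta(T)$, is a $\Pi^1_1$-over-$\Sigma^1_1$-ish statement, and the uniform fixed-point definition of $\mathsf{RFN}^{\mathsf{it}}$ is exactly what lets us derive each axiom of $\mathbf{R}^\alpha(T)$ from the single sentence $\mathsf{RFN}^{\mathsf{it}}(T,\mathsf{cone}(\beta,f(x)))$ without induction on the order — the "one step down" suffices because that sentence already bundles up all of $\mathsf{RFN}(\mathbf{R}^{\mathsf{cone}(\beta,f(x))}(T))$. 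The main obstacle I expect is getting this last point fully rigorous: carefully tracking what $\mathsf{ACA}_0$ proves about the fixed point $\mathsf{RFN}^{\mathsf{it}}$, ensuring the reduction is genuinely $\mathbf{\Sigma}^1_1$-provable (so it composes under $\sqsubseteq^{\mathbf{\Sigma}^1_1}$), and verifying monotonicity of $\omega$-model reflection, where one reasons that an $\omega$-model of $\mathbf{R}^{\mathsf{cone}(\beta,f(x))}(T)$ together with a true $\mathbf{\Sigma}^1_1$ witness is also (via the inclusion) a suitable $\omega$-model for the weaker theory.
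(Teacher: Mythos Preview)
Your decomposition is exactly right: reduce to showing each axiom $\mathsf{RFN}^{\mathsf{it}}(T,\mathsf{cone}(\alpha,x))$ of $\mathbf{R}^\alpha(T)$ is $\mathbf{\Sigma}^1_1$-provable in $\mathbf{R}^\beta(T)$, unfold the fixed point to $\mathsf{RFN}(\mathbf{R}^{\mathsf{cone}(\alpha,x)}(T))$, compare with the axiom $\mathsf{RFN}(\mathbf{R}^{\mathsf{cone}(\beta,f(x))}(T))$ of $\mathbf{R}^\beta(T)$, and appeal to monotonicity of $\mathsf{RFN}$ under $\sqsubseteq^{\mathbf{\Sigma}^1_1}$ together with the restricted homomorphism $f{\upharpoonright}\mathsf{cone}(\alpha,x)\colon\mathsf{cone}(\alpha,x)\to\mathsf{cone}(\beta,f(x))$. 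This matches the paper's reduction step for step.

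The gap is in your step (iii), and you correctly flag it yourself as ``the main obstacle.'' The cone-wise inclusion $\mathbf{R}^{\mathsf{cone}(\alpha,x)}(T)\sqsubseteq^{\mathbf{\Sigma}^1_1}\mathbf{R}^{\mathsf{cone}(\beta,f(x))}(T)$ that you need one level down is literally an instance of the lemma you are proving, and you need it not merely to be true but to be \emph{$\mathbf{\Sigma}^1_1$-provable in $\mathsf{ACA}_0$}, so that the monotonicity of $\mathsf{RFN}$ can be invoked \emph{inside} the proof from $\mathbf{R}^\beta(T)$. Your suggestion of ``internal induction'' on $\prec_\alpha$ does not work because, as you note, $\alpha$ need not be well-founded; and the claim that ``the one step down suffices because that sentence already bundles up all of $\mathsf{RFN}(\mathbf{R}^{\mathsf{cone}(\beta,f(x))}(T))$'' does not dissolve the circularity --- it still presupposes the cone-wise inclusion, which is the lemma again.

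The paper closes this loop with L\"ob's theorem. One works in $\mathsf{ACA}_0$ together with the reflexive hypothesis $\mathsf{Prv}(\mathsf{ACA}_0,\Lambda)$, where $\Lambda$ is the statement of the lemma. Under that hypothesis, the cone-wise inclusion is $\mathsf{ACA}_0$-provable (just instantiate $\Lambda$ inside provability at the restricted homomorphism, using the true $\mathbf{\Pi}^1_0$ sentence ``$f\colon\alpha\to\beta$ is a homomorphism'' as the $\mathbf{\Sigma}^1_1$ side-assumption), and hence $\mathsf{RFN}(\mathbf{R}^{\mathsf{cone}(\beta,f(x))}(T))$ $\mathbf{\Sigma}^1_1$-implies $\mathsf{RFN}(\mathbf{R}^{\mathsf{cone}(\alpha,x)}(T))$ over $\mathsf{ACA}_0$. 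That yields $\Lambda$ outright, and L\"ob's theorem discharges the reflexive hypothesis. This is the missing idea in your proposal; once you insert it, your steps (i)--(ii) go through as written.
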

\begin{proof}
We prove the lemma by L\"ob's theorem. That is we need to prove the lemma in $\mathsf{ACA}_0$ with additional assumption that $\mathsf{Prv}(\mathsf{ACA}_0,\Lambda)$, where $\Lambda$ statement of the lemma. 

We reason in $\mathsf{ACA}_0+\mathsf{Prv}(\mathsf{ACA}_0,\Lambda)$. Let us consider some homomorphism $f\colon \alpha\to \beta$ and an $\mathbf{L}_2$-theory $T$. We claim that $\mathbf{R}^{\alpha}(T)\sqsubseteq^{\mathbf{\Sigma}^1_1}\mathbf{R}^{\beta}(T)$. For this it is enough to show that any axiom of $\mathbf{R}^{\alpha}(T)$ is $\mathbf{\Sigma}^1_1$-provable in $\mathbf{R}^{\beta}(T)$. All axioms of $T$ are axioms of both $\mathbf{R}^{\alpha}(T)$ and $\mathbf{R}^{\beta}(T)$. Thus it is enough to show that any sentence $\mathsf{RFN}^{\mathsf{it}}(T,\mathsf{cone}(\dot{\alpha},\dot{x}))$ is $\mathbf{\Sigma}^1_1$-provable in $\mathbf{R}^{\beta}(T)$. Let us prove that $\mathsf{RFN}^{\mathsf{it}}(T,\mathsf{cone}(\dot{\alpha},\dot{x}))$ is $\mathbf{\Sigma}^1_1$-implied in $\mathsf{ACA}_0$  by $\mathsf{RFN}^{\mathsf{it}}(T,\mathsf{cone}(\dot{\beta},\dot f(\dot x)))$. By the construction of $\mathsf{RFN}^{\mathsf{it}}$ we need to show that $\mathsf{RFN}(\mathbf{R}^{\mathsf{cone}(\dot{\alpha},\dot{x})}(T))$ is $\mathbf{\Sigma}^1_1$-implied in $\mathsf{ACA}_0$  by $\mathsf{RFN}(\mathbf{R}^{\mathsf{cone}(\dot{\beta},\dot f(\dot x))}(T))$. Since $\mathsf{RFN}$ is at least as strong as $\mathbf{\Pi}^1_1\mbox{-}\mathsf{RFN}$, it is enough to show that $\Sigma^1_1$-provably in $\mathsf{ACA}_0$ we have
\begin{equation}\label{hom_lemma_eq1}\mathbf{R}^{\mathsf{cone}(\dot{\alpha},\dot{x})}(T)\sqsubseteq^{\mathbf{\Sigma}^1_1} \mathbf{R}^{\mathsf{cone}(\dot{\beta},\dot f(\dot x))}(T).\end{equation}
Let $H$ be the formula that says ``$\dot\alpha$ and $\dot\beta$ are linear orders and $\dot f$ is a homorphism $\dot f\colon \dot \alpha\to \dot \beta$.'' Note that $H$ is a true $\mathbf{\Pi}^1_0$ sentence. So we need to show that $\mathsf{ACA}_0+H$ proves (\ref{hom_lemma_eq1}). And since by our assumption $\mathsf{ACA}_0$ proves $\Lambda$ (recall that $\Lambda$ is the statement of the lemma), it is enough to show that $\mathsf{ACA}_0+H+\Lambda$ proves (\ref{hom_lemma_eq1}). The latter is simply a direct application of the lemma inside $\mathsf{ACA}_0$-provability to the homomorphism $\dot f{\upharpoonright} \mathsf{cone}(\dot\alpha,\dot x)\colon \mathsf{cone}(\dot\alpha,\dot x)\to \mathsf{cone}(\dot\beta,\dot f(\dot x))$.\end{proof}

 Lemma \ref{hom_lemma_eq1} implies that (provably in $\mathsf{ACA}_0$) up to $\mathbf{\Sigma}^1_1$ deductive equivalence the theories $\mathbf{R}^\alpha(T)$ depend only on  the order type of $\alpha$. Due to this we will not care about the particular numerical representations of the countable linear orders that we will consider. And we will freely switch between theories $\mathbf{R}^{\alpha}(T)$ and $\mathbf{R}^{\beta}(T)$ when $\alpha$ and $\beta$ are isomorphic.

 We identify natural numbers $n$ with the linear orders $(\{0,\ldots,n-1\},<)$. For linear orders $\alpha,\beta$, let us denote by $\alpha +\beta$ their ordered sum, i.e., the domain of $\alpha+\beta$ is the disjoint union $D_\alpha\sqcup D_\beta$ and any element of $\alpha$ is smaller than any element of $\beta$.

Using Lemma \ref{hom_lemma_eq1} it is easy to prove the following lemma.
\begin{lemma}[$\mathsf{ACA}_0$]\label{plus_one_lemma}
For any $\mathbf{L}_2$-theory $T$ and linear order $\alpha$:
$$\mathbf{R}^{\alpha+1}(T)\equiv^{\mathbf{\Sigma}^1_1} T+\mathsf{RFN}\big(\mathbf{R}^\alpha(T)\big).$$
\end{lemma}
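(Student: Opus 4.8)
The plan is to prove the two $\sqsubseteq^{\mathbf{\Sigma}^1_1}$-inclusions that constitute the claimed $\equiv^{\mathbf{\Sigma}^1_1}$ equivalence. For the direction $\mathbf{R}^{\alpha+1}(T)\sqsubseteq^{\mathbf{\Sigma}^1_1} T+\mathsf{RFN}(\mathbf{R}^\alpha(T))$, I would unwind the definition of $\mathbf{R}^{\alpha+1}(T)$: its axioms are those of $T$ together with the sentences $\mathsf{RFN}^{\mathsf{it}}(T,\mathsf{cone}(\alpha+1,\dot x))$ for $x\in D_{\alpha+1}$. The cones come in two kinds: for $x$ in the $D_\alpha$-part, $\mathsf{cone}(\alpha+1,x)$ is isomorphic to a proper initial segment $\mathsf{cone}(\alpha,x)$ of $\alpha$, so by Lemma \ref{hom_lemma} the corresponding axiom is $\mathbf{\Sigma}^1_1$-provable (indeed an axiom, up to $\mathbf{\Sigma}^1_1$-equivalence) in $\mathbf{R}^\alpha(T)$, hence in $T+\mathsf{RFN}(\mathbf{R}^\alpha(T))$; and for $x$ the top element, $\mathsf{cone}(\alpha+1,x)\cong\alpha$, so the axiom is $\mathsf{RFN}^{\mathsf{it}}(T,\alpha)$, which by the fixed-point equation unwinds to $\mathsf{RFN}(\mathbf{R}^\alpha(T))$ — precisely an axiom on the right. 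One must check that these containments-of-axioms transfer through the $\sqsubseteq^{\mathbf{\Sigma}^1_1}$ relation, but that is routine monotonicity of $\mathbf{\Sigma}^1_1\mbox{-}\mathsf{Prv}$ in the theory.

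For the converse direction $T+\mathsf{RFN}(\mathbf{R}^\alpha(T))\sqsubseteq^{\mathbf{\Sigma}^1_1}\mathbf{R}^{\alpha+1}(T)$, the axioms of $T$ are trivially handled, so the work is to show $\mathsf{RFN}(\mathbf{R}^\alpha(T))$ is $\mathbf{\Sigma}^1_1$-provable in $\mathbf{R}^{\alpha+1}(T)$. Again using that $\mathsf{cone}(\alpha+1,\mathrm{top})\cong\alpha$ and Lemma \ref{hom_lemma} (applied in both directions via the isomorphism), $\mathbf{R}^\alpha(T)\equiv^{\mathbf{\Sigma}^1_1}\mathbf{R}^{\mathsf{cone}(\alpha+1,\mathrm{top})}(T)$; since $\mathsf{RFN}$ is at least as strong as $\mathbf{\Pi}^1_1\mbox{-}\mathsf{RFN}$, $\mathbf{\Sigma}^1_1$-equivalent theories have $\mathbf{\Sigma}^1_1$-equivalent reflection principles, so $\mathsf{RFN}(\mathbf{R}^\alpha(T))$ is $\mathbf{\Sigma}^1_1$-equivalent to $\mathsf{RFN}(\mathbf{R}^{\mathsf{cone}(\alpha+1,\mathrm{top})}(T))=\mathsf{RFN}^{\mathsf{it}}(T,\mathsf{cone}(\alpha+1,\mathrm{top}))$ by the fixed-point equation, and this last sentence is literally an axiom of $\mathbf{R}^{\alpha+1}(T)$. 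As in the previous lemma, the whole argument should be wrapped in an application of L\"ob's theorem so that the uses of Lemma \ref{hom_lemma} and the transfer-through-$\mathbf{\Sigma}^1_1\mbox{-}\mathsf{Prv}$ steps can be carried out under the internal provability assumption.

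The main obstacle I anticipate is bookkeeping rather than conceptual: one needs the fact that $T\equiv^{\mathbf{\Sigma}^1_1} U$ implies $\mathsf{RFN}(T)\equiv^{\mathbf{\Sigma}^1_1}\mathsf{RFN}(U)$ (for whichever of the two reflection operators $\mathsf{RFN}$ denotes), and more generally that $\mathbf{\Sigma}^1_1\mbox{-}\mathsf{Prv}$ respects $\mathbf{\Sigma}^1_1$-provable inclusions of axiom sets — a monotonicity/transitivity property of these provability-like predicates that is implicit in the surrounding development but should be cited or verified. Care is also needed because $\mathbf{R}^{\alpha+1}(T)$ is defined via the iteration term applied to the concrete order $\alpha+1$, so one must be explicit that the top cone is genuinely isomorphic to $\alpha$ and that the remaining cones embed into $\alpha$, invoking the remark after Lemma \ref{hom_lemma} that (provably in $\mathsf{ACA}_0$, up to $\mathbf{\Sigma}^1_1$-equivalence) $\mathbf{R}^{\cdot}(T)$ depends only on the order type.
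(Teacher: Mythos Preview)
Your proposal is correct and is exactly the argument the paper has in mind: the paper gives no proof beyond the remark that the lemma follows easily from Lemma~\ref{hom_lemma}, and your unwinding of the axioms of $\mathbf{R}^{\alpha+1}(T)$ via the isomorphism $\mathsf{cone}(\alpha+1,\mathrm{top})\cong\alpha$ and the embeddings of the smaller cones is precisely that. One small point: you do not need to wrap the argument in L\"ob's theorem---Lemma~\ref{hom_lemma} is already proved in $\mathsf{ACA}_0$, the isomorphism is explicitly $\mathbf{\Pi}^1_0$-describable from $\alpha$, and that true $\mathbf{\Pi}^1_0$ description is the $\mathbf{\Sigma}^1_1$ witness needed to transfer the $\sqsubseteq^{\mathbf{\Sigma}^1_1}$-inclusion through $\mathsf{RFN}$, so the argument is direct rather than self-referential.
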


We write $\alpha<^{\mathsf{cn}}\beta$, if there exists $x\in D_\beta$ and homomorphism $f\colon \alpha\to \mathsf{cone}(\beta,x)$.
Combining Lemmas \ref{hom_lemma} and \ref{plus_one_lemma} we see that:
\begin{corollary}[$\mathsf{ACA}_0$]\label{low-cones}
For any $\mathbf{L}_2$-theory $T$ and linear orders $\alpha<^{\mathsf{cn}}\beta$ we have
$$\mathsf{RFN}(\mathbf{R}^{\alpha}(T)) \sqsubseteq^{\mathbf{\Sigma}^1_1} \mathbf{R}^{\beta}(T).$$
\end{corollary}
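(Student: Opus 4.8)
The plan is to read this off directly from Lemmas \ref{hom_lemma} and \ref{plus_one_lemma}, working throughout inside $\mathsf{ACA}_0$. Fix a theory $T$ and linear orders $\alpha<^{\mathsf{cn}}\beta$, so that there are $x\in D_\beta$ and a homomorphism $f\colon\alpha\to\mathsf{cone}(\beta,x)$. The first step is to build from this data a homomorphism $g\colon\alpha+1\to\beta$: on the copy of $D_\alpha$ inside $D_{\alpha+1}$ set $g$ to be $f$ followed by the inclusion $\mathsf{cone}(\beta,x)\hookrightarrow\beta$, and send the new top point of $\alpha+1$ to $x$. This is order-preserving on every pair: on pairs from $D_\alpha$ because $f$ is a homomorphism and $\prec_{\mathsf{cone}(\beta,x)}$ is the restriction of $\prec_\beta$, and on pairs of the form $(a,\mathrm{top})$ precisely because every value of $f$ lies $\prec_\beta x$ by the definition of $\mathsf{cone}(\beta,x)$. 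Moreover $g$ is arithmetic in $f,x,\alpha,\beta$, so $\mathsf{ACA}_0$ can form it.

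Given $g$, Lemma \ref{hom_lemma} yields $\mathbf{R}^{\alpha+1}(T)\sqsubseteq^{\mathbf{\Sigma}^1_1}\mathbf{R}^{\beta}(T)$. On the other hand, Lemma \ref{plus_one_lemma} gives $\mathbf{R}^{\alpha+1}(T)\equiv^{\mathbf{\Sigma}^1_1}T+\mathsf{RFN}(\mathbf{R}^\alpha(T))$. Chaining these — using that $\sqsubseteq^{\mathbf{\Sigma}^1_1}$ is transitive and that $\equiv^{\mathbf{\Sigma}^1_1}$ refines it — produces $T+\mathsf{RFN}(\mathbf{R}^\alpha(T))\sqsubseteq^{\mathbf{\Sigma}^1_1}\mathbf{R}^{\beta}(T)$. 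Finally, since every theory axiomatized over $\mathsf{ACA}_0$ by the single sentence $\mathsf{RFN}(\mathbf{R}^\alpha(T))$ is included in $T+\mathsf{RFN}(\mathbf{R}^\alpha(T))$ (all our theories extend $\mathsf{ACA}_0$), monotonicity of $\mathbf{\Sigma}^1_1\mbox{-}\mathsf{Prv}$ in the theory argument delivers $\mathsf{RFN}(\mathbf{R}^\alpha(T))\sqsubseteq^{\mathbf{\Sigma}^1_1}\mathbf{R}^{\beta}(T)$, which is the claim.

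I do not expect a genuine obstacle here; the whole content is the observation that an embedding of $\alpha$ into a cone of $\beta$ gives an embedding of $\alpha+1$ into $\beta$ itself. The only points needing a moment's care are (i) checking that $g$ is a homomorphism — in particular that the extra top point may be sent to $x$ exactly because $f$ lands inside $\mathsf{cone}(\beta,x)$ — and (ii) confirming that the elementary closure properties of $\sqsubseteq^{\mathbf{\Sigma}^1_1}$ used above (transitivity, monotonicity under adding axioms, and the fact that ordinary provability implies $\mathbf{\Sigma}^1_1$-provability, witnessed by any true $\mathbf{\Sigma}^1_1$ sentence) are formalizable in $\mathsf{ACA}_0$; both are routine and already implicit in the preceding subsection.
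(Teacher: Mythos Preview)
Your proof is correct and follows exactly the approach the paper indicates: the paper simply states that the corollary follows by ``Combining Lemmas \ref{hom_lemma} and \ref{plus_one_lemma}'' without further detail, and you have filled in precisely the intended combination by extending the given homomorphism $f\colon\alpha\to\mathsf{cone}(\beta,x)$ to a homomorphism $\alpha+1\to\beta$.
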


\begin{lemma}[$\mathsf{ACA}_0$]
For any $\mathbf{L}_2$-theory $T$ and linear orders $\alpha,\beta$:
$$\mathbf{R}^{\alpha+\beta}(T)\equiv^{\mathbf{\Sigma}^1_1} \mathbf{R}^{\beta}\big(\mathbf{R}^\alpha(T)\big).$$
\end{lemma}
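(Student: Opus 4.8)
The plan is to prove this by L\"ob's theorem, exactly as in Lemma~\ref{hom_lemma}: we work in $\mathsf{ACA}_0$ augmented with $\mathsf{Prv}(\mathsf{ACA}_0,\Lambda)$, where $\Lambda$ is the statement of the lemma, and derive $\Lambda$. Fix an $\mathbf{L}_2$-theory $T$ and linear orders $\alpha,\beta$. Unwinding the definitions, $\mathbf{R}^{\beta}(\mathbf{R}^\alpha(T))$ is $\mathbf{R}^\alpha(T)$ together with the sentences $\mathsf{RFN}^{\mathsf{it}}(\mathbf{R}^\alpha(T),\mathsf{cone}(\beta,\dot x))$ for $x\in D_\beta$, while $\mathbf{R}^{\alpha+\beta}(T)$ is $T$ together with the sentences $\mathsf{RFN}^{\mathsf{it}}(T,\mathsf{cone}(\alpha+\beta,\dot z))$ for $z\in D_{\alpha+\beta}$. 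We must show each theory $\mathbf{\Sigma}^1_1$-proves every axiom of the other. This splits along the disjoint-sum structure of $\alpha+\beta$: the axioms indexed by $z\in D_\alpha$ should correspond, up to $\mathbf{\Sigma}^1_1$-provability, to the axioms of $\mathbf{R}^\alpha(T)$, and the axioms indexed by $z\in D_\beta$ should correspond to the $\mathsf{RFN}^{\mathsf{it}}(\mathbf{R}^\alpha(T),\mathsf{cone}(\beta,\dot x))$ sentences.

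The key observation driving both inclusions is that for $z\in D_\beta$ viewed inside $\alpha+\beta$, the cone $\mathsf{cone}(\alpha+\beta,z)$ is isomorphic to $\alpha+\mathsf{cone}(\beta,z)$: every element of $\alpha$ lies below $z$, together with exactly the $\beta$-predecessors of $z$. By Lemma~\ref{hom_lemma} (applied in both directions via the two obvious homomorphisms), $\mathbf{R}^{\mathsf{cone}(\alpha+\beta,z)}(T)\equiv^{\mathbf{\Sigma}^1_1}\mathbf{R}^{\alpha+\mathsf{cone}(\beta,z)}(T)$, and the latter, by the L\"ob-style inductive hypothesis $\Lambda$ applied to the strictly ``shorter'' second summand $\mathsf{cone}(\beta,z)$, is $\mathbf{\Sigma}^1_1$-equivalent to $\mathbf{R}^{\mathsf{cone}(\beta,z)}(\mathbf{R}^\alpha(T))$. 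Since $\mathsf{RFN}$ is at least as strong as $\mathbf{\Pi}^1_1\text{-}\mathsf{RFN}$ and $\mathbf{\Sigma}^1_1$-deductively equivalent theories prove the same $\mathsf{RFN}$-consequences up to $\mathbf{\Sigma}^1_1$-provability, we get $\mathsf{RFN}^{\mathsf{it}}(T,\mathsf{cone}(\alpha+\beta,z))\equiv^{\mathbf{\Sigma}^1_1}\mathsf{RFN}^{\mathsf{it}}(\mathbf{R}^\alpha(T),\mathsf{cone}(\beta,z))$. This handles the $\beta$-part. For the $\alpha$-part: if $z\in D_\alpha$ then $\mathsf{cone}(\alpha+\beta,z)=\mathsf{cone}(\alpha,z)$, so $\mathsf{RFN}^{\mathsf{it}}(T,\mathsf{cone}(\alpha+\beta,z))$ is literally an axiom of $\mathbf{R}^\alpha(T)$, hence of $\mathbf{R}^\beta(\mathbf{R}^\alpha(T))$; conversely each axiom of $\mathbf{R}^\alpha(T)$ is either an axiom of $T$ (hence of $\mathbf{R}^{\alpha+\beta}(T)$) or such an $\mathsf{RFN}^{\mathsf{it}}(T,\mathsf{cone}(\alpha,z))$ with $z\in D_\alpha$, which is again an axiom of $\mathbf{R}^{\alpha+\beta}(T)$ since $D_\alpha\subseteq D_{\alpha+\beta}$ with matching cones. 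Assembling these, every axiom of either theory is $\mathbf{\Sigma}^1_1$-provable in the other, giving $\mathbf{R}^{\alpha+\beta}(T)\equiv^{\mathbf{\Sigma}^1_1}\mathbf{R}^\beta(\mathbf{R}^\alpha(T))$.

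The main obstacle, as in Lemma~\ref{hom_lemma}, is the bookkeeping needed to invoke L\"ob's theorem correctly: one must be careful that the application of the inductive hypothesis $\Lambda$ is made \emph{inside} $\mathsf{ACA}_0$-provability to the genuinely smaller data $\big(\mathbf{R}^\alpha(T),\mathsf{cone}(\beta,\dot z)\big)$ in place of $(T,\beta)$, packaging the relevant facts (that $\alpha,\beta$ are linear orders, that $\mathsf{cone}(\alpha+\beta,z)\cong\alpha+\mathsf{cone}(\beta,z)$, etc.) as a true $\mathbf{\Pi}^1_0$ sentence $H$ and noting $\mathsf{ACA}_0+H+\Lambda$ proves the needed inclusion. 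The only genuinely new ingredient beyond the pattern of Lemma~\ref{hom_lemma} is the elementary verification that cones in $\alpha+\beta$ decompose as claimed, which is a routine check about linear orders formalizable in $\mathsf{ACA}_0$; combined with Lemma~\ref{hom_lemma} it reduces everything to the L\"ob-style induction.
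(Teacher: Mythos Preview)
Your proposal is correct and follows essentially the same approach as the paper: a L\"ob-style argument in which the cones of $\alpha+\beta$ indexed by $\beta$-elements are identified with $\alpha+\mathsf{cone}(\beta,z)$, and the reflexive hypothesis $\Lambda$ (applied inside $\mathsf{ACA}_0$-provability) converts $\mathbf{R}^{\alpha+\gamma}(T)$ to $\mathbf{R}^\gamma(\mathbf{R}^\alpha(T))$ for each $\gamma<\beta$. Your write-up is in fact more explicit than the paper's in treating the $\alpha$-indexed axioms separately and in flagging the $\mathbf{\Pi}^1_0$ packaging needed to invoke $\Lambda$ internally; the paper compresses all of this into a three-line chain of equivalences.
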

\begin{proof}
We prove the claim by L\"{o}b's Theorem. Namely, \textbf{we work in} $\mathsf{ACA}_0$ and suppose that $\mathsf{ACA}_0$ proves $\Lambda$, the statement of the lemma. It suffices to derive $\Lambda$.

We reason as follows, using our assumption that $\mathsf{ACA}_0$ proves $\Lambda$ to get the equivalence on the second line.
\begin{flalign*}
\mathbf{R}^{\alpha+\beta}(T) & \equiv \{\mathsf{RFN}\big(\mathbf{R}^{\alpha+\gamma}(T)\big) : \gamma<\beta\}\\
& \equiv^{\mathbf{\Sigma}^1_1} \{\mathsf{RFN}\big(\mathbf{R}^\gamma (\mathbf{R}^\alpha (T))\big): \gamma<\beta \}\\
&\equiv \mathbf{R}^\beta(\mathbf{R}^\alpha(T))
\end{flalign*}

This completes the proof of the lemma.
\end{proof}

\begin{lemma}[$\mathsf{ACA}_0$]
For any $\mathbf{L}_2$-theory $T$ and linear order $\alpha$ we have
$$\mathsf{RFN}(\mathbf{R}^{\alpha}(T))\;\Rightarrow\; \mathsf{WO}(\alpha).$$
\end{lemma}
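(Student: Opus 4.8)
The plan is to argue by L\"ob's theorem, in the same style as Lemma~\ref{hom_lemma}. Write $\Lambda$ for the statement of the lemma, construed as an $\mathbf{L}_2$-sentence that carries the standing side conditions that $T$ is an $\mathbf{L}_2$-theory extending $\mathsf{ACA}_0$ and that $\alpha$ is a linear order. By L\"ob's theorem for $\mathsf{ACA}_0$ it suffices to derive $\Lambda$ in $\mathsf{ACA}_0+\mathsf{Prv}(\mathsf{ACA}_0,\Lambda)$. Reasoning there, fix $T$ and $\alpha$, assume $\mathsf{RFN}(\mathbf{R}^\alpha(T))$, and suppose toward a contradiction that $\neg\mathsf{WO}(\alpha)$. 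From this we obtain (a routine fact about $\mathsf{ACA}_0$) an infinite $\prec_\alpha$-descending sequence $a\colon\mathbb{N}\to D_\alpha$; set $\beta:=\mathsf{cone}(\alpha,a(0))$. Then $\beta$ is a linear order and $\langle a(n+1)\rangle_{n}$ is an infinite descending sequence in $\beta$ (each $a(n+1)$ is $\prec_\alpha$-below $a(0)$ by transitivity), so $\neg\mathsf{WO}(\beta)$.

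The heart of the argument is to transfer $\Lambda$ into $\mathbf{R}^\alpha(T)$. Since $T$, and hence $\mathbf{R}^\alpha(T)$, extends $\mathsf{ACA}_0$, and since $\mathsf{ACA}_0$ proves $\Lambda$ by the reflexive-induction hypothesis, $\mathbf{R}^\alpha(T)$ proves the instance of $\Lambda$ at the parameters $T$ and $\beta$, i.e.\ it proves ``(side conditions on $T,\beta$)$\;\to\;(\mathsf{RFN}(\mathbf{R}^\beta(T))\to\mathsf{WO}(\beta))$''. On the other hand, because $a(0)\in D_\alpha$, the sentence $\mathsf{RFN}^{\mathsf{it}}(T,\mathsf{cone}(\alpha,a(0)))$ is literally an axiom of $\mathbf{R}^\alpha(T)$, so the defining fixed-point equivalence $\mathsf{ACA}_0\vdash\mathsf{RFN}^{\mathsf{it}}(T,\gamma)\leftrightarrow\mathsf{RFN}(\mathbf{R}^\gamma(T))$ gives $\mathbf{R}^\alpha(T)\vdash\mathsf{RFN}(\mathbf{R}^\beta(T))$. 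A formalized modus ponens then yields that $\mathbf{R}^\alpha(T)$ proves the implication ``(side conditions on $T,\beta$)$\;\to\;\mathsf{WO}(\beta)$'', which, put in prenex form, is a $\mathbf{\Pi}^1_1$-sentence, since the side conditions are $\mathbf{\Pi}^1_0$ and $\mathsf{WO}(\beta)$ is $\mathbf{\Pi}^1_1$.

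Finally, using the fact --- already invoked in the proof of Lemma~\ref{hom_lemma} --- that $\mathsf{RFN}$ is at least as strong as $\mathbf{\Pi}^1_1\mbox{-}\mathsf{RFN}$, the hypothesis $\mathsf{RFN}(\mathbf{R}^\alpha(T))$ entails that $\mathbf{R}^\alpha(T)$ is $\mathbf{\Pi}^1_1$-sound. Applying this to the $\mathbf{\Pi}^1_1$-theorem obtained above and then discharging its side conditions --- each of which is a true $\mathbf{\Pi}^1_0$ statement about the actual $T$ and $\beta$ --- we conclude that $\mathsf{WO}(\beta)$ is true, contradicting $\neg\mathsf{WO}(\beta)$. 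Hence $\mathsf{WO}(\alpha)$, which derives $\Lambda$ and completes the L\"ob argument. I expect the only delicate point to be the bookkeeping of the middle paragraph: keeping the equivalence of $\mathsf{RFN}^{\mathsf{it}}(T,\cdot)$ with $\mathsf{RFN}(\mathbf{R}^\cdot(T))$ straight underneath the provability predicate, and making sure that the side conditions of the invoked instance of $\Lambda$ are recovered at the level of truth rather than inside $\mathbf{R}^\alpha(T)$. No special treatment is needed when $\mathbf{R}^\alpha(T)$ is inconsistent, since then $\mathsf{RFN}(\mathbf{R}^\alpha(T))$ is already false.
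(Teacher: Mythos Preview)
Your proof is correct and follows essentially the same approach as the paper's: a L\"ob argument in which the reflexive hypothesis is pushed into $\mathbf{R}^\alpha(T)$, combined with the axiom $\mathsf{RFN}(\mathbf{R}^\beta(T))$ of $\mathbf{R}^\alpha(T)$ for $\beta=\mathsf{cone}(\alpha,a(0))$, to obtain $\mathbf{R}^\alpha(T)\vdash\mathsf{WO}(\beta)$, and then an application of $\mathbf{\Pi}^1_1$-reflection. The paper phrases the conclusion universally (for every $\beta<\alpha$, $\mathbf{R}^\alpha(T)\vdash\mathsf{WO}(\beta)$) rather than via a contradiction from a chosen descending sequence, but this is only a cosmetic difference.
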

\begin{proof}We prove the claim by L\"{o}b's Theorem. Namely, \textbf{we work in} $\mathsf{ACA}_0$ and suppose that $\mathsf{ACA}_0$ proves $\Lambda$, the statement of the lemma. It suffices to derive $\Lambda$.

Let $T$ be an $\mathbf{L}_2$-theory and $\alpha$ a linear order. Note that since $\mathsf{ACA}_0$ proves $\Lambda$ so does $\mathbf{R}^\alpha(T).$ Thus,
$$ \forall \beta<\alpha \Big( \mathbf{R}^\alpha(T) \vdash \mathsf{RFN}(\mathbf{R}^{\beta}(T)) \rightarrow \mathsf{WO}(\beta) \Big). $$

On the other hand, by the definition $\mathbf{R}^\alpha$,
$$\forall \beta<\alpha \Big( \mathbf{R}^\alpha(T) \vdash \mathsf{RFN}(\mathbf{R}^\beta(T)) \Big).$$

Combining these two observations, for each $\beta<\alpha$, $\mathbf{R}^\alpha(T) \vdash \mathsf{WO}(\beta)$. Thus,
$$\mathsf{RFN}(\mathbf{R}^{\alpha}(T))\;\Rightarrow\; \mathsf{WO}(\alpha).$$
This completes the proof of the lemma.\end{proof}

\begin{corollary}[$\mathsf{ACA}_0$]\label{inconsistency}
For any $\mathbf{L}_2$-theory $T$ and ill-founded linear order $\alpha$ we have
$$\mathbf{R}^{\alpha}(T)\equiv^{\mathbf{\Sigma}^1_1}\bot.$$
\end{corollary}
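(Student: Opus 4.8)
The plan is to deduce this directly from the preceding lemma, $\mathsf{RFN}(\mathbf{R}^{\alpha}(T))\Rightarrow\mathsf{WO}(\alpha)$. First I would record what the conclusion amounts to: since $\bot$ is inconsistent it $\mathbf{\Sigma}^1_1$-proves every $\mathbf{L}_2$-sentence, so $U\sqsubseteq^{\mathbf{\Sigma}^1_1}\bot$ is automatic for any $U$, whereas $\bot\sqsubseteq^{\mathbf{\Sigma}^1_1}U$ says exactly that $U$ $\mathbf{\Sigma}^1_1$-proves every sentence, i.e. $\lnot\mathbf{\Sigma}^1_1\mbox{-}\mathsf{Con}(U)$. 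Thus $\mathbf{R}^{\alpha}(T)\equiv^{\mathbf{\Sigma}^1_1}\bot$ is equivalent to $\lnot\mathbf{\Sigma}^1_1\mbox{-}\mathsf{Con}(\mathbf{R}^{\alpha}(T))$, i.e. to the existence of a true $\mathbf{\Sigma}^1_1$ sentence that is inconsistent with $\mathbf{R}^{\alpha}(T)$, and it is such a sentence that I would exhibit.

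Now I would argue in $\mathsf{ACA}_0$ with $\alpha$ ill-founded. Fix an infinite $\prec_{\alpha}$-descending sequence $h$ and put $\beta:=\mathsf{cone}(\alpha,h(0))$; then $h'(i):=h(i+1)$ is an infinite $\prec_{\beta}$-descending sequence, so $\lnot\mathsf{WO}(\beta)$. Since $h(0)\in D_{\alpha}$, the instance $\mathsf{RFN}^{\mathsf{it}}(T,\mathsf{cone}(\alpha,\dot x))$ with $x=h(0)$ is an axiom of $\mathbf{R}^{\alpha}(T)$, and by the defining fixed-point equation it coincides, over $\mathsf{ACA}_0$, with $\mathsf{RFN}(\mathbf{R}^{\beta}(T))$; hence $\mathbf{R}^{\alpha}(T)\vdash\mathsf{RFN}(\mathbf{R}^{\beta}(T))$. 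Because $\mathbf{R}^{\alpha}(T)$ extends $\mathsf{ACA}_0$ it also proves the preceding lemma instantiated at $\beta$ and $T$, namely $\mathsf{RFN}(\mathbf{R}^{\beta}(T))\to\mathsf{WO}(\beta)$, so $\mathbf{R}^{\alpha}(T)\vdash\mathsf{WO}(\beta)$. But $\mathsf{WO}(\beta)$ is outright refuted by the true $\mathbf{\Pi}^1_0\subseteq\mathbf{\Sigma}^1_1$ sentence asserting that $C_{h'}$ is $\prec_{C_{\beta}}$-descending; so $\mathbf{R}^{\alpha}(T)$ together with that true $\mathbf{\Sigma}^1_1$ sentence is inconsistent, giving $\lnot\mathbf{\Sigma}^1_1\mbox{-}\mathsf{Con}(\mathbf{R}^{\alpha}(T))$ and therefore $\mathbf{R}^{\alpha}(T)\equiv^{\mathbf{\Sigma}^1_1}\bot$. (Equivalently, one can just note that $\mathbf{R}^{\alpha}(T)$ proves the false $\mathbf{\Pi}^1_1$-sentence $\mathsf{WO}(\beta)$, so $\lnot\mathbf{\Pi}^1_1\mbox{-}\mathsf{RFN}(\mathbf{R}^{\alpha}(T))$, which over $\mathsf{ACA}_0$ is the same thing.)

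I do not expect a genuine obstacle; the one point needing a little care is that the step ``$\mathbf{R}^{\alpha}(T)$ extends $\mathsf{ACA}_0$, hence proves the preceding lemma'' is itself to be carried out inside $\mathsf{ACA}_0$. For this I would invoke that the preceding lemma is not merely provable but provably provable in $\mathsf{ACA}_0$ (immediate, since its L\"ob-style proof formalizes) together with the $\mathsf{ACA}_0$-provable inclusion $\mathbf{R}^{\alpha}(T)\supseteq\mathsf{ACA}_0$ --- exactly the move already used in that lemma's own proof (``since $\mathsf{ACA}_0$ proves $\Lambda$ so does $\mathbf{R}^{\alpha}(T)$''). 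No fresh appeal to L\"ob's theorem is needed: once one passes from $\alpha$ to the cone below the top of a descending sequence, the corollary falls straight out of the lemma.
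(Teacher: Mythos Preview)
Your proof is correct and follows essentially the paper's intended approach: the corollary is stated without proof, as an immediate consequence of the preceding lemma, and your argument is exactly the natural unpacking of that---pick a cone $\beta$ below the top of a descending sequence, use the lemma's content to get $\mathbf{R}^{\alpha}(T)\vdash\mathsf{WO}(\beta)$, and observe this is a false $\mathbf{\Pi}^1_1$ sentence, whence $\lnot\mathbf{\Sigma}^1_1\mbox{-}\mathsf{Con}(\mathbf{R}^{\alpha}(T))$. Your care about needing \emph{provable} provability of the lemma (so that the step works inside $\mathsf{ACA}_0$) is well placed and handled correctly.
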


Corollary \ref{inconsistency} can also be inferred from Theorem 3.2 in \cite{pakhomov2018reflection}, which says that iterating $\mathsf{RFN}_{\Pi^1_1}$ (or any stronger reflection principle) along an ill-founded order produces $\Pi^1_1$ unsound theories; such theories clearly $\Sigma^1_1$ prove $\bot$.

For reflection schemes in first-order arithmetic there is a partial conservation result known as the \emph{reduction property} \cite[Lemma~4.2]{beklemishev2003proof}. Lemma \ref{reduction_property} below is a variant of the reduction property for the reflection principles considered in this paper. Our proof of Lemma \ref{reduction_property} differs significantly from Beklemishev's proof of \cite[Lemma~4.2]{beklemishev2003proof} (we use model-theoretic methods, wheres Beklemishev's proof is completely syntactic). We use semantic methods because we find them more convenient here; our choice does not indicate any fundamental difference between the two reduction properties. In fact, the methods from \cite{beklemishev2003proof} could be used to prove our reduction property and our method could be used to prove the reduction property from \cite{beklemishev2003proof}. We note that our proof of Lemma \ref{reduction_property} is based on ideas from \cite{avigad2002saturated}. 


\begin{definition}
For a syntactic complexity $\Gamma$, we write $T\sqsubseteq_\Gamma U$ if every $\Gamma$ consequence of $T$ is a consequence of $U$, i.e., for every $\varphi\in\Gamma$, $T\vdash \varphi$ implies $U\vdash\varphi$.

We write $T\equiv_\Gamma U$ if both $T\sqsubseteq_\Gamma U$ and  $U\sqsubseteq_\Gamma T$.

Note that we write $T\sqsubseteq^{\mathbf{\Sigma^1_1}}_\Gamma U$ if: $$\forall \varphi \in \Gamma\Big( \mathbf{\Sigma^1_1}\text{-}\mathsf{Prv}(T,\varphi)\to\mathbf{\Sigma^1_1}\text{-}\mathsf{Prv}(U,\varphi)\Big).$$
Finally, we write $T\equiv^{\mathbf{\Sigma^1_1}}_\Gamma U$ if $T\sqsubseteq^{\mathbf{\Sigma^1_1}}_\Gamma U$ and $U\sqsubseteq^{\mathbf{\Sigma^1_1}}_\Gamma T$.
\end{definition}

\begin{definition}\label{countable_fragmet}
A \emph{countable fragment} $H$ of $\mathbf{L}_2$ is a countable set of $\mathbf{L}_2$-formulas such that for any $L_2$ formula $\varphi(X_1,...,X_n)$ and set constants $C_1,...,C_n$ occurring in $H$ the formula $\varphi(C_1,...,C_n)$ is in $H$. 
\end{definition}

\begin{lemma}[$\mathsf{ACA}_0$]\label{rule_cons}
Let $T$ be some $\mathbf{\Pi}^1_{n+1}$-axiomatizable $\mathbf{L}_2$-theory and let $H$ be a countable fragment containing $T$. Let $U$ be an extension of $T$ by a series of axioms indexed by $i\in I$ of the form $\forall \vec{x}_i,\vec{X}_i\big(\varphi_i(\vec{x}_i,\vec{X}_i)\to \psi_i(\vec{x}_i,\vec{X}_i)\big)$, where $\varphi_i(\vec{x}_i,\vec{X}_i)\in \mathbf{\Pi}^1_{n}\cap H$ and $\psi_i(\vec{x}_i,\vec{X}_i)\in \mathbf{\Pi}^1_{n+1}\cap H$. Finally let $V$ be the closure of $T$ under all the rules $$\frac{\forall \vec{x}_i,\vec{X}_i,\vec{y},\vec{Y}\big(\theta(\vec{x}_i,\vec{X}_i,\vec{y},\vec{Y})\lor \varphi_i(\vec{x}_i,\vec{X}_i)\big)}{\forall \vec{x}_i,\vec{X}_i,\vec{y},\vec{Y}\big(\theta(\vec{x}_i,\vec{X}_i,\vec{y},\vec{Y})\lor \psi_i(\vec{x}_i,\vec{X}_i)\big)},\text{ where $i\in I$ and $\theta\in\mathbf{\Pi}^1_n\cap H$.}$$
Then  $U$ is a $\mathbf{\Pi}^1_n\cap H$-conservative extension of $V$.
\end{lemma}
\begin{proof}
It is trivial to see that $U$ is closed under all the rules that we added to $V$ and hence $U$ is indeed an extension of $V$. For the rest of the proof we fix a $\mathbf{\Pi}^1_n\cap H$-sentence $\xi$ that isn't provable in $V$ and construct a model of $U+\lnot \xi$.

 Let $H^+$ be the extension of $H$ by countable families of set and natural number constants: $C_0,C_1,\ldots$ and $c_0,c_1,\ldots$. We denote as $\Pi^{1+}_m$ and $\Sigma^{1+}_m$ the classes of $\Pi^1_m$ and $\Sigma^1_m$ formulas where we additionally allow all constants from $H^+$. We define a sequence $V_0\subseteq V_1\subseteq V_2\subseteq \ldots$ of theories of the language $H^+$ such that each of them extends $V$ by finitely many $\dot{\Sigma^1_m}$ sentences.  We put $V_0=V+\lnot \xi$.  We fix an enumeration $\eta_0(x,X),\eta_1(x,X),\ldots$ of all $\Sigma^{1+}_{n}$ formulas without other free variables. If $V_{i}+\exists x,X \eta_i(x,X)$ is inconsistent, then we put $V_{i+1}=V_i$. Otherwise we choose constants $c_k,C_l$ that don't appear among the axioms of $V_i$ and put $V_{i+1}=U_i+\eta_i(c_k,C_l)$. We put $V_\omega=\bigcup_{i\in \omega} U_i$.

Now consider the $H^+$ model $\mathfrak{M}$ whose domain consists of all terms from $H^+$ and such that, for all atomic formulas $\chi$, we have $\mathfrak{M}\models \chi$ iff $V_\omega\vdash \chi$. We claim that $\mathfrak{M}\models U+\lnot \theta$. By an easy induction on construction of $\Sigma^{1+}_{n}$-formulas $\nu$ we show that $\mathfrak{M}\models \nu$ iff $V_\omega\vdash \nu$. Further this implies that for $\Pi^{1+}_{n+1}$-formulas $\nu$ if $V_\omega\vdash \nu$, then $\mathfrak{M}\models \nu$. 
Hence $\mathfrak{M}\models T+\lnot \theta$. Also, observe that for any set constant $A$ from $H^+$ there clearly is some $C_i$ such that  $V_\omega\vdash A=C_i$ (since $V\vdash \exists X,x  (A=X)$ and thus $\exists X,x  (A=X)$ is consistent with any $V_i$) and for any natural number term $t$ there is some $c_i$ such that $T_i\vdash c_i=t$. 

To finish the proof we only need to show that $\mathfrak{M}\models \forall \vec{X}_i,\vec{x}_i \big(\varphi_i(\vec{X}_i,\vec{x}_i)\to \psi_i (\vec{X}_i,\vec{x}_i)\big)$, for all $i\in I$. We fix   $i\in I$ and check that $\mathfrak{M}\models \forall \vec{X}_i,\vec{x}_i\big(\varphi_i(\vec{X}_i,\vec{x}_i)\to\psi_i(\vec{X}_i,\vec{x}_i)\big)$.  For this we fix  vectors $\vec{a},\vec{A}$ of constants from the alphabets $c_0,c_1,\ldots$ and  $C_0,C_1,\ldots$ that have the same dimensions as $\vec{x}_i,\vec{X}_i$ such that $\mathfrak{M}\models \varphi_i(\vec{a},\vec{A})$. We now just need to check that $\mathfrak{M}\models \psi_i(\vec{a},\vec{A})$. To verify the latter, we check that $V_\omega\vdash \psi_i(\vec{a},\vec{A})$. Indeed, $V_\omega\vdash \varphi_i(\vec{a},\vec{A})$. Hence $T_i\vdash \varphi_i(\vec{a},\vec{A})$, for some $i$. The theory $V_i$ is of the form $V+\eta(\vec{a},\vec{A},\vec{b},\vec{B})$, where $\eta(\vec{x}_i,\vec{X}_i,\vec{y},\vec{Y})$ is an $\mathbf{\Sigma}^1_n\cap H$ formula and $\vec{b},\vec{B}$ are some constants from the alphabets $C_0,C_1,\ldots$ and $c_0,c_1,\ldots$ such that $\vec{b},\vec{B}$ are pairwise distinct from $\vec{a},\vec{A}$. Thus $$V\vdash \lnot \eta(\vec{a},\vec{A},\vec{b},\vec{B})\lor \varphi_i(\vec{A},\vec{a}).$$ Since the language $H$ of $V$ doesn't have the constants $\vec{b},\vec{B},\vec{a},\vec{A}$ we could treat them as free variables and hence $$V\vdash \forall \vec{x}_i,\vec{X}_i,\vec{y},\vec{Y} \big(\lnot \eta(\vec{x}_i,\vec{X}_i,\vec{y},\vec{Y})\lor \varphi_i(\vec{x}_i,\vec{X}_i)\big).$$ Using the closure of $V$ under the rule 
$$\frac{\forall \vec{x}_i,\vec{X}_i,\vec{y},\vec{Y}\big(\lnot \eta(\vec{x}_i,\vec{X}_i,\vec{y},\vec{Y})\lor \varphi_i(\vec{x}_i,\vec{X}_i)\big)}{\forall \vec{x}_i,\vec{X}_i,\vec{y},\vec{Y}\big(\lnot \eta(\vec{x}_i,\vec{X}_i,\vec{y},\vec{Y})\lor \psi_i(\vec{x}_i,\vec{X}_i)\big)}$$
we conclude that $$V\vdash \forall \vec{x}_i,\vec{X}_i,\vec{y},\vec{Y}\big(\lnot \eta(\vec{x}_i,\vec{X}_i,\vec{y},\vec{Y})\lor \psi_i(\vec{x}_i,\vec{X}_i)\big).$$ Since $V_\omega\vdash \eta(\vec{a},\vec{A},\vec{b},\vec{B})$ we conclude that  $V_\omega\vdash \psi_i(\vec{A},\vec{a})$.\end{proof}

\begin{lemma}[$\mathsf{ACA}_0$]\label{reduction_property} Suppose $H$ is a countable fragment of $\mathbf{L}_2$, $n\ge 1$ is a natural number,  $T$ is  $\mathbf{\Pi}^1_n \cap H$-axiomatized theory, and a theory $U$ is such that $C_U$ is an $H$-constant. Then 
\begin{equation} \label{reduction_i1}T+\frac{\varphi\in \mathbf{\Pi}^1_n \cap H}{\mathbf{\Pi}^1_{n}\mbox{-}\mathsf{RFN}(U+\varphi)}\equiv_{\mathbf{\Pi}^1_n} T+\mathbf{\Pi}^1_{n+1}\mbox{-}\mathsf{RFN}(U),\end{equation}
And if $n\ge 2$ then
\begin{equation} \label{reduction_i2}T+\frac{\varphi\in\mathbf{\Pi}^1_n \cap H}{\mathbf{\Pi}^1_{n}\mbox{-}\omega\mathsf{RFN}(U+\varphi)}\equiv_{\mathbf{\Pi}^1_n} T+\mathbf{\Pi}^1_{n+1}\mbox{-}\omega\mathsf{RFN}(U).\end{equation}
\end{lemma}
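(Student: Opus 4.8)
The two equivalences \eqref{reduction_i1} and \eqref{reduction_i2} are proved by the same two-sided argument, so I would set up a uniform framework and treat both cases in parallel, with $\mathbf{\Pi}^1_n\text{-}\mathsf{RFN}$ and $\mathbf{\Pi}^1_n\text{-}\omega\mathsf{RFN}$ playing symmetric roles (the latter only when $n\ge 2$, since the extra quantifier in the $\mathbf{\Pi}^1_{n+1}$ reflection must be absorbable by an $\omega$-model satisfying a $\mathbf{\Pi}^1_n$ fact). Write $T^-$ for the left-hand theory (with the reflection \emph{rule} restricted to $\mathbf{\Pi}^1_n\cap H$ premises over $U$) and $T^+$ for the right-hand theory (with full $\mathbf{\Pi}^1_{n+1}$ reflection over $U$). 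Since both theories extend $T$ and $T$ is $\mathbf{\Pi}^1_n$-axiomatized, it suffices to show that every $\mathbf{\Pi}^1_n$ consequence of one is a $\mathbf{\Pi}^1_n$ consequence of the other, and I would do this contrapositively by constructing, from a $\mathbf{\Sigma}^1_n$-sentence consistent with one side, an appropriate model (or $\omega$-model) witnessing its consistency with the other.

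\textbf{The easy inclusion} $T^-\sqsubseteq_{\mathbf{\Pi}^1_n}T^+$: I would verify that $T^+$ derives each instance of the rule. Given $\varphi\in\mathbf{\Pi}^1_n\cap H$, if $T^+$ proves $\varphi$ then — because $\varphi$ has set constants only among the $H$-constants and $C_U$ is an $H$-constant — one can form the $\mathbf{\Pi}^1_{n+1}$ sentence asserting $\mathbf{\Pi}^1_n\text{-}\mathsf{RFN}(U+\varphi)$ as (provably equivalent to) an instance of $\mathbf{\Pi}^1_{n+1}\text{-}\mathsf{RFN}(U)$: quantifying out the constants of $\varphi$ turns ``$U+\varphi$ proves $\psi$'' into ``$U$ proves $\varphi\to\psi$'', and reflection for $U$ on the $\mathbf{\Pi}^1_{n+1}$ formula $\forall(\varphi\to\psi)$ delivers it. This is the routine direction; the same computation works with $\omega\mathsf{RFN}$ in place of $\mathsf{RFN}$ for $n\ge 2$, using that $\mathbf{\Pi}^1_{n+1}\text{-}\omega\mathsf{RFN}(U)$ applied to $\forall Z(\varphi\to\psi)$ yields an $\omega$-model of $U$ satisfying the $\mathbf{\Pi}^1_n$ (hence absolute enough) sentence $\varphi$ together with $\neg\psi$.

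\textbf{The hard inclusion} $T^+\sqsubseteq_{\mathbf{\Pi}^1_n}T^-$: this is where I expect the real work, and following the hint I would run the Avigad-style ``saturated/full satisfaction class'' construction of \cite{avigad2002saturated}. Suppose $\theta\in\mathbf{\Sigma}^1_n$ is consistent with $T^-$; I want an $\omega$-model (for \eqref{reduction_i2}) or a model with a partial $\mathbf{\Pi}^1_n$-satisfaction class (for \eqref{reduction_i1}) of $T^+ + \theta$. Build, inside $\mathsf{ACA}_0$, a chain of theories/structures: start from a model of $T^-+\theta$, and iteratively add, for each $\mathbf{\Pi}^1_n\cap H$ sentence $\varphi$ that is ``forced'' consistent at the current stage, a witness for $\mathbf{\Pi}^1_n\text{-}\mathsf{RFN}(U+\varphi)$ — i.e.\ either a $\mathbf{\Pi}^1_n$-truth verifying some $U+\varphi$-provable $\mathbf{\Pi}^1_n$ sentence, or (in the $\omega$ case) an $\omega$-submodel of $U+\varphi$. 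Taking a union/limit of this construction produces a structure in which $\mathbf{\Pi}^1_{n+1}\text{-}\mathsf{RFN}(U)$ (resp.\ $\mathbf{\Pi}^1_{n+1}\text{-}\omega\mathsf{RFN}(U)$) holds: an arbitrary $\mathbf{\Pi}^1_{n+1}$ sentence $\forall X\,\varphi(X)$ provable in $U$ can be refuted only by producing a witness $C$ making $\varphi(C)$ — a $\mathbf{\Pi}^1_n$ instance in the enlarged fragment — fail, but the rule has already been applied to exactly such instances at the stage where $C$ entered. The countable fragment $H$ is what keeps this bookkeeping $\mathsf{ACA}_0$-formalizable: at each stage only finitely/countably many new constants appear, one re-applies the (countably presented) rule to the resulting $\mathbf{\Pi}^1_n\cap H$ sentences, and the partial satisfaction machinery of $\mathsf{ACA}_0$ suffices because one never needs full truth, only $\mathbf{\Pi}^1_n$-truth at each level. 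The main obstacle is carrying out this limit construction \emph{uniformly in $\mathsf{ACA}_0$} — ensuring the satisfaction classes at successive stages cohere, that the union is still a (weak) model of $T^+$, and that passing from partial satisfaction for weak $\omega$-models back to the full-satisfaction formulation of $\omega\mathsf{RFN}$ (as flagged in the preliminaries) goes through; I would isolate that compatibility as the key technical lemma and otherwise lean on the standardness of the Avigad construction.
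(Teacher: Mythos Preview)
Your overall framework matches the paper's: both directions are argued as you say, the hard direction is model-theoretic in the style of \cite{avigad2002saturated}, and the $\mathsf{ACA}_0$ formalization goes through weak models with partial satisfaction classes. The easy inclusion is fine (and in fact simpler than you make it: the paper just observes that $\mathbf{\Pi}^1_{n+1}\text{-}\mathsf{RFN}(U)$ is equivalent to the scheme $\varphi\to\mathbf{\Pi}^1_n\text{-}\mathsf{RFN}(U+\varphi)$ for $\varphi\in\mathbf{\Pi}^1_n$, which immediately validates the rule).

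However, your description of the hard direction misidentifies what is being iterated, and this is a genuine gap. You speak of starting from a model of $T^-+\theta$ and ``adding, for each $\mathbf{\Pi}^1_n\cap H$ sentence $\varphi$ \ldots\ a witness for $\mathbf{\Pi}^1_n\text{-}\mathsf{RFN}(U+\varphi)$.'' But reflection sentences do not have witnesses in any useful sense, and nothing like this occurs in the construction. What the paper actually does is a Henkin construction at the $\mathbf{\Sigma}^1_n$ level: introduce fresh number and set constants, enumerate all $\mathbf{\Sigma}^1_n$ sentences in the expanded language, and build an increasing chain $S_0\subseteq S_1\subseteq\cdots$ of consistent theories where $S_0$ is $T$ closed under the rule and each $S_{i+1}$ decides the $i$th $\mathbf{\Sigma}^1_n$ sentence and adds a Henkin witness for its outermost existential. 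The term model of $S_\omega=\bigcup_i S_i$ then agrees with $S_\omega$ on all $\mathbf{\Sigma}^1_n$ sentences, and one reads off $\mathbf{\Pi}^1_{n+1}\text{-}\mathsf{RFN}(U)$ from the scheme form.

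The step you are missing entirely is the one that makes this work: showing that closure under the reflection rule is \emph{preserved} along the chain. The point is that if $S_i$ is closed under the rule and $\chi\in\mathbf{\Sigma}^1_n$, then $S_i+\chi$ is again closed, because from $S_i+\chi\vdash\varphi$ one gets $S_i\vdash\chi\to\varphi$, hence $S_i\vdash\mathbf{\Pi}^1_n\text{-}\mathsf{RFN}(U+(\chi\to\varphi))$ by the rule (note $\chi\to\varphi$ is $\mathbf{\Pi}^1_n$), and then $S_i+\chi\vdash\mathbf{\Pi}^1_n\text{-}\mathsf{RFN}(U+\varphi)$. This is where the restriction $n\ge 2$ enters in the $\omega$ case: one needs $\mathbf{\Pi}^1_n\text{-}\omega\mathsf{RFN}(U+\varphi)$ itself to be $\mathbf{\Pi}^1_n$ so that its truth in the term model is controlled by $S_\omega$. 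Without isolating this preservation step your limit structure has no reason to satisfy the stronger reflection principle.
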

\begin{proof}
Since the proofs of both (\ref{reduction_i2}) and (\ref{reduction_i1}) are identical (modulo the switch of $\Pi^1_m\text{-}\omega\mathsf{RFN}$ with $\Pi^1_m\text{-}\mathsf{RFN}$), we will just cover the case of (\ref{reduction_i2}).

Observe that the schemata $\mathbf{\Pi}^1_{n+1}\textsf{-}\omega\textsf{RFN}(U)$ is equivalent to the schemata
$$\forall \vec{x},\vec{X}(\varphi(\vec{x},\vec{X})\to \mathbf{\Pi}^1_{n}\textsf{-}\omega\textsf{RFN}(U+\varphi(\dot{\vec{x}},\dot{\vec{X}})))\textsf{, where}\varphi\in \mathbf{\Pi}^1_n\cap H.$$
Thus Lemma \ref{rule_cons} it is enough to show that the theory $$U=T+\frac{\varphi\in\mathbf{\Pi}^1_n \cap H}{\mathbf{\Pi}^1_{n}\mbox{-}\omega\mathsf{RFN}(U+\varphi)}$$ is closed under the rule \begin{equation}\label{reduction_property_r1}\frac{\forall \vec{x},\vec{X},\vec{y},\vec{Y}(\theta(\vec{x},\vec{X},\vec{y},\vec{Y})\lor \varphi(\vec{x},\vec{X}))}{\forall \vec{x},\vec{X},\vec{y},\vec{Y}(\theta(\vec{x},\vec{X},\vec{y},\vec{Y})\lor  \mathbf{\Pi}^1_{n}\textsf{-RFN}(U+\varphi(\dot{\vec{x}},\dot{\vec{X}})))}\text{, where $\varphi,\theta\in \mathbf{\Pi}^1_n\cap H$}.\end{equation}
Indeed, assume $$U\vdash \forall \vec{x},\vec{X},\vec{y},\vec{Y}(\theta(\vec{x},\vec{X},\vec{y},\vec{Y})\lor \varphi(\vec{x},\vec{X}))\text{, where $\varphi,\theta\in \mathbf{\Pi}^1_n\cap H$}.$$ Then $$U\vdash \mathbf{\Pi}^1_n\mbox{-}\omega\mathsf{RFN}(U+\forall \vec{x},\vec{X},\vec{y},\vec{Y}(\lnot \theta(\vec{x},\vec{X},\vec{y},\vec{Y})\to \varphi(\vec{x},\vec{X}))).$$ Hence $$U\vdash \forall \vec{x},\vec{X},\vec{y},\vec{Y}(\mathbf{\Pi}^1_n\mbox{-}\omega\mathsf{RFN}(U+\lnot \theta(\dot{\vec{x}},\dot{\vec{X}},\dot{\vec{y}},\dot{\vec{Y}})\to \varphi(\dot{\vec{x}},\dot{\vec{X}}))).$$ Therefore, since $\lnot \theta$ is $\mathbf{\Sigma}^1_n$, we have $$U\vdash\forall \vec{x},\vec{X},\vec{y},\vec{Y}(\lnot \theta(\vec{x},\vec{X},\vec{y},\vec{Y})\to \mathbf{\Pi}^1_n\mbox{-}\omega\mathsf{RFN}(U+ \varphi(\dot{\vec{x}},\dot{\vec{X}}))).$$ Which verifies the closuer of $U$ under the rule (\ref{reduction_property_r1}).

\end{proof}


The following theorem, also provable using L\"{o}b's Theorem, expresses the systematic connection between $\omega$-model reflection and iterations of the Turing jump.

\begin{theorem}[$\mathsf{ACA}_0$] \label{omegaRFN_Turing}For any linear order $\alpha$ the following assertions are equivalent:
\begin{enumerate}
    \item $\mathbf{\Pi}^1_2\mbox{-}\omega\mathsf{RFN}\big(\mathbf{\Pi}^1_2\mbox{-}\omega\mathbf{R}^{\alpha}(\mathsf{ACA}_0)\big)$;
    \item For any set $X$, its $\omega^{1+\alpha}$-th Turing jump $X^{(\omega^{1+\alpha})}$ exists.
\end{enumerate}
\end{theorem}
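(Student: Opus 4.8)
The plan is to prove both directions by L\"ob's Theorem, exactly as the other theorems in this section, reducing the statement to an assertion about one step of $\omega$-model reflection and then unwinding the definitions of $\mathbf{\Pi}^1_2\mbox{-}\omega\mathbf{R}^\alpha$ via the equivalences from \textsection\ref{linear_orders}. The Remark already observes that a single application of $\mathbf{\Pi}^1_2\mbox{-}\omega\mathsf{RFN}(K)$ to the trivial sentence $0\in C_A\lor\lnot 0\in C_A$ produces, for every set $A$, an $\omega$-model of $K$ containing $A$, and hence containing $A^{(n)}$ for all $n$ and therefore $A^{(\omega)}$; the content of the theorem is the more refined bookkeeping that tracks how many times this can be iterated along $\alpha$, landing on $\omega^{1+\alpha}$ rather than just $\omega$.

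First I would set up the bridge between the two sides at the level of a single cone. The key observation is that, provably in $\mathsf{ACA}_0$, an $\omega$-model $\mathfrak{M}\models\mathsf{ACA}_0$ containing a set $X$ automatically contains $X^{(n)}$ for every standard $n$, so an $\omega$-model of $\mathsf{ACA}_0$ containing $X$ already gives $X^{(\omega)}$; and an $\omega$-model containing $X^{(\omega\cdot\gamma)}$ gives $X^{(\omega\cdot\gamma+n)}$ for every $n$, hence $X^{(\omega\cdot(\gamma+1))}$. This is the inductive engine: passing to an $\omega$-model of $\mathbf{\Pi}^1_2\mbox{-}\omega\mathbf{R}^\beta(\mathsf{ACA}_0)$ corresponds to adding $\omega^{1+\beta}$-many jumps on top of whatever the model already knows, and iterating the $\omega$-model-reflection operator $\beta'$-many times corresponds to ordinal-exponentiating the jump-count, so that $\mathbf{\Pi}^1_2\mbox{-}\omega\mathbf{R}^\alpha(\mathsf{ACA}_0)$ matches $\omega^{1+\alpha}$. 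Concretely, I would prove by L\"ob induction the auxiliary claim that $\mathsf{ACA}_0$ proves, for every linear order $\beta$ and every set $X$: $\mathbf{\Pi}^1_2\mbox{-}\omega\mathsf{RFN}(\mathbf{\Pi}^1_2\mbox{-}\omega\mathbf{R}^\beta(\mathsf{ACA}_0))$ holds if and only if for every $X$ the jump $X^{(\omega^{1+\beta})}$ exists. Both implications are handled by examining the cone decomposition $\mathbf{\Pi}^1_2\mbox{-}\omega\mathbf{R}^\beta(\mathsf{ACA}_0) = \mathsf{ACA}_0+\{\mathbf{\Pi}^1_2\mbox{-}\omega\mathsf{RFN}(\mathbf{\Pi}^1_2\mbox{-}\omega\mathbf{R}^{\beta'}(\mathsf{ACA}_0)):\beta'<^{\mathsf{cn}}\beta\}$ together with the Löb hypothesis applied to the cones: an $\omega$-model of this theory is an $\omega$-model of $\mathsf{ACA}_0$ in which, for each $\beta'<^{\mathsf{cn}}\beta$, every set has an $\omega^{1+\beta'}$-jump, and one checks (using closure of the model under arithmetic comprehension, i.e., the jump) that such a model computes $X^{(\omega^{1+\beta})}$ for any $X$ in it; conversely, given all $X^{(\omega^{1+\beta})}$, one builds the required $\omega$-model as (a suitable coded sequence of) the sets arithmetic in $X^{(\gamma)}$ for $\gamma<\omega^{1+\beta}$, and verifies it satisfies each cone instance of $\omega$-model reflection by the Löb hypothesis again.

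For the $(1)\Rightarrow(2)$ direction of the theorem proper, one applies $\mathbf{\Pi}^1_2\mbox{-}\omega\mathsf{RFN}(\mathbf{\Pi}^1_2\mbox{-}\omega\mathbf{R}^\alpha(\mathsf{ACA}_0))$ to the $\mathbf{\Pi}^1_2$ (indeed $\mathbf{\Pi}^1_1$, even arithmetic) sentence asserting ``there is no set coding $X^{(\omega^{1+\alpha})}$''\,—\,more precisely, one uses the auxiliary claim directly. For $(2)\Rightarrow(1)$, assuming every $X^{(\omega^{1+\alpha})}$ exists, one must produce an $\omega$-model of $\mathbf{\Pi}^1_2\mbox{-}\omega\mathbf{R}^\alpha(\mathsf{ACA}_0)$ containing any prescribed set, which again is the content of the auxiliary claim. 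The main obstacle, and the place requiring genuine care, is the formalization in $\mathsf{ACA}_0$: the ``$\omega$-model built from the jump hierarchy'' must be produced as an actual coded countable sequence of sets, and its satisfaction of each relevant instance of $\mathbf{\Pi}^1_2\mbox{-}\omega\mathsf{RFN}$ must be witnessed by an honest (full, or at least weak) satisfaction class, which is exactly the subtlety flagged earlier in the paper; here one should lean on the fact that $\mathbf{\Pi}^1_2\mbox{-}\omega\mathsf{RFN}$ is itself $\mathbf{\Pi}^1_2$ and on the cut-free $\omega$-proof reformulation of $\mathbf{\Sigma}^1_2\mbox{-}\omega\mathsf{Prv}$ announced in \textsection\ref{equivalent_forms}, reducing the model-construction to a proof-search that is manifestly arithmetic in the jump hierarchy. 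A secondary bookkeeping point is the ordinal arithmetic identity $1+\alpha$ absorbing the finite iterations coming for free from $\mathsf{ACA}_0$ closure under the jump, i.e., why the answer is $\omega^{1+\alpha}$ and not $\omega^\alpha$; this is exactly the reason for the $1+\alpha$ in the statement and drops out of the base case $\alpha=0$ of the L\"ob induction, where $\mathbf{\Pi}^1_2\mbox{-}\omega\mathsf{RFN}(\mathsf{ACA}_0)$ is equivalent to the existence of $X^{(\omega)}$ for all $X$.
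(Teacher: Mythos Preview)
Your L\"ob-induction strategy is exactly the paper's proof: in each direction one assumes the $\mathsf{ACA}_0$-provability of the theorem, obtains an $\omega$-model of $\mathbf{\Pi}^1_2\mbox{-}\omega\mathbf{R}^{\alpha}(\mathsf{ACA}_0)$ containing the given set (either from hypothesis (1) directly, or built out of the jump hierarchy under hypothesis (2)), applies the L\"ob hypothesis inside that $\omega$-model to the cones $\beta<\alpha$, and then extracts the desired conclusion by arithmetic comprehension.

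The one place you diverge is the formalization worry about satisfaction classes, where you propose to fall back on the cut-free $\omega$-proof machinery of \textsection\ref{equivalent_forms}. This detour is unnecessary, and the paper does not take it (the theorem sits in \textsection\ref{preliminaries}, before that machinery is even introduced). The point you are missing is that hypothesis~(2) already yields $\mathsf{ACA}_0^+$: since $\omega\le\omega^{1+\alpha}$, the existence of $X^{(\omega^{1+\alpha})}$ for every $X$ in particular gives $X^{(\omega)}$ for every $X$. With $\mathsf{ACA}_0^+$ available, any coded $\omega$-model can be equipped with a full satisfaction class, so the model built from the jump hierarchy needs no further justification. Your route through $\omega$-proof search would also work, but it is heavier than what the situation demands.
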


\begin{proof}
We prove both directions using L\"{o}b's Theorem.

For the first direction, we work in $\mathsf{ACA}_0$ and assume that $\mathsf{ACA}_0$ proves $(1) \Rightarrow (2)$. We want to show that $(1)\Rightarrow (2)$.

So assume (1). We know from (1) that there is an $\omega$-model $\mathfrak{M}$ of $\mathbf{\Pi}^1_2\mbox{-}\omega\mathbf{R}^{\alpha}(\mathsf{ACA}_0)$ containing $X$. Note that $\mathfrak{M} \models \mathbf{\Pi}^1_2\mbox{-}\omega\mathsf{RFN}(\mathbf{\Pi}^1_2\mbox{-}\omega\mathbf{R}^{\beta}(\mathsf{ACA}_0))$ for each $\beta<\alpha$.  Since $\mathsf{ACA}_0$ proves $(1)\Rightarrow (2)$, $\mathfrak{M} \models$ ``$X^{(\omega^{1+\beta})}$ exists'' for each $\beta<\alpha$. And since $\mathfrak{M}$ is an $\omega$-model, $\mathfrak{M}$ correctly identifies $X^{(\omega^{1+\beta})}$ for each $\beta<\alpha$. We use arithmetical comprehension to extract $X^{(\omega^{1+\alpha})}$ from $\mathfrak{M}$.

For the second direction, we work in $\mathsf{ACA}_0$ and assume that $\mathsf{ACA}_0$ proves $(2) \Rightarrow (1)$. We want to show that $(2)\Rightarrow (1)$.

So assume (2). Note that (1) is equivalent to $\mathbf{\Sigma}^1_2$-$\omega\mathsf{Con}( \mathbf{\Pi}^1_2\mbox{-}\omega\mathbf{R}^{\alpha}(\mathsf{ACA}_0)  )$, which says ``for any true $\mathbf{\Sigma}^1_2$ sentence $\varphi$, there is an $\omega$-model of $\mathbf{\Pi}^1_2\mbox{-}\omega\mathbf{R}^{\alpha}(\mathsf{ACA}_0)$ in which $\varphi$ holds.'' So let $\varphi$ be a true $\mathbf{\Sigma}^1_2$ sentence and let $X$ be its witness. From (2), we know that $X^{(\omega^{1+\alpha})}$ exists. We use this set to define an $\omega$-model $\mathfrak{M}$ containing $X$ and closed under the $\omega^{1+\beta}$ jump for all $\beta<\alpha$. Since $\mathsf{ACA}_0$ proves $(2) \Rightarrow (1)$, $\mathfrak{M}$ is a model of $\mathbf{\Pi}^1_2\text{-}\omega\mathsf{RFN}(\mathbf{\Pi}^1_2\text{-}\omega\mathbf{R}^\beta(\mathsf{ACA}_0)$ for all $\beta< \alpha$. So $\mathfrak{M}$ is an $\omega$-model of 
$\mathbf{\Pi}^1_2\text{-}\omega\mathbf{R}^\alpha(\mathsf{ACA}_0)$ in which $\varphi$ holds.
\end{proof}

Immediately from Theorem \ref{omegaRFN_Turing} we get

\begin{corollary}\label{reflection_ATR_0}
 $$\mathsf{ATR}_0\mathrel{\equiv} \forall \alpha \Big(\mathsf{WO}(\alpha) \rightarrow \mathbf{\Pi}^1_2\textrm{-}\omega\mathsf{RFN}\big(\mathbf{\Pi}^1_2\textrm{-}\omega\mathbf{R}^\alpha(\mathsf{ACA}_0)\big)\Big).$$
\end{corollary}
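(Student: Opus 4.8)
The plan is to combine Theorem \ref{omegaRFN_Turing} with the classical characterization of $\mathsf{ATR}_0$ over $\mathsf{ACA}_0$ as the assertion that iterated Turing jumps exist along every well-ordering. Write $\mathsf{TJ}$ for the $\mathbf{\Pi}^1_2$ sentence $\forall X\,\forall\beta\,(\mathsf{WO}(\beta)\to X^{(\beta)}\text{ exists})$, where ``$X^{(\beta)}$ exists'' abbreviates the arithmetical statement that some set codes the jump hierarchy along $\beta$ with bottom element $X$. It is well known (see, e.g., \cite[\S VIII.3]{simpson2009subsystems}) that $\mathsf{ACA}_0\vdash\mathsf{ATR}_0\leftrightarrow\mathsf{TJ}$, so it suffices to establish, over $\mathsf{ACA}_0$, that $\mathsf{TJ}$ is equivalent to $\forall\alpha\big(\mathsf{WO}(\alpha)\to\mathbf{\Pi}^1_2\text{-}\omega\mathsf{RFN}(\mathbf{\Pi}^1_2\text{-}\omega\mathbf{R}^\alpha(\mathsf{ACA}_0))\big)$.

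For the left-to-right direction I would reason inside $\mathsf{ATR}_0$ (equivalently $\mathsf{ACA}_0+\mathsf{TJ}$) and fix a well-order $\alpha$. Since $\mathsf{ACA}_0$ proves $\mathsf{WO}(\alpha)\to\mathsf{WO}(\omega^{1+\alpha})$ (a consequence of Girard's well-ordering principle, which is available already in $\mathsf{ACA}_0$), the order $\omega^{1+\alpha}$ is again well-founded, so $\mathsf{TJ}$ gives that every set $X$ has an $\omega^{1+\alpha}$-th jump. This is exactly clause (2) of Theorem \ref{omegaRFN_Turing}; since that theorem is a theorem of $\mathsf{ACA}_0\subseteq\mathsf{ATR}_0$, clause (1) follows, i.e.\ $\mathbf{\Pi}^1_2\text{-}\omega\mathsf{RFN}(\mathbf{\Pi}^1_2\text{-}\omega\mathbf{R}^\alpha(\mathsf{ACA}_0))$. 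As $\alpha$ was arbitrary, this yields the reflection statement.

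For the converse I would reason in $\mathsf{ACA}_0$ together with the reflection statement and verify $\mathsf{TJ}$. Given a set $X$ and a well-order $\beta$, instantiate the reflection statement at $\alpha:=\beta$: using $\mathsf{WO}(\beta)$ we get $\mathbf{\Pi}^1_2\text{-}\omega\mathsf{RFN}(\mathbf{\Pi}^1_2\text{-}\omega\mathbf{R}^\beta(\mathsf{ACA}_0))$, hence by Theorem \ref{omegaRFN_Turing} the jump $X^{(\omega^{1+\beta})}$ exists. Since $\beta$ embeds into $\omega^{1+\beta}$ by an order homomorphism (compose $\beta\hookrightarrow 1+\beta$ with $\gamma\mapsto\omega^\gamma$), and since $\mathsf{ACA}_0$ proves that from a jump hierarchy along a well-order $\delta$ one can construct, by arithmetical comprehension, a jump hierarchy along any well-order that embeds into $\delta$, we conclude that $X^{(\beta)}$ exists. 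As $X$ and $\beta$ were arbitrary, $\mathsf{TJ}$, and hence $\mathsf{ATR}_0$, follows.

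The only mildly delicate step is the last one, namely extracting $X^{(\beta)}$ from $X^{(\omega^{1+\beta})}$ within $\mathsf{ACA}_0$: because the embedding $\beta\hookrightarrow\omega^{1+\beta}$ need not carry successors to successors, one must check that each column of the $\beta$-hierarchy is uniformly arithmetical (with the embedding as a parameter) in the $\omega^{1+\beta}$-hierarchy, rather than literally appearing among its columns. This is a routine verification of exactly the sort carried out in \cite[\S VIII.3]{simpson2009subsystems}; granting it, everything else is an immediate bookkeeping consequence of Theorem \ref{omegaRFN_Turing} and of $\mathsf{ACA}_0\vdash\mathsf{WO}(\alpha)\to\mathsf{WO}(\omega^{1+\alpha})$.
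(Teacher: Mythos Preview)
Your proposal is correct and is essentially the same as the paper's approach: the paper simply says the corollary follows ``immediately from Theorem \ref{omegaRFN_Turing},'' and what you have written is exactly the natural unpacking of that remark via the standard $\mathsf{ACA}_0$-equivalence $\mathsf{ATR}_0\leftrightarrow\mathsf{TJ}$ together with $\mathsf{ACA}_0\vdash\mathsf{WO}(\alpha)\to\mathsf{WO}(\omega^{1+\alpha})$. The one step you flag as mildly delicate (extracting $X^{(\beta)}$ from $X^{(\omega^{1+\beta})}$) is indeed routine and is implicitly subsumed in the paper's ``immediately.''
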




\section{Equivalent forms of $\omega$-model reflection}\label{equivalent_forms}
In this section we will show that with our choice of $\mathsf{ACA}_0$ as base system, the principle of $\omega$-model reflection is fairly robust with respect to the choice of particular formalization. Namely, we will show the equivalence of the variants of reflection based on $\omega$-models, $\omega$-proofs with cuts, and cut-free $\omega$-proofs. Note that David Fern\'{a}ndez-Duque \cite{fernndezduque2015impredicative} proved that for certain other similar reflection principles these equivalences aren't provable in $\mathsf{ACA}_0$; namely he considered reflection principles based on certain formalizations of provability in $\omega$-logic that were not based on the notion of $\omega$-proof. 

\subsection{Defining $\omega$-proofs for $\mathbf{L}_2$}

First let us formulate the variant of $\omega$-logic for $\mathbf{L}_2$. This logic will be a variant of the Tait calculus.  Formulas are built up from literals using the connectives $\land,\lor$ and quantifiers $\forall x$, $\exists x$, $\forall X$, $\exists X$. Literals are atomic $\mathbf{L}_2$-formulas $\varphi$ and their negations ${\sim}\varphi$. As usual for any formula $\varphi$, its negation $\lnot \varphi$ is the result of switching any connective and quantifier with the dual, switching positive literals $\varphi$ with ${\sim}\varphi$ and switching negative literals ${\sim}\varphi$ with $\varphi$. Sequents are at most countable sets of formulas without free natural number variables (we allow free set variables). The axioms and rules of the logic are:
\begin{center}
  \AxiomC{\phantom{$\Gamma$}}
  \RightLabel{, if $\mathsf{val}(t)=\mathsf{val}(v)$ ($\mathsf{Ax}_1$);}
  \UnaryInfC{$\Gamma,t=v$}
  \DisplayProof
  \hspace{4pt}
  \AxiomC{\phantom{$\Gamma$}}
  \RightLabel{, if $\mathsf{val}(t)\ne \mathsf{val}(v)$ ($\mathsf{Ax}_2$);}
  \UnaryInfC{$\Gamma,{\sim}t=v$;}
  \DisplayProof
  \hspace{4pt}
  \AxiomC{\phantom{$\Gamma$}}
  \RightLabel{, if $\mathsf{val}(t)\in A$ ($\mathsf{Ax}_3$);}
  \UnaryInfC{$\Gamma,t\in C_A$}
  \DisplayProof
  \hspace{4pt}
  \AxiomC{\phantom{$\Gamma$}}
  \RightLabel{, if $\mathsf{val}(t)\not\in A$ ($\mathsf{Ax}_4$);}
  \UnaryInfC{$\Gamma,{\sim}t\in C_A$}
  \DisplayProof
  \hspace{4pt}
  \AxiomC{\phantom{$\Gamma$}}
  \RightLabel{($\mathsf{Ax}_5$);}
  \UnaryInfC{$\Gamma,t\in X,{\sim}t\in X$}
  \DisplayProof
  \hspace{4pt}
  \AxiomC{$\Gamma,\varphi$}
  \AxiomC{$\Gamma,\psi$}
  \RightLabel{($\land\mbox{-}\mathsf{Int}$);}
  \BinaryInfC{$\Gamma,\varphi\land\psi$}
  \DisplayProof
  \hspace{4pt}
  \AxiomC{$\Gamma,\varphi,\psi$}
  \RightLabel{($\lor\mbox{-}\mathsf{Int}$);}
  \UnaryInfC{$\Gamma,\varphi\lor\psi$}
  \DisplayProof
  \hspace{4pt}
  \AxiomC{$\Gamma,\varphi(\underline{n})$,}
  \AxiomC{ for all $n\in\mathbb{N}$}
  \RightLabel{($\forall_1\mbox{-}\mathsf{Int}$);}
  \BinaryInfC{$\Gamma,\forall x\;\varphi(x)$}
  \DisplayProof
  \hspace{4pt}
  \AxiomC{$\Gamma,\varphi(t)$}
  \RightLabel{($\exists_1\mbox{-}\mathsf{Int}$);}
  \UnaryInfC{$\Gamma,\exists x\;\varphi(x)$}
  \DisplayProof
  \hspace{4pt}
  \AxiomC{$\Gamma,\varphi(Y)$}
  \RightLabel{, if $Y\not\in\mathsf{FV}(\Gamma)$($\forall_2\mbox{-}\mathsf{Int}$);}
  \UnaryInfC{$\Gamma,\forall X\;\varphi(X)$}
  \DisplayProof
  \hspace{4pt}
  \AxiomC{$\Gamma,\varphi(C_A)$}
  \RightLabel{($\exists_2\mbox{-}\mathsf{Int}_1$);}
  \UnaryInfC{$\Gamma,\exists X\;\varphi(X)$}
  \DisplayProof
  \hspace{4pt}
  \AxiomC{$\Gamma,\varphi(Y)$}
  \RightLabel{($\exists_2\mbox{-}\mathsf{Int}_2$);}
  \UnaryInfC{$\Gamma,\exists X\;\varphi(X)$}
  \DisplayProof
  \hspace{4pt}
    \AxiomC{$\Gamma,\varphi$}
    \AxiomC{$\Gamma,\lnot \varphi$}
  \RightLabel{($\mathsf{Cut}$).}
  \BinaryInfC{$\Gamma$}
    \DisplayProof
  \hspace{4pt}
  \AxiomC{$\Gamma$}
  \RightLabel{($\mathsf{Rep}$);}
  \UnaryInfC{$\Gamma$}
  \DisplayProof
\end{center}
A \emph{pre-proof} is any tree that accords with these axioms and rules in the sense that its leaves are axioms and each child node follows from applying one of the rules. Note that a pre-proof may be ill-founded. By a \emph{proof} we mean a well-founded pre-proof. A sequent $\Gamma$ is $\omega$-provable if there is a well-founded proof-tree with $\Gamma$ as its conclusion. We write $\vdash_{\omega}\Gamma$ if the sequent $\Gamma$ has an $\omega$-proof. And we write $\vdash_0\Gamma$ if the sequent $\Gamma$ has a cut-free $\omega$-proof.

\subsubsection{Details of encoding $\omega$-proofs}

We now describe in some detail how we encode infinitary proof trees in $\mathsf{ACA}_0$. We encode sequents as codes for countable sets of $\mathbf{L}_2$-formulas. Due to the way our encoding works, the same sequent could have multiple representations. Note that equality on codes of sequents coincides with extensional equality: $$X\dot=Y\defiff \forall Z(Z\dot\in X\mathrel{\leftrightarrow} Z\dot\in Y).$$
And it is easy to see that $X\dot=Y$ is equivalent to a $\Pi^1_0$ formula.

It is useful to define not only the notion of \emph{proof} but also the notion of \emph{pre-proof}, where a pre-proof is a possibly ill-founded derivation tree. More formally, a pre-proof $P$ is (a code for) a triple $\mathit{Sh}_P$, $\mathit{Sq}_P$, $\mathit{Rl}_P$.  Here $\mathit{Sh}_P$ is a ``proof-shape'' tree $\langle I_P,r_P,\prec_P\rangle$, where  $I_P\subseteq\mathbb{N}$ is the domain of the tree, $r_P\in I_P$ is the root of the tree, and $x \prec_P y$ is the binary relation on $I_P$ with the intended meaning that $x$ is a child of $y$. We require  that for any $i\in I_P$ there exists unique $\prec_P$-path from it to the root $$i=i_0\prec_P i_1\prec_P\ldots\prec_P i_n=r_P.$$ We require $\mathit{Sq}_P$ to be an assignment of sequents $\langle \Delta_i\mid i\in I_P\rangle$ to the nodes of the tree $\mathit{Sh}_P$. Finally, $\mathit{Rl}_P$ is an assignment of rules $\langle R_i\mid i\in I_P\rangle$ to the nodes of the tree $\mathit{Sh}_P$. Each $R_i$ contains all the information about the applied rule. First it contains the rule type ($\mathsf{Ax}_1$, $\mathsf{Ax}_2$, $\mathsf{Ax}_3$, $\mathsf{Ax}_4$, $\mathsf{Ax}_5$, $\land\mbox{-}\mathsf{Int}$, $\lor\mbox{-}\mathsf{Int}$, $\forall_1\mbox{-}\mathsf{Int}$, $\exists_1\mbox{-}\mathsf{Int}$, $\forall_2\mbox{-}\mathsf{Int}$, $\exists_2\mbox{-}\mathsf{Int}_1$,  $\exists_2\mbox{-}\mathsf{Int}_2$). And it contains the information specific to each particular rule type. Let us specify what this information is in the case when $R_i$ is of the type $\forall_1\mbox{-}\mathsf{Int}$, the cases of all the other rule types are analogous. The rule $R_i$ should be supplied with the sequent $\Gamma_{i}$, variable $x_{i}$, formula $\varphi_i(x_i)$ and sequents of indices of the premises $\langle p_{i,n}\mid n\in \mathbb{N}\rangle$. It is required that $\Delta_i\dot=(\Gamma_i,\forall x_i\;\varphi_i(x_i))$, that all $\Delta_{p_{i,n}}\dot=(\Gamma_i,\varphi_i(\underline{n}))$, that $\{p_{i,n}\mid n\in\mathbb{N}\}=\{j\in I_P\mid j\prec_P i\}$, and that $p_{i,n}$ are pairwise distinct. For a pre-proof $P$ the sequent $\Gamma_{r_P}$ is called the conclusion of $P$.  A pre-proof $P$ is called a proof if $\prec_P$ is a well-founded relation.

We write $\vdash_{\omega}\Gamma$ if the sequent $\Gamma$ has an $\omega$-proof. And we write $\vdash_0\Gamma$ if the sequent $\Gamma$ has a cut-free $\omega$-proof. We note that $\mathsf{ACA}_0$ cannot prove the full cut-elimination theorem for $\omega$-logic (cut-elimination for $\omega$-logic requires the system $\mathsf{ACA}_0^+$; however, $\mathsf{ACA}_0$ \emph{can} show that it is possible to eliminate all the cuts of the highest rank, see \cite[Theorem~6.4.1]{girard1987proof}). Due to this issue we formulate several variants of $\omega$-completeness theorems.

Recall that we write $\mathfrak{M}\models T$ if all axioms of theory $T$ hold in the model $\mathfrak{M}$.  At the same time for closed sequents $\Gamma$ (i.e. sequents without free variables) we will write $\mathfrak{M}\models \Gamma$ if some formula $\varphi\in \Gamma$ holds in $\mathfrak{M}$. This is an abuse of notation since both sequents and theories are represented by codes of (countable) sets of $\mathbf{L}_2$-formulas. However, it will be always clear from context whether a particular object is a theory or a sequent (in particular we denote theories by capital Latin letters $T,U$ and sequents by capital Greek letters $\Gamma,\Delta$). For a theory $T$ we denote by $\lnot T$ the sequent $\{\lnot \varphi \mid \varphi\mbox{ is an axiom of } T\}$.

\subsection{Completeness theorems for cut-free $\omega$-proofs}


We now describe in detail a completeness theorem for $\omega$-proofs with respect to $\omega$-models. Our completeness theorem is proved using Sch\"{u}tte's method of deduction chains. Thus, before proving the theorem we will work up to the definition of a \emph{deduction chain} for a sequent $\Gamma$ and a countable fragment $H$ of $\mathbf{L}_2$. First, the definition of a \emph{countable fragment} of $\mathbf{L}_2$:

\begin{remark}
When we are working with a sequent $\Gamma$ and a countable fragment $H$ (see Definition \ref{countable_fragmet}), we will assume that:
\begin{enumerate}
\item  $H$ comes with a fixed enumeration $Y_0,Y_1,...$ of the free set variables in $H$ that do not occur free in $\Gamma$.
    \item $H$ comes with a fixed enumeration $A_0,A_1,...$ where each $A_i$ is either:
\begin{enumerate}
    \item an $H$ formula $\varphi_i$ that does not start with $\exists$
    \item a pair $\langle \exists x \varphi_i(x),t\rangle$ where $\exists x \varphi_i(x)$ is an $H$ formula and $t$ is a closed term or
    \item a pair $\langle \exists X \varphi_i(X),U\rangle$ where $ \exists X \varphi_i(X)$ is an $H$ formula and $U$ is either a second order variable or second order constant.
\end{enumerate}
\end{enumerate}
We require that the sequence $A_0,A_1,...$ covers all formulas and pairs of the form we describe; moreover, we require that each such formula and pair occur infinitely many times in the enumeration.
\end{remark}





\begin{definition}
A sequent $\Delta$ is \emph{axiomatic} if it contains an instance of one of the axioms (1)--(5). 
\end{definition}


\begin{definition}
A \emph{deduction chain} for a sequent $\Gamma$ and a countable fragment of $H$ of $\mathbf{L}_2$ is a finite sequence $\Delta_0,\Delta_1,...,\Delta_k$ of sequents (i.e., countable sets) of constant $\mathbf{L}_2$ formulas satisfying the following conditions:
\begin{enumerate}
\item $\Delta_0$ is the sequent $\Gamma$.
\item For all numbers $i$ less than $k$, $\Delta_i$ is not axiomatic.
\item If $A_i$ is $\varphi\wedge\psi$ and $\varphi\wedge\psi \in \Delta_i$, then $\Delta_{i+1}$ is either $\Delta_i,\varphi$ or $\Delta_i,\psi$.
\item If $A_i$ is $\varphi\vee\psi$ and $\varphi\vee\psi\in\Delta_i$, then $\Delta_{i+1}$ is $\Delta_i,\varphi,\psi$.
\item If $A_i$ is $\forall x\varphi(x)$ and $\forall x\varphi(x)\in\Delta_i$, then, for some $n\in\mathbb{N}$, $\Delta_{i+1}$ is $\Delta_i,\varphi(\bar{n})$.
\item If $A_i$ is $\forall X \varphi(X)$ and $\forall X \varphi(X) \in \Delta_i$, then $\Delta_{i+1}$ is $\Delta_i,\varphi(Y_i)$.
\item If $A_i$ is $\langle \exists x \varphi(x),t\rangle$ and $\exists x \varphi(x) \in \Delta_i$, then $\Delta_{i+1}$ is $\Delta_i,\varphi(t)$.
\item If $A_i$ is $\langle \exists X \varphi(X),U\rangle$ and $\exists X \varphi(X)\in\Delta_i$, then $\Delta_{i+1}$ is $\Delta_i,\varphi(U)$.
\item Otherwise, $\Delta_{i+1} = \Delta_i$.
\end{enumerate}
This concludes the definition of \emph{deduction chains}.
\end{definition}

\begin{definition}
Given a sequent $\Gamma$ and countable fragment $H$, we write $\mathbb{DT}[\Gamma, H]$ to denote the $\omega$ branching tree of all deduction chains for $\Gamma$ and $H$. We call $\mathbb{DT}[\Gamma, H]$ the \emph{canonical tree} of $\Gamma,H$.
\end{definition}

\begin{remark}
Note that the tree $\mathbb{DT}[\Gamma, H]$ constitutes a cut-free pre-proof in our proof system. So if $\mathbb{DT}[\Gamma, H]$ is well-founded, then $\mathbb{DT}[\Gamma, H]$ constitutes a cut-free $\omega$-proof of $\Gamma$.
\end{remark}

The following standard lemma follows from the definition of deduction chains.

\begin{lemma}\label{standard_lemma}
Suppose $\mathbb{DT}[\Gamma, H]$ is ill-founded with path $\mathbb{P}$. Then:
\begin{enumerate}
\item $\mathbb{P}$ does not contain any literals that are true in $\mathbb{N}$.
\item $\mathbb{P}$ does not contain formulas $s\in K_i$ and $t\notin K_i$ for constant terms $s$ and $t$ such that $s^\mathbb{N}=t^\mathbb{N}$.
\item If $\mathbb{P}$ contains $E_0\vee E_1$, then $\mathbb{P}$ contains $E_0$ and $E_1$.
\item If $\mathbb{P}$ contains $E_0\wedge E_1$, then $\mathbb{P}$ contains $E_0$ or $E_1$.
\item If $\mathbb{P}$ contains $\exists xF(x)$, then $\mathbb{P}$ contains $F(\bar{n})$ for all $n$.
\item If $\mathbb{P}$ contains $\forall xF(x)$, then $\mathbb{P}$ contains $F(\bar{n})$ for some $n$.
\item If $\mathbb{P}$ contains $\exists XF(X)$, then $\mathbb{P}$ contains $F(U)$ for all set variables and constants $U$.
\item If $\mathbb{P}$ contains $\forall XF(X)$, then $\mathbb{P}$ contains $F(U)$ for some set variable/constant $U$.
\end{enumerate}
\end{lemma}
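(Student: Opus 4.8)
The plan is to read the eight clauses straight off the definition of a deduction chain, using the combinatorics of the enumeration $A_0,A_1,\dots$ guaranteed by the standing assumptions on $H$. Write $\Delta_0\subseteq\Delta_1\subseteq\cdots$ for the sequents along the branch $\mathbb{P}$, so that ``$\mathbb{P}$ contains $\varphi$'' means $\varphi\in\Delta_i$ for some $i$. I would first record the two structural facts that drive everything. \emph{Monotonicity}: conditions (3)--(9) in the definition of a deduction chain each give $\Delta_{i+1}\supseteq\Delta_i$, so a formula that appears in some $\Delta_i$ appears in every later one. \emph{Non-axiomaticity}: no $\Delta_i$ along an infinite branch can be axiomatic, since a proper extension of that deduction chain would then violate condition (2), contradicting the fact that $\mathbb{P}$ continues past $\Delta_i$.

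Clauses (1) and (2) follow at once from non-axiomaticity. A literal true in $\mathbb{N}$ has one of the forms $t=v$ with $\mathsf{val}(t)=\mathsf{val}(v)$, ${\sim}t=v$ with $\mathsf{val}(t)\ne\mathsf{val}(v)$, $t\in C_A$ with $\mathsf{val}(t)\in A$, or ${\sim}t\in C_A$ with $\mathsf{val}(t)\notin A$, and its presence in some $\Delta_i$ would make that sequent an instance of $\mathsf{Ax}_1$--$\mathsf{Ax}_4$. For (2): if $s\in K_i$ and $t\notin K_i$ both lie in $\mathbb{P}$ with $s^{\mathbb{N}}=t^{\mathbb{N}}$, then when $K_i$ is a set constant one of the two formulas is already a literal true in $\mathbb{N}$ (so (1) applies), and when $K_i$ is a free set variable, monotonicity puts both formulas into a common $\Delta_i$, which is then an instance of $\mathsf{Ax}_5$.

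For clauses (3)--(8) I would use a single template, illustrated on (3). Suppose $E_0\lor E_1\in\Delta_i$. By the assumption that every $H$-formula occurs infinitely often in the enumeration, pick $j\ge i$ with $A_j=E_0\lor E_1$; by monotonicity $E_0\lor E_1\in\Delta_j$, so condition (4) of the definition forces $\Delta_{j+1}=\Delta_j,E_0,E_1$, putting $E_0$ and $E_1$ into $\mathbb{P}$. The remaining clauses come out the same way: clause (4) from condition (3) (the branch goes through one of the two children $\Delta_j,E_0$, $\Delta_j,E_1$); clause (5) by running the argument with the pairs $\langle\exists x F(x),\bar n\rangle$, one $n$ at a time, via condition (7); clause (6) from condition (5); clause (7) with the pairs $\langle\exists X F(X),U\rangle$, one $U$ at a time, via condition (8); and clause (8) from condition (6), which introduces a fresh $Y_j$ with $F(Y_j)\in\mathbb{P}$. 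The only thing to check in each case is that a task of the right shape occurs at some stage past the one where the compound formula first appeared, which is precisely the ``infinitely often'' clause in the standing assumptions on the enumeration.

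I do not expect a real obstacle --- this is a bookkeeping lemma. The one spot that will need a sentence of care is the free-variable case of clause (2): $\mathsf{Ax}_5$ as displayed mentions a single term, so one should either read it up to value-equality of terms (consistently with the side conditions on $\mathsf{Ax}_1$--$\mathsf{Ax}_4$), or observe that the model construction to follow invokes (2) only for the terms actually occurring in $\mathbb{P}$. Beyond that, the manipulations of the enumeration are arithmetical, so the argument is available in $\mathsf{ACA}_0$.
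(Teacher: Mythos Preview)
Your proposal is correct and takes the same approach the paper indicates: the paper simply calls the lemma ``standard'' and says it ``follows from the definition of deduction chains,'' adding only the one-sentence remark that a true literal in some $\Delta_i$ would make that sequent axiomatic, which cannot happen along an infinite branch. You have filled in exactly the details this remark leaves implicit---monotonicity, non-axiomaticity, and the use of the ``infinitely often'' clause of the enumeration---and your caveat about the term-mismatch in $\mathsf{Ax}_5$ versus clause~(2) is a genuine point the paper glosses over.
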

To see why clauses 1 and 2 of Lemma \ref{standard_lemma} are true, note that if $\mathbb{P}$ contained a true atomic sentence $\varphi$, then $\varphi$  would belong to an axiomatic sequent, but by definition deduction chains do not contain axiomatic sequents.

Now we are ready to prove our completeness theorems for $\omega$-models.

\begin{theorem}[$\mathsf{ACA}_0^+$]\label{omega_completeness_3}
For any closed sequent $\Gamma$ the following are equivalent:
\begin{enumerate}
    \item $\vdash_0 \Gamma$;
    \item There exists a family $S$ of sets such that for any $\omega$-model $\mathfrak{M}\supseteq S$ we have $\mathfrak{M}\models \Gamma$. 
\end{enumerate}
\end{theorem}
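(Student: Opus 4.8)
The plan is to prove the two implications separately. Throughout I would work in $\mathsf{ACA}_0^+$, which is essential here: the statement ``$\mathfrak{M}\models\Gamma$'' for an arbitrary $\omega$-model $\mathfrak{M}$ only makes sense once every $\omega$-model is known to carry a (necessarily unique) full satisfaction class, and supplying such classes is exactly what $\mathsf{ACA}_0^+$ adds over $\mathsf{ACA}_0$.

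For the soundness direction $(1)\Rightarrow(2)$, fix a cut-free $\omega$-proof $P$ of $\Gamma$ and let $S$ be the family of all sets $A$ with $C_A$ occurring somewhere in $P$; this is arithmetically definable from $P$, so it exists. Given any $\omega$-model $\mathfrak{M}\supseteq S$ with full satisfaction class $\mathcal{C}$, I would argue by transfinite induction along the well-founded tree order $\prec_P$ that every sequent $\Delta_i$ of $P$ is satisfied by $\mathfrak{M}$, where for sequents carrying free set variables (possible because $\forall_2\text{-}\mathsf{Int}$ introduces eigenvariables) ``satisfied'' means ``some disjunct holds under every assignment of sets of $\mathfrak{M}$ to those variables''. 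The base cases are the axioms $\mathsf{Ax}_1$--$\mathsf{Ax}_5$, and the cases $\mathsf{Ax}_3,\mathsf{Ax}_4$ are precisely where one invokes $A\in S\subseteq\mathfrak{M}$; the inductive cases mirror the rules and use only the compositionality of $\mathcal{C}$ and the fact that $\mathfrak{M}$ is an $\omega$-model (for $\forall_1\text{-}\mathsf{Int}$). Evaluating at the root gives $\mathfrak{M}\models\Gamma$, so $S$ witnesses (2). The induction is legitimate in $\mathsf{ACA}_0$ once $\mathcal{C}$ is in hand: the set of nodes at which satisfaction fails is arithmetical in $\mathcal{C}$ and $P$, so if nonempty it has a $\prec_P$-minimal element.

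For the completeness direction $(2)\Rightarrow(1)$, assume (2) with witness $S$ (a countable family). Build a countable fragment $H$ of $\mathbf{L}_2$ whose set constants are those occurring in $\Gamma$ together with one constant $C_A$ for each $A\in S$, equipped with the bookkeeping enumerations demanded by the Remark, and form the canonical tree $\mathbb{DT}[\Gamma,H]$. By the Remark this is a cut-free pre-proof of $\Gamma$ all of whose leaves are axiomatic sequents, so if it is well-founded it is a cut-free $\omega$-proof of $\Gamma$ and (1) holds. Suppose instead $\mathbb{DT}[\Gamma,H]$ is ill-founded; since it is a tree of finite sequences, $\mathsf{ACA}_0$ produces an infinite branch $\mathbb{P}$ by defining a descending sequence of nodes via arithmetical recursion along a subset of nodes lacking a $\prec$-minimal element. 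By Lemma \ref{standard_lemma}, $\mathbb{P}$ is downward saturated and contains no literal true in $\mathbb{N}$. I would then define an $\omega$-model $\mathfrak{M}$ whose second-order part is $\{A:C_A\in H\}\cup\{Z_i:i\in\mathbb{N}\}$, where $Z_i$ interprets the eigenvariable $Y_i$ by putting $n\in Z_i$ iff $\mathbb{P}$ contains no literal $s\in Y_i$ with $\mathsf{val}(s)=n$ (consistent by Lemma \ref{standard_lemma}(2), and each $Z_i$ exists by arithmetical comprehension). Using the satisfaction class of $\mathfrak{M}$ and induction on formula complexity, every formula of $\mathbb{P}$ is false in $\mathfrak{M}$: the connective and quantifier steps are immediate from the matching clauses of Lemma \ref{standard_lemma}, the $\exists X$ case using that $\mathbb{P}$ contains the instance $F(U)$ for every set symbol $U$, and the $\forall X$ case that it contains $F(Y_i)$ for some $i$ with $Z_i$ available in $\mathfrak{M}$. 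Since $\Gamma\subseteq\mathbb{P}$ we get $\mathfrak{M}\not\models\Gamma$ while $\mathfrak{M}\supseteq S$, contradicting (2); hence $\mathbb{DT}[\Gamma,H]$ is well-founded after all.

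The combinatorial heart — Schütte's deduction chains and the canonical counter-model — is standard; the main obstacle is the bookkeeping that keeps the whole argument inside $\mathsf{ACA}_0^+$. The three points to watch are: that ``$\mathfrak{M}\models\varphi$'' is handled uniformly via the full satisfaction classes granted by $\mathsf{ACA}_0^+$ (this is why $\mathsf{ACA}_0$ alone does not even suffice to phrase the theorem); that transfinite induction along the well-founded proof tree in the soundness direction, and the arithmetical recursion producing a branch of an ill-founded canonical tree in the completeness direction, both go through using only arithmetical comprehension; and that the eigenvariable and assignment bookkeeping for sequents with free set variables is handled consistently on both sides.
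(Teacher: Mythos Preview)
Your proposal is correct and follows essentially the same route as the paper: soundness by induction along the well-founded proof tree, and completeness via Sch\"utte's canonical tree $\mathbb{DT}[\Gamma,H]$, extracting a counter-model from an infinite branch when the tree is ill-founded. Your treatment is in fact slightly more careful than the paper's in two places: you spell out the transfinite induction for soundness (the paper just says ``follows from soundness''), and in the counter-model you interpret each constant $C_A$ directly as $A$ while defining $Z_i$ separately for the eigenvariables $Y_i$, whereas the paper uses a single uniform $\mathsf{val}$ function for all set terms---both work, but your version sidesteps having to verify that $\mathsf{val}(C_A)=A$.
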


\begin{proof}
That (1) implies (2) follows from the soundness of the proof system with respect to $\omega$-models. 

For (2) implies (1) we prove the contrapositive. Assume that $\Gamma$ does not have a cut-free $\omega$-proof. Let $S$ be a family of sets and let $H$ be a countable fragment in which all sets in $S$ are named. Note that $\mathbb{DT}[\Gamma, H]$ is ill-founded; otherwise, it would constitute a cut-free $\omega$-proof of $\Gamma$. We will use an infinite path through $\mathbb{DT}[\Gamma, H]$ to define an $\omega$-model $\mathfrak{M}$ containing the sets named in $H$ (and so \emph{a fortiori} the sets in $S$) such that $\Gamma$ fails  in $\mathfrak{M}$.

Let $P$ be a path through $\mathbb{DT}[\Gamma, H]$, and let $\mathbb{P}$ be the set of all formulas that occur in $P$. For any set term (variable or constant) $K$, we now assign a subset $\mathsf{val}(K)$ of $\mathbb{N}$ to $K$ as follows: $$\mathsf{val}(K):=\{t^\mathbb{N}:\textrm{ $t$ is a constant $\mathbf{L}_2$ term and $(t\notin K)$ belongs to $\mathbb{P}\}$}.$$ It is easy to verify, given the axioms of our proof system, that for any $C_A\in H$, $\mathsf{val}(C_A)$ is the set $A$. 

Let $M$ be the weak $\omega$-model given by relativizing the second-order quantifiers to the disjoint union of the values $\mathsf{val}(K_n)$. Since we are reasoning in $\mathsf{ACA}_0^+$ we may enrich $M$ with a full satisfaction class, yielding an $\omega$-model $\mathfrak{M}$. An induction on the complexity of formulas (making use of Lemma \ref{standard_lemma}) shows that for any formula $\varphi$, $\varphi \in \mathbb{P}$ only if $\mathfrak{M}\nvDash \varphi$. Thus, the assumption that $\mathbb{DT}[\Gamma, H]$ is ill-founded implies that there is an $\omega$-model $\mathfrak{M}$ containing each set named by a constant in $H$ in which every sentence in $\Gamma$ is false.
\end{proof}

\subsection{Consequences of the completeness theorems}

A weak $\omega$-model $M$ is an at most countable set of subsets of $\mathbb{N}$ that is interpreted as the range for second-order variables. In $\mathsf{ACA}_0^+$ given a weak $\omega$-model we could always expand it by its unique full satisfaction class and thus obtain an $\omega$-model. However, we cannot do this over $\mathsf{ACA}_0$. Instead, in $\mathsf{ACA}_0$ we can form relativizations $(\varphi(\vec{X},\vec{x}))^M$ of $L_2$-formulas $\varphi$ to weak $\omega$-models. The formula $(\varphi(\vec{X},\vec{x}))^M$ is the result of replacement of second-order quantifiers $\forall Y$ with $\forall Y\dot\in M$.

Note that the proof of Theorem \ref{omega_completeness_3} goes through entirely in $\mathsf{ACA}_0$, except for the appeal to $\mathsf{ACA}_0^+$ to enrich $M$ with a full satisfaction class. Thus, the same proof yields the following version of the theorem:

\begin{theorem}\label{omega_completeness_2} Suppose $\Gamma(\vec{X})$ is a finite $L_2$-sequent. Then $\mathsf{ACA}_0$ proves that the following are equivalent for any $\vec{X}$:
\begin{enumerate}
    \item $\vdash_0 \Gamma(\dot{\vec{X}})$;
    \item there exists a family $S$ of sets such that for any $M\dot\supseteq S$ if $\vec{X}\dot\in M$, then $(\bigvee \Gamma(\vec{X}))^M$. 
\end{enumerate}
\end{theorem}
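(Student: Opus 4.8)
The plan is to observe that Theorem~\ref{omega_completeness_2} is just the uniform, $\mathsf{ACA}_0$-provable, parametrized version of Theorem~\ref{omega_completeness_3}, specialized to finite $L_2$-sequents with set parameters $\vec X$, and with full $\omega$-models replaced by relativizations $(\cdot)^M$ to weak $\omega$-models. So the proof is essentially an annotation of the earlier proof, checking two things: first, that everything except the appeal to $\mathsf{ACA}_0^+$ already lives in $\mathsf{ACA}_0$; and second, that for finite $L_2$-sequents the clause ``$\mathfrak M\models\Gamma$'' can be replaced by the relativized $L_2$-formula $(\bigvee\Gamma(\vec X))^M$, which $\mathsf{ACA}_0$ can make sense of without a satisfaction class.

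First I would dispatch the direction (1)$\Rightarrow$(2): a cut-free $\omega$-proof of $\Gamma(\dot{\vec X})$ is sound with respect to weak $\omega$-models, and because $\Gamma$ is a \emph{finite} $L_2$-sequent, the soundness argument is a straightforward induction on the well-founded proof tree using only arithmetical comprehension — at each node one argues $(\bigvee\Delta)^M$, and since all sequents appearing are finite $L_2$-sequents (after substituting the parameters) this induction is carried out inside $\mathsf{ACA}_0$. Here one can take $S$ to be any family naming $\vec X$; indeed one can be more economical, but $S=\langle X_i\rangle$ suffices. For (2)$\Rightarrow$(1) I would again prove the contrapositive exactly as in Theorem~\ref{omega_completeness_3}: assuming $\nvdash_0\Gamma(\dot{\vec X})$, fix a countable fragment $H$ naming $\vec X$; then $\mathbb{DT}[\Gamma(\vec X),H]$ is ill-founded, and $\mathsf{ACA}_0$ can extract an infinite path $P$ (arithmetical comprehension applied to the tree, or König's lemma for this finitely-described $\omega$-branching tree — the tree is arithmetically definable, so its ill-foundedness yields a path in $\mathsf{ACA}_0$). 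From the path one defines, as before, $\mathsf{val}(K)$ for each set term, lets $M$ be the weak $\omega$-model whose second-order part is the (disjoint union of the) $\mathsf{val}(K_n)$, and notes $X_i\dot\in M$ since each $C_{X_i}\in H$. Finally, instead of building a full satisfaction class and arguing semantically, one proves by induction on the complexity of $L_2$-formulas — using Lemma~\ref{standard_lemma} — that for every subformula $\varphi$ of (a substitution instance of) $\Gamma$, if $\varphi\in\mathbb P$ then $\neg(\varphi)^M$; since $\Gamma$ is finite this is a genuine (external, and hence $\mathsf{ACA}_0$-provable) induction on a fixed finite set of formula-shapes, so no truth predicate is needed. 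Applying this to each disjunct of $\Gamma$ gives $\neg(\bigvee\Gamma(\vec X))^M$, the failure of (2) for this $M\dot\supseteq S$.

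The one point that genuinely needs care — and which I would flag as the main obstacle — is the elimination of the satisfaction class. In Theorem~\ref{omega_completeness_3} the sequent $\Gamma$ is an arbitrary countable $\mathbf{L}_2$-sequent, so ``$\mathfrak M\models\Gamma$'' cannot be unwound into a single arithmetical formula and one genuinely needs $\mathsf{ACA}_0^+$. Here the restriction to a \emph{finite} $L_2$-sequent $\Gamma(\vec X)$ is exactly what rescues us: $(\bigvee\Gamma(\vec X))^M$ is a bona fide $L_2$-formula (of bounded complexity, computed syntactically from $\Gamma$), the induction verifying the path-vs-model correspondence ranges only over the finitely many subformulas of $\Gamma$, and every step of the deduction-chain analysis — the construction of $\mathbb{DT}$, the extraction of the path, the definition of $\mathsf{val}$ — is arithmetical in the parameters. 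I would therefore structure the write-up as: (i) ``the proof of Theorem~\ref{omega_completeness_3} is entirely arithmetical except for the satisfaction-class step''; (ii) ``for finite $L_2$-sequents that step is replaced by a finite induction yielding the relativized formula $(\bigvee\Gamma(\vec X))^M$ directly''; (iii) conclude that the whole argument formalizes in $\mathsf{ACA}_0$ with $\vec X$ as parameters, giving the stated equivalence.
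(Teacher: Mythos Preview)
Your proposal is correct and matches the paper's own argument exactly: the paper's proof of Theorem~\ref{omega_completeness_2} is a single sentence observing that the proof of Theorem~\ref{omega_completeness_3} goes through entirely in $\mathsf{ACA}_0$ except for the enrichment of $M$ by a full satisfaction class, and that for a finite $L_2$-sequent this step is unnecessary because one can work directly with the relativized formula $(\bigvee\Gamma(\vec X))^M$. Your write-up simply spells out what the paper leaves implicit; in particular your identification of the finiteness of $\Gamma$ as precisely the hypothesis that lets one trade the satisfaction class for an external induction over finitely many subformula-shapes is exactly the point.
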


The following is a standard fact about $\mathsf{ACA}_0^+$.
\begin{proposition}[$\mathsf{ACA}_0$]\label{ACA_0^+_alt} The following are equivalent:
\begin{enumerate}
    \item $\mathsf{ACA}_0^+$;
    \item for any set $X$ there is $M$ such that $X\dot\in M$ and $(\mathsf{ACA}_0)^M$.
\end{enumerate}
\end{proposition}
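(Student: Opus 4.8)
The plan is to reduce everything to the standard fact that $\mathsf{ACA}_0^+$ is $\mathsf{ACA}_0$ together with the axiom asserting that every set $X$ has an $\omega$-jump, i.e.\ that there is a $Y$ with $(Y)_0=X$ and $(Y)_{n+1}=((Y)_n)'$ for all $n$ (see \cite{simpson2009subsystems}). With this characterization in hand, both implications become manipulations of arithmetical comprehension carried out inside $\mathsf{ACA}_0$; this is also why the proposition follows from the line of reasoning already used in the Remark after the definition of $\mathbf{\Pi}^1_n\text{-}\omega\mathsf{RFN}$.

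For $(1)\Rightarrow(2)$, I would fix a set $X$ and, using $\mathsf{ACA}_0^+$, fix a code $Y$ for the jump hierarchy over $X$. By arithmetical comprehension one then forms the set $M$ coding the countable family of all sets arithmetical in some $(Y)_n$; this is legitimate because ``$\{e\}^{(Y)_n}$ is total'' and ``$m\in\{e\}^{(Y)_n}$'' are arithmetical in $Y$. Clearly $X\dot\in M$. Moreover $M$ is closed under arithmetical comprehension with parameters from $M$: finitely many parameters drawn from $M$ are jointly arithmetical in a single $(Y)_n$, and hence so is any set defined from them by an arithmetical formula, which is therefore again in $M$. Since $M$ is a family of sets containing $X$ and closed under arithmetical comprehension, $(\mathsf{ACA}_0)^M$ holds — the induction axioms relativize trivially, as they contain no bound set quantifiers, and the comprehension scheme is exactly the closure property just verified.

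For $(2)\Rightarrow(1)$, fix $X$ and let $M$ be as in (2): $X\dot\in M$ and $(\mathsf{ACA}_0)^M$. Applying relativized arithmetical comprehension inside $M$ to the $\Sigma^0_1$ formula defining the Turing jump shows that $M$ is closed under the jump, $Z\dot\in M\Rightarrow Z'\dot\in M$. I then want $X^{(n)}\dot\in M$ for every $n$; the point is that, $M$ being only a countable family of sets, this must be phrased as an arithmetical (in $M$ and $X$) property $\chi(n)$ of $n$, namely that there is a number $\sigma$ coding a finite sequence of $M$-indices $\langle i_0,\dots,i_n\rangle$ with $(M)_{i_0}=X$ and $(M)_{i_{j+1}}=((M)_{i_j})'$ for $j<n$. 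One proves $\forall n\,\chi(n)$ by arithmetical induction: the base case is $X\dot\in M$, and the successor step appends an index for the jump of $(M)_{i_n}$, available by closure under the jump. Then arithmetical comprehension yields the set $\{\langle n,m\rangle : \exists\sigma\,(\chi\text{-witness of length }n{+}1\ \wedge\ m\in(M)_{i_n})\}$, whose $n$-th column is $X^{(n)}$ irrespective of the witnessing sequence chosen; so this set codes the jump hierarchy over $X$, and $\mathsf{ACA}_0^+$ holds.

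The main obstacle is exactly this extraction step in $(2)\Rightarrow(1)$: $(\mathsf{ACA}_0)^M$ delivers each finite jump of $X$ as a member of $M$ but not, a priori, a uniform enumeration of them, so the crux is to observe that ``$(M)_i$ is the $n$-th jump of $X$'' can be witnessed by a number-coded chain of $M$-indices linked by the jump operation, and is therefore arithmetical in $M$ and $X$. That observation is what keeps both the induction on $n$ and the final comprehension within $\mathsf{ACA}_0$, rather than requiring a form of $\Sigma^1_1$ induction or a detour through $\mathsf{ACA}_0^+$-style coding. The remaining verifications — that the relevant syntactic operations on $M$ are arithmetical, and that uniqueness of the jump makes the defined set independent of the choices — are routine.
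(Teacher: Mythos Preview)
The paper states this proposition as a ``standard fact'' without proof; the only argument it offers is the one-line sketch in the Remark following the definition of $\mathbf{\Pi}^1_n\text{-}\omega\mathsf{RFN}$ (``$S$ contains $A^{(n)}$, for every $n$, and thus we can construct $A^{(\omega)}$ from $S$''), which is exactly the idea you spell out for $(2)\Rightarrow(1)$. Your proposal is correct and matches this sketch: both directions are handled properly, and you correctly isolate the one nontrivial point, namely that in $(2)\Rightarrow(1)$ the assertion ``$X^{(n)}\dot\in M$'' must be recast as an arithmetical-in-$M$ predicate (via finite chains of $M$-indices linked by the jump) so that both the induction on $n$ and the final comprehension remain available in $\mathsf{ACA}_0$.
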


\begin{theorem}[$\mathsf{ACA}_0$]\label{omegaRFN_eqiv1} For any $\mathbf{L}_2$-theory $T$ the following assertions are equivalent:
\begin{enumerate}
    \item \label{omegaRFN_eqiv1_1} $\mathbf{\Pi}^1_1\mbox{-}\omega\mathsf{RFN}(T)$;
    \item \label{omegaRFN_eqiv1_2} $\mathbf{\Pi}^1_2\mbox{-}\omega\mathsf{RFN}(T)$;
    \item \label{omegaRFN_eqiv1_3} $\forall X\exists \mathfrak{M}\;\big(X\in \mathfrak{M}\land \mathfrak{M}\models T\big)$;
    \item \label{omegaRFN_eqiv1_4} $\nvdash_\omega \lnot T$;
    \item \label{omegaRFN_eqiv1_5} $\nvdash_0 \lnot T$;
    \item \label{omegaRFN_eqiv1_6} $\forall \varphi\in \mathbf{\Pi}^1_2\;\big((\vdash_\omega \lnot T,\varphi)\to \mathsf{Tr}_{\mathbf{\Pi}^1_2}(\varphi)\big)$;
    \item \label{omegaRFN_eqiv1_7} $\forall \varphi\in \mathbf{\Pi}^1_2\;\big((\vdash_0 \lnot T,\varphi)\to \mathsf{Tr}_{\mathbf{\Pi}^1_2}(\varphi)\big)$.
\end{enumerate}
\end{theorem}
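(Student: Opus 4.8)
The plan is to prove the seven-way equivalence by establishing a cycle of implications, routing everything through the cut-free $\omega$-provability of $\lnot T$. The backbone of the argument is Theorem~\ref{omega_completeness_2} (the $\mathsf{ACA}_0$-provable completeness theorem for cut-free $\omega$-proofs restricted to finite $L_2$-sequents) together with the reduction to finite sequents that the encoding of $\mathbf{L}_2$ affords: a $\mathbf{\Pi}^1_2$ sentence $\varphi$ with set-constants $C_{\vec X}$ can be replaced by the $\Pi^1_2$ formula $\varphi'(\vec X)$ obtained by turning those constants into free variables, and likewise $\lnot T$ restricted to a finite subderivation only ever mentions finitely many axioms, each of which is $\mathbf{\Pi}^1_2\cap H$ for a suitable countable fragment $H$. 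So the first step is to set up this dictionary carefully and note that $\vdash_\omega$ and $\vdash_0$ statements about $\lnot T,\varphi$ can be analyzed locally, one finite sub-sequent at a time.

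First I would prove the easy implications. $(5)\Rightarrow(4)$ is trivial since a cut-free proof is a proof. $(4)\Rightarrow(3)$: if $T$ has an $\omega$-model containing a given $X$ then, choosing $H$ naming the elements of that model, the canonical tree $\mathbb{DT}[\lnot T, H]$ cannot be well-founded (soundness of the proof system with respect to $\omega$-models would contradict the model satisfying all axioms of $T$), so $\nvdash_\omega\lnot T$ — wait, the cleaner route is the contrapositive: from $\vdash_\omega\lnot T$ and soundness, no $\omega$-model of $T$ exists, so $(3)\Rightarrow(4)$; and $(4)\Rightarrow(5)$ needs the partial cut-elimination available in $\mathsf{ACA}_0$. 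Let me reorganize: the genuinely useful cycle is $(3)\Rightarrow(4)\Rightarrow(5)\Rightarrow(7)\Rightarrow(6)\Rightarrow(2)\Rightarrow(1)\Rightarrow(3)$, possibly shuffled. $(1)\Rightarrow(3)$ and $(2)\Rightarrow(3)$ are immediate from the remark in the preliminaries that $\mathbf{\Pi}^1_1\mbox{-}\omega\mathsf{RFN}(T)$ and $\mathbf{\Pi}^1_2\mbox{-}\omega\mathsf{RFN}(T)$ are each equivalent to ``every set lies in an $\omega$-model of $T$'' (applying reflection to $0\in C_X\lor 0\notin C_X$). Conversely $(3)\Rightarrow(2)\Rightarrow(1)$ is also routine once $(3)$ is rephrased as the $\mathbf{\Sigma}^1_2$-reflection form and one notes a true $\mathbf{\Sigma}^1_2$ sentence has a witness $X$ which can be thrown into a model provided by $(3)$.

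The substantive work is relating $(3)$ to the $\omega$-proof conditions $(4)$–$(7)$, and here Theorem~\ref{omega_completeness_2} does the job. For $(3)\Rightarrow(4)$: suppose $\vdash_\omega\lnot T$; by soundness of the proof system with respect to $\omega$-models (an induction along the well-founded proof tree, formalizable in $\mathsf{ACA}_0$ since each node only needs a partial satisfaction relation on its finitely-many-axiom sub-sequent), every $\omega$-model would have to falsify some axiom of $T$, contradicting $(3)$. For $(5)\Rightarrow(3)$, i.e. the contrapositive direction that really uses completeness: given $X$, pick $H$ naming $X$; since $\nvdash_0\lnot T$, in particular no finite sub-sequent $\lnot T_0$ of $\lnot T$ (with $T_0$ a finite sub-theory) has a cut-free $\omega$-proof, so by Theorem~\ref{omega_completeness_2} applied to each such finite $\lnot T_0$ we get, compatibly, an ill-founded canonical tree whose path yields an $\omega$-model $\mathfrak M\ni X$ falsifying the negation of each axiom — i.e. $\mathfrak M\models T$ and $X\in\mathfrak M$. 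For $(4)\Leftrightarrow(5)$ one invokes the partial cut-elimination: $\mathsf{ACA}_0$ cannot eliminate all cuts but can collapse the highest-rank cuts (cf.\ \cite[Theorem~6.4.1]{girard1987proof}), and by iterating this finitely often on any given $\omega$-proof of the \emph{fixed-complexity} sequent $\lnot T$ (the cut-ranks occurring are bounded by the complexity of the axioms of $T$, i.e.\ $\mathbf{\Pi}^1_2$) one obtains a cut-free proof. Finally $(5)\Leftrightarrow(7)$ and $(4)\Leftrightarrow(6)$: for $(7)$, if $\vdash_0\lnot T,\varphi$ with $\varphi\in\mathbf{\Pi}^1_2$, then by $(5)$-style completeness any $\omega$-model of $T$ satisfies $\varphi$, and then $(3)$ (equivalently, the already-established pieces) upgrades this to truth of $\varphi$ via a witness argument for $\lnot\varphi$; conversely $\nvdash_0\lnot T,\varphi$ with $\varphi$ false gives, by completeness, an $\omega$-model of $T+\lnot\varphi$, so $\varphi$ does not hold in all $\omega$-models of $T$, i.e.\ the hypothesis of $(7)$ fails — and then $(6)\Leftrightarrow(7)$ rides on partial cut-elimination again.

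The main obstacle is formalization in $\mathsf{ACA}_0$: we do not have full cut-elimination or full satisfaction classes, so every place where one would naively say ``take the $\omega$-model'' or ``eliminate the cuts'' must instead be done locally and with bounded cut-rank. Concretely, the delicate points are (i) that the completeness theorem must be applied finite-sub-sequent by finite-sub-sequent, since $\lnot T$ is infinite, and then the resulting models pieced together compatibly along a path of a single canonical tree $\mathbb{DT}[\lnot T,H]$ — which is exactly why $\mathbb{DT}[\lnot T,H]$ itself is the right object rather than ad hoc models; and (ii) the cut-elimination step $(4)\Rightarrow(5)$, where one must check that the cut-ranks appearing in an $\omega$-proof of $\lnot T$ are a priori bounded (by the maximal complexity of the $\mathbf{\Pi}^1_2$ axioms of $T$, after suitable normalization), so that finitely many rounds of highest-rank-cut elimination — each round being $\mathsf{ACA}_0$-provable — suffice. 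Once these two technical issues are handled the rest is bookkeeping: collect the implications into one cycle through all seven items.
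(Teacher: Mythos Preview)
Your proposal has a genuine gap at the crucial step, and it also contains a false claim about cut-elimination.

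First, a minor confusion: the trivial direction is $(4)\Rightarrow(5)$, not $(5)\Rightarrow(4)$. If there is no $\omega$-proof at all, there is certainly no cut-free one. The direction that would need cut-elimination is $(5)\Rightarrow(4)$, and your proposed argument for it is wrong: you claim ``the cut-ranks occurring are bounded by the complexity of the axioms of $T$,'' but this is simply false. In a Tait-style system with cut, the cut formula can be anything; there is no subformula property constraining cuts by the end-sequent. So an $\omega$-proof of $\lnot T$ may use cuts of unbounded rank, and finitely many rounds of highest-rank cut reduction (which is all $\mathsf{ACA}_0$ gives) cannot remove them. Also note that the theorem is stated for arbitrary $\mathbf{L}_2$-theories $T$, not $\mathbf{\Pi}^1_2$-axiomatized ones.

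The real gap is that you miss the bootstrap through $\mathsf{ACA}_0^+$. The paper organizes the argument as: all of $(1)$--$(7)$ easily imply $(5)$, and $(2)$ easily implies all of them, so the only nontrivial implication is $(5)\Rightarrow(2)$. For this, the key observation is that $T\supseteq\mathsf{ACA}_0$ and $\mathsf{ACA}_0$ is \emph{finitely} axiomatized. Hence $\nvdash_0\lnot T$ gives $\nvdash_0\lnot\mathsf{ACA}_0$, and now Theorem~\ref{omega_completeness_2} applies directly to the \emph{finite} $L_2$-sequent $\lnot\mathsf{ACA}_0$: for every $X$ there is a weak $\omega$-model $M\ni X$ with $(\mathsf{ACA}_0)^M$. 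By Proposition~\ref{ACA_0^+_alt} this is exactly $\mathsf{ACA}_0^+$. With $\mathsf{ACA}_0^+$ in hand, full satisfaction classes exist and the full completeness theorem (Theorem~\ref{omega_completeness_3}) applies to the possibly infinite sequent $\lnot T$, yielding arbitrarily large genuine $\omega$-models of $T$ and hence $(2)$.

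Your direct approach via $\mathbb{DT}[\lnot T,H]$ does produce a path and hence a \emph{weak} $\omega$-model in which each axiom of $T$ holds relative to the partial satisfaction along the path. But the statements $(1)$--$(3)$, $(6)$, $(7)$ are formulated using $\mathfrak{M}\models T$, which in this paper means a \emph{full} satisfaction class; you cannot upgrade the weak model to a full one in $\mathsf{ACA}_0$ alone. The bootstrap to $\mathsf{ACA}_0^+$ is precisely what closes this gap, and it is the idea your argument is missing.
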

\begin{proof}
Clearly, we have implications (\ref{omegaRFN_eqiv1_2}.$\Rightarrow$\ref{omegaRFN_eqiv1_1}.), (\ref{omegaRFN_eqiv1_2}.$\Rightarrow$\ref{omegaRFN_eqiv1_3}.), (\ref{omegaRFN_eqiv1_2}.$\Rightarrow$\ref{omegaRFN_eqiv1_4}.), (\ref{omegaRFN_eqiv1_2}.$\Rightarrow$\ref{omegaRFN_eqiv1_6}.), (\ref{omegaRFN_eqiv1_2}.$\Rightarrow$\ref{omegaRFN_eqiv1_7}.) and (\ref{omegaRFN_eqiv1_1}.$\Rightarrow$\ref{omegaRFN_eqiv1_5}.), (\ref{omegaRFN_eqiv1_3}.$\Rightarrow$\ref{omegaRFN_eqiv1_5}.), (\ref{omegaRFN_eqiv1_4}.$\Rightarrow$\ref{omegaRFN_eqiv1_5}.), (\ref{omegaRFN_eqiv1_6}.$\Rightarrow$\ref{omegaRFN_eqiv1_5}.),
(\ref{omegaRFN_eqiv1_7}.$\Rightarrow$\ref{omegaRFN_eqiv1_5}.).
Henceforth, it is enough to prove that \ref{omegaRFN_eqiv1_5}. implies \ref{omegaRFN_eqiv1_2}.

Indeed, let us assume \ref{omegaRFN_eqiv1_5}. Since $T$ contains $\mathsf{ACA}_0$, we have $\nvdash_0\lnot \mathsf{ACA}_0$. Thus by Theorem \ref{omega_completeness_2}, for any set $X$ there is $M$ such that $X\dot\in M$ and $(\mathsf{ACA}_0)^M$. Thus by  Proposition \ref{ACA_0^+_alt} we have $\mathsf{ACA}_0^+$. By Theorem \ref{omega_completeness_3}, we see that there are arbitrarily large $\omega$-models of $T$. Using $\mathsf{ACA}_0^+$ we easily show that any false $\mathbf{\Pi}^1_2$ sentence $\varphi$ fails in all large enough $\omega$-models of $\mathsf{ACA}_0$. Combining the latter two facts we get $\mathbf{\Pi}^1_2\mbox{-}\omega\mathsf{RFN}(T)$.
\end{proof}

The same argument yields the following:
\begin{theorem}\label{omegaRFN_eqiv2} Let $n\ge 2$ be a natural number. Then $\mathsf{ACA}_0$ proves that for an $\mathbf{L}_2$-theory $T$ the following is equivalent:
\begin{enumerate}
    \item \label{omegaRFN_eqiv2_1} $\mathbf{\Pi}^1_n\mbox{-}\omega\mathsf{RFN}(T)$;
    \item \label{omegaRFN_eqiv2_2} $\forall \varphi\in \mathbf{\Pi}^1_2\;\big((\vdash_\omega \lnot T,\varphi)\to \mathsf{Tr}_{\mathbf{\Pi}^1_n}(\varphi)\big)$;
    \item \label{omegaRFN_eqiv2_3} $\forall \varphi\in \mathbf{\Pi}^1_2\;\big((\vdash_0 \lnot T,\varphi)\to \mathsf{Tr}_{\mathbf{\Pi}^1_n}(\varphi)\big)$.
\end{enumerate}
\end{theorem}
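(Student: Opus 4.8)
The plan is to mirror the proof of Theorem \ref{omegaRFN_eqiv1}, reducing everything to the cut-free $\omega$-completeness theorem for $\omega$-models (Theorem \ref{omega_completeness_3}) and its $\mathsf{ACA}_0$-version (Theorem \ref{omega_completeness_2}). First I would establish the easy implications. For $(1)\Rightarrow(2)$: suppose $\vdash_\omega \lnot T,\varphi$ with $\varphi\in\mathbf{\Pi}^1_2$; by soundness of the $\omega$-proof system with respect to $\omega$-models, every $\omega$-model of $T$ satisfies $\varphi$, so $\mathbf{\Pi}^1_n\text{-}\omega\mathsf{RFN}(T)$ applied to $\varphi$ (legitimate since $\mathbf{\Pi}^1_2\subseteq\mathbf{\Pi}^1_n$ for $n\ge 2$, and $\mathsf{Tr}_{\mathbf{\Pi}^1_n}$ agrees with $\mathsf{Tr}_{\mathbf{\Pi}^1_2}$ on $\mathbf{\Pi}^1_2$ sentences) yields $\mathsf{Tr}_{\mathbf{\Pi}^1_n}(\varphi)$. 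The implication $(2)\Rightarrow(3)$ is immediate since any cut-free $\omega$-proof is an $\omega$-proof, so the hypothesis of (3) is stronger.

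The crux is $(3)\Rightarrow(1)$. Assume (3). As in Theorem \ref{omegaRFN_eqiv1}, since $T\supseteq\mathsf{ACA}_0$ we have $\nvdash_0\lnot\mathsf{ACA}_0$ (otherwise $\bot\in\mathbf{\Pi}^1_2$ would be forced true), so by Theorem \ref{omega_completeness_2} every set lies in some weak model $M$ of $\mathsf{ACA}_0$, hence by Proposition \ref{ACA_0^+_alt} we obtain $\mathsf{ACA}_0^+$. Working now in $\mathsf{ACA}_0^+$, I would argue: given a false $\mathbf{\Pi}^1_n$ sentence $\varphi$, I must produce an $\omega$-model of $T$ in which $\varphi$ fails, equivalently (contrapositively) I must show $\nvdash_0\lnot T,\varphi$ cannot hold unless $\varphi$ is true — wait, that's exactly (3) with the caveat that (3) only speaks about $\mathbf{\Pi}^1_2$ premises $\varphi$. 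So the real work is bridging from $\mathbf{\Pi}^1_2$ to $\mathbf{\Pi}^1_n$. Here I would use the Turing-jump closure argument: $\mathsf{ACA}_0^+$ gives $X^{(\omega)}$ for every $X$, and more generally, to refute a given false $\Sigma^1_n$-witness statement inside an $\omega$-model one builds the model from an appropriate hyperarithmetic-style closure of the relevant parameters. Concretely, using $\mathsf{ACA}_0^+$ one shows that any false $\mathbf{\Pi}^1_n$ sentence fails in all sufficiently ``large'' $\omega$-models of $\mathsf{ACA}_0$ — the same sentence of the proof of Theorem \ref{omegaRFN_eqiv1} — and combines this with the existence of arbitrarily large $\omega$-models of $T$ obtained from Theorem \ref{omega_completeness_3} applied to $\lnot T$ (whose failure to be cut-free $\omega$-provable follows from (3) applied to, say, $\varphi = (0=0)$, a true $\mathbf{\Pi}^1_2$ sentence, giving $\nvdash_0\lnot T$).

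The main obstacle I expect is the claim that "any false $\mathbf{\Pi}^1_n$ sentence fails in all large enough $\omega$-models of $\mathsf{ACA}_0$" — for $n\ge 3$ this is genuinely more delicate than the $n=2$ case used in Theorem \ref{omegaRFN_eqiv1}, since the $\Sigma^1_n$ witness to the negation need not be a single set but a configuration certified by quantifier alternations. The resolution is that an $\omega$-model $\mathfrak{M}$ is \emph{absolute} for $\Pi^1_n$ and $\Sigma^1_n$ statements about its own elements in the sense that $\mathfrak{M}\models\psi$ iff $\psi^{\mathfrak M}$ holds, and any $\omega$-model containing a given set $X$ correctly computes arithmetic-in-$X$ facts; since a false $\mathbf{\Pi}^1_n$ sentence $\varphi\equiv\forall Z\,\theta(Z)$ with $\theta\in\mathbf{\Sigma}^1_{n-1}$ has a counterexample $Z_0$, once $Z_0\in\mathfrak{M}$ we need $\mathfrak M\models\theta(Z_0)$, and by downward induction on $n$ (using $\omega$-absoluteness at each alternation, plus closure of $\mathfrak M$ under the Turing jump to handle the innermost arithmetic matrix) this holds in every $\omega$-model of $\mathsf{ACA}_0$ containing $Z_0$ and the finitely many further witnesses extracted from the alternations. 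Thus the argument is: extract from the falsity of $\varphi$ a finite sequence of witnessing/counter-witnessing sets; put them (and enough jumps) into an $\omega$-model of $T$ supplied by Theorem \ref{omega_completeness_3}; conclude $\varphi$ fails there — contradicting nothing, but rather establishing $\mathbf{\Sigma}^1_n\text{-}\omega\mathsf{Con}(T)$, which is $\mathbf{\Pi}^1_n\text{-}\omega\mathsf{RFN}(T)$. The formalization in $\mathsf{ACA}_0$ proceeds exactly as flagged at the end of Theorem \ref{omegaRFN_eqiv1}: replace the full satisfaction classes by the weak-model relativizations licensed by Theorem \ref{omega_completeness_2}, noting that all $\mathbf{\Pi}^1_2$ sequents involved are handled by the finite-sequent version of completeness.
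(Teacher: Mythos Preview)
There is a genuine gap in your $(3)\Rightarrow(1)$ argument. The crucial claim you lean on --- that any false $\mathbf{\Pi}^1_n$ sentence fails in all sufficiently large $\omega$-models of $\mathsf{ACA}_0$ --- is simply false for $n\ge 3$. Your proposed ``witness extraction'' does not repair it: unwinding a false $\mathbf{\Pi}^1_3$ sentence $\forall X\exists Y\forall Z\,\theta$ gives an $X_0$ with $\forall Y\exists Z\,\lnot\theta(X_0,Y,Z)$, and to make this hold in a model $\mathfrak{M}\ni X_0$ you would need, for \emph{every} $Y\in\mathfrak{M}$, a refuting $Z\in\mathfrak{M}$. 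There is no finite list of witnesses to add; and the $\mathbf{\Sigma}^1_1$ formula $\exists Z\,\lnot\theta(X_0,Y,Z)$ is upward but not downward absolute, so its truth in $V$ says nothing about its truth in $\mathfrak{M}$. Concretely, your argument would show that $\mathbf{\Pi}^1_n\text{-}\omega\mathsf{RFN}(T)$ follows from $\mathbf{\Pi}^1_2\text{-}\omega\mathsf{RFN}(T)$ alone, collapsing the hierarchy --- but $\mathbf{\Pi}^1_3\text{-}\omega\mathsf{RFN}(\mathsf{ACA}_0)$ is equivalent to $\Sigma^1_1\text{-}\mathsf{DC}_0$, which strictly exceeds $\mathsf{ACA}_0^+$.

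The underlying issue is that you only ever invoke (3) on $\mathbf{\Pi}^1_2$ formulas, and then try to reach $\mathbf{\Pi}^1_n$-reflection by model-theoretic means. (Note that the ``$\mathbf{\Pi}^1_2$'' in items (2),(3) is a typo for ``$\mathbf{\Pi}^1_n$''; as literally stated the theorem is false for $n\ge 3$.) The intended argument uses (3) on the $\mathbf{\Pi}^1_n$ sentence $\varphi$ itself. After bootstrapping $\mathsf{ACA}_0^+$ exactly as you do, one applies Theorem~\ref{omega_completeness_3} in the \emph{other} direction: if $\varphi\in\mathbf{\Pi}^1_n$ holds in every $\omega$-model of $T$, then the sequent $\lnot T,\varphi$ holds in every $\omega$-model whatsoever, hence $\vdash_0 \lnot T,\varphi$; now (3) gives $\mathsf{Tr}_{\mathbf{\Pi}^1_n}(\varphi)$ directly. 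No absoluteness juggling is needed.
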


\section{Reduction for $\omega$-model reflection}\label{reduction}

In this section we prove the main result of this paper. First we prove a lemma, which can be viewed as an analogue of Feferman's completeness theorem for iterated $\Pi^1_1$ reflection. Then we prove Theorem \ref{omega_to_iter} (Theorem \ref{reduction_for_omega_models} in the introduction), which provides a reduction of $\omega$-model reflection to iterated syntactic reflection.

\subsection{An analogue of Feferman's theorem}

The $\omega$-rule provides one route to proving all arithmetical truths; indeed, the recursive $\omega$-rule suffices as shown by Shoenfield in \cite{shoenfield1969restricted}. Feferman provided another route in \cite{feferman1962transfinite}. Recall that for a theory $T$ in the language of first-order arithmetic, the uniform reflection schema $\mathsf{RFN}(T)$ for $T$ is the set of all sentences of the form:
$$ \forall \vec{x} \Big(\mathsf{Pr}_T\big(\varphi(\vec{x})\big) \rightarrow \varphi(\vec{x}) \Big)$$
where $\varphi(\vec{x})$ is a formula in the language of first-order arithmetic. Given an effective ordinal notation system $\prec$ we may then use the fixed point lemma to define the iterates of uniform reflection as follows:
\begin{flalign*}
\mathsf{RFN}^0(T) &: = T \\
\mathsf{RFN}^\alpha(T) &: = T + \bigcup_{\beta\prec\alpha}\mathsf{RFN}\big(\mathsf{RFN}^\beta(T)\big)\textrm{ for $\alpha\succ 0$.}
\end{flalign*}

\begin{theorem}[Feferman]
For any true arithmetical sentence $\varphi$, there is a representation $\alpha$ of a recursive ordinal such that $\mathsf{PA}+\mathsf{RFN}^\alpha(\mathsf{PA})\vdash \varphi$.
\end{theorem}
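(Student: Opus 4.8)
The plan is to prove Feferman's completeness theorem by induction on the build-up of the true arithmetical sentence $\varphi$, using the hierarchy of iterated reflection to absorb increasing quantifier complexity. The key observation is that passing from $\mathsf{RFN}^\alpha(\mathsf{PA})$ to $\mathsf{RFN}^{\alpha+1}(\mathsf{PA})$ lets us add one layer of universal quantification over a $\Pi_n$ matrix that is already handled at level $\alpha$, and that taking limits along $\prec$ lets us handle the uniformity in a free variable $\vec x$. First I would reduce to the case where $\varphi$ is a $\Pi_n$ sentence for some $n$: every arithmetical sentence is, provably in $\mathsf{PA}$, equivalent to one in prenex normal form, and $\mathsf{PA}$ alone proves this equivalence, so it suffices to find a notation $\alpha$ with $\mathsf{PA}+\mathsf{RFN}^\alpha(\mathsf{PA})\vdash\varphi'$ for the prenex form $\varphi'$.

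Next I would set up the induction on $n$. The base case $n=0$ (or $n=1$, depending on how one counts) is immediate: a true $\Sigma_1$ sentence is provable already in $\mathsf{PA}$, hence in $\mathsf{RFN}^0(\mathsf{PA})=\mathsf{PA}$; and for a true $\Pi_1$ sentence $\forall x\,\theta(x)$ with $\theta$ bounded, each instance $\theta(\underline k)$ is $\mathsf{PA}$-provable, so $\mathsf{PA}\vdash\mathsf{Pr}_{\mathsf{PA}}(\theta(\dot x))$, and one application of $\mathsf{RFN}(\mathsf{PA})$ gives $\forall x\,\theta(x)$; thus $\mathsf{RFN}^1(\mathsf{PA})$ suffices. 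For the inductive step, suppose the claim holds at level $n$ with a uniform bound, meaning: for every true $\Pi_n$ formula $\psi(\vec y)$ there is a notation $\alpha_\psi$ (depending recursively on $\psi$) such that $\mathsf{PA}+\mathsf{RFN}^{\alpha_\psi}(\mathsf{PA})\vdash\forall\vec y\,\psi(\vec y)$. Now take a true $\Pi_{n+1}$ sentence $\forall z\,\exists w\,\chi(z,w,\vec y)$ — rewriting, a true sentence $\forall z\,\psi(z)$ where $\psi$ is $\Sigma_n$. For each $k$, $\psi(\underline k)$ is a true $\Sigma_n$ sentence; its negation's relevant $\Pi_n$ part gives, via the inductive hypothesis applied inside the provability predicate, that $\mathsf{RFN}^{\beta}(\mathsf{PA})\vdash\psi(\underline k)$ for a notation $\beta$ that can be chosen uniformly in $k$ — here one uses that a true $\Sigma_n$ sentence $\exists w\,\rho(w)$ is witnessed by some $\underline m$, and $\rho(\underline m)$ is true $\Pi_{n-1}$, handled by the induction hypothesis. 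Then $\mathsf{PA}\vdash\mathsf{Pr}_{\mathsf{RFN}^\beta(\mathsf{PA})}(\psi(\dot z))$, and one more application of $\mathsf{RFN}(\mathsf{RFN}^\beta(\mathsf{PA}))$, i.e. passing to $\mathsf{RFN}^{\beta+1}(\mathsf{PA})$, yields $\forall z\,\psi(z)$.

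The subtle point — and the step I expect to be the main obstacle — is making the choice of ordinal notation genuinely \emph{recursive and uniform} in the sentence and in the numerical parameters, so that the limit stages $\mathsf{RFN}^\alpha$ taken along $\prec$ actually capture the required theories. One must produce a primitive recursive function that, given (a code for) a true $\Sigma_n$ or $\Pi_n$ sentence, returns a notation for the ordinal needed, and verify that feeding a numeral $\underline k$ into a formula increases this notation in a controlled (e.g. monotone, and bounded by a fixed limit notation) way; otherwise "$\mathsf{PA}\vdash\mathsf{Pr}_{\mathsf{RFN}^\beta(\mathsf{PA})}(\psi(\dot z))$" is not available because $\beta$ depends on $z$. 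The fix is standard in this area: one works along the Church--Kleene-style notation system, exploits that the map $k\mapsto(\text{notation for }\psi(\underline k))$ is recursive, takes a notation $\lambda$ for the supremum of these along a single recursive sequence, and uses the fact that $\mathsf{RFN}^\lambda(\mathsf{PA})$ proves each $\mathsf{RFN}^{\beta_k}(\mathsf{PA})$-theorem; then $\mathsf{RFN}^{\lambda+1}$ does the final universal generalization. Carefully tracking that all these constructions are formalizable and that the resulting $\alpha$ is a legitimate notation for a recursive ordinal is the real content; the logical manipulations with the provability predicate and reflection are routine applications of provable $\Sigma_1$-completeness and the definition of $\mathsf{RFN}$.
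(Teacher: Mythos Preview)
First, a framing remark: the paper does not actually prove this theorem. It is stated as historical background, with the single comment that ``Feferman's proof makes crucial use of Shoenfield's completeness theorem for the recursive $\omega$-rule. In particular, Feferman shows that applications of the recursive $\omega$-rule can be simulated by iterating uniform reflection along a carefully selected ordinal notation.'' The paper then proves its own analogue (its Lemma on cut-free $\omega$-proofs and iterated $\mathbf{\Pi}^1_n$-reflection) by a L\"ob-style argument along the Kleene--Brouwer rank of an $\omega$-proof. So the relevant comparison is between your induction-on-complexity approach and the Shoenfield/Feferman route via recursive $\omega$-proofs.

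Your approach has a genuine gap, and it appears already in the base case. From ``each instance $\theta(\underline k)$ is $\mathsf{PA}$-provable'' you conclude ``$\mathsf{PA}\vdash \forall x\,\mathsf{Pr}_{\mathsf{PA}}(\theta(\dot x))$''. That inference fails: the premise is a true $\Pi_1$ metastatement about $\mathsf{PA}$, not something $\mathsf{PA}$ itself proves. Concretely, take $\theta(x)\equiv \neg\mathsf{Prf}_{\mathsf{PA}+\mathsf{RFN}(\mathsf{PA})}(x,\ulcorner 0{=}1\urcorner)$. Each $\theta(\underline k)$ is a true $\Delta_0$ sentence and hence $\mathsf{PA}$-provable, but $\mathsf{PA}$ does \emph{not} prove $\forall x\,\mathsf{Pr}_{\mathsf{PA}}(\theta(\dot x))$; if it did, one reflection step would yield $\mathsf{Con}(\mathsf{PA}+\mathsf{RFN}(\mathsf{PA}))$ inside $\mathsf{RFN}^1(\mathsf{PA})$, contradicting G\"odel's second incompleteness theorem. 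So already for $\Pi_1$ sentences the notation $\alpha=1$ is not enough, and the ordinal must genuinely depend on the sentence.

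The same problem recurs at the inductive step and is not repaired by your ``fix''. Even if the map $k\mapsto\beta_k$ is recursive and you set $\lambda=\sup_k\beta_k$, you still need $\mathsf{PA}\vdash\forall z\,\mathsf{Pr}_{\mathsf{RFN}^{\lambda}(\mathsf{PA})}(\psi(\dot z))$ to apply one more reflection. But the proof of $\psi(\underline z)$ you describe is produced by first \emph{searching} for a $\Sigma_n$-witness and then invoking the inductive hypothesis on the resulting true $\Pi_{n-1}$ instance; $\mathsf{PA}$ cannot verify that this search terminates for all $z$, since that is equivalent to the very $\Pi_{n+1}$ sentence you are trying to prove. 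Taking a supremum of notations does nothing to make the proof-assignment $\mathsf{PA}$-provably total.

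This is precisely why Feferman routes through Shoenfield's theorem. A recursive cut-free $\omega$-proof of $\varphi$ is a single recursive object in which all the required witnesses are already recorded at the $\exists$-introduction nodes; the ordinal notation $\alpha$ is then read off from the Kleene--Brouwer rank of the proof tree, and one shows (as in the paper's analogue, by L\"ob's theorem along the tree) that $\mathsf{RFN}^{\alpha}(\mathsf{PA})$ proves the root sequent. The point is that the $\omega$-proof supplies, in a $\mathsf{PA}$-verifiable way, exactly the uniform family of sub-proofs whose existence your argument assumes but cannot justify internally.
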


Feferman's proof makes crucial use of Shoenfield's completeness theorem for the recursive $\omega$-rule. In particular, Feferman shows that applications of the recursive $\omega$-rule can be simulated by iterating uniform reflection along a carefully selected ordinal notation. In \cite{schmerl1982iterated}, Schmerl cites this result (among others) as evidence that the uniform reflection principle is a formalized analogue of the $\omega$-rule.

In this subsection we will show that if a sequent of $\mathbf{\Pi}^1_n$ formulas can be proved from a $\mathbf{\Pi}^1_{n+1}$ axiomatized theory $T$ by applying the $\omega$-rule, then it can also be proved by iterating $\mathbf{\Pi}^1_n$ reflection. Thus, our main lemma is an analogue of Feferman's completeness theorem.

\begin{lemma}[$\mathsf{ACA}_0$]\label{feferman}
Let $n > 1$. Suppose that $T$ is a $\mathbf{\Pi}^1_{n+1}$ axiomatized theory, $\Gamma$ is a sequent of $\mathbf{\Pi}^1_n$ formulas, and $P$ is a cut-free $\omega$-proof of $\neg T, \Gamma$ with Kleene-Brouwer rank $\delta$. Then $\mathbf{\Pi}^1_n$-$\mathbf{R}^\delta(T)\vdash \bigvee\Gamma$.
\end{lemma}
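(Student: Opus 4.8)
The plan is to prove Lemma \ref{feferman} by induction on the Kleene--Brouwer rank $\delta$ of the cut-free $\omega$-proof $P$, formalized in $\mathsf{ACA}_0$ via L\"ob's theorem (so that we may assume $\mathsf{ACA}_0$, and hence $\mathbf{\Pi}^1_n\text{-}\mathbf{R}^\delta(T)$, proves the statement $\Lambda$ of the lemma for all smaller ranks). The intuition is that each inference rule in $P$ should be absorbable into a bounded amount of iterated $\mathbf{\Pi}^1_n$ reflection, with the rank of the subproof controlling how much reflection is needed; the one delicate rule is the $\omega$-rule ($\forall_1\text{-}\mathsf{Int}$) on a $\mathbf{\Pi}^1_n$ formula, where infinitely many subproofs of (cofinally) smaller rank must be glued together by a single reflection step, in the style of Feferman's simulation of the $\omega$-rule.

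First I would set up the bookkeeping: given $P$ with root sequent $\neg T,\Gamma$ and rank $\delta$, each immediate subproof $P_i$ has a sequent $\neg T,\Delta_i$ (possibly with extra side formulas, all of which can be taken to be $\mathbf{\Pi}^1_n$ since cut-free proofs of $\mathbf{\Pi}^1_n$ endsequents from $\mathbf{\Pi}^1_{n+1}$ axioms only involve $\mathbf{\Pi}^1_n$ and $\mathbf{\Sigma}^1_{n-1}$ formulas as subformulas — here is where $n>1$ and the $\mathbf{\Pi}^1_{n+1}$-axiomatizability of $T$ are used, so that $\neg T$ is a sequent of $\mathbf{\Sigma}^1_{n+1}$ formulas whose relevant subformulas fit the pattern) and rank $\delta_i<\delta$. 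The induction hypothesis gives $\mathbf{\Pi}^1_n\text{-}\mathbf{R}^{\delta_i}(T)\vdash\bigvee(\Delta_i)$ for each $i$ — but more carefully, since the side formulas of the various subproofs differ, one really proves the stronger statement ``$\mathbf{\Pi}^1_n\text{-}\mathbf{R}^\delta(T)$ proves $\bigvee\Gamma$ for any cut-free $\omega$-proof of $\neg T,\Gamma$ of rank $\delta$ where $\Gamma$ is a sequent of $\mathbf{\Pi}^1_n$ formulas,'' which is exactly $\Lambda$, and then the propositional and existential-quantifier rules are handled by straightforward first-order reasoning inside $\mathbf{\Pi}^1_n\text{-}\mathbf{R}^\delta(T)$ (using Lemma \ref{hom_lemma} / Corollary \ref{low-cones} to pass from $\mathbf{R}^{\delta_i}$ up to $\mathbf{R}^\delta$), and the $\forall_2\text{-}\mathsf{Int}$ rule by generalizing on the eigenvariable.

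The main obstacle is the $\omega$-rule step. Suppose the last rule of $P$ is $\forall_1\text{-}\mathsf{Int}$ deriving $\neg T,\Gamma',\forall x\,\varphi(x)$ from $\neg T,\Gamma',\varphi(\underline n)$ for all $n$, where the $n$-th subproof has rank $\delta_n$ and $\delta=\sup_n(\delta_n+1)$ in the Kleene--Brouwer ordering. I would argue: working in $\mathbf{\Pi}^1_n\text{-}\mathbf{R}^\delta(T)$, fix $n$; we want $\bigvee(\Gamma',\varphi(\underline n))$, equivalently $\bigvee\Gamma'\lor\varphi(\underline n)$. We have, reasoning one level down, that $\mathsf{ACA}_0$ proves $\mathbf{\Pi}^1_n\text{-}\mathbf{R}^{\delta_n}(T)\vdash\bigvee(\Gamma',\varphi(\underline n))$ (this provability fact, being arithmetical in the codes, is available by the L\"ob-assumption applied to the appropriate instance); since $\delta_n<^{\mathsf{cn}}\delta$, Corollary \ref{low-cones} gives that $\mathbf{\Pi}^1_n\text{-}\mathsf{RFN}(\mathbf{\Pi}^1_n\text{-}\mathbf{R}^{\delta_n}(T))$ is $\mathbf{\Sigma}^1_1$-included in $\mathbf{\Pi}^1_n\text{-}\mathbf{R}^\delta(T)$, and $\bigvee(\Gamma',\varphi(\underline n))$ is $\mathbf{\Pi}^1_n$ (hence its $\mathbf{\Sigma}^1_1$-provability, a fortiori $\mathbf{\Pi}^1_n$-provability, suffices), so $\mathbf{\Pi}^1_n\text{-}\mathbf{R}^\delta(T)\vdash\mathbf{\Pi}^1_n\text{-}\mathsf{RFN}(\mathbf{\Pi}^1_n\text{-}\mathbf{R}^{\delta_n}(T))$ and thus $\vdash\bigvee(\Gamma',\varphi(\underline n))$. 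The subtlety is that $n\mapsto\delta_n$ must be definable (it is, as an arithmetical operation on $P$) and that the map sending $n$ to a cone-point of $\delta$ above $\delta_n$ be uniform; then $\forall n\,(\bigvee\Gamma'\lor\varphi(\underline n))$ follows in $\mathbf{\Pi}^1_n\text{-}\mathbf{R}^\delta(T)$, i.e. $\bigvee\Gamma'\lor\forall x\,\varphi(x)$, which is $\bigvee\Gamma$.

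Two further points I would be careful about. One: the whole argument must be seen to go through in $\mathsf{ACA}_0$ — the recursion on Kleene--Brouwer rank is really the L\"ob-style argument (as in the proofs of Lemmas \ref{hom_lemma} and the ordered-sum lemma above), where we assume $\mathsf{Prv}(\mathsf{ACA}_0,\Lambda)$ and then, at each rule, invoke $\Lambda$ under $\mathsf{ACA}_0$-provability for the immediate subproofs (whose ranks are genuinely in cones of $\delta$), never needing an external transfinite induction. Two: the case where $P$ is a single axiom $\mathsf{Ax}_1$--$\mathsf{Ax}_5$: then $\neg T,\Gamma$ already contains a true literal or a complementary pair, so $\bigvee\Gamma$ is provable already in $\mathsf{ACA}_0$ (for $\mathsf{Ax}_1,\mathsf{Ax}_2$ and the set-constant axioms $\mathsf{Ax}_3,\mathsf{Ax}_4$, using that $\mathsf{ACA}_0$ verifies true atomic facts about $C_A$; for $\mathsf{Ax}_5$, by logic), and the case where the last rule is $\mathsf{Rep}$ is immediate. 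Assembling these cases completes the induction and hence the proof.
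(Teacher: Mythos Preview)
Your proposal is correct and follows essentially the same approach as the paper: both proceed by L\"ob's theorem, split on the last inference of $P$, invoke the (now $\mathsf{ACA}_0$-provable) induction hypothesis on the immediate subproofs of smaller Kleene--Brouwer rank, and then use $\mathbf{\Pi}^1_n$ reflection over $\mathbf{\Pi}^1_n\text{-}\mathbf{R}^{\delta_i}(T)$ (available in $\mathbf{\Pi}^1_n\text{-}\mathbf{R}^{\delta}(T)$ via Corollary~\ref{low-cones}) to pull the premises back into the target theory and verify soundness of each rule. Your write-up is in fact more explicit than the paper's sketch, particularly for the $\omega$-rule case; one small imprecision to patch is that a partially decomposed $\neg T$ formula (after stripping only some leading existential quantifiers) may still be a genuine $\mathbf{\Sigma}^1_{n+1}$ formula rather than $\mathbf{\Pi}^1_n$, but such a formula is the negation of a $T$-provable $\mathbf{\Pi}^1_{n+1}$ sentence and so can simply be absorbed into the $\neg T$ side, exactly as the paper handles it.
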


\begin{proof}

Let $\Lambda$ be the statement of the lemma. We will prove $\Lambda$ by L\"{o}b's Theorem. That is, we will work in $\mathsf{ACA}_0$ and prove the statement $\mathsf{Pr}_{\mathsf{ACA}_0}(\Lambda)\rightarrow \Lambda$. It will then follow by L\"{o}b's Theorem that $\mathsf{ACA}_0$ proves $\Lambda$. 

So work in $\mathsf{ACA}_0$ and suppose that the statement of the lemma is provable in $\mathsf{ACA}_0$. Let $T$ and $\Gamma$ be as in the statement of the theorem. Let $\delta$ be the Kleene-Brouwer rank of the canonical tree $P$ for $\neg T, \Gamma$. We split into cases based on the final rule applied in $P$.

In each case $\neg T,\Gamma$ is being inferred from a sequence of sequents $\Delta_i$ which are the conclusions of canonical trees with Kleene-Brouwer ranks $\delta_i<\delta$. Our initial assumption that the statement of the lemma is provable in $\mathsf{ACA}_0$ yields that 
$$\mathsf{ACA}_0 \vdash \textrm{ ``for all $i$, $\mathbf{\Pi}^1_n$-$\mathbf{R}^{\delta_i}(T)$ proves $\bigvee \Delta_i$.''}$$ 
Which straightforwardly implies 
$$\mathsf{ACA}_0 \vdash \textrm{ ``for all $i$, if $\mathbf{\Pi}^1_n$-$\mathbf{R}^{\delta}(T)$ then $\mathsf{True}_{\mathbf{\Pi}^1_n} (\bigvee \Delta_i)$.''}$$ 
Which in turn implies
 $$\mathbf{\Pi}^1_n\text{-}\mathbf{R}^\delta(T)\vdash \forall i \mathsf{True}_{\mathbf{\Pi}^1_n} (\bigvee\Delta_i).$$ 
It suffices to check that this guarantees that $\mathbf{\Pi}^1_n$-$\mathbf{R}^\delta(T)\vdash \bigvee\Gamma.$

Since our canonical tree is cut-free, for each $i$, $\bigvee \Delta_i$ consists of $\mathbf{\Sigma}^1_{n+1}$ formulas (subformulas of negations of axioms of $T$) and $\mathbf{\Pi}^1_n$ formulas (subformulas of members of $\Gamma$). $\mathbf{\Pi}^1_n$-$\mathbf{R}^\delta(T)$ automatically rejects the negations of $T$'s axioms and so accepts the $\mathbf{\Pi}^1_n$ parts of these sequents (consisting only of subformulas of members of $\Gamma$). Then after checking, case-by-case, the soundness of each proof rule, $\mathbf{\Pi}^1_n$-$\mathbf{R}^\delta(T)$ infers $\bigvee\Gamma$ from $\forall i \mathsf{True}_{\mathbf{\Pi}^1_n} (\bigvee\Delta_i).$
\end{proof}




\subsection{The main theorem}

Now for the proof of the main theorem. We note that the general idea of the proof essentially is going back to Friedman's proof of equivalence of the scheme of bar induction and full scheme of $\omega$-model reflection \cite{friedman1975somesystems}.

\begin{theorem}\label{omega_to_iter}
Let $n > 0$. $\mathsf{ACA}_0$ proves that for any $\mathbf{\Pi}^1_{n+1}$-axiomatizable theory $T$, the following are equivalent:
\begin{enumerate}
\item\label{semantic} $\mathbf{\Pi}^1_n\textrm{-}\omega\mathsf{RFN}(T)$
\item\label{syntactic} $\forall \alpha \big(\mathsf{WO}(\alpha) \rightarrow \mathbf{\Pi}^1_n\textrm{-}\mathsf{RFN}(\mathbf{\Pi}^1_n\textrm{-}\mathbf{R}^\alpha(T))\big).$
\end{enumerate}
\end{theorem}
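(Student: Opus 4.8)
The plan is to prove the two implications separately, in each case routing through the cut-free $\omega$-proof system of \textsection\ref{equivalent_forms}. For \eqref{syntactic}$\Rightarrow$\eqref{semantic} the engine is Lemma \ref{feferman}, which converts a cut-free $\omega$-derivation of $\lnot T,\Gamma$ into a derivation in an iterate of $\mathbf{\Pi}^1_n$ reflection; for \eqref{semantic}$\Rightarrow$\eqref{syntactic} the engine is a ``converse'' of Lemma \ref{feferman} that I would prove by L\"ob's theorem, in the style of Lemma \ref{hom_lemma} and the other lemmas of \textsection\ref{preliminaries}. Throughout I use the reformulations of $\omega$-model reflection given by Theorems \ref{omegaRFN_eqiv1} and \ref{omegaRFN_eqiv2}: over $\mathsf{ACA}_0$, $\mathbf{\Pi}^1_n\text{-}\omega\mathsf{RFN}(T)$ is equivalent to $\nvdash_0\lnot T$ together with ``every $\varphi\in\mathbf{\Pi}^1_2$ with $\vdash_0\lnot T,\varphi$ (equivalently $\vdash_\omega\lnot T,\varphi$) satisfies $\mathsf{Tr}_{\mathbf{\Pi}^1_n}(\varphi)$''.

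For \eqref{syntactic}$\Rightarrow$\eqref{semantic}: assume (2), and suppose (1) failed. By the reformulation there is $\varphi\in\mathbf{\Pi}^1_2$ with a cut-free $\omega$-proof $P$ of $\lnot T,\varphi$ but $\lnot\mathsf{Tr}_{\mathbf{\Pi}^1_n}(\varphi)$ (when $n=1$, take $\varphi$ to be $0=1$ and weaken a cut-free $\omega$-proof of $\lnot T$). Since $P$ is well-founded, its Kleene--Brouwer order $\delta$ is a well-order, provably in $\mathsf{ACA}_0$. For $n\ge 2$ we have $\varphi\in\mathbf{\Pi}^1_2\subseteq\mathbf{\Pi}^1_n$, so Lemma \ref{feferman} gives $\mathbf{\Pi}^1_n\text{-}\mathbf{R}^\delta(T)\vdash\varphi$; for $n=1$ one uses instead the completeness theorem for iterated $\mathbf{\Pi}^1_1$ reflection of \cite{pakhomov2018reflection}. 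Applying (2) to the well-order $\delta$ yields $\mathbf{\Pi}^1_n\text{-}\mathsf{RFN}(\mathbf{\Pi}^1_n\text{-}\mathbf{R}^\delta(T))$, hence $\mathsf{Tr}_{\mathbf{\Pi}^1_n}(\varphi)$ --- a contradiction.

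For \eqref{semantic}$\Rightarrow$\eqref{syntactic}: I would reduce this to the statement that, provably in $\mathsf{ACA}_0$, for every linear order $\alpha$ with $\mathsf{WO}(\alpha)$ and every $\varphi\in\mathbf{\Pi}^1_n$, if $\mathbf{\Pi}^1_n\text{-}\mathbf{R}^\alpha(T)\vdash\varphi$ then $\vdash_\omega\lnot T,\varphi$. Granting this, (1) closes the argument: since $\mathfrak{M}\vDash\lnot T,\varphi$ means $\mathfrak{M}\vDash T\to\mathfrak{M}\vDash\varphi$, soundness of the $\omega$-proof system shows $\varphi$ holds in all $\omega$-models of $T$, and then $\mathbf{\Pi}^1_n\text{-}\omega\mathsf{RFN}(T)$ gives $\mathsf{Tr}_{\mathbf{\Pi}^1_n}(\varphi)$, which is exactly $\mathbf{\Pi}^1_n\text{-}\mathsf{RFN}(\mathbf{\Pi}^1_n\text{-}\mathbf{R}^\alpha(T))$. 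The reduced statement I would prove by L\"ob's theorem: arguing in $\mathsf{ACA}_0$ under the assumption that $\mathsf{ACA}_0$ proves it, and unwinding the fixed-point definition of $\mathbf{\Pi}^1_n\text{-}\mathbf{R}^\alpha(T)$. A derivation of $\varphi$ uses $T$-axioms --- which a finite first-order derivation turns into an $\omega$-derivation of $\lnot T,\varphi$ --- together with finitely many reflection axioms $\mathsf{RFN}^{\mathsf{it}}(T,\mathsf{cone}(\alpha,x_i))$. Each such axiom is $\mathsf{ACA}_0$-provably $\mathbf{\Pi}^1_n\text{-}\mathsf{RFN}(\mathbf{\Pi}^1_n\text{-}\mathbf{R}^{\mathsf{cone}(\alpha,x_i)}(T))$, and it has an $\omega$-derivation from $\lnot T$ because: by the L\"ob hypothesis applied to the proper cone $\mathsf{cone}(\alpha,x_i)$ --- a well-order --- every $\mathbf{\Pi}^1_n$ theorem of $\mathbf{\Pi}^1_n\text{-}\mathbf{R}^{\mathsf{cone}(\alpha,x_i)}(T)$ already has an $\omega$-proof from $\lnot T$, and the $\omega$-calculus can internalize this using the uniform Tarski clauses for $\mathsf{Tr}_{\mathbf{\Pi}^1_n}$ and the fact that true $\mathbf{\Pi}^1_0$ sentences (such as the instances of $\mathsf{Prv}$ occurring in $\mathbf{\Pi}^1_n\text{-}\mathsf{RFN}$) are $\omega$-provable. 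Splicing these sub-derivations with cuts yields the desired $\omega$-proof (with cuts --- which is all Theorems \ref{omegaRFN_eqiv1}--\ref{omegaRFN_eqiv2} require) of $\lnot T,\varphi$.

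The step I expect to be hardest is this converse of Lemma \ref{feferman}. The two sources of difficulty are: (i) the natural proof is a transfinite recursion along $\alpha$, which $\mathsf{ACA}_0$ does not support --- L\"ob's theorem is exactly the device that bypasses this, as in the proofs of \textsection\ref{preliminaries}; and (ii) the $\mathsf{ACA}_0$ formalization of the $\omega$-logic simulation of reflection: one must handle the fact that $\mathsf{ACA}_0$ cannot prove full cut-elimination for $\omega$-logic (so cuts genuinely appear in the constructed proofs), must use the partial truth predicates $\mathsf{Tr}_{\mathbf{\Pi}^1_n}$ and $\mathsf{Tr}_{\mathbf{\Sigma}^1_n}$ correctly under the embedding, and must keep the whole construction arithmetically definable. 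Working with the ``with cuts'' relation $\vdash_\omega$ throughout, and relying on the robustness of Theorems \ref{omegaRFN_eqiv1}--\ref{omegaRFN_eqiv2} across $\vdash_\omega$ and $\vdash_0$, is what makes the bookkeeping manageable.
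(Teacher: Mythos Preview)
Your argument for \eqref{syntactic}$\Rightarrow$\eqref{semantic} is the paper's: both take a putative counterexample $\varphi$, form the cut-free $\omega$-proof of $\lnot T,\lnot\varphi$ (or $\lnot T$), let $\delta$ be its Kleene--Brouwer rank, invoke Lemma~\ref{feferman} to get $\mathbf{\Pi}^1_n\text{-}\mathbf{R}^\delta(T)\vdash\lnot\varphi$, and apply \eqref{syntactic} at $\delta$ for a contradiction.

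For \eqref{semantic}$\Rightarrow$\eqref{syntactic} you have inverted the difficulty. The paper treats this as the \emph{easy} direction and gives a short model-theoretic argument: assuming \eqref{semantic} and the failure of \eqref{syntactic}, take a well-order $\alpha$ and a false $\varphi\in\mathbf{\Pi}^1_n$ with $\mathbf{\Pi}^1_n\text{-}\mathbf{R}^\alpha(T)\vdash\varphi$; by \eqref{semantic} choose an $\omega$-model $\mathfrak{M}\models T+\lnot\varphi$; then show by transfinite induction along $\alpha$ that $\mathfrak{M}\models\mathbf{\Pi}^1_n\text{-}\mathbf{R}^\beta(T)$ for all $\beta\le\alpha$. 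The inductive step is immediate because $\mathfrak{M}$, being an $\omega$-model, is correct about what $\mathbf{\Pi}^1_n\text{-}\mathbf{R}^\beta(T)$ proves. The predicate being inducted on is arithmetic in the satisfaction class for $\mathfrak{M}$ (which exists since \eqref{semantic} yields $\mathsf{ACA}_0^+$), so the induction is just arithmetic comprehension plus $\mathsf{WO}(\alpha)$: form the set of failures and take its minimum. Then $\mathfrak{M}\models\varphi$, contradicting $\mathfrak{M}\models\lnot\varphi$.

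Your proposed L\"ob argument for the ``converse of Lemma~\ref{feferman}'' has a real gap. The L\"ob hypothesis is $\mathsf{Prv}_{\mathsf{ACA}_0}(\Lambda)$, the \emph{formalized provability} of your statement, not $\Lambda$ itself. In the case where $\mathbf{\Pi}^1_n\text{-}\mathbf{R}^{\beta_j}(T)\vdash\psi$ and you need an actual $\omega$-proof $\vdash_\omega\lnot T,\psi$, knowing only that $\mathsf{ACA}_0$ proves ``such an $\omega$-proof exists'' does not hand you one. Compare the successful L\"ob arguments in \textsection\ref{preliminaries} and Lemma~\ref{feferman}: there the conclusion is itself a provability claim (ordinary or in some $\mathbf{R}^\delta(T)$), so the reflexive hypothesis can be pushed \emph{inside} the target provability predicate. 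Your conclusion $\vdash_\omega\lnot T,\varphi$ is a $\mathbf{\Sigma}^1_2$ existence statement about well-founded trees, and there is no provability predicate to push the hypothesis into; the ``internalization'' you gesture at does not close this gap. The direction you flagged as hardest is precisely the one that needs no new idea once you argue semantically.
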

\begin{proof} The \ref{semantic}$\rightarrow$\ref{syntactic} direction is relatively straightforward. 

Assume, for contradiction, that \ref{semantic} is true but \ref{syntactic} is false. Since \ref{syntactic} is false, there is a well-ordering $\alpha$ such that $\mathbf{\Pi}^1_n\textrm{-}\mathsf{RFN}(\mathbf{\Pi}^1_n\textrm{-}\mathbf{R}^\alpha(T))$ is false. So for some false $\mathbf{\Pi}^1_n$ sentence $\varphi$,
\begin{equation}\label{contradiction1}
\mathbf{\Pi}^1_n\textrm{-}\mathbf{R}^\alpha(T) \vdash \varphi. 
\end{equation}
Note that $\neg\varphi$ is a true $\mathbf{\Sigma}^1_n$ statement. By \ref{semantic}, we infer that there is an $\omega$-model $\mathfrak{M}$ of $T$ such that:
\begin{equation}\label{contradiction2}
\mathfrak{M} \vDash \neg\varphi.
\end{equation}


On the other hand, by induction, we can show that $\mathfrak{M}$ satisfies $\mathbf{\Pi}^1_n\textrm{-}\mathbf{R}^\alpha(T)$. Assume that for every $\beta<\alpha$, $\mathfrak{M}\vDash \mathbf{\Pi}^1_n\textrm{-}\mathbf{R}^\beta(T)$. If $\mathfrak{M}\nvDash \mathbf{\Pi}^1_n\textrm{-}\mathbf{R}^\alpha(T)$ then 
$$\mathfrak{M}\vDash\exists \beta<\alpha\textrm{ ``}\mathbf{\Pi}^1_n\textrm{-}\mathbf{R}^\beta(T)\textrm{ proves a false }\mathbf{\Pi}^1_n\textrm{ statement }\psi.\textrm{''}$$ 
Since $\mathfrak{M}$ is an $\omega$-model, it is correct about what is provable. That is, this claim must be witnessed in $\mathfrak{M}$ by a standard proof. However, for any $\beta<\alpha$ and $\mathbf{\Pi}^1_n$ statement $\psi$, if $\mathbf{\Pi}^1_n\textrm{-}\mathbf{R}^\beta(T)$ proves $\psi$ then since $\mathfrak{M}$ is a model of $\mathbf{\Pi}^1_n\textrm{-}\mathbf{R}^\beta(T)$, $\mathfrak{M}$ is a model of $\psi$, and thus, that $\psi$ is a true $\mathbf{\Pi}^1_n$ statement.

Thus, we conclude that
\begin{equation}\label{contradiction3}
\mathfrak{M} \vDash \mathbf{\Pi}^1_n\textrm{-}\mathbf{R}^\alpha(T)
\end{equation}
But \ref{contradiction1}, \ref{contradiction2}, and \ref{contradiction3} are jointly inconsistent.

The \ref{syntactic}$\rightarrow$\ref{semantic} direction is less straightforward, but we have already laid the groundwork. We assume \ref{syntactic}. We want to prove \ref{semantic}, i.e., that every true $\mathbf{\Sigma}^1_n$ sentence is satisfied by an $\omega$-model of $T$. By Proposition \ref{ACA_0^+_alt}, it suffices to prove that every true $\mathbf{\Sigma}^1_n$ sentence is satisfied by a weak $\omega$-model of $T$. So let $\varphi$ be a true $\mathbf{\Sigma}^1_n$ sentence. We want to show that $\varphi$ is satisfied by an $\omega$-model of $T$ with a partial satisfaction class for $\mathbf{\Pi}^1_n$ sentences. We break into cases based on whether there is a cut-free $\omega$-proof of $\neg T, \neg \varphi$. 

\textbf{Case I:} There is no such proof, i.e., $\nvdash_0 \neg T, \neg \varphi$. By Theorem \ref{omega_completeness_2}, for every family $S$ of sets there is a weak $\omega$-model $\mathfrak{M}\supseteq S$ satisfying $T+\varphi$. This yields \ref{semantic}.



\textbf{Case II:} There is such a proof, i.e., $\vdash_0 \neg T,\neg\varphi$. Let $\delta$ be the Kleene-Brouwer rank of the canonical proof tree of $\neg T,\neg \varphi$. By \ref{syntactic} we can iterate reflection along $\delta$, yielding $\mathbf{\Pi}^1_n\text{-}\mathsf{RFN}(\mathbf{\Pi}^1_n$-$\mathbf{R}^\delta(T))$. On the other hand, by Lemma \ref{feferman}, $\mathbf{\Pi}^1_n$-$\mathbf{R}^\delta(T)\vdash \neg\varphi$. Combining these two observations, we conclude that $\neg \varphi$ is true, contradicting our choice of $\varphi$.
\end{proof}

As a special case we get the following:

\begin{theorem}[$\mathsf{ACA}_0$]\label{special_case}
For any $\mathbf{\Pi}^1_{2}$-axiomatizable theory $T$, the following are equivalent:
\begin{enumerate}
\item Every set is contained in an $\omega$-model of $T$.
\item $\mathbf{\Pi}^1_2\textrm{-}\omega\mathsf{RFN}(T)$.
\item $\forall \alpha \Big(\mathsf{WO}(\alpha) \rightarrow \mathbf{\Pi}^1_1\textrm{-}\mathsf{RFN}\big(\mathbf{\Pi}^1_1\textrm{-}\mathbf{R}^\alpha(T)\big)\Big).$
\item $\forall \alpha \Big(\mathsf{WO}(\alpha) \rightarrow \mathbf{\Pi}^1_2\textrm{-}\mathsf{RFN}\big(\mathbf{\Pi}^1_2\textrm{-}\mathbf{R}^\alpha(T)\big)\Big).$
\end{enumerate}
\end{theorem}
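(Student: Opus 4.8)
The strategy is to obtain all four equivalences by assembling results already established, with the only care needed being the complexity-of-axiomatization hypotheses. Concretely, $(1)\Leftrightarrow(2)$ will come from the robustness theorem (Theorem~\ref{omegaRFN_eqiv1}), while $(1)\Leftrightarrow(3)$ and $(2)\Leftrightarrow(4)$ will come from the main theorem (Theorem~\ref{omega_to_iter}) applied at $n=1$ and at $n=2$ respectively.

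For $(1)\Leftrightarrow(2)$ I note that clause~(1) here, ``every set is contained in an $\omega$-model of $T$,'' is exactly clause~(\ref{omegaRFN_eqiv1_3}) of Theorem~\ref{omegaRFN_eqiv1}, and clause~(2) here is clause~(\ref{omegaRFN_eqiv1_2}) there; that theorem moreover identifies both with $\mathbf{\Pi}^1_1\mbox{-}\omega\mathsf{RFN}(T)$, a fact I will reuse at once. For $(1)\Leftrightarrow(3)$ I apply Theorem~\ref{omega_to_iter} with $n=1$: its hypothesis demands that $T$ be $\mathbf{\Pi}^1_{n+1}=\mathbf{\Pi}^1_2$-axiomatizable, which is precisely the standing assumption, and its conclusion is that $\mathbf{\Pi}^1_1\mbox{-}\omega\mathsf{RFN}(T)$ is equivalent to clause~(3); combining with the identification from the previous step gives $(1)\Leftrightarrow(3)$. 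For $(2)\Leftrightarrow(4)$ I apply Theorem~\ref{omega_to_iter} with $n=2$: here the hypothesis asks for $\mathbf{\Pi}^1_3$-axiomatizability, which holds a fortiori since a $\mathbf{\Pi}^1_2$-axiomatized theory is also $\mathbf{\Pi}^1_3$-axiomatized, and the conclusion gives $\mathbf{\Pi}^1_2\mbox{-}\omega\mathsf{RFN}(T)\Leftrightarrow$ clause~(4). Chaining $(3)\Leftrightarrow(1)\Leftrightarrow(2)\Leftrightarrow(4)$ then establishes the equivalence of all four statements.

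The one point that requires attention — and which I expect to be the only genuine obstacle — is that invoking Theorem~\ref{omega_to_iter} at $n=1$ relies on the $n=1$ case of the Feferman-style Lemma~\ref{feferman} used in Case~II of that theorem's proof, whereas Lemma~\ref{feferman} is stated for $n>1$. This is handled by the $\mathbf{\Pi}^1_1$ analogue of Feferman completeness, namely the Schmerl-style theorem for iterated $\mathbf{\Pi}^1_1$ reflection from \cite{pakhomov2018reflection} (compare Theorem~\ref{previous_paper}): substituting it for Lemma~\ref{feferman} makes Theorem~\ref{omega_to_iter} valid throughout $n>0$, so that its $n=1$ instance licenses the step above. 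With this in place, the corollary follows with no further work.
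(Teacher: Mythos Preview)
Your proposal is correct and matches the paper's own treatment: the theorem is presented immediately after Theorem~\ref{omega_to_iter} with the one-line justification ``As a special case we get the following,'' and the intended derivation is exactly the assembly you give---Theorem~\ref{omegaRFN_eqiv1} for $(1)\Leftrightarrow(2)$ (and the identification of both with $\mathbf{\Pi}^1_1\mbox{-}\omega\mathsf{RFN}(T)$), and Theorem~\ref{omega_to_iter} at $n=1$ and $n=2$ for $(1)\Leftrightarrow(3)$ and $(2)\Leftrightarrow(4)$.

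Your final paragraph, however, manufactures a difficulty and then patches it with the wrong tool. Theorem~\ref{omega_to_iter} is stated for $n>0$, so its $n=1$ instance is already asserted by the paper; the ``$n>1$'' in the hypothesis of Lemma~\ref{feferman} is a slip in the stated range, not a genuine restriction. The proof of that lemma goes through verbatim for $n=1$: in a cut-free derivation of $\neg T,\Gamma$ with $T$ $\mathbf{\Pi}^1_2$-axiomatized and $\Gamma\subseteq\mathbf{\Pi}^1_1$, every premise sequent still splits into the $\mathbf{\Sigma}^1_2$ negated axioms (which $\mathbf{\Pi}^1_1\text{-}\mathbf{R}^\delta(T)$ refutes outright, since it contains $T$) and a $\mathbf{\Pi}^1_1$ part to which the truth predicate $\mathsf{Tr}_{\mathbf{\Pi}^1_1}$ applies. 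Your proposed substitute via Theorem~\ref{previous_paper} would not actually do the job even if one were needed: that result computes $|\mathbf{\Pi}^1_1\text{-}\mathbf{R}^\alpha(\mathsf{ACA}_0)|_{\Pi^1_1}$ for the specific base theory $\mathsf{ACA}_0$, whereas here you need the Feferman-style bound for an arbitrary $\mathbf{\Pi}^1_2$-axiomatized $T$. So drop the workaround and simply note that Lemma~\ref{feferman} holds for $n\ge 1$ by the same proof.
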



\section{Proof-theoretic dilators}\label{pt_dilators}

In this section we introduce the concept of the \emph{proof-theoretic dilator of a theory}. Proof-theoretic dilators play a role in $\Pi^1_2$ proof theory that is analogous to the role proof-theoretic ordinals play in $\Pi^1_1$ proof theory. In this section, we also use the main theorem to establish a systematic connection between iterated $\omega$-model reflection and the dilators of theories. In particular, we characterize how the proof-theoretic dilators of theories grow as a function of the amount of $\omega$-model reflection they prove.

\subsection{From rules to axioms}

\begin{lemma}[$\mathsf{ACA}_0$]\label{WO_NF} For any $\mathbf{\Pi}^1_1$-formula $\varphi(\vec{x},\vec{X})$ there is an elementary comprehension term $\alpha(\vec{x},\vec{X})$ such that $\mathsf{ACA}_0\vdash \varphi(\vec{x},\vec{X})\mathrel{\leftrightarrow}\mathsf{WO}(\alpha(\vec{x},\vec{X}))$. Furthermore we could choose  $\alpha$ so that any constant that appears in $\alpha$ also appears in $\varphi$.\end{lemma}
\begin{proof}
Since this is very close to a standard fact we don't give a detailed proof. This could be achieved by taking as $\alpha(\vec{x},\vec{X})$ the Kleene-Brouwer order on the Kleene normal form of $\varphi(\vec{x},\vec{X})$ (see \cite[Lemma~V.1.4]{simpson2009subsystems} for a presentation of Kleene's normal form theorem formalized in $\mathsf{ACA}_0$). 
\end{proof}

\begin{lemma}[$\mathsf{ACA}_0$]\label{disjunction_of_orders} There is an elementary comprehension term $\mathsf{disj}(X,Y)$ such that $\mathsf{ACA}_0$ proves that for any linear orders $\alpha,\beta$:
\begin{enumerate}
    \item $\mathsf{disj}(\alpha,\beta)$ is a linear order;
    \item $\mathsf{WO}(\mathsf{disj}(\alpha,\beta))\mathrel{\leftrightarrow} \mathsf{WO}(\alpha)\lor \mathsf{WO}(\beta)$;
    \item if there is an infinite descending chain $b_0\succ_\beta b_1\succ_\beta \ldots$ then there is an embedding $f\colon \alpha\to \mathsf{disj}(\alpha,\beta)$.
\end{enumerate}
\end{lemma}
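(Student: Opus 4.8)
The plan is to realize $\mathsf{disj}(\alpha,\beta)$ as the Kleene--Brouwer order of a tree whose infinite branches code \emph{simultaneous} descending sequences in $\alpha$ and in $\beta$. Concretely, let $\mathcal{T}_{\alpha,\beta}$ be the tree of all finite sequences $\langle x_0,x_1,\dots,x_{k-1}\rangle$ of natural numbers with $x_{2i}\in D_\alpha$ and $x_{2i+1}\in D_\beta$, such that the even-indexed entries are strictly $\prec_\alpha$-descending and the odd-indexed entries are strictly $\prec_\beta$-descending. Testing membership is a finite computation from $\prec_\alpha,\prec_\beta$, so $\mathcal{T}_{\alpha,\beta}$ is given by an arithmetical comprehension term; I set $\mathsf{disj}(\alpha,\beta):=\mathsf{KB}(\mathcal{T}_{\alpha,\beta})$, the \emph{standard} Kleene--Brouwer order (extensions are smaller, and incomparable nodes are compared by the usual order of $\mathbb{N}$ at their first disagreement). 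This is again an arithmetical comprehension term, and part (1) is immediate, since the Kleene--Brouwer order of any tree is a linear order, provably in $\mathsf{ACA}_0$.

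For part (2) I would first record the branch characterization: an infinite path through $\mathcal{T}_{\alpha,\beta}$ is precisely an infinite sequence whose even part is an infinite $\prec_\alpha$-descending chain and whose odd part is an infinite $\prec_\beta$-descending chain. Hence $\mathcal{T}_{\alpha,\beta}$ has an infinite path iff both $\alpha$ and $\beta$ are ill-founded: from two such chains, interleave them; from a path, project onto the even and odd coordinates. Combining this with the standard Kleene--Brouwer theorem $\mathsf{WO}(\mathsf{KB}(\mathcal{T}))\leftrightarrow(\mathcal{T}\text{ has no infinite path})$, provable in $\mathsf{ACA}_0$, yields
\[
\mathsf{WO}(\mathsf{disj}(\alpha,\beta))\;\leftrightarrow\;\lnot(\lnot\mathsf{WO}(\alpha)\land\lnot\mathsf{WO}(\beta))\;\leftrightarrow\;\mathsf{WO}(\alpha)\lor\mathsf{WO}(\beta),
\]
which is (2). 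The easy direction uses only ``path $\Rightarrow$ descending $\mathsf{KB}$-sequence''; the converse ``descending $\mathsf{KB}$-sequence $\Rightarrow$ path'' is the content forcing the use of the \emph{natural} order of $\mathbb{N}$ on labels. This is exactly why one must take the true Kleene--Brouwer order rather than one twisted to use $\prec_\alpha,\prec_\beta$ at the respective levels: the twisted order would let a descent escape into a single level, and would compute the conjunction $\mathsf{WO}(\alpha)\land\mathsf{WO}(\beta)$ instead of the disjunction.

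The crux is part (3). Fix an infinite descending chain $b_0\succ_\beta b_1\succ_\beta\cdots$. Pass to the subtree of $\mathcal{T}_{\alpha,\beta}$ in which the odd entries are forced to be the successive $b_i$; since the Kleene--Brouwer comparison of two fixed nodes depends only on those nodes, the inclusion of this subtree is an order-embedding of its $\mathsf{KB}$-order into $\mathsf{disj}(\alpha,\beta)$. This reduces the task to embedding $\alpha$ into the Kleene--Brouwer order of a tree built purely from $\alpha$'s descending sequences, and it is precisely here that the hypothesis ``$\beta$ is ill-founded'' is spent: the chain is what makes the odd coordinates freely extendable, supplying unbounded depth. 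I would then construct the embedding $f\colon \alpha\to \mathsf{disj}(\alpha,\beta)$ by recursion along an enumeration of $D_\alpha$, at each stage extending a finite order-faithful partial map by assigning the next element a tree node placed strictly in the $\mathsf{KB}$-gap determined by its $\prec_\alpha$-position relative to the finitely many nodes already used. The entire construction is arithmetical in $\alpha$ and the chain, hence available in $\mathsf{ACA}_0$.

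I expect the main obstacle to be exactly this last embedding. Because the Kleene--Brouwer order compares two nodes that first disagree at a single coordinate by the raw order on $\mathbb{N}$, one \emph{cannot} realize $\prec_\alpha$ locally at one coordinate; order-preservation must instead be engineered through the tree's depth (extension moves) combined with label comparisons, with the inexhaustible supply of chain elements $b_i$ providing the room. The delicate points are to verify that a suitable gap-node is always available \emph{uniformly}, whether or not $\alpha$ is itself well-founded, and that the resulting map is provably an order-embedding in $\mathsf{ACA}_0$; everything else in the lemma is routine once the Kleene--Brouwer machinery and the branch characterization are in place.
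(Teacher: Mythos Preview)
Your construction of $\mathsf{disj}(\alpha,\beta)$ as the Kleene--Brouwer order of a tree of ``jointly descending'' sequences is essentially the paper's construction: they take descending sequences in the product partial order $\alpha\times\beta$, you interleave the two coordinates, but these are the same tree up to a trivial reparametrisation. Your arguments for (1) and (2) are correct and match the paper's.

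The gap is in part (3), precisely at the step where you ``pass to the subtree in which the odd entries are forced to be the successive $b_i$''. By fixing the odd positions to be exactly $b_0,b_1,\ldots$ in order, you throw away the one degree of freedom that makes the recursive embedding work. In the paper's construction, when a new element $a$ is placed below its $\prec_\alpha$-least processed upper neighbour $a_v$, the image $f(a)$ is obtained by appending to $f(a_v)$ a pair $\langle a,b_k\rangle$, and the index $k$ is chosen large enough that the \emph{numerical} code of $\langle a,b_k\rangle$ exceeds the relevant entry of every previously placed image that must lie below $f(a)$. This is what guarantees that the required $\mathsf{KB}$-gap is nonempty. In your fixed-$b$ subtree that freedom is gone: the entry at an odd position is determined, and at an even position your only freedom is the choice of an element of $D_\alpha$ that is $\prec_\alpha$-below the previous even entry, whose numerical codes may all be too small. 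Concretely, take $D_\alpha=\{0,1,2\}$ with $2\prec_\alpha 1\prec_\alpha 0$, process $0,2,1$ in that order, and set $f(0)=\langle 0\rangle$, $f(2)=\langle 0,b_0,2\rangle$; then in your subtree the open interval $\bigl(f(2),f(0)\bigr)$ in the $\mathsf{KB}$-order is empty, and the recursion stalls. With the paper's free choice of $b_k$ this cannot happen.

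The fix is simply not to pin the odd coordinates: let the odd entries range over all elements of the chain $\{b_k:k\in\mathbb{N}\}$, and at each stage choose $k$ large enough to push the new node's code above all competitors. This is exactly what the paper does, and once you retain that freedom your ``place in the $\mathsf{KB}$-gap'' recursion goes through verbatim.
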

\begin{proof}
   Let us fix orders $\alpha$, $\beta$ and describe the order $\mathsf{disj}(\alpha,\beta)=\gamma$. Let $\delta$ be the partial order that is the product of $\alpha$ and $\beta$ as partial orders, i.e., $\delta$ consists of pairs $\langle a,b\rangle$ where $a\in\alpha$, $b\in\beta$ and we have $\langle a_1,b_1\rangle\prec_\delta \langle a_2,b_2\rangle$ iff $a_1\prec_\alpha a_2$ and $b_1\prec_\delta b_2$. The domain  of the order $\gamma$ consists of all sequences $c=(c_0,c_1,\ldots,c_{n-1})$ such that $c_0\succ_\delta c_1\succ_\delta\ldots \succ_\delta c_{n-1}$. We put $c= (c_0,c_1,\ldots,c_{n-1})\prec_\gamma (c_0',c_1',\ldots,c_{m-1}')=c'$  if either $c'$ is a proper initial segment of $c$ or if $c_i<c_i'$ (this is comparison of $c_i$ and $c_i'$ as natural numbers), where $i$ is the least index such that $c_i\ne c_{i}'$. 
   It is fairly easy to see that this construction of $\gamma$ in fact could be given by an elementary comprehension term. Note $\mathsf{ACA}_0$ proves that $\gamma$ is a linear order. Moreover, provably in $\mathsf{ACA}_0$, we have $\mathsf{WF}(\delta)\mathrel{\leftrightarrow}\big(\mathsf{WO}(\alpha)\lor \mathsf{WO}(\beta)\big)$ and $\mathsf{WF}(\delta)\mathrel{\leftrightarrow} \mathsf{WO}(\gamma)$.
   
   Now we just need to show in $\mathsf{ACA}_0$ that given an infinite descending chain $b_0\succ_\beta b_1\succ_\beta \ldots$, there is an embedding $f\colon \alpha\to \mathsf{disj}(\alpha,\beta)$. We enumerate the elements of $\alpha$ as $a_0,a_1,\ldots$ (if $\alpha$ is finite then the list would be finite).  We put $f(a_0)=\langle a_0,b_0\rangle$. For each next $a_{i+1}$ we consider two cases:
   \begin{enumerate}
       \item $a_{i+1}\succ_{\alpha}a_j$, for each $j\le i$;
       \item $a_{i+1}\prec_{\alpha}a_j$, for some $j\le i$.
   \end{enumerate}
   In the case 1. we put $f(a_{i+1})=\langle a_0,b_k\rangle$, where we choose $k$ such that $\langle a_0,b_k\rangle$ would be large enough as a number so that $f(a_{i+1})\succ_\gamma f(a_j)$, for $j\le i$. In the case 2. we consider $a_v=\min_\alpha\{a_j\mid j\le i \text{ and }a_{i+1}\prec_{\alpha}a_j\}$. The value $f(a_v)$ is of the form $(c_0,\ldots,c_{m-1})$, where $c_{m-1}$ is of the form $\langle a_v, b_u\rangle$. We put $f(a_{i+1})$ to be of the form $(c_0,\ldots,c_{m-1},\langle a_{i+1},b_k\rangle)$, where we choose $k$ such that $k>u$ and $\langle a_{i+1},b_k\rangle$ as a number is large enough so that $f(a_{i+1})\succ_\gamma f(a_j)$ for all $a_j\prec a_{i+1}$, $j\le i$.
   It is easy to see that this construction gives us an embedding of $\alpha$ into $\gamma$.
\end{proof}

\begin{lemma}[$\mathsf{ACA}_0$]
For any elementary comprehension term $t(X_1,\ldots,X_n,\vec{y})$ there is an elementary comprehension term $\mathsf{appr}_t(X_1,\ldots,X_n,\vec{y})$ with the same parameters such that $\mathsf{ACA}_0$ proves the following. If for any $X_1,\ldots,X_n$, and $\vec{y}$ we have $\mathsf{LO}(t(X_1,\ldots,X_n,\vec{y}))$, then \begin{enumerate} \item $\forall x_1,\ldots,x_n\in 2^{<\omega}\forall \vec{y}\Big(\mathsf{LO}\big(\mathsf{appr}_t(x_1,\ldots,x_n,\vec{y})\big)\Big)$, \item for any $\vec{y}$, and $x_1,\ldots,x_n,x_1',\ldots,x_n'\in 2^{<\omega}$ if $x_1\subseteq x_1',\ldots,x_n\subseteq x_n'$, then $\mathsf{appr}_t(x_1,\ldots,x_n,\vec{y})\subseteq \mathsf{appr}_t(x_1',\ldots,x_n',\vec{y})$, \item for any $\vec{y}$ and $X_1,\ldots,X_n$ we have $t(X_1,\ldots,X_n,\vec{y})=\bigcup\limits_{m<\omega} \mathsf{appr}_t(X_1\upharpoonright m,\ldots,X_n\upharpoonright m,\vec{y})$.\end{enumerate}
\end{lemma}
\begin{proof}
Observe that there is an iteration of exponentiation $2_s^x$ such that for any $k$ and $i,j\le k$ when we calculate whether $\langle i,j\rangle\in t(X_1,\ldots,X_n,\vec{y})$ we make the requests about membership in $X_1,\ldots,X_n$ only for numbers $l<2_s^k$. Hence in fact we have an elementary comprehension term $v(x_1,\ldots,x_n,\vec{y},z)$ such that for any $k$ and $X_1,\ldots,X_n$ we have $$v(X_1\upharpoonright 2_s^k,\ldots,X_n\upharpoonright 2_s^k,\vec{y},k)=\{\langle i,j\rangle\mid \langle i,j\rangle \in t(X_1,\ldots,X_n,\vec{y}) \text{ and } i,j\le k\}.$$ 

Let $\log_2^s(x)$ be the function mapping natural number $x$ to the greatest natural number $y$ such that $2^x_s\ge y$. Let $f(x)=2_s^{\log_2^s(x)}$. Let $|x|$ be the function mapping $x\in 2^{<\omega}$ to its length.  Let $h(x_1,\ldots,x_n)=f\big(\min(|x_1|,\ldots,|x_n|)\big)$. Clearly these functions are elementary recursive.

We put $\mathsf{appr}_t(x_1,\ldots,x_n,\vec{y})$ to be $$v(x_1\upharpoonright h(x_1,\ldots,x_n),\ldots,x_n\upharpoonright h(x_1,\ldots,x_n),\vec{y},\log_2^s(\min(|x_1|,\ldots,|x_n|))).$$
It is easy to see that the term $\mathsf{appr}_t$ behaves as desired.
\end{proof}

\begin{lemma}[$\mathsf{ACA}_0$] \label{mrj_lemma}
For any elementary comprehension term $t(X_1,\ldots,X_n,\vec{y})$ there is an elementary comprehension term $\mathsf{mrj}_t$ and an arithmetical comprehension term $\mathsf{emb}_t(X_1,\ldots,X_n,\vec{y})$ with the same parameters such that $\mathsf{ACA}_0$ proves the following. If for any $X_1,\ldots,X_n$, and $\vec{y}$ we have $\mathsf{LO}(t(X_1,\ldots,X_n,\vec{y}))$, then \begin{enumerate} \item $\mathsf{LO}(\mathsf{mrj}_t)$; \item if $\mathsf{WO}(\mathsf{mrj}_t)$, then $\mathsf{emb}_t(X_1,\ldots,X_n,\vec{y})$ is an embedding of $t(X_1,\ldots,X_n,\vec{y})$ into $\mathsf{mrj}_t$; \item\label{Mrj_lemma_c3} $\mathsf{WO}(\mathsf{mrj}_t)$ iff $\forall X_1,\ldots,X_n,\vec{y} \;\mathsf{WO}\big(t(X_1,\ldots,X_n,\vec{y})\big)$. \end{enumerate}
\end{lemma}
\begin{proof}
  Assuming  $t(X_1,\ldots,X_n,\vec{y})$ is a term defining orders for all $X_1,\ldots,X_n$, and $\vec{y}$ the term $\mathsf{mrj}_t$ should represent the following order. The domain for $\mathsf{mrj}_t$ should consist of the sequences $\langle \vec{p},\langle s_{0,1},\ldots,s_{0,n},a_0\rangle,\ldots, \langle s_{m-1,1},\ldots,s_{m-1,n},a_{m-1}\rangle\rangle$, where
  \begin{enumerate}
      \item the vector $\vec{p}$ is a vector of naturals of the same dimension as $\vec{y}$,
      \item all $s_{i,j}\in 2^{<\omega}$,
      \item $s_{i,j}\subsetneq s_{i+1,j}$, for all $0\le i<m-1$ and $1\le j\le n$,
      \item $a_{i}$ is from the domain of $\mathsf{appr}_t(s_{i,1},\ldots,s_{i,n},\vec{p})$,
      \item $a_{i+1}<a_i$ according to the order  $\mathsf{appr}_t(s_{i+1,1},\ldots,s_{i+1,n},\vec{p})$.
  \end{enumerate}
  We treat $\mathsf{mrj}_t$ as a subtree of $\omega^{<\omega}$ by identifying the individual components of sequences from the domain of $\mathsf{mrj}_t$ with the natural numbers coding them. The linear order on $\mathsf{mrj}_t$ is simply the Kleene-Brouwer order on this tree.
  
  It is easy to see that any infinite path in $\mathsf{mrj}_t$ as a tree induces a descending chain through some $t(X_1,\ldots,X_n,\vec{y})$ and a descending chain in any $t(X_1,\ldots,X_n,\vec{y})$ yields an inifinite path through $\mathsf{mrj}_t$. Hence, $\mathsf{mrj}_t$ is well-ordered iff each  $t(X_1,\ldots,X_n,\vec{y})$  is well-ordered, i.e., condition \ref{Mrj_lemma_c3}. holds. 
  
  A simulation of a linear order $\alpha$ in a linear order $\beta$ is a binary relation $S\subseteq D_\alpha\times D_\beta$ such that 
  \begin{enumerate} \item for any $a\in D_\alpha$ there is some $b\in D_{\beta}$ for which $a\mathrel{S} b$ and \item whenever $ a \mathrel{S} b$ and $a >_{\alpha} a'$, there is $b'\in D_\beta$ such that $a' \mathrel{S} b'$ and $b >_{\beta} b'$.\end{enumerate} Let $\mu S\colon D_R\to D_{R'}$ be the partial function from $D_\alpha$ to $D_\beta$ that maps $a$ to $\inf \{b\in D_{\beta}\mid a\mathrel{S} b\}$. It is easy to see that if $\beta$ is a well-order, then $\mu S$ is an embedding of $\alpha$ into $\beta$.
  
  Let us define a simulation $S(X_1,\ldots,X_n,\vec{y})$ of $t(X_1,\ldots,X_n,\vec{y})$ in $\mathsf{mrj}_t$. We put $a$ to be $S(X_1,\ldots,X_n,\vec{y})$-simulated by $$\langle \vec{y},\langle s_{0,1},\ldots,s_{0,n},a_0\rangle,\ldots, \langle s_{m-1,1},\ldots,s_{m-1,n},a_{m-1}\rangle\rangle$$ iff  $a=a_{m-1}$. It is trivial to see that this indeed forms a family of simulation relations. We put $\mathsf{emb}_t(X_1,\ldots,X_n,\vec{y})=\mu S(X_1,\ldots,X_n,\vec{y})$.
\end{proof}

\begin{lemma}\label{wo_cons}
Suppose $T$ is a $\mathbf{\Pi}^1_2$-axiomatizable $\mathbf{L}_2$-theory and $\varphi(X)$ is a $\mathbf{\Pi}^1_2$-formula such that $T\vdash \forall X,Y\big(\mathsf{WO}(X)\land \exists f(f\colon X\to Y)\land \varphi(Y)\to\varphi(X)\big)$. And suppose $H$ is a countable fragment of $\mathbf{L}_2$ containing $T$ and $\varphi$. Then the following theories have the same $\mathbf{\Pi}^1_1\cap H$ theorems:
\begin{enumerate}
    \item \label{wo_cons_1}$T+\forall X\big(\mathsf{WO}(X)\to \varphi(X)\big)$;
    \item \label{wo_cons_2}the closure of $T$ under the rules $\displaystyle\frac{\mathsf{WO}(\alpha)}{\varphi(\alpha)}$, where $\alpha$ ranges over closed elementary comprehension terms with constants from $H$;
    \item  \label{wo_cons_3}the closure of $T$ under the rules $\displaystyle\frac{\mathsf{WO}(\alpha)}{\varphi(\alpha)}$, where $\alpha$ ranges over closed arithmetical comprehension terms with constants from $H$.
\end{enumerate}
\end{lemma}
\begin{proof}
Clearly the theory from (\ref{wo_cons_1}) contains the theory from (\ref{wo_cons_3}), which in turn contains the theory from (\ref{wo_cons_2}). Thus it is enough to show that (\ref{wo_cons_1}) is $\mathbf{\Pi}^1_1\cap H$-conservative over  (\ref{wo_cons_2}). For this we will use Lemma \ref{rule_cons}. That is, it is enough to show that the theory (\ref{wo_cons_2}) is closed under all of the rules
$$\frac{\forall X,\vec{y},\vec{Y}(\theta(X,\vec{y},\vec{Y})\lor \mathsf{WO}(X)}{\forall X,\vec{y},\vec{Y}(\theta(X,\vec{y},\vec{Y})\lor\varphi(X))},$$
where $\theta\in \mathbf{\Pi}^1_1\cap H$ and $\alpha$ is a closed elementary comprehension term with constants from $H$. Let us denote the theory from (\ref{wo_cons_2}) as $\mathsf{U}$.  Further we fix one of the rules of the form above, assume that $U\vdash \forall X,\vec{y},\vec{Y}(\theta(X,\vec{y},\vec{Y})\lor \mathsf{WO}(X))$ and claim that $U\vdash  \forall X,\vec{y},\vec{Y}(\theta(X,\vec{y},\vec{Y})\lor\varphi(X))$.

By Lemma \ref{WO_NF} there is an elementary comprehension term $\beta(\vec{y},\vec{Y})$ such that $\mathsf{ACA}_0\vdash \forall \vec{y},\vec{Y}(\mathsf{WO}(\beta(X,\vec{y},\vec{Y}))\mathrel{\leftrightarrow} \theta(X,\vec{y},\vec{Y}))$. Let $\gamma$ be $\mathsf{mrj}_{\mathsf{disj}(X,\beta(X,\vec{y},\vec{Y}))}$. Observe that 
$$\mathsf{ACA}_0\vdash \mathsf{WO}(\gamma)\mathrel{\leftrightarrow} \forall X,\vec{y},\vec{Y}\big(\theta(X,\vec{y},\vec{Y})\lor \mathsf{WO}(X)\big)$$ and hence $U\vdash \mathsf{WO}(\gamma)$. Thus using the definition of $U$ we get $U\vdash \varphi(\gamma)$. 

To finish the proof we reason in $U$ and claim that $\forall X, \vec{y},\vec{Y}(\theta(\vec{y},\vec{Y})\lor\varphi(X))$. Indeed, we fix $A,\vec{b},\vec{B}$, assume that $\lnot \theta(A,\vec{b},\vec{B})$ and need to prove $\varphi(A)$. We have $\mathsf{WO}(A)$. Since we have $\mathsf{WO}(\gamma)$, by Lemma \ref{mrj_lemma}, $\mathsf{emb}_{\mathsf{disj}(A,\beta(X,\vec{y},\vec{Y}))}(A,\vec{b},\vec{B})$ is an embedding of $\mathsf{disj}(A,\beta(A,\vec{b},\vec{B}))$ into $\gamma$. Hence $\mathsf{WO}(\mathsf{disj}(A,\beta(A,\vec{b},\vec{B})))$ and  $\varphi(\mathsf{disj}(A,\beta(A,\vec{b},\vec{B})))$. Because $\lnot \theta(A,\vec{b},\vec{B})$, we have $\lnot \mathsf{WO}(\beta(A,\vec{b},\vec{B}))$. Thus there is an embedding of $A$ into $\mathsf{disj}(A,\beta(A,\vec{b},\vec{B}))$. Therefore we have $\varphi(A)$.
\end{proof}

Combining Lemma \ref{wo_cons}, Theorem \ref{special_case}, and Lemma \ref{hom_lemma} we obtain: 
\begin{lemma}[$\mathsf{ACA}_0$]\label{ON_iter_to_rule}
Let $T$ be some $\mathbf{\Pi}^1_2$-axiomatizable $\mathbf{L}_2$-theory.  Then for any countable fragment $H$ of $\mathbf{L}_2$ containing all axioms of $T$ the following three theories have the same $\mathbf{\Pi}^1_1\cap H$ theorems:
\begin{enumerate}
    \item \label{ON_iter_to_rule_t1}$T+\mathbf{\Pi}^1_2\textrm{-}\omega\mathsf{RFN}(T)$;
    \item \label{ON_iter_to_rule_t2}the closure of $T$ under the rule $$\frac{\mathsf{WO}(\alpha)}{\mathbf{\Pi}^1_1\mbox{-}\mathsf{RFN}(\mathbf{\Pi}^1_1\mbox{-}\mathbf{R}^{\alpha}(T))}\text{, where $\alpha$ is an arithmetical term with constants from $H$}.$$
    \item \label{ON_iter_to_rule_t3}the closure of $T$ under the rule $$\frac{\mathsf{WO}(\alpha)}{\mathbf{\Pi}^1_2\mbox{-}\mathsf{RFN}(\mathbf{\Pi}^1_2\mbox{-}\mathbf{R}^{\alpha}(T))}\text{, where $\alpha$ is an arithmetical term with constants from $H$}.$$
\end{enumerate}
\end{lemma}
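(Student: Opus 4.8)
The plan is to prove the three theories have the same $\mathbf{\Pi}^1_1 \cap H$ theorems by establishing a cycle of inclusions: (1) $\sqsubseteq_{\mathbf{\Pi}^1_1 \cap H}$ (2) $\sqsubseteq_{\mathbf{\Pi}^1_1 \cap H}$ (3) $\sqsubseteq_{\mathbf{\Pi}^1_1 \cap H}$ (1). The (3)$\to$(1) direction should be essentially immediate: any instance of the rule in (3) has a true premise $\mathsf{WO}(\alpha)$, and $T + \mathbf{\Pi}^1_2\text{-}\omega\mathsf{RFN}(T)$ proves $\forall \alpha(\mathsf{WO}(\alpha) \to \mathbf{\Pi}^1_2\text{-}\mathsf{RFN}(\mathbf{\Pi}^1_2\text{-}\mathbf{R}^\alpha(T)))$ by Theorem \ref{special_case} (note $\mathbf{\Pi}^1_2\text{-}\omega\mathsf{RFN}$ for a $\mathbf{\Pi}^1_2$-axiomatizable $T$ is equivalent to the $\omega$-model containment statement, which in turn gives clause (4) there), so it proves every conclusion of the rule. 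Similarly (2)$\to$(3) follows because over $\mathsf{ACA}_0$, $\mathbf{\Pi}^1_2\text{-}\mathsf{RFN}$ is at least as strong as $\mathbf{\Pi}^1_1\text{-}\mathsf{RFN}$ and iterating the stronger principle dominates, so each conclusion of the rule in (2) is provable in theory (3); this direction needs only monotonicity of $\mathbf{R}^\alpha$ in the reflection principle together with Lemma \ref{hom_lemma}-style bookkeeping.

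The substantive direction is (1)$\to$(2): I must show that every $\mathbf{\Pi}^1_1 \cap H$ consequence of $T + \mathbf{\Pi}^1_2\text{-}\omega\mathsf{RFN}(T)$ is provable from $T$ closed under the iterated-$\mathbf{\Pi}^1_1$-reflection rule. The idea is to argue by L\"ob's theorem inside $\mathsf{ACA}_0$ and use the $\omega$-proof machinery from \textsection\ref{equivalent_forms}. Suppose $T + \mathbf{\Pi}^1_2\text{-}\omega\mathsf{RFN}(T) \vdash \psi$ for $\psi \in \mathbf{\Pi}^1_1 \cap H$. Unwinding the finite proof and using the scheme form of $\mathbf{\Pi}^1_2\text{-}\omega\mathsf{RFN}(T)$ (namely $\varphi \to \mathbf{\Pi}^1_2\text{-}\omega\mathsf{RFN}(T+\varphi)$ for $\mathbf{\Pi}^1_2$ formulas $\varphi$, together with the passage from $\omega$-model reflection to $\omega$-provability), one extracts a true $\mathbf{\Sigma}^1_2$ sentence $\chi$ and a cut-free $\omega$-proof of $\neg T, \neg\chi, \psi$; here Theorem \ref{omegaRFN_eqiv2} (equivalence of $\mathbf{\Pi}^1_n\text{-}\omega\mathsf{RFN}$ with the cut-free $\omega$-provability formulation) is the key tool, and the reduction property (Lemma \ref{reduction_property}) lets us collapse the finitely many nested applications of $\omega$-reflection down to a single one over $T$. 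Writing $\chi = \exists X\, \theta(X)$ with $\theta \in \mathbf{\Pi}^1_1$, and noting that $\psi$ together with $\neg\chi$ unfolds so that the sequent $\neg T, \neg\theta(Y), \psi$ (for a fresh variable $Y$, or with $Y$ replaced by the witnessing constant) is cut-free $\omega$-provable, let $\delta$ be the Kleene--Brouwer rank of this proof. Then Lemma \ref{feferman} (our Feferman analogue, with $n=1$ — one must double-check the lemma as stated requires $n>1$, so for the $\mathbf{\Pi}^1_1$ case one either re-proves the $n=1$ instance directly by the same L\"ob-theorem induction on the last rule, or absorbs the $\mathbf{\Sigma}^1_1$ witness as a parameter) gives $\mathbf{\Pi}^1_1\text{-}\mathbf{R}^\delta(T) \vdash \neg\theta(Y) \lor \psi$, hence $\mathbf{\Pi}^1_1\text{-}\mathbf{R}^\delta(T) \vdash \forall Y(\neg\theta(Y)) \lor \psi$; applying $\mathbf{\Pi}^1_1\text{-}\mathsf{RFN}(\mathbf{\Pi}^1_1\text{-}\mathbf{R}^\delta(T))$ — which is exactly a conclusion of the rule in (2), since $\delta$ is provably well-ordered (it is the KB-rank of a genuine $\omega$-proof) — we get that $\forall Y \neg\theta(Y) \lor \psi$ is true, and since $\chi$ is true this forces $\psi$. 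Formalizing "$\delta$ is well-ordered" uses that $\mathsf{ACA}_0$ proves the KB-rank of a well-founded tree is a well-ordering.

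**Main obstacle.** The delicate point is the bookkeeping around the $\mathbf{\Sigma}^1_2$ witness and the base case $n=1$ of Lemma \ref{feferman}: Lemma \ref{feferman} is stated for $n>1$, whereas here we need to reflect with $\mathbf{\Pi}^1_1$ reflection while the true sentence being absorbed is $\mathbf{\Sigma}^1_2$. The resolution — treating the second-order witness $Y$ of $\chi = \exists X\,\theta(X)$ as a free set parameter and noting that the cut-free $\omega$-proof then has endsequent $\neg T, \neg\theta(Y), \psi$ consisting of $\mathbf{\Sigma}^1_2$ formulas (from $\neg T$) and $\mathbf{\Pi}^1_1$ formulas — means the argument of Lemma \ref{feferman} applies verbatim with $n=1$ provided we keep $Y$ as a parameter throughout and only quantify it out at the very end (outside all reflection). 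I also need to be careful that all of this is carried out uniformly in $\mathsf{ACA}_0$ via L\"ob's theorem, so that the passage "from a finite proof in $T+\mathbf{\Pi}^1_2\text{-}\omega\mathsf{RFN}(T)$ to a cut-free $\omega$-proof with explicit KB-rank" is itself formalizable — this is precisely what Theorems \ref{omegaRFN_eqiv1} and \ref{omegaRFN_eqiv2} were set up to provide, so the formalization should go through, but verifying that the rule in (2) with $H$-constants in $\alpha$ is strong enough to absorb the specific $\delta$ arising (whose notation may mention constants naming sets from $H$) requires the hypothesis that $H$ contains all axioms of $T$ and is closed appropriately.
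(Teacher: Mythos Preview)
Your easy inclusions $(2)\subseteq(3)\subseteq(1)$ match the paper. The hard direction $(1)\sqsubseteq_{\mathbf{\Pi}^1_1\cap H}(2)$, however, has a genuine gap: your argument establishes at most that $\psi$ is \emph{true}, not that theory (2) \emph{proves} $\psi$. Concretely, to fire the rule in (2) you need theory (2) to prove $\mathsf{WO}(\delta)$; but $\delta$ is the KB-rank of an $\omega$-proof that lives only in the meta-theory, and $\delta$ as an arithmetical term has that $\omega$-proof as a set parameter --- there is no reason this parameter is an $H$-constant, nor any reason theory (2) proves its well-foundedness (you are conflating ``$\delta$ is well-founded'' with ``theory (2) proves $\mathsf{WO}(\delta)$''). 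Even granting that step, $\lnot\theta(Y)\lor\psi$ is a disjunction of a $\mathbf{\Sigma}^1_1$ and a $\mathbf{\Pi}^1_1$ sentence, so $\mathbf{\Pi}^1_1\text{-}\mathsf{RFN}$ does not apply to it; and to finally extract $\psi$ you would need theory (2) to prove the true $\mathbf{\Sigma}^1_2$ sentence $\chi$, which it has no reason to do. The L\"ob wrapper does not help: there is no self-reference in the conservation claim to exploit, and the appeal to Lemma~\ref{reduction_property} to ``collapse nested $\omega$-reflection'' is not what that lemma does.

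The paper's proof is entirely different and model-theoretic. The key idea you are missing is to first upgrade the rule in (2) to one that tolerates $\mathbf{\Pi}^1_1\cap H$ side formulas,
\[
\frac{\Gamma,\;\mathsf{WO}(\alpha)}{\Gamma,\;\mathbf{\Pi}^1_1\text{-}\mathsf{RFN}\big(\mathbf{\Pi}^1_1\text{-}\mathbf{R}^{\alpha}(T)\big)},
\]
and this is achieved via Lemma~\ref{disjunction_of_orders}: one rewrites $\bigvee\Gamma$ as $\mathsf{WO}(\beta)$ by Kleene normal form, so that $\bigvee\Gamma\lor\mathsf{WO}(\alpha)$ becomes $\mathsf{WO}(\mathsf{disj}(\alpha,\beta))$, applies the plain rule to $\mathsf{disj}(\alpha,\beta)$, and then uses the embedding clause of Lemma~\ref{disjunction_of_orders} together with Lemma~\ref{hom_lemma} to recover the conclusion with side formulas $\Gamma$. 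With this closure in hand, one runs the Henkin construction from the proof of Lemma~\ref{reduction_property}: assuming theory (2) does not prove $\psi$, build a $\mathbf{\Sigma}^1_1$-saturated completion $S_\omega$ of theory (2) $+\,\lnot\psi$, read off a model $\mathfrak{M}$, and use closure under the strengthened rule to verify $\mathfrak{M}\models\forall\alpha\big(\mathsf{WO}(\alpha)\to\mathbf{\Pi}^1_1\text{-}\mathsf{RFN}(\mathbf{\Pi}^1_1\text{-}\mathbf{R}^\alpha(T))\big)$, hence $\mathfrak{M}\models\mathbf{\Pi}^1_2\text{-}\omega\mathsf{RFN}(T)$ by Theorem~\ref{special_case}. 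Thus $\mathfrak{M}\models(1)+\lnot\psi$ and $(1)\nvdash\psi$. No $\omega$-proofs or KB-ranks appear.
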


Using Lemma \ref{wo_cons} and Lemma \ref{hom_lemma} we get:
\begin{lemma}\label{ON_omega_iter_to_rule}
Let $T$ be some $\mathbf{\Pi}^1_2$-axiomatizable $\mathbf{L}_2$-theory.  Then for any countable fragment $H$ of $\mathbf{L}_2$ containing all axioms of $T$ the following two theories have the same $\mathbf{\Pi}^1_1\cap H$ theorems:
\begin{enumerate}
    \item \label{ON_omega_iter_to_rule_t1}$T+(\forall \alpha)(\mathsf{WO}(\alpha)\to \mathbf{\Pi}^1_2\textrm{-}\omega\mathsf{RFN}(\mathbf{\Pi}^1_2\textrm{-}\omega\mathbf{R}^\alpha(T)))$;
\item \label{ON_omega_iter_to_rule_t3}the closure of $T$ under the rule $$\frac{\mathsf{WO}(\alpha)}{\mathbf{\Pi}^1_2\mbox{-}\omega\mathsf{RFN}(\mathbf{\Pi}^1_2\mbox{-}\omega\mathbf{R}^{\alpha}(T))}\text{, where $\alpha$ is an arithmetical term with constants from $H$}.$$
\end{enumerate}
\end{lemma}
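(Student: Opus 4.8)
The plan is to run the proof of Lemma~\ref{ON_iter_to_rule} almost verbatim, with $\mathbf{\Pi}^1_1\mbox{-}\mathsf{RFN}$ and $\mathbf{\Pi}^1_2\mbox{-}\mathsf{RFN}$ replaced throughout by $\mathbf{\Pi}^1_2\mbox{-}\omega\mathsf{RFN}$. The setting here is in fact a shade simpler: theory~(\ref{ON_omega_iter_to_rule_t1}) already appears in the ``$\forall\alpha(\mathsf{WO}(\alpha)\to\dots)$'' form, so there is no need to invoke Theorem~\ref{special_case} to rewrite it, and consequently there is only one rule-theory to track rather than two. One inclusion of $\mathbf{\Pi}^1_1\cap H$ theorems is immediate: theory~(\ref{ON_omega_iter_to_rule_t1}) proves $\forall\alpha(\mathsf{WO}(\alpha)\to\mathbf{\Pi}^1_2\mbox{-}\omega\mathsf{RFN}(\mathbf{\Pi}^1_2\mbox{-}\omega\mathbf{R}^\alpha(T)))$ and contains $T$, hence is closed under the rule defining theory~(\ref{ON_omega_iter_to_rule_t3}).

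For the converse I would show that theory~(\ref{ON_omega_iter_to_rule_t1}) is $\mathbf{\Pi}^1_1\cap H$-conservative over theory~(\ref{ON_omega_iter_to_rule_t3}). Fix $\psi\in\mathbf{\Pi}^1_1\cap H$ not provable in~(\ref{ON_omega_iter_to_rule_t3}); the goal is a (weak) $\omega$-model of $(\ref{ON_omega_iter_to_rule_t1})+\lnot\psi$. As in Lemma~\ref{ON_iter_to_rule}, the first step is to upgrade closure under the rule of~(\ref{ON_omega_iter_to_rule_t3}) to closure under the more liberal rule that, from a sequent $\Gamma,\mathsf{WO}(\alpha)$ with $\Gamma$ consisting of $\mathbf{\Pi}^1_1\cap H$ sentences, infers $\Gamma,\mathbf{\Pi}^1_2\mbox{-}\omega\mathsf{RFN}(\mathbf{\Pi}^1_2\mbox{-}\omega\mathbf{R}^\alpha(T))$. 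Here one writes $\bigvee\Gamma$ as $\mathsf{WO}(\beta)$ for an arithmetical linear order $\beta$ via the Kleene normal form (\cite[Lemma~V.1.4]{simpson2009subsystems}); then $\mathsf{disj}(\alpha,\beta)$ from Lemma~\ref{disjunction_of_orders} satisfies $\mathsf{WO}(\mathsf{disj}(\alpha,\beta))\leftrightarrow(\bigvee\Gamma\lor\mathsf{WO}(\alpha))$, so from the premise the theory proves $\mathsf{WO}(\mathsf{disj}(\alpha,\beta))$ and hence $\mathbf{\Pi}^1_2\mbox{-}\omega\mathsf{RFN}(\mathbf{\Pi}^1_2\mbox{-}\omega\mathbf{R}^{\mathsf{disj}(\alpha,\beta)}(T))$ by the rule; reasoning internally under the hypothesis $\lnot\bigvee\Gamma$, the order $\beta$ is ill-founded, so Lemma~\ref{disjunction_of_orders}(3) supplies a homomorphism $\alpha\to\mathsf{disj}(\alpha,\beta)$, and Lemma~\ref{hom_lemma} together with the monotonicity of $\mathbf{\Pi}^1_2\mbox{-}\omega\mathsf{RFN}$ along $\sqsubseteq^{\mathbf{\Sigma}^1_1}$ transfers reflection down to $\mathbf{\Pi}^1_2\mbox{-}\omega\mathbf{R}^\alpha(T)$, giving the conclusion of the liberal rule.

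The second step is the Henkin construction from the proof of Lemma~\ref{reduction_property}. Extend $H$ to $H^+$ by fresh first- and second-order constants, enumerate the $\Sigma^{1+}_1$ sentences with $\chi_0=\lnot\psi$, and build the chain $S_0\subseteq S_1\subseteq\cdots$ with $S_0$ the theory~(\ref{ON_omega_iter_to_rule_t3}) and $S_\omega=\bigcup_nS_n$, so that $S_\omega$ is $\Sigma^{1+}_1$-complete, has Henkin constants for all $\Sigma^{1+}_1$ sentences, and --- using the liberal rule to carry closure through the inductive step, where $S_{n+1}=S_n+\chi\vdash\mathsf{WO}(\alpha)$ becomes $S_n\vdash\lnot\chi,\mathsf{WO}(\alpha)$ with $\lnot\chi\in\Pi^{1+}_1$ --- is closed under the original rule. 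The model $\mathfrak{M}$ built on the constants of $S_\omega$ then satisfies $T$ (because $T$ is $\mathbf{\Pi}^1_2$), satisfies $\forall\alpha(\mathsf{WO}(\alpha)\to\mathbf{\Pi}^1_2\mbox{-}\omega\mathsf{RFN}(\mathbf{\Pi}^1_2\mbox{-}\omega\mathbf{R}^\alpha(T)))$ (from the rule-closure of $S_\omega$), and satisfies $\lnot\psi$; hence $(\ref{ON_omega_iter_to_rule_t1})\nvdash\psi$. To stay within $\mathsf{ACA}_0$ one builds $\mathfrak{M}$ only as a weak model equipped with a partial satisfaction class covering the relevant ($\Pi^1_2$-level) formulas, exactly as in the $\mathsf{ACA}_0$-formalization of Lemma~\ref{reduction_property}.

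The one point requiring genuine thought beyond mechanical translation is the transfer step: for $\mathbf{\Pi}^1_1\mbox{-}\mathsf{RFN}$ the fact that $C\sqsubseteq^{\mathbf{\Sigma}^1_1}D$ and $\mathbf{\Pi}^1_1\mbox{-}\mathsf{RFN}(D)$ imply $\mathbf{\Pi}^1_1\mbox{-}\mathsf{RFN}(C)$ is immediate from the $\mathbf{\Sigma}^1_1$-consistency reformulation, whereas for $\mathbf{\Pi}^1_2\mbox{-}\omega\mathsf{RFN}$ one invokes Theorem~\ref{omegaRFN_eqiv1} (so $\mathbf{\Pi}^1_2\mbox{-}\omega\mathsf{RFN}(D)$ says every set lies in an $\omega$-model of $D$) and observes that such an $\omega$-model of $D$ can be taken large enough to contain witnesses for the true $\mathbf{\Sigma}^1_1$ sentences whose addition to $D$ proves the axioms of $C$, so that --- $\omega$-models being provability-correct --- the same model satisfies $C$; hence every set lies in an $\omega$-model of $C$. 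Everything else --- the Henkin chain, the weak-model construction, and the complexity bookkeeping (the essential point being that $\mathbf{\Pi}^1_2\mbox{-}\omega\mathsf{RFN}(U)$ is a $\Pi^1_2$ sentence, just as $\mathbf{\Pi}^1_2\mbox{-}\mathsf{RFN}(U)$ is) --- carries over unchanged from the proofs of Lemmas~\ref{ON_iter_to_rule} and~\ref{reduction_property}.
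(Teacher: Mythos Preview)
Your proposal is correct and follows exactly the approach the paper intends: the paper's own ``proof'' of Lemma~\ref{ON_omega_iter_to_rule} consists of the single sentence ``In the same manner we could prove,'' referring back to Lemma~\ref{ON_iter_to_rule}, and your write-up is a faithful elaboration of that adaptation, correctly noting that only one rule-theory needs to be tracked and that the complexity bookkeeping goes through because $\mathbf{\Pi}^1_2\mbox{-}\omega\mathsf{RFN}(U)$ is (equivalent to) a $\mathbf{\Pi}^1_2$ sentence via Theorem~\ref{omegaRFN_eqiv1}.

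One small sharpening of your transfer step: the general implication ``$C\sqsubseteq^{\mathbf{\Sigma}^1_1}D$ and $\mathbf{\Pi}^1_2\mbox{-}\omega\mathsf{RFN}(D)$ imply $\mathbf{\Pi}^1_2\mbox{-}\omega\mathsf{RFN}(C)$'' is not obviously available in $\mathsf{ACA}_0$, since collecting the countably many $\mathbf{\Sigma}^1_1$ witnesses (one per axiom of $C$) into a single set could require a choice principle. In the case you actually need, however, this is harmless: the inclusion $\mathbf{\Pi}^1_2\mbox{-}\omega\mathbf{R}^{\alpha}(T)\sqsubseteq^{\mathbf{\Sigma}^1_1}\mathbf{\Pi}^1_2\mbox{-}\omega\mathbf{R}^{\mathsf{disj}(\alpha,\beta)}(T)$ coming from Lemma~\ref{hom_lemma} is witnessed by a \emph{single} true arithmetical sentence $H$ (asserting that the given $f$ is a homomorphism), so any $\omega$-model of the larger theory containing $f,\alpha,\beta$ automatically satisfies the smaller one. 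Equivalently, one can prove the needed implication directly by a L\"ob argument parallel to that of Lemma~\ref{hom_lemma} itself. Either way the step is sound; it is just worth being explicit that you are not relying on full monotonicity of $\mathbf{\Pi}^1_2\mbox{-}\omega\mathsf{RFN}$ along $\sqsubseteq^{\mathbf{\Sigma}^1_1}$.
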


\subsection{Proof-theoretic dilators}

Working in a strong meta-theory for a (countable) linear order $\alpha$ we write $|\alpha|$ where $|\alpha| \in\omega_1\cup\{\infty\}$ to denote its well-founded rank ($\infty$ is the rank of ill-founded orders, $\infty$ is greater than any ordinal). For an $\mathbf{L}_2$-theory $T$ we write $|T|_{\mathbf{\Pi}^1_1}$ to denote its proof-theoretic ordinal which we define as the supremum of ranks of the $T$-provably well-ordered $\mathbf{\Pi}^1_0$ linear orders. 

Unlike many other works on proof theoretic analysis in this paper in fact we will need a formalization of the notion of proof-theoretic ordinal in $\mathsf{ACA}_0$ rather than in an informal set-theoretic setting as we have done above. There are a few limitations that we need to address. First $\mathsf{ACA}_0$ doesn't have a good theory of ordinals. In particular it is known that $\mathsf{ATR}_0$ is equivalent to the second-order sentence ``for any well-orders $\alpha$ and $\beta$ either there is an isomorphism between $\alpha$ and an initial segment of $\beta$ or an isomorphism between $\beta$ and an initial segment of $\alpha$'' \cite[Theorem~V.6.8]{simpson2009subsystems}. Second, in $\mathsf{ACA}_0$ we need to be more careful when working with $\mathbf{\Pi}^1_0$-definable linear orders. We make the following definitions in $\mathsf{ACA}_0$.

The comparisons of ranks of linear orders:
\begin{itemize}
    \item $|\alpha|\le|\beta|$ if either there is a homomorphism $f\colon \alpha\to\beta$ or $\beta$ is ill-founded;
    \item $|\alpha|<|\beta|$ if there is a homomorphism $f\colon \alpha\to\mathsf{cone}(\beta,n)$, for some $n\in\beta$;
    \item $|\alpha|=|\beta|$ if $\alpha\le \beta$ and $\beta\le \alpha$.
\end{itemize}

\begin{remark} The authors do not know the reverse mathematical status of the sentence ``for any two well-orders $\alpha,\beta$ either $|\alpha|\le |\beta|$ or $|\beta|\le |\alpha|$'' other than the fact that it is provable in $\mathsf{ATR}_0$.\end{remark}

A $\mathbf{\Pi}^1_0$ linear order ${\boldsymbol\alpha}$ is a triple $\langle  D_{{\boldsymbol\alpha}}, \prec_{{\boldsymbol\alpha}},\models_{{\boldsymbol\alpha}}\rangle$ such that
\begin{itemize}
    \item $x\prec_{{\boldsymbol\alpha}} y$ and $D_{{\boldsymbol\alpha}}(x)$ are $\mathbf{\Pi}^1_0$-formulas without other free variables;
    \item $\models_{{\boldsymbol\alpha}}$ is a compositional partial satisfaction relation that is correct on atomic formulas and covers all subformulas of $\prec_{{\boldsymbol\alpha}}$ and $D_{{\boldsymbol\alpha}}$;
    \item the following binary relation $\mathbf{\alpha}^\star$ is a linear order: the domain of $\mathbf{\alpha}^\star$ is $D_{\mathbf{\alpha}^\star}=\{n \in \mathbb{N} \mid \; \models_{{\boldsymbol\alpha}} D_{{\boldsymbol\alpha}}(n)\}$ and $n\prec_{\mathbf{\alpha}^\star} m \defiff  \models_{{\boldsymbol\alpha}} n\prec_{{\boldsymbol\alpha}}m$ 
    is a linear order.
\end{itemize}
Note that $\models_{{\boldsymbol\alpha}}$ is included in the definition only due to the weakness of our base theory $\mathsf{ACA}_0$.  The order ${\boldsymbol\alpha}^\star$ is uniquely determined just by $D_{{\boldsymbol\alpha}}$, $\prec_{{\boldsymbol\alpha}}$; however we couldn't prove that for any $D_{{\boldsymbol\alpha}}$, $\prec_{{\boldsymbol\alpha}}$ there is a large enough partial satisfaction relation.

For a $\mathbf{\Pi}^1_0$ linear order ${\boldsymbol\alpha}$ the formula $\mathsf{WO}_{{\boldsymbol\alpha}}$ says that the binary relation given by the formulas $D_{{\boldsymbol\alpha}}$ and  $\prec_{{\boldsymbol\alpha}}$ is a well-ordering.
We write 
\begin{itemize}
    \item $|T|_{\mathbf{\Pi}^1_1}\le |\alpha|$ if for any $\mathbf{\Pi}^1_0$ linear order ${\boldsymbol\beta}$ we have
    $$T\vdash \mathsf{WO}_{{\boldsymbol\beta}}\Rightarrow |{\boldsymbol\beta}^\star|\le |\alpha|;$$
    \item $|T|_{\mathbf{\Pi}^1_1}\ge |\alpha|$ if for any $n\in \alpha$ there is a $\mathbf{\Pi}^1_0$ linear order ${\boldsymbol\beta}$ such that $T\vdash\mathsf{WO}_{{\boldsymbol\beta}}$ and $|{\boldsymbol\beta}^\star|\ge |\mathsf{cone}(\alpha,n)|$;
    \item $|T|_{\mathbf{\Pi}^1_1}= |\alpha|$ if $|T|_{\mathbf{\Pi}^1_1}\ge |\alpha|$ and $|T|_{\mathbf{\Pi}^1_1}\le |\alpha|$.    
\end{itemize}

\begin{proposition}[$\mathsf{ACA}_0$]
Let $T$ be a theory and $\alpha$ be a linear order. 
\begin{enumerate}
    \item If $|T|_{\mathbf{\Pi}^1_1}\le |\alpha|$ and $\mathsf{WO}(\alpha)$, then $\mathbf{\Pi}^1_1\textrm{-}\mathsf{RFN}(T)$.
    \item  If $|T|_{\mathbf{\Pi}^1_1}\ge |\alpha|$ and $\mathbf{\Pi}^1_1\textrm{-}\mathsf{RFN}(T)$, then $\mathsf{WO}(\alpha)$.
\end{enumerate}     
\end{proposition}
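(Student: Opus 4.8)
The plan is to derive both directions from two ingredients: the \emph{Kleene normal form} representation of $\mathbf{\Pi}^1_1$ sentences, formalized in $\mathsf{ACA}_0$ in the style of \cite[Lemma~V.1.4]{simpson2009subsystems}, together with the elementary fact that a homomorphism of linear orders carries infinite descending chains forward, so that the source of a homomorphism into a well-founded order is itself well-founded (this is the same observation used in the proof of Lemma~\ref{disjunction_of_orders}).

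For part (1) I would reason in $\mathsf{ACA}_0$ under the hypotheses $|T|_{\mathbf{\Pi}^1_1}\le|\alpha|$ and $\mathsf{WO}(\alpha)$, fix a $\mathbf{\Pi}^1_1$ sentence $\varphi$ with $\mathsf{Prv}(T,\varphi)$, and aim to conclude $\mathsf{Tr}_{\mathbf{\Pi}^1_1}(\varphi)$. Using formalized Kleene normal form I would attach to $\varphi$ (primitive recursively) a $\mathbf{\Pi}^1_0$ linear order ${\boldsymbol\beta}_\varphi$ --- the Kleene--Brouwer order on the tree of attempted refutations of $\varphi$ --- for which $\mathsf{ACA}_0$ proves $\varphi\leftrightarrow\mathsf{WO}_{{\boldsymbol\beta}_\varphi}$ and $\mathsf{Tr}_{\mathbf{\Pi}^1_1}(\varphi)\leftrightarrow\mathsf{WO}_{{\boldsymbol\beta}_\varphi}$; here the partial satisfaction relation demanded in the data of a $\mathbf{\Pi}^1_0$ linear order is available in $\mathsf{ACA}_0$ because, $\varphi$ being fixed, the relevant defining formulas have a fixed arithmetical complexity. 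Then $\mathsf{Prv}(T,\varphi)$ gives $\mathsf{Prv}(T,\mathsf{WO}_{{\boldsymbol\beta}_\varphi})$ since $T\supseteq\mathsf{ACA}_0$, so $|T|_{\mathbf{\Pi}^1_1}\le|\alpha|$ yields $|{\boldsymbol\beta}_\varphi^\star|\le|\alpha|$; unwinding that comparison and using $\mathsf{WO}(\alpha)$ to discard the disjunct ``$\alpha$ is ill-founded'' leaves a homomorphism ${\boldsymbol\beta}_\varphi^\star\to\alpha$, whence ${\boldsymbol\beta}_\varphi^\star$ is well-founded, $\mathsf{WO}_{{\boldsymbol\beta}_\varphi}$ holds, and therefore $\mathsf{Tr}_{\mathbf{\Pi}^1_1}(\varphi)$ holds. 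Since $\varphi$ was arbitrary this is exactly $\mathbf{\Pi}^1_1\textrm{-}\mathsf{RFN}(T)$.

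For part (2) I would work in $\mathsf{ACA}_0$ under $|T|_{\mathbf{\Pi}^1_1}\ge|\alpha|$ and $\mathbf{\Pi}^1_1\textrm{-}\mathsf{RFN}(T)$ and prove $\mathsf{WO}(\alpha)$ by assuming, toward a contradiction, an infinite $\prec_\alpha$-descending chain with first term $a$, so that $\mathsf{cone}(\alpha,a)$ is ill-founded. Applying the hypothesis $|T|_{\mathbf{\Pi}^1_1}\ge|\alpha|$ at $a$ produces a $\mathbf{\Pi}^1_0$ linear order ${\boldsymbol\beta}$ with $\mathsf{Prv}(T,\mathsf{WO}_{{\boldsymbol\beta}})$ and $|{\boldsymbol\beta}^\star|\ge|\mathsf{cone}(\alpha,a)|$. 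Since $\mathsf{WO}_{{\boldsymbol\beta}}$ is a $\mathbf{\Pi}^1_1$ sentence, $\mathbf{\Pi}^1_1\textrm{-}\mathsf{RFN}(T)$ makes it true, i.e.\ ${\boldsymbol\beta}^\star$ is well-founded; but then the comparison $|\mathsf{cone}(\alpha,a)|\le|{\boldsymbol\beta}^\star|$ cannot be witnessed by ill-foundedness of ${\boldsymbol\beta}^\star$, hence it is witnessed by a homomorphism $\mathsf{cone}(\alpha,a)\to{\boldsymbol\beta}^\star$, which transports the descending chain of $\mathsf{cone}(\alpha,a)$ into the well-founded ${\boldsymbol\beta}^\star$ --- a contradiction. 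So $\alpha$ has no infinite descending chain.

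I expect the only real obstacle to be the $\mathsf{ACA}_0$-formalization: I would need to verify that the passage $\varphi\mapsto{\boldsymbol\beta}_\varphi$ is uniform with its two equivalences provable in $\mathsf{ACA}_0$, and that the partial satisfaction relations attached to the $\mathbf{\Pi}^1_0$ linear orders that occur are constructible in $\mathsf{ACA}_0$ --- both are standard once one notes that the arithmetical complexity of the defining formulas is bounded after fixing the sentence at hand. The remaining steps --- unwinding the rank comparisons $\le$ and $\ge$ and pushing descending chains along homomorphisms --- are routine.
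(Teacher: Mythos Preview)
Your proposal is correct and follows essentially the same approach as the paper's proof: both parts hinge on the relativized Kleene normal form to pass between $\mathbf{\Pi}^1_1$ sentences and well-foundedness of $\mathbf{\Pi}^1_0$ linear orders, then unwind the definitions of $|T|_{\mathbf{\Pi}^1_1}\le|\alpha|$ and $|T|_{\mathbf{\Pi}^1_1}\ge|\alpha|$ and use that homomorphisms reflect well-foundedness. The only cosmetic difference is that for part~(2) the paper argues directly that every cone of $\alpha$ is well-ordered, whereas you phrase the same step as a contradiction from a descending chain; and one small imprecision: your justification that the partial satisfaction relation exists ``because $\varphi$ is fixed'' is not quite the right reason (inside the proof $\varphi$ is universally quantified), the actual reason being that Kleene normal form uniformly produces a $\mathbf{\Delta}^0_1$ order, so the required satisfaction relation has bounded complexity independent of $\varphi$.
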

\begin{proof}
  First let us prove 1. We reason in $\mathsf{ACA}_0$ and assume $|T|_{\mathbf{\Pi}^1_1}\le |\alpha|$ and $\mathsf{WO}(\alpha)$. We claim that  $\mathbf{\Pi}^1_1\textrm{-}\mathsf{RFN}(T)$. Suppose that $\varphi$ is a $T$-provable $\mathbf{\Pi}^1_1$-sentence. We need to show that $\varphi$ is true. By relativized Kleene's normal form theory we could find a $\mathbf{\Pi}^1_0$ linear order ${\boldsymbol\beta}$ (in fact $\mathbf{\Delta}^0_1$ linear order) such that $\mathsf{Tr}_{\mathbf{\Pi}^1_1}(\varphi)\leftrightarrow \mathsf{WO}_{{\boldsymbol\beta}}$ and $\mathsf{ACA}_0\vdash \mathsf{Tr}_{\mathbf{\Pi}^1_1}(\varphi)\leftrightarrow \mathsf{WO}_{{\boldsymbol\beta}}$. Thus $T\vdash \mathsf{WO}_{{\boldsymbol\beta}}$. Hence $|{\boldsymbol\beta}^\star|\le |\alpha|$. Therefore $\mathsf{WO}_{{\boldsymbol\beta}}$ and thus $\mathsf{Tr}_{\mathbf{\Pi}^1_1}(\varphi)$.
  
  Now let us prove 2. We reason in in $\mathsf{ACA}_0$ and assume $|T|_{\mathbf{\Pi}^1_1}\ge |\alpha|$ and $\mathbf{\Pi}^1_1\textrm{-}\mathsf{RFN}(T)$. We claim that $\mathsf{WO}(\alpha)$. For this it is enough to show that any cone in $\alpha$ is well-ordered. Consider a cone $\mathsf{cone}(\alpha,n)$. For some $T$-provably well-ordered $\mathbf{\Pi}^1_0$ linear order  ${\boldsymbol\beta}$ we have $|{\boldsymbol\beta}^{\star}|\ge  |\mathsf{cone}(\alpha,n)|$. By $\mathbf{\Pi}^1_1\textrm{-}\mathsf{RFN}(T)$ we have $\mathsf{Tr}_{\mathbf{\Pi}^1_1}(\mathsf{WO}_{{\boldsymbol\beta}})$.  Thus we have $\mathsf{WO}_{{\boldsymbol\beta}^{\star}}$ and hence $\mathsf{WO}(\mathsf{cone}(\alpha,n))$.
\end{proof}

For theories $T$ and $U$ we write
\begin{enumerate}
    \item $|T|_{\mathbf{\Pi}^1_1}\le|U|_{\mathbf{\Pi}^1_1}$ if for any $\mathbf{\Pi}^1_0$ linear order ${\boldsymbol\alpha}$ if  $T\vdash \mathsf{WO}_{{\boldsymbol\alpha}}$, then $|{\boldsymbol\alpha}^\star|\le |U|_{\mathbf{\Pi}^1_1}$;
    \item $|T|_{\mathbf{\Pi}^1_1}=|U|_{\mathbf{\Pi}^1_1}$ if $|T|_{\mathbf{\Pi}^1_1}\le|U|_{\mathbf{\Pi}^1_1}$ and $|T|_{\mathbf{\Pi}^1_1}\ge|U|_{\mathbf{\Pi}^1_1}$.
\end{enumerate}

It is easy to see that according to oir definitions, provably in $\mathsf{ACA}_0$, the binary relation $\le$ on theories and linear orders is a (class-sized) transitive binary relation. 

\begin{remark} Although we don't prove this in the present paper, it is in fact easy to show that provably in $\mathsf{ACA}_0$ for any theory $T$ there is $\alpha$ such that $|T|_{\mathbf{\Pi}^1_1}=|\alpha|$. Namely one could take as $\alpha$ the ordered sum $\sum\limits_{n<\omega} {\boldsymbol\alpha}_n^{\star}$, where ${\boldsymbol\alpha}_0,{\boldsymbol\alpha}_1,\ldots$ is an enumeration of all $\mathbf{\Delta}^0_1$ linear orders such that $T\vdash \mathsf{WO}_{{\boldsymbol\alpha}_n}$ and the formulas $\prec_{{\boldsymbol\alpha}},D_{{\boldsymbol\alpha}}$ use only the set constants used in the axioms of $T$.
\end{remark}

 Note that by a classical result of Kreisel (see \cite[Theorem 6.7.4,6.7.5]{pohlers2008first}) for extensions of $\mathsf{ACA}_0$ the $\Pi^1_1$ proof theoretic ordinals are stable with respect to extensions by true $\Sigma^1_1$-sentences. We have the following variant of Kreisel's result:
\begin{proposition}[$\mathsf{ACA}_0$]
If $T\sqsubseteq^{\mathbf{\Sigma}^1_1}U$, then $|T|_{\mathbf{\Pi}^1_1}\le |U|_{\mathbf{\Pi}^1_1}$. And hence if $T\equiv^{\mathbf{\Sigma}^1_1}U$, then $|T|_{\mathbf{\Pi}^1_1}= |U|_{\mathbf{\Pi}^1_1}$.
\end{proposition}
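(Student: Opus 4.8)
The second assertion is immediate from the first applied to $T\sqsubseteq^{\mathbf{\Sigma}^1_1}U$ and to $U\sqsubseteq^{\mathbf{\Sigma}^1_1}T$, so I focus on showing that $T\sqsubseteq^{\mathbf{\Sigma}^1_1}U$ implies $|T|_{\mathbf{\Pi}^1_1}\le|U|_{\mathbf{\Pi}^1_1}$. Reasoning in $\mathsf{ACA}_0$, assume $T\sqsubseteq^{\mathbf{\Sigma}^1_1}U$ and fix a $\mathbf{\Pi}^1_0$ linear order ${\boldsymbol\alpha}$ with $T\vdash\mathsf{WO}_{\boldsymbol\alpha}$; the goal is to exhibit a $\mathbf{\Pi}^1_0$ linear order ${\boldsymbol\gamma}$ with $U\vdash\mathsf{WO}_{\boldsymbol\gamma}$ and an embedding ${\boldsymbol\alpha}^\star\to{\boldsymbol\gamma}^\star$, which is precisely what ``$|{\boldsymbol\alpha}^\star|\le|U|_{\mathbf{\Pi}^1_1}$'' amounts to. First I would observe that $T\vdash\mathsf{WO}_{\boldsymbol\alpha}$ trivially gives $\mathbf{\Sigma}^1_1\mbox{-}\mathsf{Prv}(T,\mathsf{WO}_{\boldsymbol\alpha})$ (use any true arithmetical sentence as the $\mathbf{\Sigma}^1_1$ side-hypothesis), so by hypothesis $\mathbf{\Sigma}^1_1\mbox{-}\mathsf{Prv}(U,\mathsf{WO}_{\boldsymbol\alpha})$; unwinding this as in the computation of $\mathbf{\Sigma}^1_n\mbox{-}\mathsf{Prv}$ in \textsection\ref{preliminaries}, there is a true $\mathbf{\Sigma}^1_1$ sentence $\psi$ with $U\vdash\psi\to\mathsf{WO}_{\boldsymbol\alpha}$.

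Next I would absorb the side-hypothesis $\psi$ into an honestly $U$-provably well-founded order using Lemma~\ref{disjunction_of_orders}. Since $\lnot\psi$ is $\mathbf{\Pi}^1_1$, the relativized Kleene normal form theorem (\cite[Lemma~V.1.4]{simpson2009subsystems}, already used above) yields a $\mathbf{\Pi}^1_0$, indeed $\mathbf{\Delta}^0_1$, linear order ${\boldsymbol\delta}$ with $\mathsf{ACA}_0\vdash\lnot\psi\leftrightarrow\mathsf{WO}_{\boldsymbol\delta}$; since $\psi$ is true, ${\boldsymbol\delta}^\star$ is ill-founded and hence, being a set, carries an infinite descending chain. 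Let ${\boldsymbol\gamma}$ be a $\mathbf{\Pi}^1_0$ linear order with ${\boldsymbol\gamma}^\star=\mathsf{disj}({\boldsymbol\alpha}^\star,{\boldsymbol\delta}^\star)$ (available because $\mathsf{disj}$ is an arithmetical comprehension term and ${\boldsymbol\alpha}^\star,{\boldsymbol\delta}^\star$ exist as sets). By Lemma~\ref{disjunction_of_orders}(2) we get $\mathsf{ACA}_0\vdash\mathsf{WO}_{\boldsymbol\gamma}\leftrightarrow(\mathsf{WO}_{\boldsymbol\alpha}\lor\mathsf{WO}_{\boldsymbol\delta})\leftrightarrow(\psi\to\mathsf{WO}_{\boldsymbol\alpha})$, hence $U\vdash\mathsf{WO}_{\boldsymbol\gamma}$; and by Lemma~\ref{disjunction_of_orders}(3) the infinite descending chain in ${\boldsymbol\delta}^\star$ produces an embedding ${\boldsymbol\alpha}^\star\to{\boldsymbol\gamma}^\star$, so $|{\boldsymbol\alpha}^\star|\le|{\boldsymbol\gamma}^\star|\le|U|_{\mathbf{\Pi}^1_1}$. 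Since ${\boldsymbol\alpha}$ was arbitrary, $|T|_{\mathbf{\Pi}^1_1}\le|U|_{\mathbf{\Pi}^1_1}$. I would remark that no case split on whether ${\boldsymbol\alpha}^\star$ is well-founded is needed: if $T$ is $\mathbf{\Pi}^1_1$-unsound the same computation forces ${\boldsymbol\gamma}^\star$ to be ill-founded too, so $|U|_{\mathbf{\Pi}^1_1}$ is maximal and the inequality is automatic.

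The hard part is conceptual rather than technical, and is already handled by the available machinery: the usual proof of Kreisel's stability theorem compares well-founded ranks of linear orders, which is not available over $\mathsf{ACA}_0$ (rank comparison of well-orders is equivalent to $\mathsf{ATR}_0$), and Lemma~\ref{disjunction_of_orders} is exactly the device that converts ``$\mathsf{WO}_{\boldsymbol\alpha}$ is no stronger than $\psi\to\mathsf{WO}_{\boldsymbol\alpha}$, as $\psi$ is true'' into an explicit embedding. Everything else is routine $\mathsf{ACA}_0$ bookkeeping: checking that ${\boldsymbol\delta}$ and ${\boldsymbol\gamma}$ genuinely meet the definition of a $\mathbf{\Pi}^1_0$ linear order, in particular that the required partial satisfaction relations are obtainable by arithmetical comprehension (unproblematic, since the defining data of $\lnot\psi$ and of $\mathsf{disj}$ have bounded complexity over the given orders), and that the displayed equivalences are $\mathsf{ACA}_0$-provable.
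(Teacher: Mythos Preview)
Your proof is correct and follows essentially the same route as the paper: from $T\vdash\mathsf{WO}_{\boldsymbol\alpha}$ extract a true $\mathbf{\Sigma}^1_1$ side-hypothesis $\psi$ with $U\vdash\psi\to\mathsf{WO}_{\boldsymbol\alpha}$, convert $\lnot\psi$ into an ill-founded $\mathbf{\Pi}^1_0$ order via Kleene normal form, and apply Lemma~\ref{disjunction_of_orders} to obtain a $U$-provably well-ordered $\boldsymbol\gamma$ with an embedding $\boldsymbol\alpha^\star\to\boldsymbol\gamma^\star$. Your write-up is in fact slightly more careful than the paper's, which asserts a single uniform $\varphi$ with $T\sqsubseteq U+\varphi$ rather than letting the side-hypothesis depend on $\boldsymbol\alpha$ as the definition of $\sqsubseteq^{\mathbf{\Sigma}^1_1}$ warrants.
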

\begin{proof}  We consider theories $T,U$ such that $T\sqsubseteq^{\mathbf{\Sigma}^1_1}U$ and claim that $|T|_{\mathbf{\Pi}^1_1}\le |U|_{\mathbf{\Pi}^1_1}$. For some true $\mathbf{\Sigma}^1_1$ sentence $\varphi$ we have $T\sqsubseteq U+\varphi$. Using relativized Kleene's normal form theorem we find an ill-founded $\mathbf{\Pi}^1_0$ linear order ${\boldsymbol\alpha}$ such that $\mathsf{ACA}_0\vdash \lnot \mathsf{WO}_{{\boldsymbol\alpha}}\mathrel{\leftrightarrow} \varphi$. We need to show that for any given $T$-provably well-ordered $\mathbf{\Pi}^1_0$ linear order ${\boldsymbol\beta}$ we have  $|{\boldsymbol\beta}^\star|\le |{\boldsymbol\gamma}^{\star}|$ for some $U$-provably well-founded $\mathbf{\Pi}^1_0$ linear order ${\boldsymbol\gamma}$. We take $\mathsf{disj}({\boldsymbol\beta},{\boldsymbol\alpha})$ as ${\boldsymbol\gamma}$. By Lemma \ref{disjunction_of_orders}  (2) we have $\mathsf{ACA}_0\vdash \mathsf{WO}_{{\boldsymbol\gamma}}\mathrel{\leftrightarrow} (\mathsf{WO}_{{\boldsymbol\alpha})}\lor \mathsf{WO}_{{\boldsymbol\beta}}).$ Thus $\mathsf{ACA}_0\vdash \mathsf{WO}_{{\boldsymbol\gamma}}\mathrel{\leftrightarrow} (\varphi \to \mathsf{WO}_{{\boldsymbol\beta}})$ and hence $U\vdash \mathsf{WO}_{{\boldsymbol\gamma}}$.  To finish the proof we note that by Lemma \ref{disjunction_of_orders}  (3)  we have $|{\boldsymbol\beta}^{\star}|\le |{\boldsymbol\gamma}^{\star}|$.\end{proof}

In a strong meta-theory for any $\mathbf{L}_2$-theory $T$ we write $|T|_{\mathbf{\Pi}^1_2}$ to denote the function $|\alpha|\longmapsto |T+\mathsf{WO}(\dot\alpha)|_{\mathbf{\Pi}^1_1}$, where $\alpha$ ranges over countable linear orders. Note that the function is well-defined from $\omega_1\cup\{\infty\}$ to $\omega_1\cup\{\infty\}$. For a function $F$ from countable linear orders to countable linear orders we denote as $|F|$ the corresponding multi-function $|\alpha|\longmapsto |F(\alpha)|$. In practice we will only consider $F$ such that $|F|$ is a function. 

In $\mathsf{ACA}_0$ for an arithmetical term $F(X)$ we write $|F|=|T|_{\mathbf{\Pi}^1_2}$ if for any linear order $\alpha$ we have $|F(\alpha)|=|T+\mathsf{WO}(\alpha)|_{\mathbf{\Pi}^1_2}$.

We often break proofs of a claim of the form $|T|_{\mathbf{\Pi}^1_2}=|F|$ into two steps. The first step, which we label $|T|_{\mathbf{\Pi}^1_2}\geq|F|$ is accomplished by proving the following claim:
$$\text{For all $\alpha$ and all $\beta<\alpha$, } T +\mathsf{WO}(\alpha) \vdash \mathsf{WO}\big(F(\beta)\big).$$
Indeed, with that claim on board it follows that for any $\alpha$, $|T+\mathsf{WO}(\alpha)|_{\mathbf{\Pi}^1_1} \geq F(\alpha)$.

We label the second step $|T|_{\mathbf{\Pi}^1_2}\leq|F|$. To execute this step we prove the following claim:
$$\text{For every $\alpha$, }\mathsf{ACA}_0\vdash \mathsf{WO}\big(F(\alpha)\big) \rightarrow \mathsf{Con}\big(T+\mathsf{WO}(\alpha)\big).$$
Indeed, with that claim on board, since $T$ always contains $\mathsf{ACA}_0$, it follows that $|T+\mathsf{WO}(\alpha)|_{\mathbf{\Pi}^1_1} \leq F(\alpha)$.

\subsection{Iterated reflection and dilators}



For an ordinal $\alpha$, we write $\varepsilon^+(\alpha)$ to denote the least $\varepsilon$-number strictly greater than $\alpha$. Formally speaking, $\varepsilon^+$ is the arithmetical term representing the naturally defined function mapping a linear order $\alpha$ to the notation system $\varepsilon^+(\alpha)$ for the least $\varepsilon$-number strictly greater than $\alpha$.

The relativization of the usual proof-theoretic analysis of $\mathsf{ACA}_0$ yields:
\begin{theorem}\label{ACA_0_Pi^1_2}
$$|\mathsf{ACA}_0|_{\mathbf{\Pi}^1_2}=|\varepsilon^+|.$$
\end{theorem}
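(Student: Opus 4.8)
Unwinding the $\mathsf{ACA}_0$-formalization of $|\cdot|_{\mathbf{\Pi}^1_1}$ fixed above, the statement says that for every countable linear order $\alpha$ one has $|\mathsf{ACA}_0+\mathsf{WO}(\dot\alpha)|_{\mathbf{\Pi}^1_1}=|\varepsilon^+(\alpha)|$. Since $\varepsilon^+(0)=\varepsilon_0$, the instance $\alpha=0$ is the classical computation $|\mathsf{ACA}_0|_{\mathbf{\Pi}^1_1}=\varepsilon_0$, and the general statement is that analysis carried out relative to the ``black box'' well-order $\alpha$. Following the two-step recipe described just above, the plan is to establish $|\mathsf{ACA}_0|_{\mathbf{\Pi}^1_2}\ge|\varepsilon^+|$ and $|\mathsf{ACA}_0|_{\mathbf{\Pi}^1_2}\le|\varepsilon^+|$ separately.

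For the lower bound the plan is to iterate Girard's $\omega$-exponentiation well-ordering principle. Recall that $\mathsf{ACA}_0\vdash\forall\gamma\,(\mathsf{WO}(\gamma)\to\mathsf{WO}(\omega^\gamma))$. Writing $\omega_0(\gamma)=\gamma$ and $\omega_{k+1}(\gamma)=\omega^{\omega_k(\gamma)}$, it follows that for each standard $k$ one has $\mathsf{ACA}_0+\mathsf{WO}(\alpha)\vdash\mathsf{WO}\big(\omega_k(\alpha+1)\big)$, by first passing from $\mathsf{WO}(\alpha)$ to $\mathsf{WO}(\alpha+1)$ and then applying the principle $k$ times. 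Since the standard notation system $\varepsilon^+(\alpha)$ has order type $\sup_k\omega_k(\alpha+1)$, every proper cone $\mathsf{cone}(\varepsilon^+(\alpha),n)$ is isomorphic to an initial segment of $\omega_k(\alpha+1)$ for a suitable standard $k$. Taking ${\boldsymbol\beta}$ to be the (even $\mathbf{\Delta}^0_1$) linear order $\omega_k(\alpha+1)$ then verifies $|\mathsf{ACA}_0+\mathsf{WO}(\alpha)|_{\mathbf{\Pi}^1_1}\ge|\varepsilon^+(\alpha)|$ straight from the definition of $\ge$ between a theory and a linear order. This argument is uniform in $\alpha$ (for ill-founded $\alpha$ it holds automatically, since $\mathsf{ACA}_0+\mathsf{WO}(\alpha)\equiv^{\mathbf{\Sigma}^1_1}\bot$ and $\varepsilon^+(\alpha)$ is then ill-founded) and uses no infinitary machinery.

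For the upper bound the plan is a relativized Gentzen-style argument: prove in $\mathsf{ACA}_0$ that $\mathsf{WO}(\varepsilon^+(\alpha))$ entails that every $\mathbf{\Pi}^1_0$ linear order provably well-ordered in $\mathsf{ACA}_0+\mathsf{WO}(\alpha)$ has rank strictly below $\varepsilon^+(\alpha)$. Given such a ${\boldsymbol\beta}$, the finitary proof is embedded into the Tait-style $\omega$-calculus of \textsection\ref{equivalent_forms}: the arithmetic comprehension instances occurring in it are $\omega$-provable (instantiate the set existential with the constant naming the relevant externally definable set, then close off with the $\omega$-rule), the induction instances raise the height only finitely, and the $\mathbf{\Pi}^1_1$ axiom $\mathsf{WO}(\alpha)$ contributes its transfinite-induction strength to the ordinal assignment, which is therefore set up over a notation system containing $\alpha$ — legitimate because $\mathsf{WO}(\varepsilon^+(\alpha))$ yields $\mathsf{WO}(\alpha)$, $\alpha$ being an initial segment of $\varepsilon^+(\alpha)$ — so that all heights occurring are terms of $\varepsilon^+(\alpha)$ and the cut-rank is a standard natural number. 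Partial cut-elimination, which $\mathsf{ACA}_0$ performs one rank at a time by the remark following the definition of $\omega$-proofs (cf.\ \cite[Theorem~6.4.1]{girard1987proof}), removes all cuts after a standard number of iterations — there being only standardly many cut-ranks — while keeping the height below $\varepsilon^+(\alpha)$, which is possible because $\varepsilon^+(\alpha)$, being an $\varepsilon$-number, is closed under $\omega$-exponentiation. A boundedness lemma for cut-free $\omega$-proofs of $\mathbf{\Pi}^1_1$ sequents then reads off $|{\boldsymbol\beta}^\star|<|\varepsilon^+(\alpha)|$, with $\mathsf{WO}(\varepsilon^+(\alpha))$ furnishing the well-foundedness the argument is carried out relative to. Feeding this into the definition of $\le$ gives $|\mathsf{ACA}_0+\mathsf{WO}(\alpha)|_{\mathbf{\Pi}^1_1}\le|\varepsilon^+(\alpha)|$.

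I expect the main obstacle to be precisely keeping the upper-bound argument inside $\mathsf{ACA}_0$ rather than $\mathsf{ACA}_0^+$: neither full cut-elimination for $\omega$-logic nor full satisfaction classes are available over $\mathsf{ACA}_0$. The argument therefore has to be arranged so that only finitely many partial cut-reduction steps, applied to one fixed finitary proof, are used, and so that all semantic reasoning is replaced by the partial-truth and $\mathbf{\Pi}^1_0$-linear-order apparatus developed earlier in this section, with a single transfinite induction along $\varepsilon^+(\alpha)$ carrying the analysis. This is the content packaged in ``the relativization of the usual proof-theoretic analysis of $\mathsf{ACA}_0$''; everything else is bookkeeping.
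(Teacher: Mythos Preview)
Your proposal is correct and matches the paper exactly: the paper gives no proof at all beyond the single sentence ``The relativization of the usual proof-theoretic analysis of $\mathsf{ACA}_0$ yields'' the theorem, and your sketch is precisely an unpacking of what that relativization amounts to --- Girard's $\omega$-exponentiation principle iterated for the lower bound, and embedding plus finitely many rank-reduction steps in the $\omega$-calculus for the upper bound, all kept inside $\mathsf{ACA}_0$. You have supplied the details the paper deliberately omits; there is nothing to compare.
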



In a standard manner for linear orders $\alpha,\beta$ we define the linear order $\phi_\alpha^+(\beta)$ that is the notation system intended for the least value of $\phi_\alpha$-function strictly above $\beta$. And for linear orders $\alpha,\beta,\gamma$ we denote as $\phi_\alpha^{+\gamma}(\beta)$ the standard notation system for the $\gamma$-th value of $\phi_\alpha$-function strictly greater than $\beta$. Formally we treat $\phi^{+}_x(y)$ and $\phi^{+z}_{x}(y)$ as binary and ternary arithmetical terms respectively representing corresponding operations on linear orders.

\begin{theorem}[$\mathsf{ACA}_0$]\label{iter_Pi^1_2}
Suppose for a $\mathbf{\Pi}^1_2$-axiomatizable theory $T$  we  have $|T|_{\mathbf{\Pi}^1_2}=|\phi_\alpha^+|$, for some linear order $\alpha$. Then for any $\beta$ we have $|\mathbf{\Pi}^1_2\mbox{-}\mathbf{R}^\beta(T)|_{\mathbf{\Pi}^1_2}=|\phi_{\alpha}^{+\omega^\beta}|$.
\end{theorem}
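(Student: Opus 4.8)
The plan is to prove Theorem~\ref{iter_Pi^1_2} by effective transfinite induction on $\beta$, formalized in $\mathsf{ACA}_0$ via L\"ob's theorem in the style of the earlier lemmas; one may assume $\mathsf{WO}(\beta)$ throughout, since for ill-founded $\beta$ both $\mathbf{\Pi}^1_2\text{-}\mathbf{R}^\beta(T)$ (by Corollary~\ref{inconsistency}) and the notation system $\phi_\alpha^{+\omega^\beta}$ are ill-founded, so both sides equal $\infty$. As recorded after the definition of the proof-theoretic dilator, the equality $|\mathbf{\Pi}^1_2\text{-}\mathbf{R}^\beta(T)|_{\mathbf{\Pi}^1_2}=|\phi_\alpha^{+\omega^\beta}|$ splits into the lower bound ``for all $\gamma$ and all $\delta<\gamma$, $\mathbf{\Pi}^1_2\text{-}\mathbf{R}^\beta(T)+\mathsf{WO}(\gamma)\vdash\mathsf{WO}(\phi_\alpha^{+\omega^\beta}(\delta))$'' and the upper bound ``for all $\gamma$, $\mathsf{ACA}_0\vdash\mathsf{WO}(\phi_\alpha^{+\omega^\beta}(\gamma))\to\mathsf{Con}(\mathbf{\Pi}^1_2\text{-}\mathbf{R}^\beta(T)+\mathsf{WO}(\gamma))$''. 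The base case $\beta=0$ is the hypothesis, since $\mathbf{\Pi}^1_2\text{-}\mathbf{R}^0(T)$ coincides with $T$ and $\phi_\alpha^{+\omega^0}=\phi_\alpha^+$ (Theorem~\ref{ACA_0_Pi^1_2} being the template for the relativized ordinal-analysis bookkeeping). At a limit $\lambda$ both bounds reduce to those for the $\beta+1<\lambda$: any $\mathbf{\Pi}^1_1$ theorem of $\mathbf{\Pi}^1_2\text{-}\mathbf{R}^\lambda(T)$ witnessed by a $\mathbf{\Pi}^1_0$ order uses finitely many axioms and hence is a theorem of some $\mathbf{\Pi}^1_2\text{-}\mathbf{R}^{\beta+1}(T)$ (by Lemma~\ref{plus_one_lemma} and the $\mathbf{\Sigma}^1_1$-monotonicity of Lemma~\ref{hom_lemma}), and $\sup_{\beta<\lambda}\phi_\alpha^{+\omega^{\beta+1}}(\gamma)=\phi_\alpha^{+\omega^\lambda}(\gamma)$.

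By Lemma~\ref{plus_one_lemma} (and the fact that $\mathbf{\Pi}^1_2\text{-}\mathsf{RFN}(U)$ internalizes the truth of all $\mathbf{\Pi}^1_2$ axioms of $U$), the successor step $\beta\mapsto\beta+1$ reduces --- using the $\equiv^{\mathbf{\Sigma}^1_1}$-invariance of proof-theoretic dilators (the Kreisel-type stability proved above) --- to a \emph{single-step lemma}: if $U$ is $\mathbf{\Pi}^1_2$-axiomatizable with $|U|_{\mathbf{\Pi}^1_2}=|\phi_\alpha^{+\mu}|$, then $|U+\mathbf{\Pi}^1_2\text{-}\mathsf{RFN}(U)|_{\mathbf{\Pi}^1_2}=|\phi_\alpha^{+\mu\cdot\omega}|$; taking $\mu=\omega^\beta$ gives $\mu\cdot\omega=\omega^{\beta+1}$. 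Here the \emph{full} dilator hypothesis on $U$ is essential: it yields $|U+\mathsf{WO}(\eta)|_{\mathbf{\Pi}^1_1}=\phi_\alpha^{+\mu}(\eta)$ for \emph{every} linear order $\eta$, and then by the relativized Schmerl-style analysis of iterated $\mathbf{\Pi}^1_1$-reflection from \cite{pakhomov2018reflection} one has $|\mathbf{\Pi}^1_1\text{-}\mathbf{R}^n(U+\mathsf{WO}(\eta))|_{\mathbf{\Pi}^1_1}=\phi_\alpha^{+(\mu+n)}(\eta)$ for standard $n$. Note also that $\phi_\alpha^{+\mu\cdot\omega}(\eta)=\sup_n\phi_\alpha^{+\mu\cdot n}(\eta)$, that $\mathbf{\Pi}^1_2\text{-}\mathsf{RFN}(U)$ --- being $\mathbf{\Pi}^1_2$-soundness of $U$ --- proves $\mathbf{\Pi}^1_1\text{-}\mathsf{RFN}(\mathbf{\Pi}^1_1\text{-}\mathbf{R}^n(U))$ for each standard $n$ (by $n$-fold induction on the observation that a theory all of whose axioms are true is $\mathbf{\Pi}^1_1$-sound), and that $\mathbf{\Pi}^1_2\text{-}\mathsf{RFN}(U)+\mathsf{WO}(\eta)\vdash\mathbf{\Pi}^1_2\text{-}\mathsf{RFN}(U+\mathsf{WO}(\eta))$.

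With these ingredients the lower half of the single-step lemma is obtained by a bootstrapping argument: working in $U+\mathbf{\Pi}^1_2\text{-}\mathsf{RFN}(U)+\mathsf{WO}(\gamma)$, one proves in successive ``rounds'' $\mathsf{WO}$ of orders cofinal below $\phi_\alpha^{+(\mu+\omega)}(\gamma)$, then below $\phi_\alpha^{+(\mu+\omega)\cdot 2}(\gamma)$, and so on, each round restarting from a well-order established in the previous one; since $(\mu+\omega)\cdot\omega=\mu\cdot\omega$ the supremum of the rounds is exactly $\phi_\alpha^{+\mu\cdot\omega}(\gamma)$. The upper half is the contrapositive: an inconsistency of $U+\mathbf{\Pi}^1_2\text{-}\mathsf{RFN}(U)+\mathsf{WO}(\gamma)$ produces, via the ordinal analysis of cut-free $\omega$-proofs in \textsection\ref{equivalent_forms} and the $n=1$ companion of the Feferman analogue (Lemma~\ref{feferman}, cf.~\cite{pakhomov2018reflection}), a refutation of a theory $\mathbf{\Pi}^1_1\text{-}\mathbf{R}^\delta(U+\mathsf{WO}(\gamma))$ with $\delta$ a $\mathbf{\Pi}^1_0$-order of rank below $\phi_\alpha^{+\mu\cdot\omega}(\gamma)$, which the relativized Schmerl analysis together with the dilator hypothesis forbids (such theories being consistent, with $\mathbf{\Pi}^1_1$-ordinal well below $\phi_\alpha^{+\mu\cdot\omega}(\gamma)$); hence $\mathsf{WO}(\phi_\alpha^{+\mu\cdot\omega}(\gamma))$ fails.

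I expect the main obstacle to be the upper half of the single-step lemma: extracting from a $\mathbf{\Sigma}^1_2$-provability of $\bot$ from $U$ a cut-free $\omega$-derivation of controlled Kleene--Brouwer rank, and converting it via the Feferman analogue into a refutation of a small-enough iterated-$\mathbf{\Pi}^1_1$-reflection theory over $U+\mathsf{WO}(\gamma)$, requires careful bookkeeping at the interface between $\mathbf{\Pi}^1_2$- and $\mathbf{\Pi}^1_1$-level reflection and the relativized ordinal analysis of $U$, and is precisely what forces the multiplier to be exactly $\omega$ (the lower half showing it is at least $\omega$, the upper half that it is no more). A secondary, pervasive difficulty is that the whole argument must be carried out inside $\mathsf{ACA}_0$ using the $\mathbf{\Pi}^1_0$-linear-order formalism with partial satisfaction relations, exactly as in the surrounding lemmas; the disjunction-of-orders construction of Lemma~\ref{disjunction_of_orders}, the $\omega$-proof machinery, and the Kleene normal form manipulations of \cite{simpson2009subsystems} are all already available there.
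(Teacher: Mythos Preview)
Your overall framework---L\"ob's theorem plus a split into lower and upper bounds---matches the paper's spirit, and the bootstrapping for the lower bound (iterating the dilator $\phi_\alpha^{+\mu}$ starting from $\gamma$, using that $\mathbf{\Pi}^1_2\text{-}\mathsf{RFN}(U)+\mathsf{WO}(\eta)\vdash\mathbf{\Pi}^1_2\text{-}\mathsf{RFN}(U+\mathsf{WO}(\eta))$) is essentially correct. One side remark in that paragraph is wrong, though: the claim $|\mathbf{\Pi}^1_1\text{-}\mathbf{R}^n(U+\mathsf{WO}(\eta))|_{\mathbf{\Pi}^1_1}=\phi_\alpha^{+(\mu+n)}(\eta)$ fails in general, since one step of $\mathbf{\Pi}^1_1$-reflection produces the next $\varepsilon$-number above the current ordinal, not the next value of $\phi_\alpha$. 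Fortunately your bootstrapping does not actually use this claim.

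The genuine gap is your upper bound. You propose to use the $\omega$-proof machinery of \S\ref{equivalent_forms} and a ``$n=1$ companion'' of Lemma~\ref{feferman} to convert an inconsistency of $U+\mathbf{\Pi}^1_2\text{-}\mathsf{RFN}(U)+\mathsf{WO}(\gamma)$ into a refutation of some $\mathbf{\Pi}^1_1\text{-}\mathbf{R}^\delta(U+\mathsf{WO}(\gamma))$ with $\delta$ of rank below $\phi_\alpha^{+\mu\cdot\omega}(\gamma)$. But those tools are built for $\omega$-\emph{model} reflection: they turn a cut-free $\omega$-proof of $\lnot T,\Gamma$ into provability in iterated syntactic reflection of length the Kleene--Brouwer rank of that $\omega$-proof. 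Here there is no $\omega$-proof in sight---an ordinary inconsistency is a finitary object, and embedding it into an $\omega$-proof gives no control on the rank. Nothing in your sketch produces the required bound on $\delta$, and you correctly flag this step as the main obstacle; as stated it does not go through.

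The paper bypasses this entirely, and in fact dispenses with the successor/limit case split, by using the reduction property (Lemma~\ref{reduction_property}) as the key conservation tool. For each $\beta$ at once, Lemma~\ref{reduction_property} gives
\[
\mathbf{\Pi}^1_2\text{-}\mathbf{R}^\beta(T)+\mathsf{WO}(\gamma)\ \equiv_{\mathbf{\Pi}^1_1}\ T+\mathsf{WO}(\gamma)+\Big\{\tfrac{\varphi\in\mathbf{\Pi}^1_1\cap H}{\mathbf{\Pi}^1_1\text{-}\mathsf{RFN}(\mathbf{\Pi}^1_2\text{-}\mathbf{R}^\delta(T)+\varphi)}:\delta<\beta\Big\}.
\]
Replacing $\varphi$ by $\mathsf{WO}(\theta)$ via $\mathbf{\Pi}^1_1$-completeness of well-foundedness, and invoking the L\"ob hypothesis to get $\mathsf{ACA}_0\vdash\mathbf{\Pi}^1_1\text{-}\mathsf{RFN}(\mathbf{\Pi}^1_2\text{-}\mathbf{R}^\delta(T)+\mathsf{WO}(\theta))\leftrightarrow\mathsf{WO}(\phi_\alpha^{+\omega^\delta}(\theta))$, the right-hand side becomes $T+\mathsf{WO}(\gamma)+\{\mathsf{WO}(\theta)\to\mathsf{WO}(\phi_\alpha^{+\omega^\delta}(\theta)):\delta<\beta\}$, whose $\mathbf{\Pi}^1_1$-ordinal is $\phi_\alpha^{+\omega^\beta}(\gamma)$ by a direct well-ordering-principle computation. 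Both bounds fall out of this single $\mathbf{\Pi}^1_1$-equivalence; no $\omega$-proof analysis is needed. The missing ingredient in your proposal is precisely Lemma~\ref{reduction_property}.
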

\begin{proof}
We reason by L\"ob's theorem over $\mathsf{ACA}_0$. That is, we reason in $\mathsf{ACA}_0$ and show that the theorem holds assuming its provability in $\mathsf{ACA}_0$. 

We consider some  $\mathbf{\Pi}^1_2$-axiomatizable theory $T$ and linear orders $\alpha,\beta,\gamma$ such that $|T|_{\mathbf{\Pi}^1_2}=|\phi^+_\alpha$|. We need to show that $|\mathbf{\Pi}^1_2\text{-}\mathbf{R}^\beta(T)+\mathsf{WO}(\gamma)|_{\mathbf{\Pi}^1_1}=|\phi_{\alpha}^{+\omega^\beta}(\gamma)|$.

For a suitably large fragment  $H$ of $\mathbf{L}$ by Lemma \ref{reduction_property} we have 
$$\mathbf{\Pi}^1_2\text{-}\mathbf{R}^\beta(T)+\mathsf{WO}(\gamma)\equiv_{\mathbf{\Pi}^1_1} T+\mathsf{WO}(\gamma)+\bigg\{\frac{\varphi\in \mathbf{\Pi}^1_1\cap H}{\mathbf{\Pi}^1_1\text{-}\mathsf{RFN}(\mathbf{\Pi}^1_2\text{-}\mathbf{R}^\delta(T)+\varphi)}\mid \delta<\beta\bigg\}.$$

By the $\mathbf{\Pi}^1_1$-completeness of well-foundedness, we can transform the rule just stated into:
$$\bigg\{\frac{\mathsf{WO}(\theta)}{\mathbf{\Pi}^1_1\text{-}\mathsf{RFN}\big(\mathbf{\Pi}^1_2\text{-}\mathbf{R}^\delta(T)+\mathsf{WO}(\theta)\big)}\mid \delta<\beta\bigg\}.$$
Recall that we are assuming the statement of the the theorem is provable in $\mathsf{ACA}_0$. Thus, 
$$\mathsf{ACA}_0\vdash |\mathbf{\Pi}^1_2\text{-}\mathbf{R}^\delta(T)+\mathsf{WO}(\theta)|_{\mathbf{\Pi}^1_1}=|\phi_{\alpha}^{+\omega^\delta}(\theta)|.$$
Whence:
$$\mathsf{ACA}_0\vdash \mathbf{\Pi}^1_1\text{-}\mathsf{RFN}\big(\mathbf{\Pi}^1_2\text{-}\mathbf{R}^\delta(T)+\mathsf{WO}(\theta)\big) \leftrightarrow \mathsf{WO}\big(\phi_{\alpha}^{+\omega^\delta}(\theta)\big).$$
Thus, we infer that the aforementioned rule is equivalent to the countable family: $$\Big\{\mathsf{WO}(\theta)\to \mathsf{WO}\big(\varphi_\alpha^{+\omega^\delta}(\theta) \big) \mid \delta<\beta \Big\}.$$
Putting this all together, 
$$\mathbf{\Pi}^1_2\text{-}\mathbf{R}^\beta(T)+\mathsf{WO}(\gamma)\equiv_{\mathbf{\Pi}^1_1} T+\mathsf{WO}(\gamma)+\Big\{\mathsf{WO}(\theta)\to \mathsf{WO}\big(\varphi_\alpha^{+\omega^\delta}(\theta) \big) \mid \delta<\beta \Big\}.$$
And the proof-theoretic ordinal of the latter theory is $\phi^{+\omega^\beta}_\alpha(\gamma)$.
\end{proof}

\begin{theorem}[$\mathsf{ACA}_0$]\label{omega_Pi^1_2} Suppose for a $\mathbf{\Pi}^1_2$-axiomatizable theory $T$  we  have $|T|_{\mathbf{\Pi}^1_2}=|\phi_\alpha^+|$, for some linear order $\alpha$. Then $|\mathbf{\Pi}^1_2\mbox{-}\omega\mathbf{R}(T)|_{\mathbf{\Pi}^1_2}=|\phi_{\alpha+1}^+|$.
\end{theorem}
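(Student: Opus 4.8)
The plan is to run the argument of Theorem~\ref{iter_Pi^1_2} one level up: instead of iterating $\mathbf{\Pi}^1_2$-reflection a fixed number $\beta$ of times, we iterate it \emph{along every well-order}, and the extra strength comes from a bootstrap rather than from a supremum (note that one cannot merely take $\sup_\beta\phi_\alpha^{+\omega^\beta}$ in Theorem~\ref{iter_Pi^1_2}, which is unbounded; the point is that $\omega$-model reflection licenses iteration along well-orders it itself proves well-ordered). Concretely, I would argue by L\"ob's theorem over $\mathsf{ACA}_0$: reasoning in $\mathsf{ACA}_0$ with the statement of the theorem available as a hypothesis, fix a $\mathbf{\Pi}^1_2$-axiomatizable $T$ with $|T|_{\mathbf{\Pi}^1_2}=|\phi_\alpha^+|$ and a linear order $\gamma$, and show $|\mathbf{\Pi}^1_2\mbox{-}\omega\mathbf{R}(T)+\mathsf{WO}(\gamma)|_{\mathbf{\Pi}^1_1}=|\phi_{\alpha+1}^+(\gamma)|$. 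First I would trade the semantic principle for a reflection rule: by Lemma~\ref{ON_iter_to_rule} (and Theorem~\ref{special_case} behind it), for a suitable countable fragment $H$ containing the axioms of $T$ and the constant $C_\gamma$, the theory $\mathbf{\Pi}^1_2\mbox{-}\omega\mathbf{R}(T)=T+\mathbf{\Pi}^1_2\mbox{-}\omega\mathsf{RFN}(T)$ has the same $\mathbf{\Pi}^1_1\cap H$-theorems as the closure of $T$ under $\dfrac{\mathsf{WO}(\theta)}{\mathbf{\Pi}^1_2\mbox{-}\mathsf{RFN}(\mathbf{\Pi}^1_2\mbox{-}\mathbf{R}^\theta(T))}$, with $\theta$ ranging over arithmetical terms over $H$; inspecting that proof gives the side-sequent form of the rule as well, so $\mathsf{WO}(\gamma)$ may be absorbed, and for the purpose of computing $\mathbf{\Pi}^1_1$-consequences we may replace $\mathbf{\Pi}^1_2\mbox{-}\omega\mathbf{R}(T)+\mathsf{WO}(\gamma)$ by the closure $K$ of $T+\mathsf{WO}(\gamma)$ under this rule.

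Next I would unwind the rule exactly as in the proof of Theorem~\ref{iter_Pi^1_2}: by the reduction property (Lemma~\ref{reduction_property}, case $n=1$) together with $\mathbf{\Pi}^1_1$-completeness of well-foundedness, the conclusion $\mathbf{\Pi}^1_2\mbox{-}\mathsf{RFN}(\mathbf{\Pi}^1_2\mbox{-}\mathbf{R}^\theta(T))$ contributes, modulo $\mathbf{\Pi}^1_1$ over $T$, precisely the scheme $\mathsf{WO}(\eta)\to\mathsf{WO}\big(|\mathbf{\Pi}^1_2\mbox{-}\mathbf{R}^\theta(T)+\mathsf{WO}(\eta)|_{\mathbf{\Pi}^1_1}\big)$, and the L\"ob hypothesis, via Theorem~\ref{iter_Pi^1_2}, identifies this $\mathbf{\Pi}^1_1$-ordinal as $|\phi_\alpha^{+\omega^\theta}(\eta)|$. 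Hence $K$ is $\mathbf{\Pi}^1_1\cap H$-equivalent to $T+\mathsf{WO}(\gamma)$ closed under the ``$\phi_\alpha$-climbing'' rule $\dfrac{\mathsf{WO}(\theta),\ \mathsf{WO}(\eta)}{\mathsf{WO}(\phi_\alpha^{+\omega^\theta}(\eta))}$. Since $T$ already internalizes $\eta\mapsto\phi_\alpha^+(\eta)$, starting from $\mathsf{WO}(\gamma)$ this closure proves $\mathsf{WO}$ of the successive $\phi_\alpha$-iterates of $\gamma$, and these are cofinal in — but never reach — the least fixed point of $\phi_\alpha$ strictly above $\gamma$, which is $\phi_{\alpha+1}^+(\gamma)$. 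The lower bound $|K|_{\mathbf{\Pi}^1_1}\ge|\phi_{\alpha+1}^+(\gamma)|$ is the ``$\ge$'' half in the template of \S\ref{pt_dilators}: $T+\mathsf{WO}(\gamma)$ under the climbing rule proves $\mathsf{WO}$ of a set of orders cofinal below $\phi_{\alpha+1}^+(\gamma)$. For the upper bound one shows each finite stage $K_n$ of the closure has $\mathbf{\Pi}^1_1$-ordinal strictly below $\phi_{\alpha+1}^+(\gamma)$ — using that $\phi_{\alpha+1}^+(\gamma)$ is an $\varepsilon$-number closed under the operations $\eta\mapsto\phi_\alpha^{+\omega^\theta}(\eta)$ for all inputs below it — so $\mathsf{WO}(\phi_{\alpha+1}^+(\gamma))\to\mathsf{Con}(K_n)$ for every $n$, hence $\mathsf{WO}(\phi_{\alpha+1}^+(\gamma))\to\mathsf{Con}(K)$, and pulling back through Lemma~\ref{ON_iter_to_rule} gives $\mathsf{WO}(\phi_{\alpha+1}^+(\gamma))\to\mathsf{Con}(\mathbf{\Pi}^1_2\mbox{-}\omega\mathbf{R}(T)+\mathsf{WO}(\gamma))$, the ``$\le$'' half.

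The main obstacle I anticipate is the ordinal-arithmetic core: verifying, uniformly and provably in $\mathsf{ACA}_0$, that closing $T$ under the $\phi_\alpha$-climbing rule lands precisely at the least $\phi_\alpha$-fixed point above $\gamma$ — i.e. that the fixed-point equation $\mu=\sup\{\phi_\alpha^{+\omega^\theta}(\eta):\theta,\eta<\mu\}$ has least solution above $\gamma$ equal to $\phi_{\alpha+1}^+(\gamma)$, with both the cofinality (for $\ge$) and the closure (for $\le$) handled for the \emph{standard notation systems} involved. This is a well-ordering-principle computation of the same flavor as Theorems~\ref{marcone-mont} and \ref{fmw_intro} and should be isolated as a lemma about the $\phi$-functions; the boundary effects (strict versus non-strict values, successor versus limit stages) have to be routed through the coarse rank-comparisons $|{\cdot}|_{\mathbf{\Pi}^1_1}\le|{\cdot}|$ and $|{\cdot}|_{\mathbf{\Pi}^1_1}\ge|{\cdot}|$ of \S\ref{pt_dilators} rather than through literal ordinal identities. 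As in Lemmas~\ref{reduction_property} and \ref{ON_iter_to_rule}, one must also check the $\mathsf{ACA}_0$ bookkeeping: every appeal to ``the $\mathbf{\Pi}^1_1$-ordinal of $\mathbf{\Pi}^1_2\mbox{-}\mathbf{R}^\theta(T)$'' should be used only in the form licensed by Theorem~\ref{iter_Pi^1_2}, and the consistency bound in the upper-bound step should presuppose no soundness of $\mathbf{\Pi}^1_2\mbox{-}\omega\mathbf{R}(T)$ beyond what $\mathsf{WO}(\phi_{\alpha+1}^+(\gamma))$ already delivers.
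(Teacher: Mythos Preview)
Your proposal is correct and, for the lower bound, follows essentially the same route as the paper: pass to the rule closure via Lemma~\ref{ON_iter_to_rule}, then bootstrap using Theorem~\ref{iter_Pi^1_2} to climb the $\phi_\alpha$-iterates starting from $\gamma$, reaching ordinals cofinal in $\phi_{\alpha+1}^+(\gamma)$.

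The approaches diverge on the upper bound. The paper obtains $|\mathbf{\Pi}^1_2\mbox{-}\omega\mathbf{R}(T)|_{\mathbf{\Pi}^1_2}\le|\phi_{\alpha+1}^+|$ \emph{directly from the L\"ob reflexive hypothesis}: since the theorem itself asserts the upper bound, its $\mathsf{ACA}_0$-provability immediately yields $\mathsf{ACA}_0\vdash\mathsf{WO}(\phi_{\alpha+1}^+(\gamma))\to\mathsf{Con}(\mathbf{\Pi}^1_2\mbox{-}\omega\mathbf{R}(T)+\mathsf{WO}(\gamma))$, which is the ``$\le$'' template of \S\ref{pt_dilators}. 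You instead carry out an honest stage-by-stage bound on the rule closure $K_n$, using that $\phi_{\alpha+1}^+(\gamma)$ is closed under $(\theta,\eta)\mapsto\phi_\alpha^{+\omega^\theta}(\eta)$. Your argument is more explicit and self-contained, but it does the ordinal bookkeeping you flag as the main obstacle; the paper's trick sidesteps that entirely. Conversely, the paper's upper-bound step leans on the L\"ob hypothesis in a way that requires some care to see is not circular, whereas yours does not. Note also that your invocation of L\"ob is not actually needed: you only use the reflexive hypothesis ``via Theorem~\ref{iter_Pi^1_2}'', but that theorem is already $\mathsf{ACA}_0$-provable independently, so your argument goes through without L\"ob --- which is a mild advantage in transparency, at the cost of doing the upper-bound computation by hand.
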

\begin{proof}
We prove the claim by L\"{o}b's Theorem. So we assume the \emph{reflexive hypothesis}, i.e., that the statement of the theorem is provable in $\mathsf{ACA}_0$. The claim that $|\mathbf{\Pi}^1_2\mbox{-}\omega\mathbf{R}(T)|_{\mathbf{\Pi}^1_2}\leq|\phi_{\alpha+1}^+|$ is nearly immediately from the reflexive hypothesis. Indeed, we have supposed that $\mathsf{ACA}_0$ proves that
$$|\mathbf{\Pi}^1_2\mbox{-}\omega\mathbf{R}(T)|_{\mathbf{\Pi}^1_2}\leq|\phi_{\alpha+1}^+|.$$
This is to just to say that $\mathsf{ACA}_0$ proves that for all $\gamma$, 
$$|\mathbf{\Pi}^1_2\mbox{-}\omega\mathbf{R}(T)+\mathsf{WO}(\gamma)|_{\mathbf{\Pi}^1_1}\leq|\phi_{\alpha+1}^+(\gamma)|.$$
Whence $\mathsf{ACA}_0$ proves that for all $\gamma$,
$$\mathsf{WO}\big(\phi^+_{\alpha+1}(\gamma)\big) \rightarrow \mathsf{Con}\big(\mathbf{\Pi}^1_2\mbox{-}\omega\mathbf{R}(T)+\mathsf{WO}(\gamma)\big).$$

To establish that $|\mathbf{\Pi}^1_2\mbox{-}\omega\mathbf{R}(T)|_{\mathbf{\Pi}^1_2}\geq|\phi_{\alpha+1}^+|$, let $T$ be as in the statement of the theorem. Let $\gamma$ be an arbitrary well-ordering. Then the theory $$U:=T+\mathsf{WO}(\gamma)$$ is $\mathbf{\Pi}^1_2$-axiomatized. 

We are interested in the theory $V$, which is the closure of $U$ under the rule:
$$\mathsf{WO}(\beta) / \mathbf{\Pi}^1_2\text{-}\mathsf{RFN}\big(\mathbf{\Pi}^1_2\text{-}\mathbf{R}^\beta(U)\big)$$

$V$ has $\mathsf{WO}(\gamma)$ as an axiom, and it contains $T$. Since $|T|_{\mathbf{\Pi}^1_2}=|\phi^+_\alpha|$, it follows that, for each $\beta<\phi_\alpha^+(\gamma)$, $V$ proves $\mathbf{\Pi}^1_2\text{-}\mathsf{RFN}\big(\mathbf{\Pi}^1_2\text{-}\mathbf{R}^\beta(U)\big)$. By Theorem \ref{iter_Pi^1_2}, for all $\beta<\phi_\alpha^+(\gamma)$, $|V|_{\mathbf{\Pi}^1_2}$ is at least $|\phi^{+^{\omega^{\beta+1}}}_\alpha|$. So for all $\beta<\phi_\alpha^+(\gamma)$, for all $\delta< \phi^{+^{\omega^{\beta+1}}}_\alpha(\beta)$, $V$ proves $\mathsf{WO}(\delta)$.

By iteratively applying the argument in the previous paragraph, we conclude that $|V|_{\mathbf{\Pi}^1_1}= \phi^+_{\alpha+1}(\gamma)$.

By Lemma \ref{ON_iter_to_rule}, $V \equiv_{\mathbf{\Pi}^1_1}\mathbf{\Pi}^1_2\mbox{-}\mathbf{R}^{\mathrm{ON}}(U)$. And by Theorem \ref{omega_to_iter}, $$\mathbf{\Pi}^1_2\mbox{-}\mathbf{R}^{\mathrm{ON}}(U) \equiv \mathbf{\Pi}^1_2\mbox{-}\omega\mathbf{R}(U).$$

Thus, $|\mathbf{\Pi}^1_2\mbox{-}\omega\mathbf{R}(U)|_{\mathbf{\Pi}^1_1} = \phi^+_{\alpha+1}(\gamma)$.
Since $U:=T+\mathsf{WO}(\gamma)$ and $\gamma$ was arbitrary, this is just to say that $|\mathbf{\Pi}^1_2\mbox{-}\omega\mathbf{R}(T)|_{\mathbf{\Pi}^1_2}\geq|\phi_{\alpha+1}^+|$. This completes the proof.
\end{proof}

\begin{theorem}[$\mathsf{ACA}_0$]\label{omega_iter_Pi^1_2}
For any linear order $\alpha$
 $$|\mathbf{\Pi}^1_2\mbox{-}\omega\mathbf{R}^\alpha(\mathsf{ACA}_0)|_{\mathbf{\Pi}^1_2}=|\phi_{1+\alpha}^+|.$$
\end{theorem}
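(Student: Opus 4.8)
The plan is to prove the statement by L\"ob's theorem, this being how the transfinite induction on $\alpha$ is carried out inside $\mathsf{ACA}_0$: letting $\Lambda$ denote the statement of the theorem, one argues inside $\mathsf{ACA}_0$ under the hypothesis $\mathsf{Pr}_{\mathsf{ACA}_0}(\Lambda)$ and deduces $\Lambda$. Fix a linear order $\alpha$. The base case $\alpha=0$ is just Theorem~\ref{ACA_0_Pi^1_2}: here $\mathbf{\Pi}^1_2\text{-}\omega\mathbf{R}^0(\mathsf{ACA}_0)=\mathsf{ACA}_0$, and since the $\varepsilon$-numbers are precisely the values of $\phi_1$, the notation system $\varepsilon^+$ coincides with $\phi^+_1=\phi^+_{1+0}$. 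All of the content is in the successor and limit cases, which I would treat separately.

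For the successor case, suppose $\alpha$ has a greatest element $m$ and put $\beta=\mathsf{cone}(\alpha,m)$, so that $\alpha\cong\beta+1$. By Lemma~\ref{plus_one_lemma}, together with the observation that over $\mathsf{ACA}_0$ the sentence $\mathbf{\Pi}^1_2\text{-}\omega\mathsf{RFN}(U)$ recovers every $\mathbf{\Pi}^1_2$-axiomatised theory $U$ (it implies $\mathbf{\Pi}^1_2\text{-}\mathsf{RFN}(U)$, hence all axioms of $U$), we have $\mathbf{\Pi}^1_2\text{-}\omega\mathbf{R}^{\beta+1}(\mathsf{ACA}_0)\equiv^{\mathbf{\Sigma}^1_1}\mathbf{\Pi}^1_2\text{-}\omega\mathbf{R}\bigl(\mathbf{\Pi}^1_2\text{-}\omega\mathbf{R}^{\beta}(\mathsf{ACA}_0)\bigr)$. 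Now $\mathbf{\Pi}^1_2\text{-}\omega\mathbf{R}^{\beta}(\mathsf{ACA}_0)$ is $\mathbf{\Pi}^1_2$-axiomatisable, and by the reflexive hypothesis its proof-theoretic dilator is $|\phi^+_{1+\beta}|$, so Theorem~\ref{omega_Pi^1_2}, applied with the linear order appearing in its statement instantiated to $1+\beta$, gives that the dilator of $\mathbf{\Pi}^1_2\text{-}\omega\mathbf{R}\bigl(\mathbf{\Pi}^1_2\text{-}\omega\mathbf{R}^{\beta}(\mathsf{ACA}_0)\bigr)$ is $|\phi^+_{(1+\beta)+1}|=|\phi^+_{1+(\beta+1)}|$. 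Finally, $\equiv^{\mathbf{\Sigma}^1_1}$-equivalence is preserved by adjoining any $\mathsf{WO}(\gamma)$ and implies equality of $\Pi^1_1$ proof-theoretic ordinals (the Proposition following Theorem~\ref{ACA_0_Pi^1_2}), hence equality of proof-theoretic dilators, so $|\mathbf{\Pi}^1_2\text{-}\omega\mathbf{R}^{\beta+1}(\mathsf{ACA}_0)|_{\mathbf{\Pi}^1_2}=|\phi^+_{1+(\beta+1)}|$, as required.

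For the limit case, suppose $\alpha$ is nonempty with no greatest element. The inequality $|\mathbf{\Pi}^1_2\text{-}\omega\mathbf{R}^\alpha(\mathsf{ACA}_0)|_{\mathbf{\Pi}^1_2}\leq|\phi^+_{1+\alpha}|$ is immediate from the reflexive hypothesis in the usual way: it amounts to showing that $\mathsf{ACA}_0$ proves, for every $\gamma$, the implication $\mathsf{WO}\bigl(\phi^+_{1+\alpha}(\gamma)\bigr)\rightarrow\mathsf{Con}\bigl(\mathbf{\Pi}^1_2\text{-}\omega\mathbf{R}^\alpha(\mathsf{ACA}_0)+\mathsf{WO}(\gamma)\bigr)$, which follows at once. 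For the reverse inequality, fix $\gamma$. For each $x\in D_\alpha$ the inclusion of $\mathsf{cone}(\alpha,x)$ into $\alpha$ is a homomorphism, so Lemma~\ref{hom_lemma} yields $\mathbf{\Pi}^1_2\text{-}\omega\mathbf{R}^{\mathsf{cone}(\alpha,x)}(\mathsf{ACA}_0)\sqsubseteq^{\mathbf{\Sigma}^1_1}\mathbf{\Pi}^1_2\text{-}\omega\mathbf{R}^\alpha(\mathsf{ACA}_0)$; adjoining $\mathsf{WO}(\gamma)$, using that $|\cdot|_{\mathbf{\Pi}^1_1}$ is monotone under $\sqsubseteq^{\mathbf{\Sigma}^1_1}$, and using the reflexive hypothesis at $\mathsf{cone}(\alpha,x)$, we get $|\mathbf{\Pi}^1_2\text{-}\omega\mathbf{R}^\alpha(\mathsf{ACA}_0)+\mathsf{WO}(\gamma)|_{\mathbf{\Pi}^1_1}\geq|\phi^+_{1+\mathsf{cone}(\alpha,x)}(\gamma)|$ for every $x$. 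Since the cones of $\alpha$ are cofinal in $\alpha$ and, $1+\alpha$ being a limit, the standard notation systems satisfy $\sup_{x\in D_\alpha}|\phi^+_{1+\mathsf{cone}(\alpha,x)}(\gamma)|=|\phi^+_{1+\alpha}(\gamma)|$ — the values of $\phi_{1+\alpha}$ strictly above $\gamma$ being exactly the ordinals strictly above $\gamma$ lying in the range of every $\phi_{1+\beta}$ with $\beta<\alpha$, an identity verifiable in $\mathsf{ACA}_0$ for the chosen term systems — we conclude $|\mathbf{\Pi}^1_2\text{-}\omega\mathbf{R}^\alpha(\mathsf{ACA}_0)+\mathsf{WO}(\gamma)|_{\mathbf{\Pi}^1_1}\geq|\phi^+_{1+\alpha}(\gamma)|$. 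As $\gamma$ was arbitrary, this gives $|\mathbf{\Pi}^1_2\text{-}\omega\mathbf{R}^\alpha(\mathsf{ACA}_0)|_{\mathbf{\Pi}^1_2}\geq|\phi^+_{1+\alpha}|$, completing the induction.

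The one point that requires genuine care is the rigorous execution of the reflexive induction within $\mathsf{ACA}_0$: L\"ob's theorem supplies only the \emph{provability} of $\Lambda$, not $\Lambda$ itself, so — exactly as in the proofs of Theorems~\ref{iter_Pi^1_2} and~\ref{omega_Pi^1_2} — the uses of the inductive hypothesis at proper cones $\beta<^{\mathsf{cn}}\alpha$ (in the successor step and in the reverse inequality of the limit step) should really be phrased by first reducing $\mathbf{\Pi}^1_2\text{-}\omega\mathbf{R}^\alpha(\mathsf{ACA}_0)+\mathsf{WO}(\gamma)$, via the reduction property (Lemma~\ref{reduction_property}), Lemma~\ref{ON_iter_to_rule}, and the main theorem (Theorem~\ref{omega_to_iter}), to a theory axiomatised by rules whose conclusions are reflection statements over lower-level theories, and then rewriting those conclusions as $\mathsf{WO}$-statements using the dilator values supplied — \emph{provably in $\mathsf{ACA}_0$}, via the reflexive hypothesis together with Theorems~\ref{ACA_0_Pi^1_2} and~\ref{iter_Pi^1_2} — so that the resulting well-ordering-principle theory can have its $\Pi^1_1$ proof-theoretic ordinal computed outright. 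This bookkeeping is the laborious but conceptually routine part; the only new ingredient beyond the pattern already established for Theorem~\ref{iter_Pi^1_2} is the limit-stage supremum identity for the Veblen notation systems.
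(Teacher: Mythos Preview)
Your proposal is correct and follows essentially the same strategy as the paper: a L\"ob argument in which the reflexive hypothesis, combined with Theorem~\ref{omega_Pi^1_2}, supplies the dilator of each $\mathbf{\Pi}^1_2\text{-}\omega\mathbf{R}^\beta(\mathsf{ACA}_0)$ for $\beta<\alpha$, after which the Veblen supremum identity finishes the computation. The organisational difference is that you split into base/successor/limit cases, whereas the paper treats all $\alpha$ at once by writing $\mathbf{\Pi}^1_2\text{-}\omega\mathbf{R}^\alpha(\mathsf{ACA}_0)+\mathsf{WO}(\gamma)$ as $\mathsf{ACA}_0+\mathsf{WO}(\gamma)+\{\mathbf{\Pi}^1_2\text{-}\omega\mathsf{RFN}(\mathbf{\Pi}^1_2\text{-}\omega\mathbf{R}^\beta(\mathsf{ACA}_0))\mid\beta<\alpha\}$, observing that every finite fragment is $\sqsubseteq^{\mathbf{\Sigma}^1_1}$-contained in some $\mathbf{\Pi}^1_2\text{-}\omega\mathbf{R}^1(\mathbf{\Pi}^1_2\text{-}\omega\mathbf{R}^\beta(\mathsf{ACA}_0))+\mathsf{WO}(\gamma)$, and then computing $\max\bigl(\varepsilon^+(\gamma),\sup_{\beta<\alpha}\phi^+_{1+\beta+1}(\gamma)\bigr)=\phi^+_{1+\alpha}(\gamma)$. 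This uniform finite-fragment argument absorbs your three cases into a single line and avoids having to cite Lemma~\ref{plus_one_lemma} and Lemma~\ref{hom_lemma} separately.

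Your closing paragraph about the provability-versus-truth issue under L\"ob is well taken and, if anything, more scrupulous than the paper: the paper moves directly from ``$\mathsf{ACA}_0$ proves (\ref{application_theory})'' to the ordinal computation without spelling out how the provable dilator identities are discharged, whereas you correctly indicate that the honest route is to rewrite the reflection axioms as $\mathsf{WO}$-statements (using the $\mathsf{ACA}_0$-provable equivalences supplied by the reflexive hypothesis) so that the final ordinal can be read off from a well-ordering-principle theory, exactly as in the proof of Theorem~\ref{iter_Pi^1_2}.
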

\begin{proof} We reason by L\"ob's theorem. We work inside $\mathsf{ACA}_0$ and assume that $\mathsf{ACA}_0$ proves the statement of the theorem.

Now fix some $\alpha$. We have assumed that $\mathsf{ACA_0}$ proves that for all $\beta<\alpha$
\begin{equation}\label{dilator_inside}
|\mathbf{\Pi}^1_2\text{-}\omega\mathbf{R}^\beta(\mathsf{ACA}_0)|_{\mathbf{\Pi}^1_2}=|\phi^+_{1+\beta}|.
\end{equation}
Note that $\mathbf{\Pi}^1_2\text{-}\omega\mathbf{R}^\beta(\mathsf{ACA}_0)$ is $\mathbf{\Pi}^1_2$-axiomatized. So by combining Theorem \ref{omega_Pi^1_2} and assumption \ref{dilator_inside}, we infer that $\mathsf{ACA}_0$ proves that for any $\beta<\alpha$ 
\begin{equation}\label{application_theory}
|\mathbf{\Pi}^1_2\text{-}\omega\mathbf{R}^1\big(\mathbf{\Pi}^1_2\text{-}\omega\mathbf{R}^\beta(\mathsf{ACA}_0)\big)|_{\mathbf{\Pi}^1_2} = |\phi^+_{1+\beta+1}|.
\end{equation}

To finish the proof we fix an arbitrary well-ordering $\gamma$ and claim that \begin{equation}\label{it_omega_on_wo}|\mathbf{\Pi}^1_2\mbox{-}\omega\mathbf{R}^\alpha(\mathsf{ACA}_0)+\mathsf{WO}(\gamma)|_{\mathbf{\Pi}^1_1}=|\phi_{1+\alpha}^+(\gamma)|.\end{equation}
We have
$$\mathbf{\Pi}^1_2\mbox{-}\omega\mathbf{R}^\alpha(\mathsf{ACA}_0)+\mathsf{WO}(\gamma)=\mathsf{ACA}_0+\mathsf{WO}(\gamma)+\{\mathbf{\Pi}^1_2\mbox{-}\omega\mathsf{RFN}(\mathbf{\Pi}^1_2\mbox{-}\omega\mathbf{R}^\beta(\mathsf{ACA}_0))\mid \beta<\alpha\}.$$
Notice that any finite fragment of $$\mathsf{ACA}_0+\mathsf{WO}(\gamma)+\{\mathbf{\Pi}^1_2\mbox{-}\omega\mathsf{RFN}(\mathbf{\Pi}^1_2\mbox{-}\omega\mathbf{R}^\beta(\mathsf{ACA}_0))\mid \beta<\alpha\}$$ is $\sqsupseteq^{\mathbf{\Sigma}^1_1}$-contained in either $\mathsf{ACA}_0+\mathsf{WO}(\gamma)$ or $\mathbf{\Pi}^1_2\text{-}\omega\mathbf{R}^1(\mathbf{\Pi}^1_2\text{-}\omega\mathbf{R}^\beta(\mathsf{ACA}_0))+\mathsf{WO}(\gamma)$, for some $\beta<\alpha$. Thus:
$$|\mathbf{\Pi}^1_2\mbox{-}\omega\mathbf{R}^\alpha(\mathsf{ACA}_0)+\mathsf{WO}(\gamma)|_{\mathbf{\Pi}^1_1}=\max(\varepsilon^+(\gamma),\sup\limits_{\beta<\alpha}\phi_{1+\beta+1}^+(\gamma))=\phi_{1+\alpha}^+(\gamma).$$
This completes the proof of the theorem.
\end{proof}

Let $\Gamma^+(\alpha)$ be the notation system for the least $\Gamma$-number strictly greater than $\alpha$. Once again, we formally treat $\Gamma^+$ as the arithmetical term representing the corresponding operation on linear orders.

\begin{theorem}\label{ATR_0_Pi^1_2}
$|\mathsf{ATR}_0|_{\mathbf{\Pi}^1_2}=|\Gamma^+|$.
\end{theorem}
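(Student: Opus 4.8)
The plan is to read off Theorem \ref{ATR_0_Pi^1_2} from the reflection-theoretic presentation of $\mathsf{ATR}_0$ in Corollary \ref{reflection_ATR_0} --- which says $\mathsf{ATR}_0$ is equivalent to $\forall\alpha(\mathsf{WO}(\alpha)\to\mathbf{\Pi}^1_2\textrm{-}\omega\mathsf{RFN}(\mathbf{\Pi}^1_2\textrm{-}\omega\mathbf{R}^\alpha(\mathsf{ACA}_0)))$ --- together with the dilator computations of Theorems \ref{ACA_0_Pi^1_2} and \ref{omega_iter_Pi^1_2}. Fix a linear order $\alpha$; the task is to show $|\mathsf{ATR}_0+\mathsf{WO}(\alpha)|_{\mathbf{\Pi}^1_1}=\Gamma^+(\alpha)$. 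I will use that the notation system $\Gamma^+(\alpha)$ is naturally the union of notations $\delta_n$ with $\delta_0$ chosen so that $\omega\le|\delta_0|<\varepsilon^+(\alpha)$ (hence $|\delta_0|\ge|\alpha|$) and $\delta_{n+1}=\phi_{\delta_n}(0)$: since $\Gamma^+(\alpha)$ is the least $\Gamma$-number strictly above $\alpha$ and no $\Gamma$-number lies in the interval $(\alpha,\delta_0]$, the supremum of the $|\delta_n|$ is exactly $\Gamma^+(\alpha)$.

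For the lower bound $|\mathsf{ATR}_0|_{\mathbf{\Pi}^1_2}\ge|\Gamma^+|$ I would prove by an arithmetical induction on $n$ (so no appeal to L\"ob's theorem is needed) that $\mathsf{ATR}_0+\mathsf{WO}(\alpha)\vdash\mathsf{WO}(\delta_n)$. For $n=0$ this is the relativized ordinal analysis of $\mathsf{ACA}_0$: by Theorem \ref{ACA_0_Pi^1_2}, $|\mathsf{ACA}_0+\mathsf{WO}(\alpha)|_{\mathbf{\Pi}^1_1}=\varepsilon^+(\alpha)$, and $\delta_0$ sits below this bound. For the step, from $\mathsf{ATR}_0+\mathsf{WO}(\alpha)\vdash\mathsf{WO}(\delta_n)$ we get $\mathsf{ATR}_0+\mathsf{WO}(\alpha)\vdash\mathbf{\Pi}^1_2\textrm{-}\omega\mathsf{RFN}(\mathbf{\Pi}^1_2\textrm{-}\omega\mathbf{R}^{\delta_n}(\mathsf{ACA}_0))$ by Corollary \ref{reflection_ATR_0}; by Lemma \ref{plus_one_lemma} this theory $\mathbf{\Sigma}^1_1$-proves every axiom of $\mathbf{\Pi}^1_2\textrm{-}\omega\mathbf{R}^{\delta_n+1}(\mathsf{ACA}_0)$, which by Theorem \ref{omega_iter_Pi^1_2} has $\mathbf{\Pi}^1_1$-ordinal $\phi^+_{1+\delta_n+1}(0)$; as $\phi^+_{1+\delta_n+1}(0)>\phi_{\delta_n}(0)=\delta_{n+1}$, we conclude $\mathsf{ATR}_0+\mathsf{WO}(\alpha)\vdash\mathsf{WO}(\delta_{n+1})$. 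Since ``$\mathsf{ATR}_0+\mathsf{WO}(\alpha)\vdash\mathsf{WO}(\delta_n)$'' is arithmetical in $n$ and $\alpha$, the induction is available in $\mathsf{ACA}_0$; and since every $n$ in the notation $\Gamma^+(\alpha)$ lies in some $\delta_k$, taking $\delta_k$ itself as the witnessing $\mathbf{\Pi}^1_0$-order gives $|\mathsf{ATR}_0+\mathsf{WO}(\alpha)|_{\mathbf{\Pi}^1_1}\ge\Gamma^+(\alpha)$. (Note that the generic ``$\ge$'' recipe does not apply directly here, since $\Gamma^+$ is not continuous.)

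For the upper bound $|\mathsf{ATR}_0|_{\mathbf{\Pi}^1_2}\le|\Gamma^+|$ I would establish $\mathsf{ACA}_0\vdash\mathsf{WO}(\Gamma^+(\alpha))\to\mathsf{Con}(\mathsf{ATR}_0+\mathsf{WO}(\alpha))$. Using Lemma \ref{ON_omega_iter_to_rule} (applied with base $\mathsf{ACA}_0+\mathsf{WO}(\alpha)$, together with the observation that a true $\mathbf{\Pi}^1_1$-axiom such as $\mathsf{WO}(\alpha)$ can be absorbed into and out of the relevant $\omega$-models), one sees that $\mathsf{ATR}_0+\mathsf{WO}(\alpha)$ has --- over a suitable countable fragment --- the same $\mathbf{\Pi}^1_1$-theorems, hence the same consistency status, as the closure $W$ of $\mathsf{ACA}_0+\mathsf{WO}(\alpha)$ under the rule $\mathsf{WO}(\beta)/\mathbf{\Pi}^1_2\textrm{-}\omega\mathsf{RFN}(\mathbf{\Pi}^1_2\textrm{-}\omega\mathbf{R}^\beta(\mathsf{ACA}_0+\mathsf{WO}(\alpha)))$. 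Writing $W=\bigcup_k S_k$ as a union of arithmetically-defined stages, one shows by an arithmetical induction on $k$, working inside $\mathsf{ACA}_0+\mathsf{WO}(\Gamma^+(\alpha))$, that each $S_k$ is $\mathbf{\Sigma}^1_1$-reducible to $\mathbf{\Pi}^1_2\textrm{-}\omega\mathbf{R}^{\hat\beta_k}(\mathsf{ACA}_0+\mathsf{WO}(\alpha))$ for a notation $\hat\beta_k$ with $|\hat\beta_k|<\Gamma^+(\alpha)$, and that $S_k$ is consistent --- indeed $\mathbf{\Pi}^1_1$-sound. The point is that one passage through the rule replaces a stage of $\mathbf{\Pi}^1_1$-ordinal $\rho<\Gamma^+(\alpha)$ by one of $\mathbf{\Pi}^1_1$-ordinal at most $\phi^+_{1+\rho}(\alpha)$, again below $\Gamma^+(\alpha)$ precisely because $\Gamma^+(\alpha)$, being a $\Gamma$-number above $\alpha$, is closed under $\langle\beta,\gamma\rangle\mapsto\phi_\beta(\gamma)$ for $\beta,\gamma$ below it; $\mathbf{\Pi}^1_1$-soundness of $S_k$ then follows from $\mathsf{WO}(\phi^+_{1+\hat\beta_k}(\alpha))$ via the identity $|\mathbf{\Pi}^1_2\textrm{-}\omega\mathbf{R}^{\hat\beta_k}(\mathsf{ACA}_0+\mathsf{WO}(\alpha))|_{\mathbf{\Pi}^1_1}=\phi^+_{1+\hat\beta_k}(\alpha)$ (Theorem \ref{omega_iter_Pi^1_2}, with Theorem \ref{ACA_0_Pi^1_2} at the base) and the Proposition linking $\mathbf{\Pi}^1_1\textrm{-}\mathsf{RFN}$ to well-foundedness. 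Since any derivation of $\bot$ from $\mathsf{ATR}_0+\mathsf{WO}(\alpha)$ transfers, through the equivalence with $W$, to a single stage $S_k$, this yields the displayed implication and hence $|\mathsf{ATR}_0+\mathsf{WO}(\alpha)|_{\mathbf{\Pi}^1_1}\le\Gamma^+(\alpha)$.

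I expect the main obstacle to be this upper-bound argument and its formalization in $\mathsf{ACA}_0$: stratifying ``closure under the rule'' into arithmetically definable stages whose proof-theoretic ordinals can be tracked without a genuine theory of ordinals, correctly managing the interplay of $\mathbf{\Sigma}^1_1$-equivalence, absorption of the true axiom $\mathsf{WO}(\alpha)$, monotonicity of iterated $\omega$-model reflection, and transfer of consistency/soundness, and pinning down exactly which closure property of $\Gamma$-numbers keeps one rule-step below $\Gamma^+(\alpha)$. The lower bound, by contrast, should be a short and essentially mechanical consequence of Corollary \ref{reflection_ATR_0}, Lemma \ref{plus_one_lemma}, and Theorem \ref{omega_iter_Pi^1_2}.
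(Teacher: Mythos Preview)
Your proposal is correct and follows essentially the same route as the paper: both directions rest on Corollary~\ref{reflection_ATR_0} to recast $\mathsf{ATR}_0$ as iterated $\omega$-model reflection, Theorem~\ref{omega_iter_Pi^1_2} to read off the dilator of each iterate, and Lemma~\ref{ON_omega_iter_to_rule} to pass from the axiom to the rule for the upper bound. Your presentation is somewhat more explicit than the paper's (you spell out the arithmetical induction on the $\delta_n$ and the stage-by-stage tracking of $\hat\beta_k$, and you flag the absorption of $\mathsf{WO}(\alpha)$ into the base theory), whereas the paper compresses both the iteration for the lower bound and the bounding of nested rule applications into short ``by iterating this argument'' remarks; but the key ingredients and the structure of the argument are the same.
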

\begin{proof} We consider the theory $T:=\mathsf{ATR}_0+\mathsf{WO}(\gamma)$.
By Corollary \ref{reflection_ATR_0}, we may put $T$ into the form:
$$\mathsf{WO}(\gamma) + \forall \alpha \Big(\mathsf{WO}(\alpha) \rightarrow \mathbf{\Pi}^1_2\textrm{-}\omega\mathsf{RFN}\big(\mathbf{\Pi}^1_2\textrm{-}\omega\mathbf{R}^\alpha(\mathsf{ACA}_0)\big)\Big).$$ 
Then $T$ clearly proves
$$ \mathbf{\Pi}^1_2\textrm{-}\omega\mathsf{RFN}\big(\mathbf{\Pi}^1_2\textrm{-}\omega\mathbf{R}^\gamma(\mathsf{ACA}_0)\big).$$
By Lemma \ref{ON_iter_to_rule}, $|T|_{\mathbf{\Pi}^1_1}\geq \phi^+_{1+\alpha}(\gamma).$ So for every $\delta< \phi^+_{1+\alpha}(\gamma)$, $T$ proves
$$ \mathbf{\Pi}^1_2\textrm{-}\omega\mathsf{RFN}\big(\mathbf{\Pi}^1_2\textrm{-}\omega\mathbf{R}^\delta(\mathsf{ACA}_0)\big).$$
Whence by Lemma \ref{ON_iter_to_rule} again, $|T|_{\mathbf{\Pi}^1_1}\geq \phi^+_{1+{\phi^+_{1+\alpha}(\gamma)}}(\gamma).$

By iterating this argument, we see that $|T|_{\mathbf{\Pi}^1_1}\geq \Gamma^+(\gamma)$.  That is, $|\mathsf{ATR}_0|_{\mathbf{\Pi}^1_2}\geq\Gamma^+$.

To see that $|\mathsf{ATR}_0|_{\mathbf{\Pi}^1_2}\leq\Gamma^+$ we need to show that: 
$$\mathsf{ACA}_0 \vdash \mathsf{WF}\big(\Gamma^+ (\alpha) \big) \to\mathbf{\Pi}^1_1\text{-}\mathsf{RFN}\big(\mathsf{ATR}_0 + \mathsf{WO}(\alpha)\big).$$

Let's reason in $\mathsf{ACA}_0$ and make some observations about the claim $\mathbf{\Pi}^1_1\text{-}\mathsf{RFN}\big(\mathsf{ATR}_0 + \mathsf{WO}(\alpha)\big)$.
First, by Corollary \ref{reflection_ATR_0}, $\mathsf{ATR}_0+\mathsf{WO}(\alpha)$ is equivalent to:
$$\mathsf{WO}(\alpha) + \forall\gamma \Big( \mathsf{WO}(\gamma) \to \mathbf{\Pi}^1_2\text{-}\omega\mathsf{RFN}\big(\mathbf{\Pi}^1_2\text{-}\omega\mathbf{R}^\gamma(\mathsf{ACA}_0)\big)\Big)$$
By Theorem \ref{ON_omega_iter_to_rule} this is $\mathbf{\Pi}^1_1$-equivalent to the closure of $\mathsf{ACA}_0+\mathsf{WO}(\alpha)$ under the rule:
$$\frac{\mathsf{WO}(\gamma)}{\mathbf{\Pi}^1_2\mbox{-}\omega\mathsf{RFN}\Big(\mathbf{\Pi}^1_2\mbox{-}\omega\mathbf{R}^{\gamma}\big(\mathsf{ACA}_0 + \mathsf{WO}(\alpha)\big)\Big)}$$
Using Theorem \ref{omega_iter_Pi^1_2} we can bound the provably well-founded ordinals of nested applications of this rule. Indeed, the upper-bounds on the ordinals that are provably well-founded by nested applications of the rule are:
$$\{ \phi^+_1(\alpha), \phi^+_{1+\phi^+_1(\alpha)}(\alpha), \phi^+_{1+\phi^+_{1+\phi^+_1(\alpha)}(\alpha)}(\alpha) \dots \}$$
the supremum of which is $\Gamma^+(\alpha)$. By the $\mathbf{\Pi}^1_1$-completeness of well-foundedness, every $\mathbf{\Pi}^1_1$ theorem of $\mathsf{ATR}_0+\mathsf{WO}(\alpha)$ follows from the claim that $\mathsf{WO}(\beta)$ for some $\beta<\Gamma^+(\alpha)$. 

Since all of that reasoning took place in $\mathsf{ACA}_0$, we conclude that: $$\mathsf{ACA}_0 + \mathsf{WO}\big(\Gamma^+(\alpha)\big)\vdash \mathbf{\Pi}^1_1\text{-}\mathsf{RFN}\big(\mathsf{ATR}_0+\mathsf{WO}(\alpha)\big).$$
This completes the proof of the theorem.
\end{proof}


\section{$\Sigma^1_1\mbox{-}\mathsf{AC}_0$, $\Sigma^1_1\mbox{-}\mathsf{DC}_0$ and equivalents of $\mathsf{ATR}_0$}\label{choice}

The main theorem of this section is an alternative axiomatization of $\mathsf{ATR}_0$ in terms of reflection principles. First we give a new proof of the fact that $\mathsf{ATR}_0$ is equivalent to the claim ``every set is contained in an $\omega$-model of $\Sigma^1_1\text{-}\mathsf{AC}_0$'' (see \cite[Lemma~VIII.4.19]{simpson2009subsystems}). We then prove that $\mathsf{ATR}_0$ is equivalent to the statement ``every set is contained in an $\omega$-model of $\mathbf{\Pi}^1_3\text{-}\omega\mathsf{RFN}(\mathsf{ACA}_0)$.''
To the best of our knowledge, this exact characterization has not appeared in the literature. However, one may indirectly prove this equivalence by combining two known results. First, a proof that $\mathsf{ATR}_0$ is equivalent to ``every set is contained in an $\omega$-model of $\mathsf{DC}$'' is given by Avigad and Sommer \cite{avigad1999model}. Second, a proof of the equivalence of  $\mathbf{\Pi}^1_3\text{-}\omega\mathsf{RFN}(\mathsf{ACA}_0)$ and $\mathsf{DC}$ appears in Simpson's book \cite[Theorem~VIII.5.12]{simpson2009subsystems}.

We say that a sentence is $\forall\exists_2! \mathbf{\Pi}^0_\infty$ over a theory $T$ if it is of the form $\forall X\exists Y\varphi(X,Y)$, where $\varphi \in\mathbf{\Pi}^0_\infty$ and $T\vdash \forall X,Y_1,Y_2(\varphi(X,Y_1)\land \varphi(X,Y_2)\to Y_1=Y_2)$. We say that a theory $T$ is $\forall\exists_2! \mathbf{\Pi}^0_\infty$-axiomatizable over $\mathbf{\Sigma}^1_1\text{-}\mathsf{Prv}_{\mathsf{ACA}_0}$ if there exists a true $\mathbf{\Sigma}^1_1$-sentence $\varphi$ so that for any axiom $\psi$ of $T$ there is a $\forall X\exists_2! \mathbf{\Pi}^0_\infty$ sentence $\psi'$ over $T+\varphi$ so that $\mathsf{ACA}_0\vdash \varphi\land \psi'\to \psi$ and $T\vdash \varphi\to \psi'$.

\begin{lemma}[$\mathsf{ACA}_0$]\label{AC_cons_1}
For any $\forall\exists_2! \mathbf{\Pi}^0_\infty$-axiomatizable over $\mathbf{\Sigma}^1_1\text{-}\mathsf{Prv}_{\mathsf{ACA}_0}$ theory $T$ we have $$\mathsf{ACA}_0+\mathbf{\Pi}^1_2\textrm{-}\omega\mathsf{RFN}(T)\subseteq^{\mathbf{\Sigma}^1_1} \Sigma^1_1\textrm{-}\mathsf{AC}_0+\mathbf{\Pi}^1_2\textrm{-}\mathsf{RFN}(T).$$
\end{lemma}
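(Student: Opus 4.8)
The plan is to reduce, as at the start of the proof of Lemma~\ref{reduction_property}, to showing that the single extra axiom $\mathbf{\Pi}^1_2\text{-}\omega\mathsf{RFN}(T)$ is $\mathbf{\Sigma}^1_1$-provable in $U:=\Sigma^1_1\text{-}\mathsf{AC}_0+\mathbf{\Pi}^1_2\text{-}\mathsf{RFN}(T)$; the other axioms on the left are those of $\mathsf{ACA}_0$, which $U$ outright proves. Fix the true $\mathbf{\Sigma}^1_1$ sentence $\chi$ and the assignment $\psi\mapsto\psi'$ witnessing that $T$ is $\forall\exists_2!\mathbf{\Pi}^0_\infty$-axiomatizable over $\mathbf{\Sigma}^1_1\text{-}\mathsf{Prv}_{\mathsf{ACA}_0}$, so each $\psi'$ has the form $\forall X\exists Y\,\theta_\psi(X,Y)$ with $\theta_\psi$ arithmetic, $\mathsf{ACA}_0\vdash\chi\wedge\psi'\to\psi$, $T\vdash\chi\to\psi'$, and $T+\chi$ proves $\theta_\psi$ single-valued in $Y$. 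Put $T':=\mathsf{ACA}_0+\chi+\{\psi'\mid\psi\in T\}$; this is $\mathbf{\Pi}^1_2$-axiomatized, and over $\mathsf{ACA}_0$ every $\omega$-model of $T'$ is an $\omega$-model of $T$, so $\mathbf{\Pi}^1_2\text{-}\omega\mathsf{RFN}(T')$ (equivalently, by Theorem~\ref{omegaRFN_eqiv1}, $\nvdash_0\lnot T'$) implies $\mathbf{\Pi}^1_2\text{-}\omega\mathsf{RFN}(T)$. So it suffices to prove $\nvdash_0\lnot T'$ inside $U+\chi$, and since $\chi$ is true $\mathbf{\Sigma}^1_1$ this yields the desired $\mathbf{\Sigma}^1_1$-provability (the compactness argument needed to pass from ``the extra axiom is $\mathbf{\Sigma}^1_1$-provable in $U$'' to $\subseteq^{\mathbf{\Sigma}^1_1}$ is routine).

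Working in $U+\chi$: applying $\mathbf{\Pi}^1_2\text{-}\mathsf{RFN}(T)$ to the $\mathbf{\Pi}^1_2$ consequences $\chi\to\psi'$ of $T$ and using that $\chi$ is true, each $\psi'$ is true; hence each $\theta_\psi$ is the graph of a total, single-valued operation $F_\psi$ on sets (in a uniform, arithmetically indexed fashion over the axioms of $T$). Pick a witness $Z_0$ for $\chi$. Using $\Sigma^1_1$ choice I build the countable family $\mathcal S$ that is the closure of $\{Z_0\}$ under the Turing jump and all of the $F_\psi$: enumerate finitely-iterated ``terms'' built from $Z_0$ by applying the jump and the $F_\psi$, let $\mathrm{Val}(n,Y)$ be the ($\mathbf{\Sigma}^1_1$, provably functional) assertion that $Y$ is the value of the $n$-th term, verify $\forall n\,\exists Y\,\mathrm{Val}(n,Y)$ from totality of the jump and of the $F_\psi$, and apply $\Sigma^1_1\text{-}\mathsf{AC}_0$ to obtain a single set listing these values. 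Equipping $\mathcal S$ with a partial satisfaction class covering $\mathbf{\Pi}^1_2$ formulas (arithmetic in the code of $\mathcal S$, hence available over $\mathsf{ACA}_0$) turns $\mathcal S$ into a weak $\omega$-model $\mathfrak M$ of $T'$: $\mathfrak M\models\mathsf{ACA}_0$ by closure under the jump, $\mathfrak M\models\chi$ since $Z_0\in\mathcal S$ and $\chi$ has an arithmetic, hence absolute, matrix, and $\mathfrak M\models\psi'$ for every axiom $\psi$ of $T$ by closure under $F_\psi$ (with $\theta_\psi$ arithmetic). Since the axioms of $T'$ are $\mathbf{\Pi}^1_2$, the soundness of cut-free $\omega$-proofs with respect to weak $\omega$-models whose satisfaction class covers the subformulas of those axioms (as used in Theorems~\ref{omega_completeness_3} and \ref{omega_completeness_2}) shows $\nvdash_0\lnot T'$, which is what we wanted.

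The step I expect to be the main obstacle is making this $\omega$-model construction go through \emph{inside} $\Sigma^1_1\text{-}\mathsf{AC}_0$: both the totality claim $\forall n\,\exists Y\,\mathrm{Val}(n,Y)$ and the $\omega$-fold closure it encodes must be extracted from the limited induction of $\Sigma^1_1\text{-}\mathsf{AC}_0$ together with one application of $\Sigma^1_1$ choice, and the whole model-theoretic argument must be formalized over the weak metatheory $\mathsf{ACA}_0$, which forces the use of weak $\omega$-models and partial satisfaction classes in place of genuine $\omega$-models unless $\mathsf{ACA}_0^+$ is at hand. A secondary point requiring care is uniformity over the axioms of $T$ --- the single use of $\mathbf{\Pi}^1_2\text{-}\mathsf{RFN}(T)$ furnishing totality of all the $F_\psi$ at once, and the single use of $\Sigma^1_1$ choice closing under all of them at once --- which is exactly why the hypothesis is phrased in terms of $\forall\exists_2!\mathbf{\Pi}^0_\infty$ Skolem functions presented over $\mathbf{\Sigma}^1_1\text{-}\mathsf{Prv}_{\mathsf{ACA}_0}$, and which is unproblematic in the intended applications $\Sigma^1_1\text{-}\mathsf{AC}_0$ and $\Sigma^1_1\text{-}\mathsf{DC}_0$. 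The remaining bookkeeping --- the reduction to the single axiom, the compactness argument, verifying that relativization to $\mathcal S$ respects the deductions $\mathsf{ACA}_0\vdash\chi\wedge\psi'\to\psi$, and the passage $\mathbf{\Pi}^1_2\text{-}\omega\mathsf{RFN}(T')\Rightarrow\mathbf{\Pi}^1_2\text{-}\omega\mathsf{RFN}(T)$ --- is routine.
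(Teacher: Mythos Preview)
Your overall strategy---work inside $U+\chi=\Sigma^1_1\text{-}\mathsf{AC}_0+\mathbf{\Pi}^1_2\text{-}\mathsf{RFN}(T)+\chi$ and build an $\omega$-model of $T$ by a single application of $\Sigma^1_1$ choice---is exactly the paper's. The detour through $T'$ and $\nvdash_0\lnot T'$ is unnecessary (the paper just builds, for each set $A$, a model $\mathfrak{M}_A\ni A$ directly), but harmless.

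The genuine gap is in the model construction. You propose to take $\mathcal{S}$ to be the closure of $\{Z_0\}$ under the jump and the $F_\psi$, enumerated by finite terms, and to justify the premise $\forall n\,\exists Y\,\mathrm{Val}(n,Y)$ of choice ``from totality of the jump and of the $F_\psi$.'' That inference is a $\Sigma^1_1$-induction on the depth of the term, and $\Sigma^1_1\text{-}\mathsf{AC}_0$ does \emph{not} prove $\Sigma^1_1$-induction: already the special case ``$Z_0^{(n)}$ exists for every $n$'' is $\mathsf{ACA}_0^+$, which $\Sigma^1_1\text{-}\mathsf{AC}_0$ does not prove (indeed, by the paper's own Corollary~\ref{AC_cons_iter} with $\alpha=1$, $\mathsf{ACA}_0^+$ is $\mathbf{\Pi}^1_2$-equivalent to $\mathbf{\Pi}^1_2\text{-}\mathsf{RFN}(\Sigma^1_1\text{-}\mathsf{AC}_0)$, hence strictly above $\Sigma^1_1\text{-}\mathsf{AC}_0$). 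So you cannot get the $\omega$-fold closure this way, and you correctly flagged this as the main obstacle---but it is not merely an obstacle, it is fatal to the argument as written.

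The paper's fix is to change the \emph{definition} of the model so that no iteration is needed. Instead of closing under operations, let $\psi_0,\psi_1,\ldots$ enumerate all $\mathbf{\Sigma}^1_1$ $H$-formulas $\psi(X)$ for which $T+\chi\vdash\exists!X\,\psi(X)$ (an arithmetic enumeration), and take $\mathcal{S}=\{(S)_i:i\in\mathbb{N}\}$ where $S$ is given by $\Sigma^1_1\text{-}\mathsf{AC}$ applied to $\forall i\,\exists!X\,\mathsf{Tr}_{\mathbf{\Sigma}^1_1}(\psi_i(X))$. The point is that this last premise comes for free from $\mathbf{\Pi}^1_2\text{-}\mathsf{RFN}(T)$ uniformly in $i$: the provability hypothesis is built into the enumeration, and reflection turns each provable $\exists!X\,\psi_i(X)$ into a true one. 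Closure of $\mathcal{S}$ under arithmetic comprehension and under the $F_\psi$ is then checked not by induction but by \emph{composing} $\Sigma^1_1$ definitions: if $\psi_i$ names $(S)_i$, then ``$\exists X(\psi_i(X)\wedge\theta_\psi(X,Y))$'' is again a $\Sigma^1_1$ formula with $T+\chi$-provable unique witness, hence already on the list. This is the missing idea that replaces your $\Sigma^1_1$-induction.
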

\begin{proof} 
We reason in $\mathbf{\Pi}^1_2\text{-}\mathsf{RFN}_{\mathsf{ACA}_0}(T)+\Sigma^1_1\text{-}\mathsf{AC}_0+$ ``$T$ is $\forall\exists_2! \mathbf{\Pi}^0_\infty$-axiomatizable over $\mathbf{\Sigma}^1_1\text{-}\mathsf{Prv}_{\mathsf{ACA}_0}$'' and are going to prove $\mathbf{\Pi}^1_2\textrm{-}\omega\mathsf{RFN}(T)$. 

Since $T$ is $\forall\exists_2! \mathbf{\Pi}^0_\infty$-axiomatizable over $\mathbf{\Sigma}^1_1\text{-}\mathsf{Prv}_{\mathsf{ACA}_0}$ we could fix a true $
\mathbf{\Sigma}^1_1$-sentence $\varphi$ such that for any axiom $\psi$ of $T$ there is a sentence $\psi'$ so that $\mathsf{ACA}_0\vdash \varphi\land \psi'\to \psi$ and $T\vdash \varphi\to\psi'$. Consider an arbitrary set $A$. It will be enough to show that there is a countably coded $\omega$-model $\mathfrak{M}_A$ of $T$ that contains $A$. Let $H$ be the countable fragment of $\mathbf{L}_2$ that contains all set constants used in the axioms of $T$ and in the sentence $\varphi$ and also contains the constant for the set $A$. Let $\psi_0(X),\psi_1(X),\ldots$ be some fixed enumeration of $H$-formulas of the complexity $\mathbf{\Sigma}^1_1$ without other free variables such that $T+\varphi\vdash \exists! X \psi_i(X)).$ By $\mathbf{\Pi}^1_2\text{-}\mathsf{RFN}(T)$ we have that $\forall i \exists! X \mathsf{Tr}_{\mathbf{\Sigma}^1_1}(\psi_i(X))$. Hence by $\Sigma^1_1\text{-}\mathsf{AC}_0$ there exists a set $S$ such that $\mathsf{Tr}_{\mathbf{\Sigma}^1_1}(\psi_i((S)_i))$. We claim that $S$ as a countable collection of sets is an $\omega$-model of $T$ containing $A$. 

Since $X=A$ is $\psi_i(X)$ for some $i$, the collection $S$ should contain $A$ as $(S)_i$ for this particular $i$. By the same argument we see that $S$ contains witnesses for $\varphi$ and hence $S$ satisfies $\varphi$. It is now enough to check that $S$ as an $\omega$-model satisfies all instances of arithmetical comprehension and all $\forall \exists_2!\mathbf{\Pi}^0_\infty$-consequences of $T$. 

Let us first prove that $S$ satisfies an instance of arithmetical comprehension $$\forall X_1,\ldots, X_n \forall x_1,\ldots,x_m \exists Y\forall y(y\in Y\mathrel{\leftrightarrow}\theta(y,X_1,\ldots,X_n,x_1,\ldots,x_m)).$$ We consider sets $(S)_{i_1},\ldots,(S)_{i_n}$ and numbers $a_1,\ldots,a_m$. We claim that $$\exists Y\forall y(y\in Y\mathrel{\leftrightarrow}\theta(y,(S)_{i_1},\ldots,(S)_{i_n},a_1,\ldots,a_m))$$ holds in $S$. Indeed, $$T\vdash \exists ! Y\; \exists X_1,\ldots,X_n\Big(\psi_{i_1}(X_1)\land\ldots\land \psi_{i_n}(X_n)\land \forall y\big(y\in Y\mathrel{\leftrightarrow}\theta(y,X_1,\ldots,X_n,a_1,\ldots,a_m)\big)\Big)$$
which allows us to find the result of arithmetical comprehension in $S$. 

Let us consider a sentence $\forall X\exists Y \theta(X,Y)$ that is a $\forall \exists_2!\mathbf{\Pi}^0_\infty$-consequences of $T$ and verify that this sentence holds in $S$. We consider an arbitrary $(S)_{i}$ and claim that there is $j$ such that $\theta((S)_{i},(S)_{j})$ holds. Indeed we choose $j$ so that $\psi_j(X)$ is the result of pushing existential quantifiers to the front of $\exists Y(\psi_j(Y)\land \theta(Y,X))$.\end{proof}

\begin{lemma}[$\mathsf{ACA}_0$]\label{AC_cons_AEU} For any $\forall \exists_2!\mathbf{\Pi}^0_\infty$-axiomatizable over $\mathbf{\Sigma}^1_1\textrm{-}\mathsf{Prv}_{\mathsf{ACA}_0}$ theory $T$ the theory $\mathsf{ACA}_0+\mathbf{\Pi}^1_2\textrm{-}\omega\mathsf{RFN}(T)$ is also $\forall \exists_2!\mathbf{\Pi}^0_\infty$-axiomatizable over $\mathbf{\Sigma}^1_1\textrm{-}\mathsf{Prv}_{\mathsf{ACA}_0}$.\end{lemma}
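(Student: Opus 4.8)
The plan is to re-axiomatize $T':=\mathsf{ACA}_0+\mathbf{\Pi}^1_2\text{-}\omega\mathsf{RFN}(T)$ using, as its witnessing true $\mathbf{\Sigma}^1_1$-sentence, the very sentence $\varphi$ that witnesses the $\forall\exists_2!\mathbf{\Pi}^0_\infty$-axiomatizability of $T$ over $\mathbf{\Sigma}^1_1\text{-}\mathsf{Prv}_{\mathsf{ACA}_0}$. The axioms of $\mathsf{ACA}_0$ are unproblematic: arithmetical comprehension is already literally a $\forall\exists_2!\mathbf{\Pi}^0_\infty$-scheme (the comprehension set is unique by extensionality), and each remaining arithmetical axiom $\sigma$ of $\mathsf{ACA}_0$ may be replaced by $\forall Z\,\exists Y(\text{``}Y=\mathbb{N}\text{''}\wedge\sigma)$, which satisfies the three requirements trivially. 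Hence the whole content is to exhibit, for the single axiom $\mathbf{\Pi}^1_2\text{-}\omega\mathsf{RFN}(T)$, a sentence $\psi^\ast=\forall X\,\exists Y\,\theta(X,Y)$ with $\theta\in\mathbf{\Pi}^0_\infty$ such that (a) $\mathsf{ACA}_0\vdash\varphi\wedge\psi^\ast\to\mathbf{\Pi}^1_2\text{-}\omega\mathsf{RFN}(T)$, (b) $T'\vdash\varphi\to\psi^\ast$, and (c) $T'+\varphi$ proves that $\theta(X,-)$ has at most one solution.

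The sentence $\psi^\ast$ will assert that every set $X$ lies in a \emph{canonical} weak $\omega$-model of $T$. Concretely, $\theta(X,Y)$ says that $Y$ codes a pair $\langle\mathfrak{N},{\models}_{\mathfrak{N}}\rangle$ where $\mathfrak{N}=\langle(\mathfrak{N})_0,(\mathfrak{N})_1,\dots\rangle$ is a countable family of sets and ${\models}_{\mathfrak{N}}$ is a compositional, atomically correct partial satisfaction relation over $\mathfrak{N}$ covering the propositional closure of the atomic sentences together with all subformulas of the axioms of $T$ and of the formulas $\psi^{(X)}_i$, such that (i) every axiom of $T$ is true under ${\models}_{\mathfrak{N}}$ and (ii) for each $i$, $(\mathfrak{N})_i$ is the ${\models}_{\mathfrak{N}}$-unique $V\dot\in\mathfrak{N}$ with ${\models}_{\mathfrak{N}}\psi^{(X)}_i(V)$. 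Here $\psi^{(X)}_0,\psi^{(X)}_1,\dots$ enumerates, with repetitions, the $\mathbf{\Sigma}^1_1$-formulas in one free set variable, with set constants from the fragment generated by the constants in the axioms of $T$, in $\varphi$, and $C_X$, that $T+\varphi$ proves to have a unique solution — exactly the formulas used in the proof of Lemma \ref{AC_cons_1}; applying (ii) to $\psi^{(X)}_j(V):\equiv V=C_X$ already forces $X\dot\in\mathfrak{N}$. Since ${\models}_{\mathfrak{N}}$ is part of $Y$, the formula $\theta$ is arithmetical (this is precisely why one works with weak models rather than with relativizations here). The family $\mathfrak{N}$ singled out by $\theta(X,-)$ is nothing but the countable coded $\omega$-model $S$ built, via $\Sigma^1_1\text{-}\mathsf{AC}_0$, in the proof of Lemma \ref{AC_cons_1} for the parameter $X$: indeed, $\mathbf{\Sigma}^1_1$-truth is upward absolute from weak $\omega$-models, so ${\models}_{\mathfrak{N}}\psi^{(X)}_i((\mathfrak{N})_i)$ entails $\mathsf{Tr}_{\mathbf{\Sigma}^1_1}(\psi^{(X)}_i((\mathfrak{N})_i))$; and $T'$ proves that each $\psi^{(X)}_i$ has at most one \emph{true} solution, because ``$\psi^{(X)}_i$ has $\le 1$ solution'' is a $\mathbf{\Pi}^1_1$-sentence that $T+\varphi$ proves, whence $T\vdash(\varphi\to\text{uniq}_i)$ with $\varphi\to\text{uniq}_i$ a $\mathbf{\Pi}^1_1$-sentence, which is true by $\mathbf{\Pi}^1_1\text{-}\mathsf{RFN}(T)$ — a consequence of $\mathbf{\Pi}^1_2\text{-}\mathsf{RFN}(T)$, which in turn follows from $\mathbf{\Pi}^1_2\text{-}\omega\mathsf{RFN}(T)$ since every provable $\mathbf{\Pi}^1_2$-sentence holds in all $\omega$-models of $T$. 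This is exactly claim (c): two witnesses $Y$ for $\theta(X,-)$ have, for every $i$, the same $(\mathfrak{N})_i$ (the unique true solution of $\psi^{(X)}_i$), hence the same sequence $\mathfrak{N}$, hence (by compositionality, using arithmetical induction on formula complexity) the same ${\models}_{\mathfrak{N}}$.

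For claim (a): if $\psi^\ast$ holds then every set lies in a weak $\omega$-model of $T$, in particular of $\mathsf{ACA}_0$, so $\mathsf{ACA}_0^+$ holds by Theorem \ref{omega_completeness_2} and Proposition \ref{ACA_0^+_alt}; under $\mathsf{ACA}_0^+$ every weak $\omega$-model of $T$ extends to a genuine $\omega$-model of $T$ (the full satisfaction class agrees with ${\models}_{\mathfrak{N}}$ on the covered formulas), so every set lies in an $\omega$-model of $T$, which is $\mathbf{\Pi}^1_2\text{-}\omega\mathsf{RFN}(T)$ by Theorem \ref{omegaRFN_eqiv1}. For claim (b): reasoning in $T'+\varphi$, fix $X$; since $\mathbf{\Pi}^1_2\text{-}\omega\mathsf{RFN}(T)$ entails $\mathsf{ACA}_0^+$, full satisfaction classes are available and there is an $\omega$-model $\mathfrak{M}\models T+\varphi$ containing $X$ and a witness of $\varphi$; put $(\mathfrak{N})_i:=$ the $\mathfrak{M}$-unique solution of $\psi^{(X)}_i$ (which exists as $\mathfrak{M}\models\exists!V\,\psi^{(X)}_i(V)$), extract $\mathfrak{N}$ by arithmetical comprehension from $\mathfrak{M}$'s satisfaction class, and let ${\models}_{\mathfrak{N}}$ be the restriction to the relevant formulas of a full satisfaction class for $\mathfrak{N}$. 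As explained above this $\mathfrak{N}$ is the model $S$ of Lemma \ref{AC_cons_1} for $X$, so by the argument given there $\mathfrak{N}$ is a weak $\omega$-model of $T$ that satisfies $\varphi$; since $\mathfrak{N}\models T+\varphi$, each $\psi^{(X)}_i$ has a ${\models}_{\mathfrak{N}}$-unique solution, which, being an internal $\mathbf{\Sigma}^1_1$-witness, is the true one, i.e.\ $(\mathfrak{N})_i$. Thus $\theta(X,\langle\mathfrak{N},{\models}_{\mathfrak{N}}\rangle)$ holds, giving $\psi^\ast$. (All of this is elementary reasoning about proofs, truth predicates and weak models, so it is formalizable in $\mathsf{ACA}_0$ with $\varphi$ and the axiomatization of $T$ as data.)

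The main obstacle is precisely the tension in the definition of $\theta$: it must be arithmetical and yet pin $Y$ down uniquely over $T'+\varphi$. Packaging $Y$ as a weak $\omega$-model \emph{together with} its partial satisfaction relation keeps $\theta$ arithmetical, while the uniqueness is purchased by the fact — already latent in the proof of Lemma \ref{AC_cons_1} — that the $\mathbf{\Sigma}^1_1$-formulas that $T+\varphi$ \emph{proves} to have unique solutions genuinely \emph{do} have unique solutions once the relevant instance of $\mathbf{\Pi}^1_1$ reflection is on board, which it is in $T'$. The only other delicate point is that the canonical model must actually be a model of $T$ (and of $\varphi$); this is not reproved but inherited verbatim from Lemma \ref{AC_cons_1}, the single change being that the family is now obtained as an arithmetically definable submodel of an $\omega$-model supplied by $\mathbf{\Pi}^1_2\text{-}\omega\mathsf{RFN}(T)$, rather than assembled using $\Sigma^1_1\text{-}\mathsf{AC}_0$.
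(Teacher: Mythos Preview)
Your proof is correct and follows essentially the same approach as the paper's: both hinge on the observation that the canonical model $\mathfrak{M}_A$ from Lemma~\ref{AC_cons_1} is arithmetically definable and can be extracted in $\mathsf{ACA}_0$ from any ambient $\omega$-model of $T$ containing $A$. The paper's proof is a two-sentence sketch; you have carefully unpacked the details, in particular making explicit that the partial satisfaction relation must be packaged into $Y$ so that $\theta$ remains arithmetical, and verifying the uniqueness clause via upward absoluteness of $\mathbf{\Sigma}^1_1$ together with $\mathbf{\Pi}^1_1$-reflection (available from $\mathbf{\Pi}^1_2\text{-}\omega\mathsf{RFN}(T)$).
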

\begin{proof}
  Our goal will be to put $\mathbf{\Pi}^1_2\textrm{-}\omega\mathsf{RFN}(T)$ in  $\forall \exists_2!\mathbf{\Pi}^0_\infty$-form working over $\mathbf{\Sigma}^1_1\textrm{-}\mathsf{Prv}_{\mathsf{ACA}_0}$. Observe that the model $\mathfrak{M}_A$ constructed in the proof of Lemma \ref{AC_cons_1} is definable by a $\mathbf{\Pi}^0_\infty$ formula (with $A$ as parameter). Furthermore, the model $\mathfrak{M}_A$ could be constructed in $\mathsf{ACA}_0$ from any $\omega$-model of $T$ containing $A$ (rather than using $\Sigma_1^1\textrm{-}\mathsf{AC}_0$ as we have done in Lemma \ref{AC_cons_1}).
\end{proof}

\begin{lemma}[$\mathsf{ACA}_0$]\label{AC_cons_2}
For any $\forall\exists_2! \mathbf{\Pi}^0_\infty$-axiomatizable over $\mathbf{\Sigma}^1_1\text{-}\mathsf{Prv}_{\mathsf{ACA}_0}$ theory $T$ we have $$\mathsf{ACA}_0+\mathbf{\Pi}^1_2\textrm{-}\omega\mathsf{RFN}(T)\supseteq_{\mathbf{\Pi}^1_2}^{\mathbf{\Sigma}^1_1} \Sigma^1_1\textrm{-}\mathsf{AC}_0+\mathbf{\Pi}^1_2\textrm{-}\mathsf{RFN}(T).$$
\end{lemma}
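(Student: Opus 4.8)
The goal is to establish the inclusion $\mathsf{ACA}_0+\mathbf{\Pi}^1_2\textrm{-}\omega\mathsf{RFN}(T)\supseteq_{\mathbf{\Pi}^1_2}^{\mathbf{\Sigma}^1_1} \Sigma^1_1\textrm{-}\mathsf{AC}_0+\mathbf{\Pi}^1_2\textrm{-}\mathsf{RFN}(T)$, i.e., every $\mathbf{\Pi}^1_2$ theorem of $\Sigma^1_1\textrm{-}\mathsf{AC}_0+\mathbf{\Pi}^1_2\textrm{-}\mathsf{RFN}(T)$ is $\mathbf{\Sigma}^1_1$-provable from $\mathsf{ACA}_0+\mathbf{\Pi}^1_2\textrm{-}\omega\mathsf{RFN}(T)$. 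The plan is to split this into its two conjuncts on the right-hand side. First I would show that $\mathbf{\Pi}^1_2\textrm{-}\omega\mathsf{RFN}(T)$ already $\mathbf{\Sigma}^1_1$-proves (in fact outright proves, over $\mathsf{ACA}_0$) the principle $\mathbf{\Pi}^1_2\textrm{-}\mathsf{RFN}(T)$: any $\omega$-model of $T$ containing a putative witnessing data set computes the relevant standard proof, so syntactic reflection is a weakening of $\omega$-model reflection. Hence it remains to capture $\Sigma^1_1\textrm{-}\mathsf{AC}_0$, or rather its $\mathbf{\Pi}^1_2$ consequences, over $\mathsf{ACA}_0+\mathbf{\Pi}^1_2\textrm{-}\omega\mathsf{RFN}(T)$.

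The key point for the $\Sigma^1_1\textrm{-}\mathsf{AC}_0$ part is that $\mathbf{\Pi}^1_2\textrm{-}\omega\mathsf{RFN}(T)$ produces, for any set $A$, an $\omega$-model $\mathfrak{M}_A$ of $T$ containing $A$ — indeed, by the Remark following the definition of $\omega$-model reflection, $\mathbf{\Pi}^1_2\textrm{-}\omega\mathsf{RFN}(T)$ implies $\mathsf{ACA}_0^+$, and more strongly every set lies in an $\omega$-model of $T$. But $T$ itself, being $\forall\exists_2!\mathbf{\Pi}^0_\infty$-axiomatizable over $\mathbf{\Sigma}^1_1\textrm{-}\mathsf{Prv}_{\mathsf{ACA}_0}$, has the feature (essentially the content of the construction in Lemma \ref{AC_cons_1}, read in reverse as noted in Lemma \ref{AC_cons_AEU}) that from any $\omega$-model of $T$ containing $A$ one can, in $\mathsf{ACA}_0$, extract a $\Sigma^1_1$-choice set: the witnesses to the unique-existential axioms, and to arithmetical comprehension, can all be read off coordinatewise. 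Concretely: given a true $\Sigma^1_1$ formula $\forall i\exists X\,\theta_i(X)$ with the relevant uniqueness, pass to an $\omega$-model $\mathfrak{M}$ of $T$ (plus the witnessing $\mathbf{\Sigma}^1_1$-sentence $\varphi$) containing all parameters; each $\theta_i$ has a unique witness which $\mathfrak{M}$, being an $\omega$-model and correct on $\Sigma^1_1$ truth, contains; assemble these into a single set $S=\langle (S)_i\rangle$ by arithmetical comprehension applied to $\mathfrak{M}$'s satisfaction data. This yields $\Sigma^1_1\textrm{-}\mathsf{AC}_0$, hence any $\mathbf{\Pi}^1_2$ consequence of $\Sigma^1_1\textrm{-}\mathsf{AC}_0$.

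So the proof would proceed: (i) reason in $\mathsf{ACA}_0+\mathbf{\Pi}^1_2\textrm{-}\omega\mathsf{RFN}(T)$, where we may additionally assume $T$ is $\forall\exists_2!\mathbf{\Pi}^0_\infty$-axiomatizable over $\mathbf{\Sigma}^1_1\textrm{-}\mathsf{Prv}_{\mathsf{ACA}_0}$ with witnessing true $\mathbf{\Sigma}^1_1$-sentence $\varphi$; (ii) derive $\mathbf{\Pi}^1_2\textrm{-}\mathsf{RFN}(T)$ directly from $\mathbf{\Pi}^1_2\textrm{-}\omega\mathsf{RFN}(T)$; (iii) fix an arbitrary instance of $\Sigma^1_1$-choice with uniqueness and, using $\mathbf{\Pi}^1_2\textrm{-}\omega\mathsf{RFN}(T)$ (equivalently $\mathbf{\Sigma}^1_2\textrm{-}\omega\mathsf{Con}(T)$) applied to the true $\mathbf{\Sigma}^1_2$ statement asserting the existence of the parameters together with $\varphi$, obtain an $\omega$-model $\mathfrak{M}$ of $T+\varphi$ containing them; (iv) extract the choice set $S$ coordinatewise from $\mathfrak{M}$ via arithmetical comprehension, exactly mirroring the construction of $\mathfrak{M}_A$ in Lemma \ref{AC_cons_1} but run in the opposite direction; (v) conclude $\Sigma^1_1\textrm{-}\mathsf{AC}_0$, hence every $\mathbf{\Pi}^1_2$ theorem of $\Sigma^1_1\textrm{-}\mathsf{AC}_0+\mathbf{\Pi}^1_2\textrm{-}\mathsf{RFN}(T)$ follows, modulo a true $\mathbf{\Sigma}^1_1$-sentence, from $\mathsf{ACA}_0+\mathbf{\Pi}^1_2\textrm{-}\omega\mathsf{RFN}(T)$. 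I expect the main obstacle to be the careful bookkeeping in step (iv) over the weak base theory $\mathsf{ACA}_0$: the $\omega$-model $\mathfrak{M}$ is only equipped with a satisfaction class for a suitable countable fragment, so one must check that the extraction of $S$ uses only arithmetical comprehension over that data, and that the $\mathbf{\Sigma}^1_1$-provability qualifier (not outright provability) is exactly what is needed to absorb the parameter $\varphi$ — this is where the $\mathbf{\Sigma}^1_1$ in $\supseteq_{\mathbf{\Pi}^1_2}^{\mathbf{\Sigma}^1_1}$ does its work.
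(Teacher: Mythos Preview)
Your approach has a genuine gap: you are trying to derive $\Sigma^1_1\text{-}\mathsf{AC}_0$ outright (modulo a true $\mathbf{\Sigma}^1_1$ sentence) inside $\mathsf{ACA}_0+\mathbf{\Pi}^1_2\text{-}\omega\mathsf{RFN}(T)$, but this is simply false. Take $T=\mathsf{ACA}_0$; then $\mathsf{ACA}_0+\mathbf{\Pi}^1_2\text{-}\omega\mathsf{RFN}(\mathsf{ACA}_0)$ is just $\mathsf{ACA}_0^+$, and $\mathsf{ACA}_0^+$ does not prove $\Sigma^1_1\text{-}\mathsf{AC}_0$ (not even arithmetic unique choice). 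Concretely, the $\omega$-model $\{X:\exists n\;X\le_T 0^{(\omega n)}\}$ satisfies $\mathsf{ACA}_0^+$, yet the arithmetic formula $\theta(n,X):=\text{``}X=0^{(\omega n)}\text{''}$ has a unique witness for each $n$ in the model, while the choice set $\bigoplus_n 0^{(\omega n)}\equiv_T 0^{(\omega^2)}$ is absent. So the lemma really is a $\mathbf{\Pi}^1_2$-\emph{conservation} result, not an inclusion, and your step (v) cannot go through.

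The specific error is in step (iv): you claim that an $\omega$-model $\mathfrak{M}$ of $T$ containing the parameters must contain the unique witness to each $\theta_i$. But $\omega$-models are only \emph{upward} $\Sigma^1_1$-correct; a true $\Sigma^1_1$ sentence with parameters in $\mathfrak{M}$ need not have a witness in $\mathfrak{M}$. The $\forall\exists_2!\mathbf{\Pi}^0_\infty$ hypothesis on $T$ guarantees that $\mathfrak{M}$ contains witnesses to \emph{$T$'s axioms}, not to arbitrary $\Sigma^1_1$ formulas. Your appeal to Lemma~\ref{AC_cons_AEU} ``in reverse'' only extracts the canonical model $\mathfrak{M}_A$ of $T$; it does not manufacture choice functions for formulas unrelated to $T$.

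The paper's proof takes a completely different, model-theoretic route (following Simpson's proof of \cite[Theorem~IX.4.4]{simpson2009subsystems}): given a $\mathbf{\Sigma}^1_2$ sentence $\psi$ consistent with $\mathsf{ACA}_0+\mathbf{\Pi}^1_2\text{-}\omega\mathsf{RFN}(T)+\varphi$, one adds fresh set constants $C_0,C_1,\ldots$ together with axioms ``$C_i$ is an $\omega$-model of $T$'' and ``$C_i\in C_{i+1}$'', takes a \emph{recursively saturated} model $\mathfrak{M}$ of the resulting theory, and lets $\mathfrak{N}$ be the submodel whose second-order part is $\bigcup_i C_i$. Recursive saturation is exactly what forces $\mathfrak{N}\models\Sigma^1_1\text{-}\mathsf{AC}_0$: if $\mathfrak{N}\models\forall x\exists Y\,\varphi(x,Y)$ but no single $C_i$ collected all witnesses, saturation would produce a single (possibly nonstandard) $x$ with no witness in any $C_i$, contradicting the assumption. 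This is where the restriction to $\mathbf{\Pi}^1_2$ consequences is essential: one only needs $\mathfrak{N}$ to be a model of $V$, not to derive $V$ inside $U$.
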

\begin{proof}
 Our proof is inspired by the proof of \cite[Theorem~IX.4.4]{simpson2009subsystems} by Simpson. It is enough for us to consider an aribitrary true $\mathbf{\Sigma}^1_1$-sentence $\varphi$ and arbitrary $\mathbf{\Sigma}^1_2$-sentence $\psi$ such that the theory $U=\mathbf{\Pi}^1_2\textrm{-}\omega\mathsf{RFN}(T)+\varphi+\psi$ is consistent and show that $V=\Sigma^1_1\textrm{-}\mathsf{AC}_0+\mathbf{\Pi}^1_2\textrm{-}\mathsf{RFN}(T)+\varphi+\psi$ is consistent as well. 
 
 Indeed, we consider a countable fragment $H$ of $\mathbf{L}_2$ covering all the constants used in  theories $U$ and $V$. We further extend the language $H$ by a family $C_0,C_1,\ldots$ of fresh constants of set type (these constants are not from $\mathbf{L}_2$ and hence do not correspond to any particular set). We denote the resulting language $H'$. We denote by $U'$ the $H'$ theory that extends $U$ by the axioms 
\begin{enumerate}
    \item ``$C_{i}$ is an $\omega$-model of $T$'', for all $i$;
    \item $C_{i}\in C_{i+1}$, for all $i$; 
    \item  ``$C_0$ contains witnesses for $\varphi$ and $\psi$'';
    \item  $A\in C_0$, for all constants $A$ from $H$. 
\end{enumerate}
Since any finite fragment of $U'$ could be interpreted in $U$, we see that $U'$ is consistent. By the recursively saturated models existence theorem (it is provable in $\mathsf{WKL}_0$ \cite[Lemma~IX.4.2]{simpson2009subsystems}) there is a model $\mathfrak{M}$ of $U'$ that is $H$-recursively saturated. We define an $H$-model $\mathfrak{N}$ that has the same first-order part as $\mathfrak{M}$, but its second-order part is restricted to $\mathfrak{M}$-sets $X$ such that $X\in C_i$, for some $i$. Clearly the model $\mathfrak{N}$ satisfies $U$. To finish the proof we show that $\mathfrak{N}$ satisfies $\Sigma^1_1\textrm{-}\mathsf{AC}_0$ and hence is the model of $V$. 

Indeed, we consider some $\Sigma^1_1$-formula $\varphi(x,Y)$ with parameters from $\mathfrak{N}$ such that $\mathfrak{N}\models \forall x\exists Y\;\varphi(x,Y)$ and claim that there is an $\mathfrak{N}$-set $A$ such that $\mathfrak{N}\models \forall x\;\varphi(x,(A)_x)$. Assume, for the sake of contradiction, that no $A$ with this property exists. Observe that in this case for any natural $i$ we would have $\mathfrak{M}\models \exists x\; \forall y\;\lnot \varphi(x,(C_i)_y)$. Hence by $H$-recursive saturation of $\mathfrak{M}$ there should be a non-standard number $a$ such that $\mathfrak{M}\models \forall y\;\lnot \varphi(a,(C_i)_y)$, for any $i$. But this means that $\mathfrak{N}\models \lnot\exists Y\;\varphi(a,Y)$, contradiction. \end{proof}

\begin{corollary}[$\mathsf{ACA}_0$]\label{AC_cons_iter}
For any $\forall\exists_2! \mathbf{\Pi}^0_\infty$-axiomatizable theory $T$ we have $$\mathbf{\Pi}^1_2\textrm{-}\omega\mathbf{R}^\alpha(T)\equiv^{\mathbf{\Sigma}^1_1}_{\mathbf{\Pi}^1_2} \mathbf{\Pi}^1_2\textrm{-}\mathbf{R}^\alpha(T+\Sigma^1_1\textrm{-}\mathsf{AC}_0).$$
\end{corollary}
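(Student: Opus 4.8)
The plan is to prove the equivalence by induction on $\alpha$, packaged in the customary way through L\"ob's theorem: we argue in $\mathsf{ACA}_0$, assume $\mathsf{ACA}_0$ proves the statement of the corollary (call it $\Lambda$), fix a $\forall\exists_2!\mathbf{\Pi}^0_\infty$-axiomatizable theory $T$ and a linear order $\alpha$, and derive the claim. By Corollary \ref{inconsistency} we may assume $\alpha$ is well-founded, and it is harmless to take $\alpha$ nonempty (for the empty order nothing is iterated, and the statement then reduces to the observation that $\Sigma^1_1\text{-}\mathsf{AC}_0$ gets absorbed as soon as any reflection has been applied, via Lemma \ref{AC_cons_1}). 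The driving fact is the one-step equivalence obtained by combining Lemmas \ref{AC_cons_1} and \ref{AC_cons_2}: for any theory $U$ that is $\forall\exists_2!\mathbf{\Pi}^0_\infty$-axiomatizable over $\mathbf{\Sigma}^1_1\text{-}\mathsf{Prv}_{\mathsf{ACA}_0}$, the theories $\mathsf{ACA}_0 + \mathbf{\Pi}^1_2\text{-}\omega\mathsf{RFN}(U)$ and $\Sigma^1_1\text{-}\mathsf{AC}_0 + \mathbf{\Pi}^1_2\text{-}\mathsf{RFN}(U)$ have the same $\mathbf{\Pi}^1_2$ consequences up to $\mathbf{\Sigma}^1_1$-provability. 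The remaining structural inputs are Lemma \ref{plus_one_lemma}, the associativity of $\mathbf{R}^{(\cdot)}$, and the $\mathbf{\Sigma}^1_1$-robustness of the reflection operators established in \S\ref{preliminaries}, which lets $\mathbf{\Pi}^1_2\text{-}\omega\mathsf{RFN}$ and $\mathbf{\Pi}^1_2\text{-}\mathsf{RFN}$ depend only on the relevant equivalence class of their argument.

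For the inductive step I would unfold both sides: $\mathbf{\Pi}^1_2\text{-}\omega\mathbf{R}^\alpha(T) = T + \{\mathbf{\Pi}^1_2\text{-}\omega\mathsf{RFN}(U_\beta) : \beta < \alpha\}$ with $U_\beta := \mathbf{\Pi}^1_2\text{-}\omega\mathbf{R}^\beta(T)$, and $\mathbf{\Pi}^1_2\text{-}\mathbf{R}^\alpha(T + \Sigma^1_1\text{-}\mathsf{AC}_0) = (T + \Sigma^1_1\text{-}\mathsf{AC}_0) + \{\mathbf{\Pi}^1_2\text{-}\mathsf{RFN}(V_\beta) : \beta < \alpha\}$ with $V_\beta := \mathbf{\Pi}^1_2\text{-}\mathbf{R}^\beta(T + \Sigma^1_1\text{-}\mathsf{AC}_0)$. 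Since $\mathsf{ACA}_0$ proves $\Lambda$, so does every $U_\beta$, and applying $\Lambda$ to the cones of $\alpha$ gives $U_\beta \equiv^{\mathbf{\Sigma}^1_1}_{\mathbf{\Pi}^1_2} V_\beta$ for each $\beta < \alpha$; by $\mathbf{\Sigma}^1_1$-robustness this upgrades to $\mathbf{\Pi}^1_2\text{-}\omega\mathsf{RFN}(U_\beta) \equiv \mathbf{\Pi}^1_2\text{-}\omega\mathsf{RFN}(V_\beta)$ and $\mathbf{\Pi}^1_2\text{-}\mathsf{RFN}(U_\beta) \equiv \mathbf{\Pi}^1_2\text{-}\mathsf{RFN}(V_\beta)$. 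One direction then runs: each axiom $\mathbf{\Pi}^1_2\text{-}\omega\mathsf{RFN}(U_\beta)$ of the left-hand theory is, by Lemma \ref{AC_cons_1} applied to $U_\beta$, $\mathbf{\Sigma}^1_1$-provable in $\Sigma^1_1\text{-}\mathsf{AC}_0 + \mathbf{\Pi}^1_2\text{-}\mathsf{RFN}(U_\beta)$, hence in $\Sigma^1_1\text{-}\mathsf{AC}_0 + \mathbf{\Pi}^1_2\text{-}\mathsf{RFN}(V_\beta)$, which sits inside the right-hand theory; together with $T \subseteq \mathbf{\Pi}^1_2\text{-}\mathbf{R}^\alpha(T+\Sigma^1_1\text{-}\mathsf{AC}_0)$ this gives the $\subseteq^{\mathbf{\Sigma}^1_1}$ inclusion. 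Conversely, each axiom of the right-hand theory — the axioms of $T$, the scheme $\Sigma^1_1\text{-}\mathsf{AC}_0$, and each $\mathbf{\Pi}^1_2\text{-}\mathsf{RFN}(V_\beta) \equiv \mathbf{\Pi}^1_2\text{-}\mathsf{RFN}(U_\beta)$ — is a $\mathbf{\Pi}^1_2$ consequence of $\Sigma^1_1\text{-}\mathsf{AC}_0 + \mathbf{\Pi}^1_2\text{-}\mathsf{RFN}(U_\beta)$ (for $\Sigma^1_1\text{-}\mathsf{AC}_0$, using that $\alpha$ is nonempty, pick any fixed $\beta$), hence by Lemma \ref{AC_cons_2} is $\mathbf{\Sigma}^1_1$-provable among the $\mathbf{\Pi}^1_2$ sentences of $\mathsf{ACA}_0 + \mathbf{\Pi}^1_2\text{-}\omega\mathsf{RFN}(U_\beta)$, and this theory is contained in $\mathbf{\Pi}^1_2\text{-}\omega\mathbf{R}^\alpha(T)$ because $\beta < \alpha$. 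Transitivity of $\sqsubseteq^{\mathbf{\Sigma}^1_1}$ closes the loop.

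This argument presupposes that each $U_\beta = \mathbf{\Pi}^1_2\text{-}\omega\mathbf{R}^\beta(T)$ is again $\forall\exists_2!\mathbf{\Pi}^0_\infty$-axiomatizable over $\mathbf{\Sigma}^1_1\text{-}\mathsf{Prv}_{\mathsf{ACA}_0}$, which I would establish by a parallel induction on $\beta$ using Lemma \ref{AC_cons_AEU}: at successor stages $U_{\beta+1} \equiv^{\mathbf{\Sigma}^1_1} T + \mathbf{\Pi}^1_2\text{-}\omega\mathsf{RFN}(U_\beta)$ is of the required form since $T$'s axioms already are and the reflection piece is handled by Lemma \ref{AC_cons_AEU}. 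The step I expect to be the main obstacle is the \emph{limit} stage of this parallel induction: at a limit $\beta$ the axioms of $U_\beta$ are those of $T$ together with infinitely many sentences $\mathbf{\Pi}^1_2\text{-}\omega\mathsf{RFN}(U_\gamma)$, $\gamma < \beta$, each of which may separately require its own true $\mathbf{\Sigma}^1_1$ witness, and countably many true $\mathbf{\Sigma}^1_1$ sentences cannot in general be collapsed to one. The way around this is exactly the refinement recorded inside the proof of Lemma \ref{AC_cons_AEU}: the canonical $\omega$-model $\mathfrak{M}_A$ witnessing $\mathbf{\Pi}^1_2\text{-}\omega\mathsf{RFN}(U_\gamma)$ is $\mathbf{\Pi}^0_\infty$-definable from $A$ \emph{uniformly} and constructible in $\mathsf{ACA}_0$ from any $\omega$-model of $U_\gamma$ containing $A$, so the whole of $U_\beta$ can be axiomatized in $\forall\exists_2!\mathbf{\Pi}^0_\infty$ form over a single true $\mathbf{\Sigma}^1_1$ sentence, essentially the one already attached to $T$. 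A final routine point is the $\mathsf{ACA}_0$-formalization: every occurrence of ``$\beta < \alpha$'' must be read through $\mathsf{cone}(\alpha,x)$, with all the equivalences applied uniformly in $x$, exactly as in the iteration lemmas of \S\ref{preliminaries}.
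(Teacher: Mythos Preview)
Your proposal is correct and takes essentially the same route as the paper: a L\"ob argument in which Lemmas~\ref{AC_cons_1} and~\ref{AC_cons_2} supply the one-step comparison between $\mathsf{ACA}_0+\mathbf{\Pi}^1_2\text{-}\omega\mathsf{RFN}(U_\beta)$ and $\Sigma^1_1\text{-}\mathsf{AC}_0+\mathbf{\Pi}^1_2\text{-}\mathsf{RFN}(U_\beta)$, the reflexive hypothesis is used (exactly as you say) to swap $U_\beta$ for $V_\beta$ inside $\mathbf{\Pi}^1_2\text{-}\mathsf{RFN}$, and Lemma~\ref{AC_cons_AEU} carries the $\forall\exists_2!\mathbf{\Pi}^0_\infty$ axiomatizability forward.

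The one point worth tightening is your ``parallel induction on $\beta$'' for the $\forall\exists_2!$ property. Taken literally as a transfinite induction carried out in $\mathsf{ACA}_0$, this is not available for arbitrary $\alpha$; the paper avoids the issue by simply \emph{strengthening} the L\"ob sentence $\Lambda$ to include the assertion that $\mathbf{\Pi}^1_2\text{-}\omega\mathbf{R}^\alpha(T)$ is $\forall\exists_2!\mathbf{\Pi}^0_\infty$-axiomatizable over $\mathbf{\Sigma}^1_1\text{-}\mathsf{Prv}_{\mathsf{ACA}_0}$, and then proving the strengthened statement by L\"ob. Your closing remarks about reading everything through $\mathsf{cone}(\alpha,x)$ and uniformity show you have the right picture; just make the bundling explicit rather than calling it an induction.
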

\begin{proof}  We strengthen the corollary by the assertion that $\mathbf{\Pi}^1_2\textrm{-}\omega\mathbf{R}^\alpha(T)$ is $\forall\exists_2!\mathbf{\Pi}^0_\infty$-axiomatizable over $\mathbf{\Sigma}^1_1\textrm{-}\mathsf{Prv}_{\mathsf{ACA}_0}$. 
We use L\"{o}b's Theorem to prove the strengthen version of the corollary.  Reason in $\mathsf{ACA}_0$ and assume that corollary is $\mathsf{ACA}_0$-provable.


Consider an order $\alpha$. \textbf{First observation}: Any $\mathbf{\Pi}^1_2$-theorem $\varphi$ of $\mathbf{\Pi}^1_2\textrm{-}\omega\mathbf{R}^\alpha(T)$ is a theorem of  $$\mathsf{ACA}_0+\mathbf{\Pi}^1_2\textrm{-}\omega\mathsf{RFN}\big(\mathbf{\Pi}^1_2\textrm{-}\omega\mathbf{R}^\beta(T)\big)$$ for some suborder $\beta$ of $\alpha$. 

\textbf{Second observation}: Any $\mathbf{\Pi}^1_2$-theorem $\varphi$ of $\mathbf{\Pi}^1_2\textrm{-}\mathbf{R}^\alpha(T+\Sigma^1_1\textrm{-}\mathsf{AC}_0)$ is a theorem of $$\Sigma^1_1\textrm{-}\mathsf{AC}_0+\mathbf{\Pi}^1_2\textrm{-}\mathsf{RFN}\big(\mathbf{\Pi}^1_2\textrm{-}\mathbf{R}^\beta(T+\Sigma^1_1\textrm{-}\mathsf{AC}_0)\big)$$ for some suborder $\beta$ of $\alpha$.

\textbf{Third observation:} By the provability of the present corollary the theory $$\Sigma^1_1\textrm{-}\mathsf{AC}_0+\mathbf{\Pi}^1_2\textrm{-}\mathsf{RFN}(\mathbf{\Pi}^1_2\textrm{-}\mathbf{R}^\beta\big(T+\Sigma^1_1\textrm{-}\mathsf{AC}_0)\big)$$ is $\mathbf{\Sigma}^1_1\textrm{-}\mathsf{Prv}_{\mathsf{ACA}_0}$-equivalent to $\Sigma^1_1\textrm{-}\mathsf{AC}_0+\mathbf{\Pi}^1_2\textrm{-}\mathsf{RFN}\big(\mathbf{\Pi}^1_2\textrm{-}\omega\mathbf{R}^\beta(T)\big)$. 

Accordingly, we infer from from the second and third observations the \textbf{fourth observation}: Any $\mathbf{\Pi}^1_2$-theorem $\varphi$ of $\mathbf{\Pi}^1_2\textrm{-}\mathbf{R}^\alpha(T+\Sigma^1_1\textrm{-}\mathsf{AC}_0)$ is $\Sigma^1_1$ provable in $$\Sigma^1_1\textrm{-}\mathsf{AC}_0+\mathbf{\Pi}^1_2\textrm{-}\mathsf{RFN}\big(\mathbf{\Pi}^1_2\textrm{-}\omega\mathbf{R}^\beta(T)\big)$$ for some suborder $\beta$ of $\alpha$.

From the first observation and Lemma \ref{AC_cons_1} we infer that:
\begin{equation}\label{fourth-conservation}
    \mathbf{\Pi}^1_2\text{-}\mathbf{R}^\alpha(T+\Sigma^1_1\text{-}\mathsf{AC}_0+\varphi) \subseteq^{\Sigma^1_1}_{\mathbf{\Pi}^1_2} \Sigma^1_1\text{-}\mathsf{AC}_0 + \mathbf{\Pi}^1_2\text{-}\mathsf{RFN}(T)
\end{equation}
From the fourth observation and Lemma \ref{AC_cons_2} we infer that:
\begin{equation} \label{fifth-conservation} 
\mathbf{\Pi}^1_2\text{-}\mathbf{R}^\alpha(T + \Sigma^1_1\text{-}\mathsf{AC}_0 +\varphi) \subseteq^{\Sigma^1_1}_{\mathbf{\Pi}^1_2} \mathsf{ACA}_0+\mathbf{\Pi}^1_2\text{-}\omega\mathsf{RFN}\big(\mathbf{\Pi}\text{-}\omega\mathbf{R}^\beta(T)\big)
\end{equation}
Applying Lemma \ref{AC_cons_AEU} to (\ref{fourth-conservation}) we see that Lemma \ref{AC_cons_AEU} is applicable to (\ref{fifth-conservation}). And the corollary follows by applying Lemma \ref{AC_cons_AEU} to (\ref{fifth-conservation}).
\end{proof}

\begin{proposition}\label{ATR_0_Sigma^1_1_AC}
$$\mathsf{ATR}_0\equiv \mathsf{ACA}_0+\mathbf{\Pi}^1_2\mbox{-}\omega\mathsf{RFN}(\Sigma^1_1\mbox{-}\mathsf{AC}_0)$$
\end{proposition}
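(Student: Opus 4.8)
The plan is to prove the two inclusions $\mathsf{ATR}_0 \supseteq \mathsf{ACA}_0 + \mathbf{\Pi}^1_2\text{-}\omega\mathsf{RFN}(\Sigma^1_1\text{-}\mathsf{AC}_0)$ and $\mathsf{ATR}_0 \subseteq \mathsf{ACA}_0 + \mathbf{\Pi}^1_2\text{-}\omega\mathsf{RFN}(\Sigma^1_1\text{-}\mathsf{AC}_0)$ separately, using the machinery developed in this section. For the forward inclusion, recall from the Remark following the definition of $\mathbf{\Pi}^1_n\text{-}\omega\mathsf{RFN}$ that $\mathbf{\Pi}^1_2\text{-}\omega\mathsf{RFN}(\Sigma^1_1\text{-}\mathsf{AC}_0)$ says in particular that every set lies in an $\omega$-model of $\Sigma^1_1\text{-}\mathsf{AC}_0$. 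By the classical result (cf.\ \cite[Lemma~VIII.4.19]{simpson2009subsystems}) that $\mathsf{ATR}_0$ is equivalent to ``every set is contained in an $\omega$-model of $\Sigma^1_1\text{-}\mathsf{AC}_0$,'' this direction is immediate. Since we are aiming for a self-contained treatment, I would also sketch the relevant half of that equivalence: from an $\omega$-model $\mathfrak{M}$ of $\Sigma^1_1\text{-}\mathsf{AC}_0$ containing a given set $X$ together with a well-order $\alpha$ and an arithmetic-in-$X$ operator, one extracts the transfinite iteration along $\alpha$ by a $\Sigma^1_1$ (indeed $\Sigma^0_1$-over-$\mathfrak{M}$) definition and applies $\Sigma^1_1$ choice inside $\mathfrak{M}$; arithmetical comprehension in the ambient model then pulls the iteration out of $\mathfrak{M}$. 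This is the standard argument and I expect it to be routine.

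For the reverse inclusion, the key is Lemma~\ref{AC_cons_1}, which gives (provably in $\mathsf{ACA}_0$) that $\mathsf{ACA}_0 + \mathbf{\Pi}^1_2\text{-}\omega\mathsf{RFN}(T) \subseteq^{\mathbf{\Sigma}^1_1} \Sigma^1_1\text{-}\mathsf{AC}_0 + \mathbf{\Pi}^1_2\text{-}\mathsf{RFN}(T)$ for $\forall\exists_2!\mathbf{\Pi}^0_\infty$-axiomatizable $T$. Taking $T = \Sigma^1_1\text{-}\mathsf{AC}_0$ — which is finitely (hence trivially $\forall\exists_2!\mathbf{\Pi}^0_\infty$-) axiomatizable, or can be put in this form over $\mathbf{\Sigma}^1_1\text{-}\mathsf{Prv}_{\mathsf{ACA}_0}$ — we want to bound the right-hand side inside $\mathsf{ATR}_0$. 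Here the idea is to iterate: $\mathbf{\Pi}^1_2\text{-}\mathsf{RFN}(\Sigma^1_1\text{-}\mathsf{AC}_0)$ is a single reflection step, so $\mathsf{ACA}_0 + \mathbf{\Pi}^1_2\text{-}\omega\mathsf{RFN}(\Sigma^1_1\text{-}\mathsf{AC}_0)$ lies $\mathbf{\Sigma}^1_1$-below $\mathbf{\Pi}^1_2\text{-}\mathbf{R}^1(\Sigma^1_1\text{-}\mathsf{AC}_0 + \Sigma^1_1\text{-}\mathsf{AC}_0) = \mathbf{\Pi}^1_2\text{-}\mathbf{R}^1(\Sigma^1_1\text{-}\mathsf{AC}_0)$, and more generally $\mathbf{\Pi}^1_2\text{-}\omega\mathbf{R}^\alpha(\Sigma^1_1\text{-}\mathsf{AC}_0) \equiv^{\mathbf{\Sigma}^1_1}_{\mathbf{\Pi}^1_2} \mathbf{\Pi}^1_2\text{-}\mathbf{R}^\alpha(\Sigma^1_1\text{-}\mathsf{AC}_0)$ by Corollary~\ref{AC_cons_iter}. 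Since $\Sigma^1_1\text{-}\mathsf{AC}_0$ and $\mathsf{ACA}_0$ are $\mathbf{\Pi}^1_2$-conservative over one another (a standard fact), the iterated-syntactic-reflection side is $\mathbf{\Sigma}^1_1$-equivalent to $\mathbf{\Pi}^1_1\text{-}\mathbf{R}^\alpha(\mathsf{ACA}_0)$-style theories, which are all provable in $\mathsf{ATR}_0$ by Theorem~\ref{omega_to_iter} applied to $\mathsf{ACA}_0$ together with Corollary~\ref{reflection_ATR_0} / Corollary~\ref{low-cones}.

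Concretely, I would argue: work in $\mathsf{ATR}_0$. Given a set $X$, it suffices by Proposition~\ref{ACA_0^+_alt}-style reasoning to produce an $\omega$-model of $\Sigma^1_1\text{-}\mathsf{AC}_0$ containing $X$ and witnessing any prescribed true $\mathbf{\Sigma}^1_2$ sentence. Using the $\mathsf{ATR}_0$-provable hierarchy of iterated jumps / arithmetical transfinite recursion, one builds along a sufficiently long well-order the increasing chain of $\omega$-models as in the proof of Lemma~\ref{AC_cons_2} (the chain $C_0 \in C_1 \in \cdots$ of $\omega$-models of $\Sigma^1_1\text{-}\mathsf{AC}_0$), and takes the union; $\Sigma^1_1$ choice holds in the union by the recursive-saturation / overspill argument already given there, or alternatively one invokes the equivalence of $\mathsf{ATR}_0$ with $\Sigma^1_1\text{-}\mathsf{DC}_0$ and the fact that an $\omega$-model of $\Sigma^1_1\text{-}\mathsf{DC}_0$ is an $\omega$-model of $\Sigma^1_1\text{-}\mathsf{AC}_0$. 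Either way one then upgrades ``every set is in an $\omega$-model of $\Sigma^1_1\text{-}\mathsf{AC}_0$'' to the full $\mathbf{\Pi}^1_2\text{-}\omega\mathsf{RFN}(\Sigma^1_1\text{-}\mathsf{AC}_0)$ by Theorem~\ref{omegaRFN_eqiv1} (the equivalence of the various formulations of $\omega$-model reflection over $\mathsf{ACA}_0$, noting $\mathsf{ACA}_0^+$ follows). The main obstacle is making the ``iterate the reflection step $\omega$ times and stay inside $\mathsf{ATR}_0$'' argument precise: one must check that the length of iteration needed is bounded by a well-order that $\mathsf{ATR}_0$ proves well-founded, i.e.\ that the relevant $\mathbf{\Pi}^1_1$ consequences extracted from $\mathbf{\Pi}^1_2\text{-}\omega\mathsf{RFN}(\Sigma^1_1\text{-}\mathsf{AC}_0)$ do not exceed $\mathsf{ATR}_0$'s reach — this is controlled by the dilator computations of \S\ref{pt_dilators} (in particular Theorem~\ref{ATR_0_Pi^1_2} and Corollary~\ref{AC_cons_iter}), but assembling those into a clean proof of the present proposition is the delicate bookkeeping step.
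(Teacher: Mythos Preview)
Your proposal collects most of the right ingredients but never assembles them, and in places it points in the wrong direction. The paper's proof is a single four-line chain of equivalences, all over $\mathsf{ACA}_0$:
\[
\mathbf{\Pi}^1_2\text{-}\omega\mathsf{RFN}(\Sigma^1_1\text{-}\mathsf{AC}_0)
\;\stackrel{\text{Thm.~\ref{omega_to_iter}}}{\equiv}\;
\forall\alpha\bigl(\mathsf{WO}(\alpha)\to\mathbf{\Pi}^1_2\text{-}\mathsf{RFN}(\mathbf{\Pi}^1_2\text{-}\mathbf{R}^\alpha(\Sigma^1_1\text{-}\mathsf{AC}_0))\bigr)
\;\stackrel{\text{Cor.~\ref{AC_cons_iter}}}{\equiv}\;
\forall\alpha\bigl(\mathsf{WO}(\alpha)\to\mathbf{\Pi}^1_2\text{-}\omega\mathsf{RFN}(\mathbf{\Pi}^1_2\text{-}\omega\mathbf{R}^\alpha(\mathsf{ACA}_0))\bigr),
\]
and the last expression is $\mathsf{ATR}_0$ by Theorem~\ref{omegaRFN_Turing} (equivalently Corollary~\ref{reflection_ATR_0}). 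No separate model-building, no dilator bookkeeping, and both directions come at once. The crucial move you miss is applying Corollary~\ref{AC_cons_iter} with $T=\mathsf{ACA}_0$ (not $T=\Sigma^1_1\text{-}\mathsf{AC}_0$): this identifies $\mathbf{\Pi}^1_2\text{-}\mathbf{R}^\alpha(\Sigma^1_1\text{-}\mathsf{AC}_0)$ with $\mathbf{\Pi}^1_2\text{-}\omega\mathbf{R}^\alpha(\mathsf{ACA}_0)$ up to $\equiv^{\mathbf{\Sigma}^1_1}_{\mathbf{\Pi}^1_2}$, which is exactly what is needed to pass from the iterated-syntactic side back to the iterated-$\omega$-model side and invoke the Turing-jump characterisation of $\mathsf{ATR}_0$.

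There are also genuine problems with your outline as written. First, your direction labels are swapped: the sketch ``from an $\omega$-model of $\Sigma^1_1\text{-}\mathsf{AC}_0$ extract the transfinite iteration'' proves $\mathbf{\Pi}^1_2\text{-}\omega\mathsf{RFN}(\Sigma^1_1\text{-}\mathsf{AC}_0)\Rightarrow\mathsf{ATR}_0$, not the inclusion you call it. Second, and more seriously, the paper explicitly advertises this proposition as a \emph{new} proof of Simpson's Lemma~VIII.4.19, so simply citing that lemma for one direction defeats the point. Third, your ``reverse'' paragraph invokes Lemma~\ref{AC_cons_1} to get $\mathsf{ACA}_0+\mathbf{\Pi}^1_2\text{-}\omega\mathsf{RFN}(\Sigma^1_1\text{-}\mathsf{AC}_0)\subseteq^{\mathbf{\Sigma}^1_1}\Sigma^1_1\text{-}\mathsf{AC}_0+\mathbf{\Pi}^1_2\text{-}\mathsf{RFN}(\Sigma^1_1\text{-}\mathsf{AC}_0)$ and then proposes to ``bound the right-hand side inside $\mathsf{ATR}_0$''; but bounding the \emph{larger} side inside $\mathsf{ATR}_0$ only shows the theorems of $\omega\mathsf{RFN}$ lie in $\mathsf{ATR}_0$, which does nothing toward proving $\mathsf{ATR}_0\vdash\omega\mathsf{RFN}$. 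The remark that ``$\Sigma^1_1\text{-}\mathsf{AC}_0$ and $\mathsf{ACA}_0$ are $\mathbf{\Pi}^1_2$-conservative over one another, so the iterated-syntactic side is $\mathbf{\Sigma}^1_1$-equivalent to $\mathbf{\Pi}^1_1\text{-}\mathbf{R}^\alpha(\mathsf{ACA}_0)$-style theories'' is also wrong: conservativity at the base does not propagate through iterated reflection, and the correct companion is $\mathbf{\Pi}^1_2\text{-}\omega\mathbf{R}^\alpha(\mathsf{ACA}_0)$, obtained precisely via Corollary~\ref{AC_cons_iter}. Finally, the appeal to the dilator computations of \S\ref{pt_dilators} is unnecessary here; nothing from that section is used.
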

\begin{proof}
We reason in $\mathsf{ACA}_0$. \begin{flalign*}
\mathbf{\Pi}^1_2\mbox{-}\omega\mathsf{RFN}(\Sigma^1_1\mbox{-}\mathsf{AC}_0) & \equiv \forall \alpha\Big(\mathsf{WO}(\alpha)\to \mathbf{\Pi}^1_2\mbox{-}\mathsf{RFN}\big(\mathbf{\Pi}^1_2\mbox{-}\mathbf{R}^\alpha(\Sigma^1_1\mbox{-}\mathsf{AC}_0)\big)\Big) \text{ by Theorem \ref{omega_to_iter}}\\
&\equiv \forall  \alpha\Big(\mathsf{WO}(\alpha)\to \mathbf{\Pi}^1_2\mbox{-}\omega\mathsf{RFN}\big(\mathbf{\Pi}^1_2\mbox{-}\omega\mathbf{R}^\alpha(\mathsf{ACA}_0)\big)\Big) \text{ by Corollary \ref{AC_cons_iter}}\\
& \equiv \forall \alpha \forall X \big( \mathsf{WO}(\alpha) \to(X\sqcup \alpha)^{(\omega^{1+\alpha})}\text{ exists}\big) \text{ by Theorem \ref{omegaRFN_Turing} }\\
&\equiv \mathsf{ATR}_0
\end{flalign*}
Note that the last equivalence is well-known.
\end{proof}


Note that over $\mathsf{ACA}_0$ the principles $\Sigma^1_1\mbox{-}\mathsf{DC}, \Pi^1_1\mbox{-}\mathsf{BI},$ and $\mathbf{\Pi}^1_3\mbox{-}\omega\mathsf{RFN}$ are equivalent \cite[Theorem~VIII.5.12]{simpson2009subsystems}.

\begin{lemma}[$\mathsf{ACA}_0$]\label{DC_iter}
$$\mathbf{\Pi}^1_2\mbox{-}\mathbf{R}^\alpha\big(\mathsf{ACA}_0+ \mathbf{\Pi}^1_3\mbox{-}\omega\mathsf{RFN}(\mathsf{ACA}_0)\big)\equiv^{\mathbf{\Sigma}^1_1}_{\mathbf{\Pi}^1_2}\mathbf{\Pi}^1_2\mbox{-}\omega\mathbf{R}^{\omega\alpha}(\mathsf{ACA}_0)$$
\end{lemma}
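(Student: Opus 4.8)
The plan is to prove the lemma by reflexive induction on $\alpha$ (L\"ob's theorem), as in the proofs of Theorem~\ref{iter_Pi^1_2} and Corollary~\ref{AC_cons_iter}: reason in $\mathsf{ACA}_0$, assume the statement $\Lambda$ of the lemma is $\mathsf{ACA}_0$-provable, and derive it. The two previously established facts doing the work are the reduction property for $\omega$-model reflection, equation~(\ref{reduction_i2}) of Lemma~\ref{reduction_property} with $n=2$, which converts one layer of $\mathbf{\Pi}^1_3$-reflection into an $\omega$-block of layers of $\mathbf{\Pi}^1_2$-reflection; and Corollary~\ref{AC_cons_iter}, the $\Sigma^1_1\text{-}\mathsf{AC}_0$ analogue of the present lemma, which trades $\omega$-model reflection over $\mathsf{ACA}_0$ for ordinary iterated reflection over $\Sigma^1_1\text{-}\mathsf{AC}_0$, where the additivity of iteration is available.

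The base case is the seed equivalence
$$\mathsf{ACA}_0+\mathbf{\Pi}^1_3\text{-}\omega\mathsf{RFN}(\mathsf{ACA}_0)\;\equiv^{\mathbf{\Sigma}^1_1}_{\mathbf{\Pi}^1_2}\;\mathbf{\Pi}^1_2\text{-}\omega\mathbf{R}^{\omega}(\mathsf{ACA}_0).$$
Since $\mathsf{ACA}_0$ is $\mathbf{\Pi}^1_2$-axiomatized, equation~(\ref{reduction_i2}) of Lemma~\ref{reduction_property} (with $T=U=\mathsf{ACA}_0$) identifies the left-hand side, up to $\mathbf{\Pi}^1_2$-provability, with the closure $K$ of $\mathsf{ACA}_0$ under the rule $\varphi\,/\,\mathbf{\Pi}^1_2\text{-}\omega\mathsf{RFN}(\mathsf{ACA}_0+\varphi)$ ($\varphi\in\mathbf{\Pi}^1_2\cap H$). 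One then checks $K$ and $\mathbf{\Pi}^1_2\text{-}\omega\mathbf{R}^{\omega}(\mathsf{ACA}_0)$ are deductively equivalent: since each $\mathbf{\Pi}^1_2\text{-}\omega\mathsf{RFN}(\mathbf{\Pi}^1_2\text{-}\omega\mathbf{R}^{n}(\mathsf{ACA}_0))$ is equivalent to a $\mathbf{\Pi}^1_2$-sentence (as used in the proof of Lemma~\ref{reduction_property}), an induction on $n$ using the rule shows $K$ proves every finite iterate, hence every axiom of $\mathbf{\Pi}^1_2\text{-}\omega\mathbf{R}^{\omega}(\mathsf{ACA}_0)$; conversely $\mathbf{\Pi}^1_2\text{-}\omega\mathbf{R}^{\omega}(\mathsf{ACA}_0)$ is closed under the rule, because if it proves a $\mathbf{\Pi}^1_2$-sentence $\varphi$ then, by compactness, so does some $\mathbf{\Pi}^1_2\text{-}\omega\mathbf{R}^{n}(\mathsf{ACA}_0)$, every $\omega$-model of which then satisfies $\mathsf{ACA}_0+\varphi$ ($\omega$-models being correct about provability, with $\mathsf{ACA}_0^+$ available by the Remark that each $\mathbf{\Pi}^1_n\text{-}\omega\mathsf{RFN}(T)$ implies $\mathsf{ACA}_0^+$), so that $\mathbf{\Pi}^1_2\text{-}\omega\mathsf{RFN}(\mathbf{\Pi}^1_2\text{-}\omega\mathbf{R}^{n}(\mathsf{ACA}_0))$ yields $\mathbf{\Pi}^1_2\text{-}\omega\mathsf{RFN}(\mathsf{ACA}_0+\varphi)$. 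Corollary~\ref{low-cones} and Lemma~\ref{plus_one_lemma} handle the uniform-in-$n$ manipulation of the terms $\mathbf{\Pi}^1_2\text{-}\omega\mathbf{R}^{n}(\mathsf{ACA}_0)$ inside $\mathsf{ACA}_0$.

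For the inductive step write $B:=\mathsf{ACA}_0+\mathbf{\Pi}^1_3\text{-}\omega\mathsf{RFN}(\mathsf{ACA}_0)$ and unfold $\mathbf{\Pi}^1_2\text{-}\mathbf{R}^{\alpha}(B)=B+\{\mathbf{\Pi}^1_2\text{-}\mathsf{RFN}(\mathbf{\Pi}^1_2\text{-}\mathbf{R}^{\beta}(B)):\beta<\alpha\}$. As $\mathbf{\Pi}^1_2\text{-}\mathsf{RFN}$ depends only on the $\mathbf{\Pi}^1_2$-consequences of its argument and is insensitive to true $\mathbf{\Sigma}^1_1$-extensions (\`a la Kreisel), the reflexive hypothesis lets one replace each $\mathbf{\Pi}^1_2\text{-}\mathbf{R}^{\beta}(B)$ by $\mathbf{\Pi}^1_2\text{-}\omega\mathbf{R}^{\omega\beta}(\mathsf{ACA}_0)$ and, by the seed, $B$ by $\mathbf{\Pi}^1_2\text{-}\omega\mathbf{R}^{\omega}(\mathsf{ACA}_0)$, up to $\equiv^{\mathbf{\Sigma}^1_1}_{\mathbf{\Pi}^1_2}$. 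Corollary~\ref{AC_cons_iter} (applied to $\mathsf{ACA}_0$, which is $\forall\exists_2!\mathbf{\Pi}^0_\infty$-axiomatizable) turns every $\mathbf{\Pi}^1_2\text{-}\omega\mathbf{R}^{\gamma}(\mathsf{ACA}_0)$ into $\mathbf{\Pi}^1_2\text{-}\mathbf{R}^{\gamma}(\Sigma^1_1\text{-}\mathsf{AC}_0)$, and then the additivity lemma $\mathbf{R}^{\gamma+\delta}(\cdot)\equiv^{\mathbf{\Sigma}^1_1}\mathbf{R}^{\delta}(\mathbf{R}^{\gamma}(\cdot))$ together with Lemma~\ref{plus_one_lemma} collapses the nest of syntactic reflections into a single tower. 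The point producing the multiplier $\omega$ rather than a mere $+\,\omega$ is that each step $\mathbf{\Pi}^1_2\text{-}\mathsf{RFN}(\mathbf{\Pi}^1_2\text{-}\mathbf{R}^{\beta}(B))$ is taken over a theory proving $\mathbf{\Pi}^1_3\text{-}\omega\mathsf{RFN}(\mathsf{ACA}_0)$; by the reduction property this hypothesis relativizes along the reflections done so far (in the manner of Lemma~\ref{AC_cons_AEU}), so each step contributes a fresh $\omega$-block. Converting back via Corollary~\ref{AC_cons_iter} lands on $\mathbf{\Pi}^1_2\text{-}\omega\mathbf{R}^{\omega\alpha}(\mathsf{ACA}_0)$.

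I expect the main difficulty to be the cumulative bookkeeping rather than any single step: every equivalence above holds only up to $\mathbf{\Sigma}^1_1$-provability of $\mathbf{\Pi}^1_2$-consequences, so one must verify the chain composes (using transitivity of $\sqsubseteq^{\mathbf{\Sigma}^1_1}_{\mathbf{\Pi}^1_2}$ and the Kreisel-type stability of reflection), and that the rule/iterate identification, the relativization of $\mathbf{\Pi}^1_3\text{-}\omega\mathsf{RFN}$ along the tower, and the appeals to $\omega$-model correctness are all carried out inside $\mathsf{ACA}_0$ with the reflexive hypothesis in force; the use of full satisfaction classes is harmless because each $\mathbf{\Pi}^1_n\text{-}\omega\mathsf{RFN}$ already entails $\mathsf{ACA}_0^+$.
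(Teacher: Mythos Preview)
Your overall strategy (L\"ob's theorem) matches the paper's, and your seed equivalence $B\equiv^{\mathbf{\Sigma}^1_1}_{\mathbf{\Pi}^1_2}\mathbf{\Pi}^1_2\text{-}\omega\mathbf{R}^{\omega}(\mathsf{ACA}_0)$ via Lemma~\ref{reduction_property} is correct and already contains the essential idea. However, the paper's proof is considerably more direct than yours: it never invokes Corollary~\ref{AC_cons_iter}, never passes through $\Sigma^1_1\text{-}\mathsf{AC}_0$, and never uses additivity of iterated reflection. Instead, for $U\supseteq^{\mathbf{\Sigma}^1_1}_{\mathbf{\Pi}^1_2}V$ the paper simply observes that the reflexive hypothesis gives $U\vdash\mathbf{\Pi}^1_2\text{-}\mathsf{RFN}(\mathbf{\Pi}^1_2\text{-}\omega\mathbf{R}^{\omega\beta}(\mathsf{ACA}_0))$ for each $\beta<\alpha$, and then uses the fact that $U$ (containing $\mathbf{\Pi}^1_3\text{-}\omega\mathsf{RFN}(\mathsf{ACA}_0)$) is closed under the rule $\varphi\,/\,\mathbf{\Pi}^1_2\text{-}\omega\mathsf{RFN}(\mathsf{ACA}_0+\varphi)$ to climb from $\omega\beta$ to $\omega\beta+n$ for each $n$. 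The $\subseteq$ direction is handled symmetrically.

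The place where your argument is genuinely incomplete is exactly the passage you flag as ``the point producing the multiplier $\omega$ rather than a mere $+\omega$.'' After you unfold with the reflexive hypothesis and the seed, what you have on the syntactic side is $\mathbf{\Pi}^1_2\text{-}\mathbf{R}^{\omega}(\Sigma^1_1\text{-}\mathsf{AC}_0)+\{\mathbf{\Pi}^1_2\text{-}\mathsf{RFN}(\mathbf{\Pi}^1_2\text{-}\mathbf{R}^{\omega\beta}(\Sigma^1_1\text{-}\mathsf{AC}_0)):\beta<\alpha\}$. The target $\mathbf{\Pi}^1_2\text{-}\mathbf{R}^{\omega\alpha}(\Sigma^1_1\text{-}\mathsf{AC}_0)$ needs the levels $\omega\beta+n$ for all $n$, and when $\alpha$ is a successor these are not covered by monotonicity alone. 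Your appeal to ``relativization in the manner of Lemma~\ref{AC_cons_AEU}'' is not the right citation (that lemma is about preservation of $\forall\exists_2!\mathbf{\Pi}^0_\infty$-axiomatizability) and does not supply the missing step. What actually fills the gap is precisely the rule-closure observation above: since $B\vdash\mathbf{\Pi}^1_3\text{-}\omega\mathsf{RFN}(\mathsf{ACA}_0)$, for any $\mathbf{\Pi}^1_2$-sentence $\varphi$ one has $B\vdash\varphi\to\mathbf{\Pi}^1_2\text{-}\omega\mathsf{RFN}(\mathsf{ACA}_0+\varphi)$, and iterating this over $\varphi=\mathbf{\Pi}^1_2\text{-}\omega\mathsf{RFN}(\mathbf{\Pi}^1_2\text{-}\omega\mathbf{R}^{\omega\beta+k}(\mathsf{ACA}_0))$ produces the $+n$ steps. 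Once you insert this, the detour through Corollary~\ref{AC_cons_iter} and additivity becomes redundant---you have recovered the paper's direct argument, and the $\Sigma^1_1\text{-}\mathsf{AC}_0$ translation adds only bookkeeping.
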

\begin{proof}
We prove the lemma using L\"{o}b's Theorem. We reason in $\mathsf{ACA}_0$. We assume that the lemma is provable in $\mathsf{ACA}_0$ and claim that the lemma holds. We consider some particular linear order $\alpha$ and need to show that 
$$U=\mathbf{\Pi}^1_2\mbox{-}\mathbf{R}^\alpha\big(\mathsf{ACA}_0+ \mathbf{\Pi}^1_3\mbox{-}\omega\mathsf{RFN}(\mathsf{ACA}_0)\big)\equiv^{\mathbf{\Sigma}^1_1}_{\mathbf{\Pi}^1_2}\mathbf{\Pi}^1_2\mbox{-}\omega\mathbf{R}^{\omega\alpha}(\mathsf{ACA}_0)=V.$$

We start with proving $U\supseteq^{\mathbf{\Sigma}^1_1}_{\mathbf{\Pi}^1_2} V$. It is enough to show that for any $\beta\prec \alpha$ and $n<\omega$, the sentence $\mathbf{\Pi}^1_2\textrm{-}\mathsf{RFN}(\mathbf{\Pi}^1_2\mbox{-}\mathbf{R}^{\omega\beta+n}\big(\mathsf{ACA}_0)\big)$ is provable in $U$.  By the provability of the present lemma we have the provability of the equivalence
$$\mathbf{\Pi}^1_2\textrm{-}\mathsf{RFN}\Big(\mathbf{\Pi}^1_2\mbox{-}\mathbf{R}^\beta\big(\mathsf{ACA}_0+ \mathbf{\Pi}^1_3\mbox{-}\omega\mathsf{RFN}(\mathsf{ACA}_0)\big)\Big)\mathrel{\leftrightarrow} \mathbf{\Pi}^1_2\textrm{-}\mathsf{RFN}\big(\mathbf{\Pi}^1_2\mbox{-}\mathbf{R}^{\omega\beta}(\mathsf{ACA}_0)\big).$$
Thus $U$ proves $\mathbf{\Pi}^1_2\textrm{-}\mathsf{RFN}(\mathbf{\Pi}^1_2\mbox{-}\mathbf{R}^{\omega\beta}\big(\mathsf{ACA}_0)\big)$. Next by induction (using closure of $V$ under $\mathbf{\Pi}^1_2\textrm{-}\mathsf{RFN}$ rule) we establish that $U$ proves $\mathbf{\Pi}^1_2\textrm{-}\mathsf{RFN}(\mathbf{\Pi}^1_2\mbox{-}\mathbf{R}^{\omega\beta+m}\big(\mathsf{ACA}_0)\big)$, for all natural $m$.

Now let us prove that $U\subseteq^{\mathbf{\Sigma}^1_1}_{\mathbf{\Pi}^1_2} V$. We need to show that for any  $\beta<\alpha$ we have $$U'=\mathsf{ACA}_0+ \mathbf{\Pi}^1_3\mbox{-}\omega\mathsf{RFN}(\mathsf{ACA}_0)+\mathbf{\Pi}^1_2\mbox{-}\omega\mathsf{RFN}\Big(\mathbf{\Pi}^1_2\mbox{-}\mathbf{R}^\beta\big(\mathsf{ACA}_0+ \mathbf{\Pi}^1_3\mbox{-}\omega\mathsf{RFN}(\mathsf{ACA}_0)\big)\Big)\supseteq^{\mathbf{\Sigma}^1_1}_{\mathbf{\Pi}^1_2} V.$$
Indeed, the provability of the lemma implies that $V$ proves $$\mathbf{\Pi}^1_2\mbox{-}\omega\mathsf{RFN}\Big(\mathbf{\Pi}^1_2\mbox{-}\mathbf{R}^\beta\big(\mathsf{ACA}_0+ \mathbf{\Pi}^1_3\mbox{-}\omega\mathsf{RFN}(\mathsf{ACA}_0)\big)\Big).$$ Using the closure of $V$ under the $\mathbf{\Pi}^1_2\textrm{-}\mathsf{RFN}$ rule and Lemma \ref{reduction_property} we get the desired  inclusion $U'\subseteq^{\mathbf{\Sigma}^1_1}_{\mathbf{\Pi}^1_2} V$.\end{proof}

\begin{theorem}
$$\mathsf{ATR}_0\equiv\mathsf{ACA}_0+\mathbf{\Pi}^1_2\mbox{-}\omega\mathsf{RFN}(\mathsf{ACA}_0+ \mathbf{\Pi}^1_3\mbox{-}\omega\mathsf{RFN}\big(\mathsf{ACA}_0)\big)$$
\end{theorem}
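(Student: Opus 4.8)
The plan is to route the equivalence through the characterization $\mathsf{ATR}_0\equiv\forall\alpha(\mathsf{WO}(\alpha)\to\mathbf{\Pi}^1_2\mbox{-}\omega\mathsf{RFN}(\mathbf{\Pi}^1_2\mbox{-}\omega\mathbf{R}^\alpha(\mathsf{ACA}_0)))$ from Corollary~\ref{reflection_ATR_0}, using Lemma~\ref{DC_iter} as the transfer principle and Theorem~\ref{omega_to_iter} to move between semantic and syntactic reflection. Write $S:=\mathsf{ACA}_0+\mathbf{\Pi}^1_3\mbox{-}\omega\mathsf{RFN}(\mathsf{ACA}_0)$. Since $\mathbf{\Pi}^1_2\mbox{-}\omega\mathsf{RFN}(S)$ already implies $\mathsf{ACA}_0^+$ and $\mathsf{ATR}_0\vdash\mathsf{ACA}_0$, it suffices to prove, provably in $\mathsf{ACA}_0$, that $\mathbf{\Pi}^1_2\mbox{-}\omega\mathsf{RFN}(S)$ is equivalent to that displayed sentence. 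A preliminary point: by Theorem~\ref{omegaRFN_eqiv2} (with $n=3$) the principle $\mathbf{\Pi}^1_3\mbox{-}\omega\mathsf{RFN}(\mathsf{ACA}_0)$ is $\mathsf{ACA}_0$-provably equivalent to $\forall\varphi\in\mathbf{\Pi}^1_2((\vdash_0\lnot\mathsf{ACA}_0,\varphi)\to\mathsf{Tr}_{\mathbf{\Pi}^1_3}(\varphi))$, which is a $\mathbf{\Pi}^1_3$-sentence (the $\vdash_0$-clause is $\mathbf{\Sigma}^1_2$, so the body is $\mathbf{\Pi}^1_3$); as all the reflection operators and iterated-reflection theories used below are insensitive to replacing a base theory by a deductively equivalent one, we may treat $S$ as $\mathbf{\Pi}^1_3$-axiomatized.

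With that in hand, Theorem~\ref{omega_to_iter} with $n=2$ applies to $T=S$ and yields that $\mathbf{\Pi}^1_2\mbox{-}\omega\mathsf{RFN}(S)$ is equivalent to $\forall\alpha(\mathsf{WO}(\alpha)\to\mathbf{\Pi}^1_2\mbox{-}\mathsf{RFN}(\mathbf{\Pi}^1_2\mbox{-}\mathbf{R}^\alpha(S)))$. By Lemma~\ref{DC_iter}, $\mathbf{\Pi}^1_2\mbox{-}\mathbf{R}^\alpha(S)\equiv^{\mathbf{\Sigma}^1_1}_{\mathbf{\Pi}^1_2}\mathbf{\Pi}^1_2\mbox{-}\omega\mathbf{R}^{\omega\alpha}(\mathsf{ACA}_0)$ for every $\alpha$. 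Since $\mathbf{\Pi}^1_2\mbox{-}\mathsf{RFN}(K)$ is determined by the set of $\mathbf{\Pi}^1_2$-sentences that are $\mathbf{\Sigma}^1_1$-provable in $K$ (because an implication $\mathbf{\Sigma}^1_1\to\mathbf{\Pi}^1_2$ is again $\mathbf{\Pi}^1_2$), the operator $\mathbf{\Pi}^1_2\mbox{-}\mathsf{RFN}$ respects $\equiv^{\mathbf{\Sigma}^1_1}_{\mathbf{\Pi}^1_2}$, and we conclude that $\mathbf{\Pi}^1_2\mbox{-}\omega\mathsf{RFN}(S)$ is equivalent to $\forall\alpha(\mathsf{WO}(\alpha)\to\mathbf{\Pi}^1_2\mbox{-}\mathsf{RFN}(\mathbf{\Pi}^1_2\mbox{-}\omega\mathbf{R}^{\omega\alpha}(\mathsf{ACA}_0)))$.

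It remains to see that this last principle is equivalent to the Corollary~\ref{reflection_ATR_0}-form of $\mathsf{ATR}_0$. One direction is easy: $\omega$-model reflection implies ordinary reflection (provability being sound over $\omega$-models), and $\mathsf{ACA}_0\vdash\mathsf{WO}(\beta)\to\mathsf{WO}(\omega\beta)$, so $\mathsf{ATR}_0$ gives the displayed principle. For the converse, two facts do the work. First, by Theorem~\ref{omegaRFN_eqiv1} (items~(\ref{omegaRFN_eqiv1_2}) and~(\ref{omegaRFN_eqiv1_7})), $\mathbf{\Pi}^1_2\mbox{-}\omega\mathsf{RFN}(U)$ is, for any $U$, an $\mathsf{ACA}_0$-provable $\mathbf{\Pi}^1_2$-sentence. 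Second, $\omega\beta<^{\mathsf{cn}}\omega(\beta+1)$, so by Corollary~\ref{low-cones} the theory $\mathbf{\Pi}^1_2\mbox{-}\omega\mathbf{R}^{\omega(\beta+1)}(\mathsf{ACA}_0)$ $\mathbf{\Sigma}^1_1$-proves $\mathbf{\Pi}^1_2\mbox{-}\omega\mathsf{RFN}(\mathbf{\Pi}^1_2\mbox{-}\omega\mathbf{R}^{\omega\beta}(\mathsf{ACA}_0))$. Now assume the displayed principle and fix $\beta$ with $\mathsf{WO}(\beta)$; then $\mathsf{WO}(\beta+1)$, so the principle gives $\mathbf{\Pi}^1_2\mbox{-}\mathsf{RFN}(\mathbf{\Pi}^1_2\mbox{-}\omega\mathbf{R}^{\omega(\beta+1)}(\mathsf{ACA}_0))$; since this theory $\mathbf{\Sigma}^1_1$-proves the $\mathbf{\Pi}^1_2$-sentence $\mathbf{\Pi}^1_2\mbox{-}\omega\mathsf{RFN}(\mathbf{\Pi}^1_2\mbox{-}\omega\mathbf{R}^{\omega\beta}(\mathsf{ACA}_0))$, that sentence is true; finally, via the homomorphism $\beta\to\omega\beta$, Lemma~\ref{hom_lemma} yields $\mathbf{\Pi}^1_2\mbox{-}\omega\mathbf{R}^{\beta}(\mathsf{ACA}_0)\sqsubseteq^{\mathbf{\Sigma}^1_1}\mathbf{\Pi}^1_2\mbox{-}\omega\mathbf{R}^{\omega\beta}(\mathsf{ACA}_0)$, so $\mathbf{\Pi}^1_2\mbox{-}\omega\mathsf{RFN}(\mathbf{\Pi}^1_2\mbox{-}\omega\mathbf{R}^{\beta}(\mathsf{ACA}_0))$ holds. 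This is exactly $\forall\alpha(\mathsf{WO}(\alpha)\to\mathbf{\Pi}^1_2\mbox{-}\omega\mathsf{RFN}(\mathbf{\Pi}^1_2\mbox{-}\omega\mathbf{R}^\alpha(\mathsf{ACA}_0)))$, i.e.\ $\mathsf{ATR}_0$ by Corollary~\ref{reflection_ATR_0}, and the theorem follows.

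The essential content is already packaged in Lemma~\ref{DC_iter}; the rest is assembly, and the only steps needing care are the two complexity-bookkeeping observations above---that $\mathbf{\Pi}^1_3\mbox{-}\omega\mathsf{RFN}(\mathsf{ACA}_0)$ may be presented as a $\mathbf{\Pi}^1_3$-sentence (so Theorem~\ref{omega_to_iter} applies to $S$) and that $\mathbf{\Pi}^1_2\mbox{-}\omega\mathsf{RFN}$ of an iterated theory is a genuine $\mathbf{\Pi}^1_2$-sentence (so that its occurrence as an axiom of the $\mathbf{\Pi}^1_2$-sound ``$+1$''-iterate forces its truth)---along with the routine checks that $\mathbf{\Pi}^1_2\mbox{-}\mathsf{RFN}$ and $\mathbf{\Pi}^1_2\mbox{-}\omega\mathsf{RFN}$ are stable under the relevant equivalences of theories. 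Unlike the other results of this section, the argument needs no fresh reflexive (L\"{o}b-style) induction.
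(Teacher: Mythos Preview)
Your proof is correct and follows essentially the same chain of equivalences as the paper's own argument (Corollary~\ref{reflection_ATR_0}, the ``$\omega\alpha$'' reparametrization, Lemma~\ref{DC_iter}, Theorem~\ref{omega_to_iter}), just traversed in the opposite direction and with the step the paper labels ``trivially'' spelled out in full. You also supply the verification, which the paper leaves implicit, that $S$ may be taken $\mathbf{\Pi}^1_3$-axiomatized so that Theorem~\ref{omega_to_iter} with $n=2$ applies.
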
 
\begin{proof}
The following sequence of pairwise equivalent sentences establishes the equivalence:
\begin{flalign*}
\mathsf{ATR}_0 &\equiv \forall \alpha\Big(\mathsf{WO}(\alpha)\to \mathbf{\Pi}^1_2\mbox{-}\omega\mathsf{RFN}\big(\mathbf{\Pi}^1_2\mbox{-}\omega\mathbf{R}^\alpha(\mathsf{ACA}_0)\big)\Big) \text{ by Corollary \ref{reflection_ATR_0} }\\
& \equiv \forall \alpha\Big(\mathsf{WO}(\alpha)\to \mathbf{\Pi}^1_2\mbox{-}\mathsf{RFN}\big(\mathbf{\Pi}^1_2\mbox{-}\omega\mathbf{R}^{\omega\alpha}(\mathsf{ACA}_0)\big)\Big) \text{ trivially}\\
& \equiv\forall \alpha\Big(\mathsf{WO}(\alpha)\to \mathbf{\Pi}^1_2\mbox{-}\mathsf{RFN}\big(\mathbf{\Pi}^1_2\mbox{-}\mathbf{R}^\alpha(\mathsf{ACA}_0+ \mathbf{\Pi}^1_3\mbox{-}\omega\mathsf{RFN}(\mathsf{ACA}_0))\big)\Big) \text{ by Lemma \ref{DC_iter}}\\
& \equiv \mathbf{\Pi}^1_2\mbox{-}\omega\mathsf{RFN}\big(\mathsf{ACA}_0+ \mathbf{\Pi}^1_3\mbox{-}\omega\mathsf{RFN}(\mathsf{ACA}_0)\big) \text{ by Theorem \ref{omega_to_iter}}
\end{flalign*}
This completes the proof of the theorem.
\end{proof}

\section{Proof-theoretic ordinals and well-ordering principles}\label{well-ordering-principles}

In this section we deliver proof-theoretic applications of the results from the previous sections. In particular, we use the systematic connection between $\omega$-model reflection and dilators to provide uniform proof-theoretic analyses of theories in the interval $[\mathsf{ACA}_0,\mathsf{ATR}]$. Note that these techniques are useful not only for calculating proof-theoretic ordinals but also for proving reverse-mathematical characterizations of well-ordering principles.

The following is a variant of the usual Schmerl's formula that could be proved in a standard manner \cite{beklemishev2003proof,schmerl1979fine} using Lemma \ref{reduction_property}:
\begin{theorem}\label{Schmerl-variant}For any $\mathbf{\Pi}^1_{n+1}$-axiomatizable theory $T$ we have
$$T+\mathbf{\Pi}^1_\infty\text{-}\mathsf{RFN}(T)\equiv^{\mathbf{\Sigma}^1_1}_{\mathbf{\Pi}^1_n}\mathbf{\Pi}^1_n\text{-}\mathbf{R}^{\varepsilon_0}(T).$$\end{theorem}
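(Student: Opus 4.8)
The plan is to adapt the standard fine-structural proof of Schmerl's formula from \cite{schmerl1979fine,beklemishev2003proof} to the setting of $\mathbf{L}_2$ and of the base theory $\mathsf{ACA}_0$, with Lemma~\ref{reduction_property}\,\eqref{reduction_i1} as the central tool. I would prove the two inclusions of $\equiv^{\mathbf{\Sigma}^1_1}_{\mathbf{\Pi}^1_n}$ separately, and in each direction argue by reflexive induction (L\"ob's theorem), exactly as in the proofs of Theorems~\ref{iter_Pi^1_2}, \ref{omega_Pi^1_2}, and \ref{ATR_0_Pi^1_2}.

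For the direction $\mathbf{\Pi}^1_n\text{-}\mathbf{R}^{\varepsilon_0}(T)\sqsubseteq^{\mathbf{\Sigma}^1_1} T+\mathbf{\Pi}^1_\infty\text{-}\mathsf{RFN}(T)$ (which already yields the $\sqsupseteq_{\mathbf{\Pi}^1_n}$ half) it suffices to use $\mathbf{\Pi}^1_{n+1}\text{-}\mathsf{RFN}(T)$ alone. By Lemma~\ref{reduction_property}\,\eqref{reduction_i1}, over $T$ this principle is equivalent, modulo $\mathbf{\Pi}^1_n$-consequences, to closure under the rule $\varphi/\mathbf{\Pi}^1_n\text{-}\mathsf{RFN}(T+\varphi)$ for $\varphi\in\mathbf{\Pi}^1_n\cap H$; one then bootstraps by nesting this rule, feeding it the reflection assertions it has already produced (which are again $\mathbf{\Pi}^1_n$-sentences), using $\mathbf{\Pi}^1_1$-completeness of well-foundedness to recast the relevant $\mathbf{\Pi}^1_1$-premises as well-foundedness statements, the fact that $\mathsf{ACA}_0$ proves $\mathsf{WO}(\alpha)$ for every standard notation $\alpha<\varepsilon_0$, and the reflexive hypothesis to keep the recursion going. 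Combining the resulting estimates with Lemmas~\ref{hom_lemma} and~\ref{plus_one_lemma} collapses everything into $\mathbf{\Pi}^1_n\text{-}\mathbf{R}^{\varepsilon_0}(T)\sqsubseteq^{\mathbf{\Sigma}^1_1} T+\mathbf{\Pi}^1_{n+1}\text{-}\mathsf{RFN}(T)$.

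For the reverse direction $T+\mathbf{\Pi}^1_\infty\text{-}\mathsf{RFN}(T)\sqsubseteq^{\mathbf{\Sigma}^1_1}_{\mathbf{\Pi}^1_n}\mathbf{\Pi}^1_n\text{-}\mathbf{R}^{\varepsilon_0}(T)$ the starting observation is that a finitary derivation of a $\mathbf{\Pi}^1_n$-sentence from $T+\mathbf{\Pi}^1_\infty\text{-}\mathsf{RFN}(T)$ (plus, to account for $\mathbf{\Sigma}^1_1$-provability, one true $\mathbf{\Sigma}^1_1$-sentence) invokes only finitely many of the reflection axioms, hence lies in $T+\mathbf{\Pi}^1_c\text{-}\mathsf{RFN}(T)$ for some $c$; so it suffices to bound the $\mathbf{\Pi}^1_n$-content of $T+\mathbf{\Pi}^1_c\text{-}\mathsf{RFN}(T)$, modulo $\mathbf{\Sigma}^1_1$, by $\mathbf{\Pi}^1_n\text{-}\mathbf{R}^{\alpha_c}(T)$ for a standard notation $\alpha_c<\varepsilon_0$, uniformly in $c$ and with the ranks of the $\alpha_c$ cofinal in $\varepsilon_0$. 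One way is to iterate Lemma~\ref{reduction_property}\,\eqref{reduction_i1} down from level $c$ to level $n$, estimating the reachable iteration-ordinal at each step exactly as in the proof of Theorem~\ref{iter_Pi^1_2}. Alternatively one can run the argument inside the infinitary calculus of \textsection\ref{equivalent_forms}: a finitary proof of $\varphi$ translates to an $\omega$-proof of $\lnot T,\varphi$ whose cut-rank is a fixed finite number determined by $c$, and applying the cut-reduction available in $\mathsf{ACA}_0$ \cite[Theorem~6.4.1]{girard1987proof} that many times produces a cut-free $\omega$-proof of Kleene--Brouwer rank a finite tower of $\omega$-exponentials, hence below $\varepsilon_0$, after which Lemma~\ref{feferman} delivers the conclusion; in either case Lemma~\ref{hom_lemma} absorbs the result into $\mathbf{\Pi}^1_n\text{-}\mathbf{R}^{\varepsilon_0}(T)$.

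The step I expect to be the main obstacle is this uniform ordinal bookkeeping: checking that passing from a single level of $\mathbf{\Pi}^1_{n+1}$-reflection to the unbounded scheme $\mathbf{\Pi}^1_\infty\text{-}\mathsf{RFN}(T)$ reaches \emph{exactly} $\varepsilon_0$ iterations of $\mathbf{\Pi}^1_n$-reflection and no more, and that the entire computation (including the order-homomorphisms witnessing $\alpha_c<^{\mathsf{cn}}\varepsilon_0$) can be carried out in $\mathsf{ACA}_0$ while being tracked at the level of $\mathbf{\Sigma}^1_1$-provability, which restricts one to $\equiv^{\mathbf{\Sigma}^1_1}$-invariant tools such as Lemma~\ref{hom_lemma} and the $\mathbf{\Sigma}^1_1$-invariance of $\Pi^1_1$ proof-theoretic ordinals established earlier in the paper.
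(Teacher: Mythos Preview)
Your overall plan—adapt the standard Schmerl argument via iterated use of Lemma~\ref{reduction_property}\,\eqref{reduction_i1}—is precisely what the paper intends (the paper gives no proof, only a pointer to \cite{schmerl1979fine,beklemishev2003proof} and Lemma~\ref{reduction_property}), and your treatment of the conservation direction $T+\mathbf{\Pi}^1_\infty\text{-}\mathsf{RFN}(T)\sqsubseteq^{\mathbf{\Sigma}^1_1}_{\mathbf{\Pi}^1_n}\mathbf{\Pi}^1_n\text{-}\mathbf{R}^{\varepsilon_0}(T)$, descending from level $c$ to level $n$ through repeated applications of Lemma~\ref{reduction_property}\,\eqref{reduction_i1} and accumulating a tower of $\omega$-exponentials, is correct and is the substantive half.

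There is, however, a real error in the other direction. You assert that for $\mathbf{\Pi}^1_n\text{-}\mathbf{R}^{\varepsilon_0}(T)\sqsubseteq^{\mathbf{\Sigma}^1_1} T+\mathbf{\Pi}^1_\infty\text{-}\mathsf{RFN}(T)$ ``it suffices to use $\mathbf{\Pi}^1_{n+1}\text{-}\mathsf{RFN}(T)$ alone,'' but this is false: a single level of $\mathbf{\Pi}^1_{n+1}$-reflection over $T$ yields only $\omega$ iterations of $\mathbf{\Pi}^1_n$-reflection, not $\varepsilon_0$. Concretely, Theorem~\ref{iter_Pi^1_2} with $\beta=1$ gives $|\mathsf{ACA}_0+\mathbf{\Pi}^1_2\text{-}\mathsf{RFN}(\mathsf{ACA}_0)|_{\mathbf{\Pi}^1_1}=\varepsilon_\omega$, whereas by Theorem~\ref{previous_paper} $|\mathbf{\Pi}^1_1\text{-}\mathbf{R}^{\varepsilon_0}(\mathsf{ACA}_0)|_{\mathbf{\Pi}^1_1}=\varepsilon_{\varepsilon_0}$, so the inclusion you claim already fails for $n=1$, $T=\mathsf{ACA}_0$. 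Nesting the rule $\varphi/\mathbf{\Pi}^1_n\text{-}\mathsf{RFN}(T+\varphi)$ does not escape this bound (each nesting advances the iteration by one, so finitary nesting reaches only finite stages and the closure reaches $\omega$), and the reflexive hypothesis does not help either, since it speaks about $T+\mathbf{\Pi}^1_\infty\text{-}\mathsf{RFN}(T)$, not about $T+\mathbf{\Pi}^1_{n+1}\text{-}\mathsf{RFN}(T)$. The fix is symmetric to your other direction: for each $\alpha<\varepsilon_0$ choose $k$ with $\alpha$ below the $k$-fold $\omega$-tower and show that $T+\mathbf{\Pi}^1_{n+k}\text{-}\mathsf{RFN}(T)$ proves $\mathbf{\Pi}^1_n\text{-}\mathsf{RFN}\bigl(\mathbf{\Pi}^1_n\text{-}\mathbf{R}^\alpha(T)\bigr)$, by iterating the basic Schmerl step $\mathbf{\Pi}^1_{m+1}\text{-}\mathsf{RFN}(U)\to\mathbf{\Pi}^1_m\text{-}\mathsf{RFN}\bigl(U+\mathbf{\Pi}^1_{m+1}\text{-}\mathsf{RFN}(U)\bigr)$ down from $m=n+k-1$ to $m=n$. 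Thus the full scheme $\mathbf{\Pi}^1_\infty\text{-}\mathsf{RFN}(T)$ is genuinely required on this side as well.
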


\begin{corollary}
\begin{enumerate}
    \item $|\mathsf{ACA}_0^+|_{\mathbf{\Pi}^1_1}=\phi_2(0)$;
    \item $|\Sigma^1_1$-$\mathsf{AC}|_{\mathbf{\Pi}^1_1}=|\Pi^1_2$-$\mathsf{RFN}^{\varepsilon_0}(\Sigma^1_1$-$\mathsf{AC}_0)|_{\mathbf{\Pi}^1_1}=\phi_{\varepsilon_0}(0)$;
    \item $|\mathsf{ATR}_0|_{\mathbf{\Pi}^1_1}=\Gamma_0$;
    \item $|\mathsf{ATR}|_{\mathbf{\Pi}^1_1}=\Gamma_{\varepsilon_0}$.
\end{enumerate}
\end{corollary}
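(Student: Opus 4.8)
The plan is to derive each of the four proof-theoretic ordinal calculations by combining the dilator-level results of \textsection\ref{pt_dilators} with the Schmerl-style variant stated just above (Theorem \ref{Schmerl-variant}), and then extracting $\Pi^1_1$ ordinals by evaluating the relevant dilators at $0$. The underlying principle is that for a $\mathbf{\Pi}^1_2$-axiomatizable theory $T$, if $|T|_{\mathbf{\Pi}^1_2}=|\phi_\alpha^+|$ then $|T|_{\mathbf{\Pi}^1_1}=|\phi_\alpha^+(0)|=\phi_\alpha(0)$; so each item reduces to identifying the relevant dilator.

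For item (1), recall that $\mathsf{ACA}_0^+$ is equivalent (over $\mathsf{ACA}_0$, by Proposition \ref{ACA_0^+_alt} and the Remark following the definition of $\omega$-model reflection) to the assertion that every set lies in an $\omega$-model of $\mathsf{ACA}_0$, which is $\mathbf{\Pi}^1_1\mbox{-}\omega\mathsf{RFN}(\mathsf{ACA}_0)$, i.e. (by Theorem \ref{omegaRFN_eqiv1}) $\mathsf{ACA}_0+\mathbf{\Pi}^1_2\mbox{-}\omega\mathsf{RFN}(\mathsf{ACA}_0)$. By Theorem \ref{ACA_0_Pi^1_2} we have $|\mathsf{ACA}_0|_{\mathbf{\Pi}^1_2}=|\varepsilon^+|=|\phi_1^+|$, so Theorem \ref{omega_iter_Pi^1_2} (the case $\alpha=1$, equivalently Theorem \ref{omega_Pi^1_2}) gives $|\mathsf{ACA}_0^+|_{\mathbf{\Pi}^1_2}=|\mathbf{\Pi}^1_2\mbox{-}\omega\mathbf{R}(\mathsf{ACA}_0)|_{\mathbf{\Pi}^1_2}=|\phi_2^+|$; evaluating at $0$ yields $|\mathsf{ACA}_0^+|_{\mathbf{\Pi}^1_1}=\phi_2(0)$. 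For item (2), $\Sigma^1_1\mbox{-}\mathsf{AC}=\Sigma^1_1\mbox{-}\mathsf{AC}_0+\mathbf{\Pi}^1_\infty\mbox{-}\mathsf{RFN}$-style full induction; using Theorem \ref{Schmerl-variant} with $T=\Sigma^1_1\mbox{-}\mathsf{AC}_0$ and $n=1$ we get $\Sigma^1_1\mbox{-}\mathsf{AC}\equiv^{\mathbf{\Sigma}^1_1}_{\mathbf{\Pi}^1_1}\mathbf{\Pi}^1_1\mbox{-}\mathbf{R}^{\varepsilon_0}(\Sigma^1_1\mbox{-}\mathsf{AC}_0)$, but to reach the Veblen value we instead work at $\mathbf{\Pi}^1_2$: one checks $|\Sigma^1_1\mbox{-}\mathsf{AC}_0|_{\mathbf{\Pi}^1_2}=|\varepsilon^+|=|\phi_1^+|$ (again a relativization of the $\mathsf{ACA}_0$-analysis, since $\Sigma^1_1\mbox{-}\mathsf{AC}_0$ is $\Pi^1_1$-conservative over $\mathsf{ACA}_0$ and hence has the same proof-theoretic dilator), and then Theorem \ref{iter_Pi^1_2} with $\beta=\varepsilon_0$ gives $|\mathbf{\Pi}^1_2\mbox{-}\mathbf{R}^{\varepsilon_0}(\Sigma^1_1\mbox{-}\mathsf{AC}_0)|_{\mathbf{\Pi}^1_2}=|\phi_1^{+\omega^{\varepsilon_0}}|=|\phi_1^{+\varepsilon_0}|$; the $\varepsilon_0$-th $\varepsilon$-number above $0$ is $\phi_{\varepsilon_0}(0)$, so evaluating at $0$ gives $\phi_{\varepsilon_0}(0)$, and the middle equality $|\Pi^1_2\mbox{-}\mathsf{RFN}^{\varepsilon_0}(\Sigma^1_1\mbox{-}\mathsf{AC}_0)|_{\mathbf{\Pi}^1_1}=\phi_{\varepsilon_0}(0)$ follows by the same dilator computation.

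For items (3) and (4) the engine is Theorem \ref{ATR_0_Pi^1_2}, which states $|\mathsf{ATR}_0|_{\mathbf{\Pi}^1_2}=|\Gamma^+|$. Evaluating this dilator at $0$ immediately gives $|\mathsf{ATR}_0|_{\mathbf{\Pi}^1_1}=\Gamma^+(0)=\Gamma_0$, settling (3). For (4), $\mathsf{ATR}=\mathsf{ATR}_0+$ full induction, which by the Schmerl-style Theorem \ref{Schmerl-variant} applied with $T=\mathsf{ATR}_0$ (noting $\mathsf{ATR}_0$ is $\mathbf{\Pi}^1_2$-axiomatizable, so $n=1$) is $\mathbf{\Pi}^1_1$-equivalent over $\mathbf{\Sigma}^1_1$-provability to $\mathbf{\Pi}^1_1\mbox{-}\mathbf{R}^{\varepsilon_0}(\mathsf{ATR}_0)$; upgrading to the dilator level, since $|\mathsf{ATR}_0|_{\mathbf{\Pi}^1_2}=|\Gamma^+|$ and iterating $\mathbf{\Pi}^1_2$-reflection $\varepsilon_0$ times climbs from the least $\Gamma$-number above a given ordinal to the $\varepsilon_0$-th such $\Gamma$-number (an iteration argument in the spirit of the proof of Theorem \ref{ATR_0_Pi^1_2}, using Theorem \ref{iter_Pi^1_2} at each successor stage), one obtains $|\mathsf{ATR}|_{\mathbf{\Pi}^1_2}=|\Gamma^{+\varepsilon_0}|$ where $\Gamma^{+\varepsilon_0}(\beta)$ denotes the $\varepsilon_0$-th $\Gamma$-number above $\beta$; evaluating at $0$ gives the $\varepsilon_0$-th $\Gamma$-number, namely $\Gamma_{\varepsilon_0}$.

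The main obstacle I expect is item (4): one must combine the fixed-point/iteration structure in the proof of Theorem \ref{ATR_0_Pi^1_2} with the Schmerl transformation of full induction into $\varepsilon_0$-many reflection steps, and carefully verify that the iteration of $\Gamma^+$ behaves as the "$\varepsilon_0$-th $\Gamma$-number" operator — i.e. that the relevant notation systems $\phi^{+\cdots}$ and the ordinal-theoretic identity $\Gamma_{\varepsilon_0}=$ (the $\varepsilon_0$-th strongly critical ordinal) match up, with all the manipulations formalizable in $\mathsf{ACA}_0$ via L\"ob's theorem as in the other proofs of this section. Items (1)–(3) are more routine, being direct evaluations at $0$ of dilators already computed in \textsection\ref{pt_dilators}, with only the mild point that one should confirm $\mathsf{ACA}_0^+$, $\Sigma^1_1\mbox{-}\mathsf{AC}_0$, and $\mathsf{ATR}_0$ are presented in the $\mathbf{\Pi}^1_2$-axiomatizable form required to apply those theorems.
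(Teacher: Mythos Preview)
Your treatment of items (1), (3), and (4) is essentially the paper's approach: evaluate the already-computed dilators at $0$, and for (4) combine Theorem \ref{Schmerl-variant} with the dilator $|\mathsf{ATR}_0|_{\mathbf{\Pi}^1_2}=|\Gamma^+|$ (the paper does the last step via a direct L\"ob argument that $|\mathbf{\Pi}^1_1\text{-}\mathbf{R}^\alpha(\mathsf{ATR}_0)|_{\mathbf{\Pi}^1_1}=\Gamma_\alpha$, but your dilator formulation is equivalent).

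Item (2), however, contains a genuine error. You claim $|\Sigma^1_1\text{-}\mathsf{AC}_0|_{\mathbf{\Pi}^1_2}=|\phi_1^+|$ and then apply Theorem \ref{iter_Pi^1_2} to obtain $|\mathbf{\Pi}^1_2\text{-}\mathbf{R}^{\varepsilon_0}(\Sigma^1_1\text{-}\mathsf{AC}_0)|_{\mathbf{\Pi}^1_2}=|\phi_1^{+\varepsilon_0}|$. Evaluating $\phi_1^{+\varepsilon_0}$ at $0$ gives the $\varepsilon_0$-th $\varepsilon$-number, which is $\varepsilon_{\varepsilon_0}=\phi_1(\varepsilon_0)$, \emph{not} $\phi_{\varepsilon_0}(0)$; your final sentence conflates these two very different ordinals. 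More fundamentally, Theorem \ref{iter_Pi^1_2} requires $T$ to be $\mathbf{\Pi}^1_2$-axiomatizable, and the natural axiomatization of $\Sigma^1_1\text{-}\mathsf{AC}_0$ is $\mathbf{\Pi}^1_3$, so the hypothesis is not met. The point is precisely that one step of $\mathbf{\Pi}^1_2$-reflection over $\Sigma^1_1\text{-}\mathsf{AC}_0$ is much stronger than one step over $\mathsf{ACA}_0$: it buys an $\omega$-model, not just a consistency statement.

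The paper supplies the missing ingredient via Corollary \ref{AC_cons_iter}, which shows $\mathbf{\Pi}^1_2\text{-}\mathbf{R}^{\varepsilon_0}(\Sigma^1_1\text{-}\mathsf{AC}_0)\equiv^{\mathbf{\Sigma}^1_1}_{\mathbf{\Pi}^1_2}\mathbf{\Pi}^1_2\text{-}\omega\mathbf{R}^{\varepsilon_0}(\mathsf{ACA}_0)$; then Theorem \ref{omega_iter_Pi^1_2} gives the dilator $|\phi_{1+\varepsilon_0}^+|=|\phi_{\varepsilon_0}^+|$, and evaluation at $0$ yields $\phi_{\varepsilon_0}(0)$. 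You need this bridge from syntactic $\mathbf{\Pi}^1_2$-reflection over $\Sigma^1_1\text{-}\mathsf{AC}_0$ to iterated $\omega$-model reflection over $\mathsf{ACA}_0$; without it the calculation lands at the wrong ordinal.
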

\begin{proof} (1) is a straightforward combination of Theorems \ref{ACA_0_Pi^1_2} and \ref{omega_Pi^1_2}. (3) follows immediately from 5.6. 

For (2) we reason as follows:
\begin{flalign*}
\Sigma^1_1\text{-}\mathsf{AC}&\equiv \mathsf{\Sigma}^1_1\text{-}\mathsf{AC}_0+\mathbf{\Pi}^1_\infty\text{-}\mathsf{RFN}(\mathsf{\Sigma}^1_1\text{-}\mathsf{AC}_0)\\
&\equiv^{\mathbf{\Sigma}^1_1}_{\mathbf{\Pi}^1_2} \mathbf{\Pi}^1_2\text{-}\mathbf{R}^{\varepsilon_0}(\Sigma^1_1\text{-}\mathsf{AC}_0) \text{ by Theorem \ref{Schmerl-variant}}\\
&\equiv \mathbf{\Pi}^1_2\text{-}\mathbf{R}^{\varepsilon_0}(\mathsf{ACA}_0+\Sigma^1_1\text{-}\mathsf{AC}_0)\\
&\equiv_{\mathbf{\Pi}^1_2} \mathbf{\Pi}^1_2\text{-}\omega\mathbf{R}^{\varepsilon_0}(\mathsf{ACA}_0) \text{ by Lemma \ref{AC_cons_iter}}
\end{flalign*}
The conclusion follows since, by Theorem \ref{omega_iter_Pi^1_2}, $|\mathbf{\Pi}^1_2\text{-}\omega\mathbf{R}^{\varepsilon_0}(\mathsf{ACA}_0)|_{\mathbf{\Pi}^1_1}=\phi^+_{\varepsilon_0}(0)$.

For (4), we first note that:
\begin{flalign*}
\mathsf{ATR}&\equiv \mathsf{ATR}_0 + \mathbf{\Pi}^1_\infty\text{-}\mathsf{RFN}(\mathsf{ATR}_0)\\
&\equiv^{\mathbf{\Sigma}^1_1}_{\mathbf{\Pi}^1_1} \mathbf{\Pi}^1_1\text{-}\mathbf{R}^{\varepsilon_0}(\mathsf{ATR}_0) \text{ by Theorem \ref{Schmerl-variant}}
\end{flalign*}
We proceed to show that for any linear order $\alpha$ we have \begin{equation}\label{iter_ATR_0}|\mathbf{\Pi}^1_1\textrm{-}\mathbf{R}^\alpha(\mathsf{ATR}_0)|_{\mathbf{\Pi}^1_1}=\Gamma_\alpha.\end{equation} We prove it using L\"ob's theorem. 

We reason in $\mathsf{ACA_0}$ under the assumption $\mathsf{ACA}_0$-proves that for all for all linear orders $\alpha$ we have (\ref{iter_ATR_0}). We need to show that (\ref{iter_ATR_0}) holds. Indeed, for any linear order $\beta$ we have $\mathbf{\Pi}^1_1\textrm{-}\mathbf{R}^\alpha(\mathsf{ATR}_0)\vdash \mathsf{WO}(\beta)$ iff for some $x\in \alpha$ we have $$\mathsf{ATR}_0+\mathbf{\Pi}^1_1\textrm{-}\mathsf{RFN}(\mathbf{\Pi}^1_1\textrm{-}\mathbf{R}^{\mathsf{cone}(\alpha,x)}(\mathsf{ATR}_0))\vdash \mathsf{WO}(\beta).$$
But using our assumption about provability of (\ref{iter_ATR_0}) we see that $$\mathsf{ACA}_0\vdash \mathbf{\Pi}^1_1\textrm{-}\mathsf{RFN}(\mathbf{\Pi}^1_1\textrm{-}\mathbf{R}^\gamma(\mathsf{ATR}_0))\mathrel{\leftrightarrow} \mathsf{WO}(\Gamma_\gamma).$$
Thus $\mathbf{\Pi}^1_1\textrm{-}\mathbf{R}^\alpha(\mathsf{ATR}_0)\vdash\mathsf{WO}(\beta)$ iff for some $x\in \alpha$ we have $$\mathsf{ATR}_0+\mathsf{WO}(\Gamma_{\mathsf{cone}(\alpha,x)})\vdash \mathsf{WO}(\beta).$$ And since $|\mathsf{ATR}_0|_{\mathbf{\Pi}^1_2}=\Gamma^+$ the latter condition is equivalent to: $$\beta<\Gamma^+(\Gamma_{\mathsf{cone}(\alpha,x)})=\Gamma_{\mathsf{cone}(\alpha,x)+1}.$$ Thus (\ref{iter_ATR_0}) holds.
\end{proof}

Note that our technique actually delivers not just $\Pi^1_1$-ordinals but also reverse-mathematical well-ordering theorems.

\begin{corollary} Over $\mathsf{ACA}_0$ we have the following equivalences:
\begin{enumerate}
    \item \label{ACA_0^+_WOP}$\mathsf{ACA}_0^+$ is equivalent to the well-ordering principle $$\forall \alpha\Big(\mathsf{WO}(\alpha)\to  \mathsf{WO}\big(\phi_1(\alpha)\big)\Big).$$
    \item \label{ATR_0_WOP}$\mathsf{ATR}_0$ is equivalent to the well-ordering principle $$\forall \alpha\Big(\mathsf{WO}(\alpha)\to  \mathsf{WO}\big(\phi_\alpha(0)\big)\Big).$$
\end{enumerate}
\end{corollary}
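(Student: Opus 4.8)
The plan is to route both equivalences through the reflection-principle characterizations of \textsection\ref{reduction}--\textsection\ref{pt_dilators} together with the Proposition of \textsection\ref{pt_dilators} relating $\mathbf{\Pi}^1_1\text{-}\mathsf{RFN}$ to well-foundedness (over $\mathsf{ACA}_0$, if $|T|_{\mathbf{\Pi}^1_1}=|\alpha|$ then $\mathbf{\Pi}^1_1\text{-}\mathsf{RFN}(T)\leftrightarrow\mathsf{WO}(\alpha)$), thereby reducing each well-ordering principle to a claim about $\mathbf{\Pi}^1_1$ proof-theoretic ordinals. The one auxiliary fact I would isolate first is a \emph{bridge lemma} (a relative of Theorem~\ref{previous_paper}): $\mathsf{ACA}_0$ proves that for every $\mathbf{\Pi}^1_2$-axiomatizable $U$ with $|U|_{\mathbf{\Pi}^1_2}=|\phi_\gamma^+|$ and every linear order $\beta$ one has $|\mathbf{\Pi}^1_1\text{-}\mathbf{R}^\beta(U)|_{\mathbf{\Pi}^1_1}=|\phi_\gamma(\beta)|$, equivalently $\mathbf{\Pi}^1_1\text{-}\mathsf{RFN}(\mathbf{\Pi}^1_1\text{-}\mathbf{R}^\beta(U))\leftrightarrow\mathsf{WO}(\phi_\gamma(\beta))$. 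I would prove this by L\"{o}b's theorem: granting the statement inside $\mathsf{ACA}_0$-provability, the Proposition above converts each axiom $\mathbf{\Pi}^1_1\text{-}\mathsf{RFN}(\mathbf{\Pi}^1_1\text{-}\mathbf{R}^{\delta}(U))$ (for $\delta$ of the form $\mathsf{cone}(\beta,x)$, $x\in D_\beta$) into $\mathsf{WO}(\phi_\gamma(\delta))$, so $\mathbf{\Pi}^1_1\text{-}\mathbf{R}^\beta(U)\equiv^{\mathbf{\Sigma}^1_1}U+\{\mathsf{WO}(\phi_\gamma(\delta))\mid\delta<\beta\}$; a compactness argument as in the proof of Theorem~\ref{omega_iter_Pi^1_2}, together with the identity $|U+\mathsf{WO}(\phi_\gamma(\delta))|_{\mathbf{\Pi}^1_1}=|\phi_\gamma^+(\phi_\gamma(\delta))|=|\phi_\gamma(\delta+1)|$, then gives $|\mathbf{\Pi}^1_1\text{-}\mathbf{R}^\beta(U)|_{\mathbf{\Pi}^1_1}=\sup_{\delta<\beta}|\phi_\gamma(\delta+1)|=|\phi_\gamma(\beta)|$, the ill-founded case being handled by Corollary~\ref{inconsistency}.

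For \eqref{ACA_0^+_WOP}, I would first note that over $\mathsf{ACA}_0$ the system $\mathsf{ACA}_0^+$ is equivalent to $\mathbf{\Pi}^1_2\text{-}\omega\mathsf{RFN}(\mathsf{ACA}_0)$: this is the $\alpha=0$ instance of Theorem~\ref{omegaRFN_Turing}, since $\mathbf{\Pi}^1_2\text{-}\omega\mathbf{R}^0(\mathsf{ACA}_0)=\mathsf{ACA}_0$ and ``$X^{(\omega)}$ exists for all $X$'' is, over $\mathsf{ACA}_0$, exactly $\mathsf{ACA}_0^+$. As $\mathsf{ACA}_0$ is $\mathbf{\Pi}^1_2$-axiomatizable, clauses~(2) and~(3) of Theorem~\ref{special_case} make this in turn equivalent to $\forall\alpha\big(\mathsf{WO}(\alpha)\to\mathbf{\Pi}^1_1\text{-}\mathsf{RFN}(\mathbf{\Pi}^1_1\text{-}\mathbf{R}^\alpha(\mathsf{ACA}_0))\big)$. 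Now I apply the bridge lemma with $U=\mathsf{ACA}_0$ and $\gamma=1$, legitimate because $|\mathsf{ACA}_0|_{\mathbf{\Pi}^1_2}=|\varepsilon^+|=|\phi_1^+|$ by Theorem~\ref{ACA_0_Pi^1_2}; since $\phi_1$ enumerates the $\varepsilon$-numbers, this rewrites each $\mathbf{\Pi}^1_1\text{-}\mathsf{RFN}(\mathbf{\Pi}^1_1\text{-}\mathbf{R}^\alpha(\mathsf{ACA}_0))$ as $\mathsf{WO}(\phi_1(\alpha))$, yielding $\mathsf{ACA}_0^+\leftrightarrow\forall\alpha(\mathsf{WO}(\alpha)\to\mathsf{WO}(\phi_1(\alpha)))$.

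For \eqref{ATR_0_WOP}, Corollary~\ref{reflection_ATR_0} gives $\mathsf{ATR}_0\equiv\forall\alpha\big(\mathsf{WO}(\alpha)\to\mathbf{\Pi}^1_2\text{-}\omega\mathsf{RFN}(\mathbf{\Pi}^1_2\text{-}\omega\mathbf{R}^\alpha(\mathsf{ACA}_0))\big)$. Fixing $\alpha$, the theory $U:=\mathbf{\Pi}^1_2\text{-}\omega\mathbf{R}^\alpha(\mathsf{ACA}_0)$ is $\mathbf{\Pi}^1_2$-axiomatizable with $|U|_{\mathbf{\Pi}^1_2}=|\phi_{1+\alpha}^+|$ (Theorem~\ref{omega_iter_Pi^1_2}), so Theorem~\ref{special_case} applied to $U$ gives $\mathbf{\Pi}^1_2\text{-}\omega\mathsf{RFN}(U)\leftrightarrow\forall\beta\big(\mathsf{WO}(\beta)\to\mathbf{\Pi}^1_1\text{-}\mathsf{RFN}(\mathbf{\Pi}^1_1\text{-}\mathbf{R}^\beta(U))\big)$, which the bridge lemma (with $\gamma=1+\alpha$) turns into $\forall\beta(\mathsf{WO}(\beta)\to\mathsf{WO}(\phi_{1+\alpha}(\beta)))$. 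Hence $\mathsf{ATR}_0$ is equivalent over $\mathsf{ACA}_0$ to the two-variable Veblen principle $\forall\alpha,\beta\big(\mathsf{WO}(\alpha)\wedge\mathsf{WO}(\beta)\to\mathsf{WO}(\phi_{1+\alpha}(\beta))\big)$, and it remains to see that this is equivalent to the one-variable principle $\forall\alpha(\mathsf{WO}(\alpha)\to\mathsf{WO}(\phi_\alpha(0)))$ of the statement. The forward direction uses $\phi_\alpha(0)\le\phi_{1+\alpha}(0)$ and the fact that every nonempty well-ordered $\gamma$ is isomorphic to $1+\alpha$ for some well-ordered $\alpha$; the backward direction uses that $\phi_{1+\alpha}(\beta)$ embeds into $\phi_\gamma(0)$ for a well-order $\gamma$ built by ordered sum from $1+\alpha$ and $\beta$ (so that $\phi_\gamma(0)$ is then a fixed point of $\phi_{1+\alpha}$ lying above $\beta$), whence the one-variable principle delivers $\mathsf{WO}(\phi_{1+\alpha}(\beta))$.

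I expect the main obstacle to be the routine-but-delicate ordinal-arithmetic bookkeeping over $\mathsf{ACA}_0$: verifying the needed identities and embeddings between the standard Veblen notation systems ($\phi_\gamma^+(\phi_\gamma(\delta))=\phi_\gamma(\delta+1)$, the absorption of $1+$, $\phi_{1+\alpha}(\beta)\le\phi_\gamma(0)$ for the ordered-sum $\gamma$, and the resulting two-variable/one-variable equivalence --- including the small-subscript instances where one must already know, e.g., $\mathsf{ATR}_0\vdash\mathsf{WO}(\phi_n(0))$), and making sure the compactness step in the bridge lemma is carried out with $\mathbf{\Pi}^1_0$ linear orders as demanded by the base-theory formalization of \textsection\ref{pt_dilators}. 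All the genuinely new content already resides in the reduction theorems and dilator computations of the preceding sections; the present corollary is essentially their bookkeeping.
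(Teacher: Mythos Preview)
Your proposal is correct and for part~(\ref{ACA_0^+_WOP}) matches the paper's argument closely; your ``bridge lemma'' is in fact the precise statement the paper needs (the paper's citation of Theorem~\ref{iter_Pi^1_2} there is loose, since that theorem concerns $\mathbf{\Pi}^1_2$-iterations rather than $\mathbf{\Pi}^1_1$-iterations, so you have correctly isolated the right auxiliary fact). For part~(\ref{ATR_0_WOP}) you take a genuinely different route: the paper goes directly from Corollary~\ref{reflection_ATR_0} to the one-variable principle by asserting $\mathsf{ATR}_0\equiv\forall\alpha\big(\mathsf{WO}(\alpha)\to\mathbf{\Pi}^1_1\text{-}\mathsf{RFN}(\mathbf{\Pi}^1_2\text{-}\omega\mathbf{R}^\alpha(\mathsf{ACA}_0))\big)$ and then reading off $\mathbf{\Pi}^1_1\text{-}\mathsf{RFN}(\mathbf{\Pi}^1_2\text{-}\omega\mathbf{R}^\alpha(\mathsf{ACA}_0))\leftrightarrow\mathsf{WO}(\phi_{1+\alpha}(0))$ from Theorem~\ref{omega_iter_Pi^1_2} evaluated at $0$ (since $\phi_{1+\alpha}^+(0)=\phi_{1+\alpha}(0)$). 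You instead unfold $\mathbf{\Pi}^1_2\text{-}\omega\mathsf{RFN}$ one more time via Theorem~\ref{special_case}, land on the two-variable Veblen principle $\forall\alpha,\beta\big(\mathsf{WO}(\alpha)\wedge\mathsf{WO}(\beta)\to\mathsf{WO}(\phi_{1+\alpha}(\beta))\big)$, and then collapse it to the one-variable form. The paper's route is shorter but its first displayed equivalence (attributed to Theorem~\ref{omega_to_iter}) is stated rather elliptically; your route is longer but each step is transparent, and it shows explicitly why the $\beta$-variable can be absorbed. Either way the substantive work is already done in \S\ref{reduction}--\S\ref{pt_dilators}, and your ordinal-arithmetic bookkeeping (the $1{+}$ absorption and the embedding $\phi_{1+\alpha}(\beta)\hookrightarrow\phi_\gamma(0)$ for a suitable ordered sum $\gamma$) is the same kind of routine verification the paper leaves implicit.
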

\begin{proof} We reason in $\mathsf{ACA}_0$. By Theorem \ref{omega_to_iter} $\mathsf{ACA}_0^+$ is equivalent to $$\forall \alpha \Big( \mathsf{WO}(\alpha) \rightarrow \mathbf{\Pi}^1_1\mbox{-}\mathsf{RFN}\big(\mathbf{\Pi}^1_1\mbox{-}\mathbf{R}^\alpha(\mathsf{ACA}_0)\big)\Big).$$ And by Theorem \ref{iter_Pi^1_2} $\mathbf{\Pi}^1_1\mbox{-}\mathsf{RFN}(\mathbf{\Pi}^1_1\mbox{-}\mathbf{R}^{\alpha}(\mathsf{ACA}_0))$ is equivalent to $\mathsf{WO}(\varphi_1^{+\alpha})$. This yields \ref{ACA_0^+_WOP}.

We again reason in $\mathsf{ACA}_0$. By Theorem \ref{omega_to_iter}  $\mathsf{ATR}_0$ is equivalent to $$\forall \alpha \Big( \mathsf{WO}(\alpha) \rightarrow \mathbf{\Pi}^1_1\mbox{-}\mathsf{RFN}\big(\mathbf{\Pi}^1_2\mbox{-}\omega\mathbf{R}^\alpha(\mathsf{ACA}_0)\big)\Big).$$ The formula $\mathbf{\Pi}^1_1\mbox{-}\mathsf{RFN}\big(\mathbf{\Pi}^1_2\mbox{-}\omega\mathbf{R}^\alpha(\mathsf{ACA}_0)\big)$ by Theorem \ref{omega_iter_Pi^1_2} is equivalent to $\mathsf{WO}(\phi_{1+\alpha}(0))$. This yields \ref{ATR_0_WOP}.
\end{proof}

\bibliographystyle{alpha}
\bibliography{bibliography}{}

\end{document}